\newtheorem{theorem}{Theorem}[section]
\newtheorem{prop}[theorem]{Proposition}
\newtheorem{lem}[theorem]{Lemma}
\newtheorem{cor}[theorem]{Corollary}
\theoremstyle{definition}
\newtheorem{ass}[theorem]{Assumption}
\theoremstyle{remark}
\newtheorem{remark}[theorem]{Remark}
\numberwithin{equation}{section}
\DeclareMathOperator{\Null}{Null}
\newcommand{\be}{\begin{equation}}
\newcommand{\ee}{\end{equation}}
\newcommand{\interior}{\text{int}}
\newcommand{\clo}{\text{cl}}
\newcommand{\lb}{\left(}
\newcommand{\rb}{\right)}
\newcommand{\lsb}{\left[}
\newcommand{\rsb}{\right]}
\newcommand{\lcb}{\left\{}
\newcommand{\rcb}{\right\}}
\newcommand{\lan}{\left\langle}
\newcommand{\ran}{\right\rangle}
\newcommand{\norm}[1]{\left\Vert #1 \right\Vert}
\newcommand{\abs}[1]{\left\vert #1 \right\vert}
\newcommand{\ip}[1]{\lan #1 \ran}
\newcommand{\mb}[1]{\mathbf{#1}}
\newcommand{\bs}[1]{\boldsymbol{#1}}
\newcommand{\wt}[1]{\widetilde{#1}}
\newcommand{\ve}{\varepsilon}
\newcommand{\vf}{\psi}
\newcommand{\vfn}{{\bs\vf}}
\newcommand{\vr}{\varrho}
\newcommand{\B}{\mathcal{B}}
\newcommand{\C}{\mathcal{C}}
\newcommand{\D}{\mathcal{D}}
\newcommand{\M}{\mathbb{G}}
\newcommand{\CC}{\mathbb{C}}
\newcommand{\N}{\mathbb{N}}
\newcommand{\R}{\mathbb{R}}
\newcommand{\Z}{\mathbb{Z}}
\newcommand{\pd}{\partial}
\newcommand{\gmatrix}{H}
\newcommand{\Id}{I}
\newcommand{\MF}{J}
\newcommand{\buniq}{\beta_0}
\newcommand{\bmono}{\beta_1}
\newcommand{\bnbhdone}{\beta_2}
\newcommand{\bnbhdzero}{\beta_3}
\newcommand{\bzoa}{\beta_4}
\newcommand{\bzob}{\beta_5}
\newcommand{\stable}{\mathbb{S}}
\newcommand{\unstable}{\mathbb{U}}
\newcommand{\ball}{\mathbb{B}}
\newcommand{\sigminus}{\sigma_{-1}}
\newcommand{\sigminusz}{\sigma_{-0}}
\newcommand{\Iset}{\mathcal{I}}
\newcommand{\Jset}{\mathcal{J}}
\newcommand{\interval}{\mathbb{I}}
\newcommand{\banach}{X}
\newcommand{\banachY}{Y}
\newcommand{\field}{\mathbb{F}}
\newcommand{\mono}{M}
\newcommand{\monoU}{U}
\newcommand{\sops}{p}
\newcommand{\sopsn}{{\bf p}}
\newcommand{\xbeta}{p^\beta}
\newcommand{\nxbeta}{\bar\sops^\beta}
\renewcommand{\Re}{\operatorname{Re}}
\renewcommand{\Im}{\operatorname{Im}}
\renewcommand{\star}{\diamond}
\newcommand{\orbit}{\mathcal{O}}
\newcommand{\period}{\omega}
\newcommand{\hyper}{\mathcal{H}}
\begin{document}

\title[Synchronized periodic solutions of coupled delay equations]{Stability of synchronous slowly oscillating periodic solutions for delay differential equations with coupled nonlinearity}

\date{\today}

\author[David Lipshutz]{David Lipshutz*}
\address{Faculty of Electrical Engineering \\ 
               Technion --- Israel Institute of Technology \\ 
               Haifa 32000 \\ Israel}
\email{lipshutz@gmail.com}

\author[Robert J.\ Lipshutz]{Robert J.\ Lipshutz}
\address{Palo Alto, CA 94301 \\ USA}
\email{roblipshutz@gmail.com}

\keywords{Delayed dynamics, coupled delay equations, slowly oscillating periodic solution, synchronization, linear stability, monodromy operator, Floquet theory, characteristic multiplier}

\subjclass[2010]{Primary: 34K11, 34K13, 34K20, 34D06; secondary: 92B25}

\thanks{*DL was supported in part at the Technion by the Israel Science Foundation (grant 1184/16) and a Zuckerman Fellowship.}

\begin{abstract}
We study stability of so-called synchronous slowly oscillating periodic solutions (SOPSs) for a system of identical delay differential equations (DDEs) with linear decay and nonlinear delayed negative feedback that are coupled through their nonlinear term.
Under a row sum condition on the coupling matrix, existence of a unique SOPS for the corresponding scalar DDE implies existence of a unique synchronous SOPS for the coupled DDEs.
However, stability of the SOPS for the scalar DDE does not generally imply stability of the synchronous SOPS for the coupled DDEs.
We obtain an explicit formula, depending only on the spectrum of the coupling matrix, the strength of the linear decay and the values of the nonlinear negative feedback function near plus/minus infinity, that determines the stability of the synchronous SOPS in the asymptotic regime where the nonlinear term is heavily weighted.
We also treat the special cases of so-called weakly coupled systems, near uniformly coupled systems, and doubly nonnegative coupled systems, in the aforementioned asymptotic regime.
Our approach is to estimate the characteristic (Floquet) multipliers for the synchronous SOPS. 
We first reduce the analysis of the multidimensional variational equation to the analysis of a family of scalar variational-type equations, and then establish limits for an associated family of monodromy-type operators.
We illustrate our results with examples of systems of DDEs with mean-field coupling and systems of DDEs arranged in a ring.
\end{abstract}

\maketitle

\tableofcontents

%\listoffigures

\section{Introduction}
\label{sec:intro}

\subsection{Background and motivation}
\label{sec:background}

Models for networks of coupled oscillators with time delays arise in the study of complex biological systems.
Examples include models of neuronal networks where the spatial distribution of neurons leads to propagation delays \cite{Campbell2007,Dhamala2004,Marcus1988,Rosenblum2004,wu2011introduction,Wu2015} and networks of genetic oscillators where lengthy transcription processes lead to signaling delays \cite{glass2016signaling,Mather2014,novak2008design}.
Of special interest for such a system is the synchronous state in which the elements of the system oscillate in unison.
A natural problem in this context, which has received considerable attention in the literature, is to characterize the stability of the synchronous state; see, for example, \cite{Atay2006,Campbell2018,campbell2006multistability,Chen2000,Flunkert2010,Flunkert2014,Guo2003,Guo2007,Guo2009,Sieber2013,Wang2017,Wu1999}.

In this work we consider the system of delay differential equations (DDEs)
	\be\label{eq:cdde}\dot x^j(t)=-\alpha x^j(t)+\beta\sum_{k=1}^nG^{jk}f(x^k(t-1)),\qquad j=1,\dots,n,\qquad t>0,\ee
where $n\ge 2$, $x^1,\dots,x^n$ are real-valued continuous functions on $[-1,\infty)$ that are continuously differentiable on $(0,\infty)$, $\alpha\ge0$ and $\beta>0$ are constants, $G=(G^{jk})$ is an $n\times n$ real-valued coupling matrix whose rows each sum to 1, and $f$ is a continuously differentiable real-valued function on $\R=(-\infty,\infty)$ satisfying the negative feedback condition $\xi f(\xi)<0$ for all $\xi\ne 0$.
We refer to the system of equations \eqref{eq:cdde} as the $n$-dimensional coupled DDE associated with $(\alpha,\beta,f,G)$.
Due to the dependence of the dynamics on the past in \eqref{eq:cdde}, the natural state descriptor for a solution ${\bf x}=(x^1,\dots,x^n)$ of the coupled DDE \eqref{eq:cdde} at time $t\ge0$ is the continuous $n$-dimensional function ${\bf x}_t$ on the delay interval $[-1,0]$ defined by ${\bf x}_t(\theta)={\bf x}(t+\theta)$ for $\theta\in[-1,0]$, which takes values in $C([-1,0],\R^n)$, the vector space of $n$-dimensional continuous functions on $[-1,0]$ equipped with the uniform norm.
Our focus is on stability of so-called synchronous slowly oscillating periodic solutions for the coupled DDE \eqref{eq:cdde}, in the asymptotic regime $\beta\to\infty$.
By focusing on this asymptotic regime, we are able to treat a broad class of nonlinearities $f$ and coupling matrices $G$, which is in contrast to many related works that impose additional symmetry conditions on the system, such as restricting to oscillators arranged in a ring \cite{campbell2006multistability,Chen2000,Guo2003,Guo2007,Guo2009,Wu1999} or, more generally, symmetric circulant coupling matrices \cite{Campbell2018,Wu1998}.

Before defining a synchronous slowly oscillating periodic solution for the coupled DDE \eqref{eq:cdde}, we first define a slowly oscillating periodic solution for the related scalar DDE
\begin{equation}\label{eq:dde}
	\dot x(t)=-\alpha x(t)+\beta f(x(t-1)),\qquad t>0,
\end{equation}
where $x$ is a real-valued continuous function on $[-1,\infty)$ that is continuously differentiable on $(0,\infty)$ and $\alpha$, $\beta$ and $f$ are as in system \eqref{eq:cdde}. 
We refer to equation \eqref{eq:dde} as the scalar DDE associated with $(\alpha,\beta,f)$. 
Analogous to the coupled DDE setting, the natural state descriptor for a solution $x$ of the scalar DDE \eqref{eq:dde} at time $t\ge0$ is the continuous real-valued function $x_t$ on the delay interval $[-1,0]$ defined by $x_t(\theta)=x(t+\theta)$ for all $\theta\in[-1,0]$, which takes values in $C([-1,0],\R)$, the vector space of real-valued continuous functions on $[-1,0]$ equipped with the uniform norm.
A \textbf{slowly oscillating periodic solution (SOPS)} for the scalar DDE \eqref{eq:dde} is a solution $\sops$ satisfying the following property: there are constants $z_0\ge-1$, $z_1>z_0+1$ and $z_2>z_1+1$ such that 
\begin{align*}
	\sops(t)&>0,\qquad z_0<t<z_1,\\
	\sops(t)&<0,\qquad z_1<t<z_2.
\end{align*}
and $\sops$ is periodic with period $\period=z_2-z_0$; that is, $\sops(t+\period)=\sops(t)$ for all $t\ge-1$.
Here ``slowly oscillating'' refers to the fact that $\sops$ oscillates about zero and the separation between zeros is greater than the delay length, which is normalized to be 1 in both the scalar DDE \eqref{eq:dde} and coupled DDE \eqref{eq:cdde}.
Given an SOPS $\sops$ of the scalar DDE \eqref{eq:dde}, its {\bf orbit} in $C([-1,0],\R)$ is given by $\orbit_\sops=\{\sops_t,t\ge0\}$.
We say an SOPS $\sops$ of the scalar DDE \eqref{eq:dde} is {\bf orbitally unique} if given any other SOPS $\tilde\sops$ of the scalar DDE \eqref{eq:dde} their orbits are equal; that is, $\orbit_\sops=\orbit_{\tilde\sops}$.
We say $\sops$ is {\bf asymptotically stable with an exponential phase} if the follow property holds: there are positive constants $C$ and $\gamma$, and an open set $W\subset C([-1,0],\R)$ containing the orbit $\orbit_\sops$ such that if $\phi\in W$ and $x$ is the solution of the scalar DDE \eqref{eq:dde} with initial condition $x_0=\phi$ then there exists $s=s(\phi)\ge0$ such that
	$$\sup_{\theta\in[-1,0]}|x_t(\theta)-\sops_{s+t}(\theta)|\le Ce^{-\gamma t}\sup_{\theta\in[-1,0]}|\phi(\theta)-\sops_s(\theta)|,\qquad t\ge0.$$
There are a number of works on conditions for existence, uniqueness, and stability of an SOPS for the scalar DDE \eqref{eq:dde}; see, for example, the discussion in \cite[Chapter XV.9]{Diekmann1991}.
Here we impose the following conditions on $f$, under which Xie \cite{Xie1991} proved uniqueness and stability of an SOPS for the scalar DDE \eqref{eq:dde} when $\beta$ is sufficiently large.

\begin{ass}\label{ass:main}
The function $f:\R\to\R$ satisfies the following conditions:
\begin{itemize}
        \item[1.] \textit{Regularity}: $f$ is continuously differentiable.
        \item[2.] \textit{Bounded negative feedback}: $f'(0)<0$, $f(\xi)\xi<0$ for all $\xi\ne0$, and there exist constants $a,b>0$ such that $\lim_{\xi\to\infty}f(\xi)=-a$ and $\lim_{\xi\to-\infty}f(\xi)=b$.
        \item[3.] \textit{Super-linear decay of the derivative}: $\int_{-\infty}^\infty\abs{f'(\xi)}d\xi<\infty$ and $\lim_{|\xi|\to\infty}\xi f'(\xi)=0$.
\end{itemize}
\end{ass}

\begin{remark}
Let $a,b>0$.
An example of a function $f:\R\to\R$ satisfying Assumption \ref{ass:main} is
\be\label{eq:fmf}
f(\xi)=
\begin{cases}
	\displaystyle
	-a\tanh\lb \xi/a\rb,&\xi\ge0,\\
	\displaystyle
	-b\tanh\lb \xi/b\rb,&\xi<0.
\end{cases}\ee
\end{remark}

\begin{theorem}
[{\cite[Theorem 1]{Xie1991}}]
\label{thm:xie}
Suppose $\alpha\geq0$ and $f$ satisfies Assumption \ref{ass:main}. There is a $\buniq=\buniq(\alpha,f)>0$ such that for each $\beta>\buniq$ there exists an orbitally unique SOPS $\sops$ of the scalar DDE \eqref{eq:dde} associated with $(\alpha,\beta,f)$.
Furthermore, $\sops$ is asymptotically stable with an exponential phase.
\end{theorem}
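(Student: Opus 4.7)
The plan is to split the argument into existence with orbital uniqueness and asymptotic stability with exponential phase, with both parts driven by a sharp a priori description of any SOPS in the singular regime $\beta\to\infty$. The key shape control is as follows: since $\xi f(\xi)<0$, $f(\xi)\to -a$ as $\xi\to+\infty$, and $f(\xi)\to b$ as $\xi\to-\infty$, on any sub-interval of the positive sign-interval of $\sops$ on which $\sops(\cdot-1)$ stays bounded away from $0$, the DDE reduces to $\dot\sops(t)\approx -\alpha\sops(t)-\beta a$, driving $\sops$ exponentially fast toward a plateau near $-\beta a/\alpha$ (replaced by a linear ramp when $\alpha=0$); the symmetric statement holds on the other side, with plateau near $+\beta b/\alpha$. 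Together with the definition of an SOPS, this fixes the period $\period$ up to an $O(\beta^{-1})$ correction, pins down the $\beta\to\infty$ asymptotic profile of $\sops$, and forces its sign changes to be transverse with slope of order $\beta$.

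For existence I would use a Nussbaum-style ejective fixed-point argument on a Kaplan--Yorke cone
\[ K=\lcb \phi\in C([-1,0],\R): \phi\ge 0,\ \phi(-1)=0\rcb. \]
Under Assumption \ref{ass:main}, the semiflow is eventually compact, the origin is ejective for $\beta$ large (since the linearization about $0$ has an unstable oscillatory mode as $-f'(0)\beta$ dominates), and a first-return map is well-defined on $K\setminus\{0\}$ by the negative feedback hypothesis; this yields at least one SOPS. For orbital uniqueness I would study the Poincar\'e return map on the section $\Sigma=\lcb\phi\in K:\phi(0)=0\rcb$ and combine the shape control with the fact that, in the $\beta\to\infty$ limit, all initial data in a neighborhood of $\orbit_\sops\cap\Sigma$ get compressed onto a single $C^1$-curve of fixed points of the return map, which by the nondegeneracy of the transverse crossing collapses to a single point.

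For stability I would pass to the linear variational equation
\[ \dot y(t)=-\alpha y(t)+\beta f'(\sops(t-1))\,y(t-1) \]
and its monodromy operator $\mono:C([-1,0],\R)\to C([-1,0],\R)$. Compactness of $\mono$ implies its nonzero spectrum consists of a sequence of Floquet multipliers of finite multiplicity, with $\mu=1$ always present (with eigenvector $\dot\sops_0$) by time-translation invariance. The objective is to show that every other multiplier has modulus bounded strictly below $1$ for $\beta$ large. Writing $g_\beta(t)=\beta f'(\sops(t-1))$, the change of variables $\xi=\sops(t-1)$ across each transition interval of $\sops$ gives
\[ \int_0^\period \abs{g_\beta(t)}\,dt=\int_{-\infty}^\infty \abs{f'(\xi)}\,d\xi+o(1) \]
as $\beta\to\infty$, the error being controlled via $\xi f'(\xi)\to 0$ at the endpoints of the transitions where $\dot\sops$ is no longer $\beta$ times a constant. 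Splitting $[0,\period]$ into two plateau intervals, where $\mono$ acts by pure exponential decay $e^{-\alpha}$, and two transition intervals of length $O(\beta^{-1})$, on which the action becomes essentially a rank-one operator whose image collapses onto $\dot\sops_0$ as $\beta\to\infty$, yields the desired spectral gap; standard Floquet theory for DDEs then upgrades this to asymptotic stability with exponential phase.

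The main technical obstacle is exactly this last claim: proving a uniform-in-$\beta$ spectral gap for $\mono$ off the trivial multiplier $\mu=1$. This requires combining the $L^1$ kernel bound with a delicate treatment of the boundary layers at the zeros of $\sops$, where $f'(\sops(t-1))$ changes sign rapidly and a naive pointwise bound is insufficient. The super-linear decay hypothesis $\xi f'(\xi)\to 0$ is precisely the condition that rules out kernel mass accumulating in these layers, and is why Assumption \ref{ass:main} takes this specific form; weakening it would force a genuinely different argument.
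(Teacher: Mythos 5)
First, a point of order: the paper does not prove this statement at all --- it is imported verbatim as \cite[Theorem 1]{Xie1991}, so there is no internal proof to compare against. Judged on its own terms, your outline for existence (Nussbaum-type ejectivity of the origin plus a return map on the Kaplan--Yorke cone) and for stability (singular analysis of the monodromy operator, treating $\beta f'(\sops(t-1))\,dt$ as a measure concentrating at the zeros of $\sops$, with the super-linear decay hypothesis controlling the boundary layers) is the right philosophy; indeed the stability half is essentially what this paper redoes in Sections 4--6 for the extended operators $\mono_\lambda$, where the limit operator is rank one with spectrum $\{0,\nu_\star(\lambda)\}$ and $\nu_\star(1)=1$. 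Two small inaccuracies there: the change of variables gives $\int_0^\period|g_\beta|\,dt\to \int|f'(\xi)|\,|\dot{\bar\sops}^\star|^{-1}d\xi$ with weights $1/a$ and $1/b$ (cf.\ the bounds \eqref{eq:3}--\eqref{eq:4}), not $\norm{f'}_{L^1}$ itself; and when $\alpha=0$ the plateau intervals contribute no contraction whatsoever, so the entire spectral collapse must come from the transition layers --- your sketch should also say explicitly why the one surviving eigenvalue near $1$ is the \emph{simple} trivial multiplier, which is what the definition of linear stability requires.

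The genuine gap is orbital uniqueness. Your argument --- that near a given SOPS orbit the return map's fixed-point set collapses to a point --- is at best a \emph{local} uniqueness statement; it says nothing about whether two SOPS with well-separated orbits can coexist, and a priori the singular limit $\beta\to\infty$ does not preclude this (all SOPS share the same limiting normalized profile, but that does not immediately force their prelimit orbits to coincide). Xie's actual route closes exactly this hole: he first proves the a priori shape estimates and the Floquet spectral gap for \emph{every} SOPS (not just one), so that every fixed point of the return map on the cone section is hyperbolic and attracting, hence has fixed-point index $1$; a degree/index count for the ejective fixed-point problem then shows the total index is $1$, which forces exactly one fixed point and hence one orbit. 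Without some global ingredient of this kind (index, monotonicity of the return map, or a uniqueness-of-limits argument quantitative enough to cover all SOPS simultaneously), the uniqueness claim in the theorem is not established by your proposal.
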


We are interested in so-called synchronous SOPSs of the coupled DDE \eqref{eq:cdde}, defined as follows.
A \textbf{synchronous SOPS} of the coupled DDE \eqref{eq:cdde} is a periodic solution $\sopsn$ whose components are all equal to an SOPS $\sops$ of the scalar DDE \eqref{eq:dde}; that is, $\sopsn=(\sops,\dots,\sops)^T$, where superscript ``$T$'' denotes the transpose operation.
Analogously to the scalar setting, given a synchronous $\sopsn$ of the coupled DDE \eqref{eq:cdde}, we define its {\bf orbit} in $C([-1,0],\R^n)$ by ${\orbit}_\sopsn=\{\sopsn_t,t\ge0\}$ and say it is {\bf orbitally unique} if given any other synchronous SOPS $\tilde\sopsn$ of the coupled DDE \eqref{eq:cdde} their orbits are equal; that is, $\orbit_\sopsn=\orbit_{\tilde\sopsn}$.
Under a row sum condition on $G$, it is readily verified that $\sops$ is a solution of the scalar DDE \eqref{eq:dde} if and only if $\sopsn=(\sops,\dots,\sops)^T$ is a solution of the coupled DDE \eqref{eq:cdde}.
Indeed, we have the following corollary of Theorem \ref{thm:xie}, which is stated without proof.

\begin{cor}
\label{cor:1}
Suppose $\alpha\ge0$ and $f$ satisfies Assumption \ref{ass:main}.
Given $\beta>\buniq$ let $\sops$ denote the orbitally unique SOPS for the scalar DDE \eqref{eq:dde} associated with $(\alpha,\beta,f)$.
For each $n\ge2$ and $n\times n$ real-valued matrix $G$ with row sums equal to 1, $\sopsn=(\sops,\dots,\sops)^T$ is the orbitally unique synchronous SOPS for the $n$-dimensional coupled DDE \eqref{eq:cdde} associated with $(\alpha,\beta,f,G)$.
\end{cor}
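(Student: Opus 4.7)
The proof will be a direct verification that exploits the row-sum condition to reduce the coupled dynamics along the synchronous diagonal to the scalar DDE. The plan is in two parts: first show that $\sopsn=(\sops,\dots,\sops)^T$ is a synchronous SOPS for the coupled DDE, then deduce orbital uniqueness from the orbital uniqueness of $\sops$ in Theorem \ref{thm:xie}.

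For existence, I would substitute $\sopsn=(\sops,\dots,\sops)^T$ into equation \eqref{eq:cdde}. For each $j=1,\dots,n$, the $j$-th component gives
\[
\dot\sops(t) = -\alpha\sops(t) + \beta\sum_{k=1}^n G^{jk}f(\sops(t-1)) = -\alpha\sops(t) + \beta f(\sops(t-1))\sum_{k=1}^n G^{jk}.
\]
Using the hypothesis that the rows of $G$ each sum to $1$, the right-hand side collapses to $-\alpha\sops(t)+\beta f(\sops(t-1))$, which coincides with the scalar DDE \eqref{eq:dde} satisfied by $\sops$. Since $\sops$ is an SOPS of the scalar DDE with period $\period$ and sign pattern witnessed by constants $z_0<z_1<z_2$, every component of $\sopsn$ inherits these properties, so $\sopsn$ is by definition a synchronous SOPS of the coupled DDE.

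For orbital uniqueness, suppose $\tilde\sopsn=(\tilde\sops,\dots,\tilde\sops)^T$ is any other synchronous SOPS of the coupled DDE \eqref{eq:cdde}. Applying the same row-sum identity to $\tilde\sopsn$ in reverse, each component equation reduces to $\dot{\tilde\sops}(t)=-\alpha\tilde\sops(t)+\beta f(\tilde\sops(t-1))$, so $\tilde\sops$ is a solution of the scalar DDE \eqref{eq:dde}; since $\tilde\sopsn$ is synchronous SOPS, $\tilde\sops$ satisfies the SOPS sign/period condition and hence is an SOPS of the scalar DDE. By the orbital uniqueness part of Theorem \ref{thm:xie} for $\beta>\buniq$, we obtain $\orbit_\sops=\orbit_{\tilde\sops}$. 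Since orbits of synchronous solutions are determined coordinate-wise by the common scalar component, this lifts directly to $\orbit_\sopsn=\orbit_{\tilde\sopsn}$.

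There is essentially no obstacle here: the only substantive input is Theorem \ref{thm:xie}, and the row-sum condition is exactly what makes the diagonal embedding $\sops\mapsto(\sops,\dots,\sops)^T$ intertwine the scalar and coupled equations. No additional regularity, spectral, or stability considerations enter the argument, which is why the corollary is stated without proof in the paper.
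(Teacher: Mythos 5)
Your proof is correct and follows exactly the route the paper intends: the paper states the corollary without proof, but the preceding sentence ("it is readily verified that $\sops$ is a solution of the scalar DDE if and only if $\sopsn=(\sops,\dots,\sops)^T$ is a solution of the coupled DDE") is precisely your row-sum reduction, combined with the orbital uniqueness of the scalar SOPS from Theorem \ref{thm:xie}. No gaps.
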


According to Theorem \ref{thm:xie}, an SOPS of the scalar DDE \eqref{eq:dde} is always asymptotically stable provided $\beta$ is sufficiently large; however, this is not the case for synchronous SOPS of the coupled DDE \eqref{eq:cdde}.
Our main results, presented below, are conditions on $\alpha$, $f$ and $G$ that describe the stability of a synchronous SOPS, in the asymptotic regime $\beta\to\infty$.

\subsection{Statements of main results}
\label{sec:main}

We have four main results on local stability of a synchronous SOPS.
Our first main result treats systems with general coupling.
Our second, third and fourth main results respectively treat the special cases of so-called weakly coupled systems, near uniformly coupled systems and doubly nonnegative coupled systems.
The proofs of our main results are given in Section \ref{sec:mainproofs}.
In Section \ref{sec:examples} we illustrate our main results with examples of mean-field coupled systems and systems arranged in a ring with nearest neighbor coupling.

Analogously to the scalar setting, we say that the synchronous SOPS $\sopsn$ is \textbf{asymptotically stable with an exponential phase} if the following property holds: there are positive constants $N$ and $\gamma$, and an open set $W\subset C([-1,0],\R^n)$ containing the orbit $\orbit_\sopsn$ such that if ${\bs\phi}\in W$ and ${\bf x}$ is the solution of the coupled DDE \eqref{eq:cdde} with initial condition ${\bf x}_t={\bs\phi}$ then there exists $s=s({\bs\phi})\ge0$ such that
	$$\sup_{\theta\in[-1,0]}|{\bf x}_t(\theta)-\sopsn_{s+t}(\theta)|\le Ne^{-\gamma t}\sup_{\theta\in[-1,0]}|{\bs\phi}(\theta)-\sopsn_s(\theta)|,\qquad t\ge0.$$
We say $\sopsn$ is {\bf unstable} if there is an open set $W\subset C([-1,0],\R^n)$ containing the orbit $\orbit_{\sopsn}$ with the property that for every subset $V\subset W$ containing the orbit $\orbit_\sopsn$ there exists ${\bs\phi}\in V$ such that if ${\bf x}$ is the solution of the coupled DDE \eqref{eq:cdde} with initial condition ${\bf x}_0={\bs\phi}$, then ${\bf x}_t\not\in W$ for some $t>0$.

\subsubsection{Systems with general coupling}
\label{sec:general}

Our first main result treats systems with general coupling.
``General coupling'' refers to any system with coupling matrix $G$ that has row sums equal to 1.
In order to present our first main result we need the following definitions.
Given $\alpha\geq0$ and $f$ satisfying Assumption \ref{ass:main}, define constants the $0<\vr_1,\vr_2<e^{-\alpha}$ by
\begin{equation}\label{eq:rho}
	\vr_1=\frac{a}{a+b}e^{-\alpha}\qquad\text{and}\qquad \vr_2=\frac{b}{a+b}e^{-\alpha},
\end{equation}
and define the complex quadratic function $\nu_\star:\CC\to\CC$ by 
\begin{equation}\label{eq:nuast}
	\nu_\star(\lambda)=\frac{(\lambda - \vr_1)(\lambda - \vr_2)}{(1 - \vr_1)(1 - \vr_2)},\qquad\lambda\in\CC,
\end{equation}
where $\CC$ denotes the complex plane.
Throughout the paper, the diamond symbol $\star$ is used to distinguish objects associated with the asymptotic regime $\beta\to\infty$ from their prelimit counterparts.
 
\begin{theorem}
	\label{thm:1}
	Suppose $\alpha\ge0$ and $f$ satisfies Assumption \ref{ass:main}.
	Let $\delta\in(0,1)$ and $K$ be a compact subset of $\CC$.
	There exists $\bmono=\bmono(\alpha,f,\delta,K)>\buniq$ such that for every $\beta>\bmono$, $n\ge2$ and $n\times n$ real-valued matrix $G$ with row sums all equal to 1 and whose spectrum, $\sigma(G)$, is contained in $K$, the following hold:
	\begin{itemize}
		\item[(i)] If the eigenvalue $\lambda=1$ of $G$ has simple algebraic multiplicity and $\abs{\nu_\star(\lambda)}<1-\delta$ for all $\lambda\in\sigma(G)\setminus\{1\}$,
		then the unique synchronous SOPS of the $n$-dimensional coupled DDE \eqref{eq:cdde} associated with $(\alpha,\beta,f,G)$ is asymptotically stable with an exponential phase.
		\item[(ii)] If $\abs{\nu_\star(\lambda)}>1+\delta$ for some $\lambda\in\sigma(G)$,
		then the unique synchronous SOPS of the $n$-dimensional coupled DDE \eqref{eq:cdde} associated with $(\alpha,\beta,f,G)$ is unstable.
	\end{itemize}
\end{theorem}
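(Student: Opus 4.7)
The plan is to use Floquet theory: linearize the coupled DDE \eqref{eq:cdde} around the synchronous SOPS $\sopsn$, decouple the variational equation into a family of scalar complex-valued equations indexed by the eigenvalues of $G$, and analyze the associated family of monodromy operators $M_\lambda^\beta$ as $\beta\to\infty$. Linearizing \eqref{eq:cdde} gives $\dot{\mathbf y}(t)=-\alpha\mathbf y(t)+\beta f'(\sops(t-1))\,G\,\mathbf y(t-1)$, and putting $G$ into Jordan form and changing basis produces, on each diagonal entry, the scalar complex equation
\begin{equation}\label{eq:scalarvar}
\dot z(t)=-\alpha z(t)+\beta\lambda f'(\sops(t-1))z(t-1),\qquad t>0,
\end{equation}
parametrized by $\lambda\in\sigma(G)$. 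Let $M_\lambda^\beta:C([-1,0],\CC)\to C([-1,0],\CC)$ denote the period-$\period$ solution operator of \eqref{eq:scalarvar}. The spectrum of the full coupled monodromy operator equals $\bigcup_{\lambda\in\sigma(G)}\sigma(M_\lambda^\beta)$ (nilpotent parts of Jordan blocks affect only multiplicities, via upper-triangular forcing that does not alter spectra). Standard Floquet theory for DDEs then reduces the problem to spectral properties: asymptotic stability of $\sopsn$ with exponential phase follows once every multiplier besides the trivial $\mu=1$ (from translation invariance, carried by $M_1^\beta$ through $\dot\sops$) lies strictly inside the unit disk, and instability follows once some multiplier lies strictly outside. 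Theorem \ref{thm:xie} already handles $\lambda=1$: for $\beta>\buniq$, $M_1^\beta$ has $1$ as a simple eigenvalue with all other multipliers strictly inside.

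The crux of the proof is a uniform-in-$\lambda\in K$ statement
$$\sigma(M_\lambda^\beta)\setminus\{0\}\longrightarrow\{\nu_\star(\lambda)\}\qquad\text{as }\beta\to\infty.$$
Under condition 3 of Assumption \ref{ass:main}, as $\beta\to\infty$ the SOPS $\sops$ becomes a relaxation-type oscillation: it spends nearly all of each half-period on plateaus where $f'(\sops(t-1))$ is negligible, connected by two rapid transitions through zero per period on which $f'(\sops(t-1))$ carries $O(1)$ mass. Splitting the period into plateau and transition windows, the plateau contribution to $M_\lambda^\beta$ is pure exponential decay at rate $\alpha$, which supplies the factor $e^{-\alpha}$, while the two transitions produce multiplicative weights proportional to $a/(a+b)$ and $b/(a+b)$ coming from the asymptotic feedback values $-a$ and $b$; together these give the constants $\vr_1,\vr_2$ of \eqref{eq:rho}. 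After this reduction, $M_\lambda^\beta$ collapses in the limit to a rank-one operator whose sole nonzero eigenvalue is $\nu_\star(\lambda)$, the quadratic form \eqref{eq:nuast} reflecting the two-transition structure, and the normalization $(1-\vr_1)(1-\vr_2)$ chosen precisely so that $\nu_\star(1)=1$, consistent with the trivial Floquet multiplier at $\lambda=1$.

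Continuity of $\nu_\star$ and compactness of $K$ upgrade the spectral convergence to uniform convergence in $\lambda\in K$, so there is $\bmono(\alpha,f,\delta,K)>\buniq$ such that for every $\beta>\bmono$ the nontrivial multipliers of the coupled monodromy operator lie within distance $\delta/2$ of $\{\nu_\star(\lambda):\lambda\in\sigma(G)\}$. Under the hypothesis of (i), every such multiplier then lies in $\{|z|<1-\delta/2\}$ and the multiplier $1$ is simple (because $\lambda=1$ is assumed simple in $\sigma(G)$, which rules out Jordan-inherited duplication), giving asymptotic stability with exponential phase; under the hypothesis of (ii), some multiplier lies in $\{|z|>1+\delta/2\}$, giving instability. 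The main obstacle is the uniform spectral convergence of $M_\lambda^\beta$: it demands quantitative control of the shape of $\sops$ as $\beta\to\infty$ (sharpness of the two transitions, heights of the plateaus, locations of the zeros) together with a careful decomposition of $M_\lambda^\beta$ as an integral operator isolating the rank-one contribution, with the residual spectrum shrinking to $0$ uniformly in $\lambda\in K$.
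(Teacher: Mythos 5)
Your proposal is correct and follows essentially the same route as the paper: the Jordan-form decoupling of the coupled variational equation into the scalar $\lambda$-parametrized equations is the paper's Theorem \ref{thm:msf} (via Lemma \ref{lem:decomp}), and the uniform convergence $\sigma(M_\lambda^\beta)\to\{0,\nu_\star(\lambda)\}$ on compact $K$ — obtained from the relaxation-oscillation structure of $\sops^\beta$, the weak-$^\ast$ limit of the measure $\beta f'(\sops^\beta(t-1))\,dt$, and the resulting rank-one limit operator — is the paper's Theorem \ref{thm:1a}. The final assembly, including the observation that simplicity of $\lambda=1$ in $\sigma(G)$ together with $|\nu_\star(\lambda)|<1-\delta$ for the remaining eigenvalues yields a simple trivial multiplier, matches the paper's argument.
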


In view of Theorem \ref{thm:1} the placement of the spectrum of $G$ relative to the regions of the complex plane
	\be\label{eq:stablestarsets}\stable_\star(\delta)=\lcb\lambda\in\CC:\abs{\nu_\star(\lambda)}<1-\delta\rcb\quad\text{and}\quad\unstable_\star(\delta)=\lcb\lambda\in\CC:\abs{\nu_\star(\lambda)}>1+\delta\rcb\ee
is relevant for determining the stability of a synchronous SOPS when $\delta\in(0,1)$ and $\beta$ is sufficiently large.
The boundaries $\pd\stable_\star(\delta)=\{\lambda\in\CC:\abs{\nu_\star(\lambda)}=1-\delta\}$ and $\pd\unstable_\star(\delta)=\{\lambda\in\CC:\abs{\nu_\star(\lambda)}=1+\delta\}$, for $\delta\ge0$, are classical curves known as \textit{Ovals of Cassini}, which are described in \cite[Section 5.16]{Lawrence1972} and include the Lemniscate of Bernoulli as a special case.

\subsubsection{Weakly coupled systems}
\label{sec:weak}

Our second main result addresses the special case of so-called weakly coupled systems \cite{Ermentrout2010,Hoppensteadt,Kopell2002}.
``Weakly coupled'' refers to systems where the coupling matrix is a small perturbation of the identity matrix; that is, for some $n\ge2$ and $n\times n$ real-valued matrix $\gmatrix$ with row sums equal to zero, the coupling matrix is given by 
	$$G=I_n+\eta \gmatrix,$$ 
where $I_n$ denotes the $n\times n$ identity matrix and $\eta\ne0$ is small.
The identity $I_n+\eta\gmatrix-(1+\eta\lambda)I_n=\eta(\gmatrix-\lambda I_n)$ implies
	\be\label{eq:sigmaweakmatrix}\sigma(I_n+\eta\gmatrix)=\{1+\eta\lambda:\lambda\in\sigma(\gmatrix)\}.\ee
Since $\nu_\star(\cdot)$ is continuous and $\nu_\star(1)=1$, for every fixed $\delta\in(0,1)$ there exists $\eta\ne0$ sufficiently small such that $\lambda\not\in\stable_\star(\delta)$ and $\lambda\not\in\unstable_\star(\delta)$ for all $\lambda\in\sigma(I_n+\eta\gmatrix)$.
Therefore, Theorem \ref{thm:1} is not informative about the stability of the synchronous SOPS when $\beta$ is fixed and $\eta$ is (infinitesimally) close to 0.
Our main result for weakly coupled systems addresses the stability of such a synchronous SOPS in terms of the placement of the spectrum of $H$ relative to the imaginary axis.

\begin{theorem}
\label{thm:2}
Suppose $\alpha\geq0$ and $f$ satisfies Assumption \ref{ass:main}.  
There is a $\bnbhdone=\bnbhdone(\alpha,f)\ge\buniq$ such that for every $\beta>\bnbhdone$, $n\ge2$ and an $n\times n$ real-valued matrix $\gmatrix$ with row sums equal to zero, there exists $\eta_H>0$ such that the following hold:
\begin{itemize}
	\item[(i)] If the eigenvalue $\lambda=0$ of $H$ has simple algebraic multiplicity and $\Re\lambda<0$ (resp.\ $\Re\lambda>0$) for all $\lambda\in\sigma(H)\setminus\{0\}$, then the unique synchronous SOPS of the $n$-dimensional coupled DDE \eqref{eq:cdde} associated with $(\alpha,\beta,f,\Id_n+\eta\gmatrix)$ (resp.\  $(\alpha,\beta,f,\Id_n-\eta\gmatrix)$) is asymptotically stable with an exponential phase for all $\eta\in(0,\eta_H)$.
	\item[(ii)] If $\Re\lambda>0$ (resp.\ $\Re\lambda<0$) for some eigenvalue $\lambda\in\sigma(\gmatrix)$, then the unique synchronous SOPS of the $n$-dimensional coupled DDE \eqref{eq:cdde} associated with $(\alpha,\beta,f,\Id_n+\eta\gmatrix)$ (resp.\ $(\alpha,\beta,f,\Id_n-\eta\gmatrix)$) is unstable for all $\eta\in(0,\eta_H)$.
\end{itemize}
\end{theorem}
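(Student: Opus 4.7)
The plan is to deduce Theorem \ref{thm:2} from the machinery that underlies Theorem \ref{thm:1}, rather than applying Theorem \ref{thm:1} as a black box, by performing a first-order (in $\eta$) analysis of the dominant characteristic multiplier at $\lambda=1$. Since $\gmatrix$ has row sums equal to zero, $0\in\sigma(\gmatrix)$, and by \eqref{eq:sigmaweakmatrix} we have $\sigma(\Id_n+\eta\gmatrix)=\{1+\eta\mu:\mu\in\sigma(\gmatrix)\}$, with the algebraic multiplicity of $\lambda=1$ in $\sigma(\Id_n+\eta\gmatrix)$ equal to that of $\mu=0$ in $\sigma(\gmatrix)$. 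A direct computation from \eqref{eq:nuast} gives $\nu_\star(1)=1$ and
\begin{equation*}
\nu_\star'(1)=\frac{1}{1-\vr_1}+\frac{1}{1-\vr_2}>0,
\end{equation*}
whence $|\nu_\star(1+\eta\mu)|^2=1+2\eta\,\nu_\star'(1)\,\Re(\mu)+O(\eta^2)$, so $\Re\mu$ determines to leading order whether $1+\eta\mu$ lies inside or outside the curve $\{|\nu_\star|=1\}$. A direct appeal to Theorem \ref{thm:1} fails because $\sigma(\Id_n+\eta\gmatrix)$ collapses onto $\lambda=1$ as $\eta\to 0$, so no fixed $\delta\in(0,1)$ places the spectrum uniformly inside $\stable_\star(\delta)$ or $\unstable_\star(\delta)$.

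Instead, I would work directly with the $\lambda$-parametrized family of monodromy-type operators $M_\beta(\lambda)$ used (as described in the abstract) in the proof of Theorem \ref{thm:1}, and let $\kappa_\beta(\lambda)$ denote the dominant characteristic multiplier of $M_\beta(\lambda)$. The analysis behind Theorem \ref{thm:1} yields $\kappa_\beta(\lambda)\to\nu_\star(\lambda)$ as $\beta\to\infty$, uniformly on compact subsets of $\CC$. At $\lambda=1$, $M_\beta(1)$ is the monodromy operator of the scalar SOPS, which by Theorem \ref{thm:xie} is asymptotically stable with exponential phase for $\beta>\buniq$; hence $\kappa_\beta(1)=1$ is a simple dominant eigenvalue of $M_\beta(1)$ with all other spectrum strictly inside the unit disk. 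Analytic perturbation theory then yields a holomorphic branch $\kappa_\beta(\lambda)$ on a $\beta$-dependent neighborhood of $\lambda=1$. Because $M_\beta(\lambda)$ preserves real-valued functions when $\lambda\in\R$, Schwarz reflection forces $\kappa_\beta(\lambda)\in\R$ for real $\lambda$ near $1$, so in particular $\kappa_\beta'(1)\in\R$.

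The main obstacle, and the step I expect to require the most care, is to upgrade the uniform-on-compacts convergence $\kappa_\beta\to\nu_\star$ to the derivative-level statement $\kappa_\beta'(1)\to\nu_\star'(1)$ as $\beta\to\infty$. Given holomorphicity of $\kappa_\beta$ and $\beta$-uniform bounds on a fixed disk around $\lambda=1$ (which should be extractable from the same compactness-type estimates behind Theorem \ref{thm:1}), the Cauchy integral formula on a fixed small circle around $\lambda=1$ delivers the convergence of derivatives. Combined with the reality observation above, this produces $\bnbhdone=\bnbhdone(\alpha,f)\ge\buniq$ such that $\kappa_\beta'(1)\ge\tfrac{1}{2}\nu_\star'(1)>0$ for every $\beta>\bnbhdone$.

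To conclude, fix $\beta>\bnbhdone$ and $\gmatrix$ with row sums zero. Choose $\eta_\gmatrix>0$ small enough that the disk $\{|\lambda-1|\le\eta_\gmatrix\|\gmatrix\|\}$ lies in the holomorphy domain of $\kappa_\beta$ and that the quadratic remainder in $\kappa_\beta(1+z)=1+z\kappa_\beta'(1)+O(z^2)$ is dominated by its linear part for $z=\eta\mu$ with $\mu\in\sigma(\gmatrix)$ and $\eta\in(0,\eta_\gmatrix)$. Since $\kappa_\beta'(1)>0$ is real, for each nonzero $\mu\in\sigma(\gmatrix)$,
\begin{equation*}
|\kappa_\beta(1+\eta\mu)|^2=1+2\eta\,\kappa_\beta'(1)\,\Re(\mu)+O(\eta^2|\mu|^2),
\end{equation*}
so the sign of $|\kappa_\beta(1+\eta\mu)|-1$ matches the sign of $\Re\mu$. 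Part (i) then follows via the Floquet-type argument behind Theorem \ref{thm:1}: every non-trivial scalar variational problem has dominant multiplier strictly inside the unit disk, yielding asymptotic stability with exponential phase. Part (ii) is analogous: some scalar problem yields a multiplier of modulus $>1$, giving instability. The cases involving $\Id_n-\eta\gmatrix$ follow by replacing $\gmatrix$ with $-\gmatrix$.
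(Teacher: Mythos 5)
Your proposal is correct and follows essentially the same route as the paper: the paper's Proposition \ref{prop:nu_lambda_simple} establishes exactly your holomorphic dominant-multiplier branch $\nu_\beta$ near $\lambda=1$ with $\nu_\beta(1)=1$, real derivative, and convergence $\nu_\beta'\to\nu_\star'$ via uniform convergence of holomorphic functions, and Theorem \ref{thm:2a} then carries out your first-order sign analysis (with the quadratic remainder controlled by a Laurent/Cauchy estimate) before Theorem \ref{thm:msf} converts the spectral information into stability or instability of the synchronous SOPS. The only cosmetic differences are that you invoke Schwarz reflection for reality where the paper uses a conjugate-pair argument, and the paper packages the first-order region into explicit open sets $A_{<1}$, $A_{>1}$ rather than arguing eigenvalue by eigenvalue.
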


\subsubsection{Near uniformly coupled systems}
\label{sec:uniform}

Our third main result, which parallels Theorem \ref{thm:2} on weakly coupled systems in many respects, treats the special case of so-called near uniformly coupled systems.
``Near uniformly coupled'' refers to systems where the coupling matrix is a small perturbation of the matrix whose entries are all identical; that is, for some $n\ge2$ and an $n\times n$ real-valued matrix $H$ with row sums equal to zero, the coupling matrix is given by 
	$$G=\MF_n+\eta \gmatrix,$$ 
where $\MF_n=(\MF_n^{jk})$ denotes the $n\times n$ matrix with $\MF_n^{jk}=\frac{1}{n}$ for all $j,k=1,\dots,n$, and $\eta\ne0$ is small.
Part (i) of the result treats the case $\alpha>0$ and is a corollary of our first result, Theorem \ref{thm:1}.

On the other hand, to treat the case $\alpha=0$, we claim that for $\eta$ sufficiently small, 1 is an eigenvalue of $\MF_n+\eta\gmatrix$ with simple algebraic multiplicity and all other eigenvalues $\lambda$ satisfy $|\lambda|\le|\eta| \rho(\gmatrix)$, where $\rho(\gmatrix)=\max\{|z|:z\in\sigma(\gmatrix)\}$.
To see the claim holds, first observe that ${\bf 1}_n=(1,\dots,1)^T$ is in the null space of $\gmatrix$ and is an eigenvector of $\MF_n+\eta\gmatrix$ associated with eigenvalue 1.
Let ${\bf v}$ be an eigenvector of $\MF_n+\eta\gmatrix$, ${\bf u}=n^{-\frac12}\ip{{\bf v},{\bf 1}_n}{\bf 1}_n$ denote the orthogonal projection of ${\bf v}$ onto the span of ${\bf 1}_n$, and ${\bf w}={\bf v}-{\bf u}$ denote the orthogonal component.
Choose $|\eta|<[\rho(\gmatrix)]^{-1}$.
Then $(\MF_n+\eta\gmatrix){\bf v}={\bf u}+\eta\gmatrix{\bf w}$, so either ${\bf v}$ is in the span of ${\bf 1}_n$ or ${\bf v}$ is an eigenvector of $\gmatrix$ orthogonal to ${\bf 1}_n$ associated with eigenvalue $\lambda$ satisfying $|\lambda|<|\eta|\rho(\gmatrix)$.
This establishes the claim.

Thus, by the continuity of $\nu_\star(\cdot)$ and the fact that $\nu_\star(0)=1$ when $\alpha=0$, it follows that for every fixed $\delta\in(0,1)$ there exists $\eta\ne0$ sufficiently small such that $\lambda\not\in\stable_\star(\delta)$ and $\lambda\not\in\unstable_\star(\delta)$ for all $\lambda\in\sigma(\MF_n+\eta\gmatrix)$.
Therefore, Theorem \ref{thm:1} is not informative about the stability of the synchronous SOPS when $\beta$ is fixed and $\eta$ is (infinitesimally) close to 0.
Parts (ii) and (iii) of our main result for near uniformly coupled systems address the stability of a synchronous SOPS when $\alpha=0$ in terms of the placement of the spectrum of $H$ relative to the imaginary axis.

\begin{theorem}
\label{thm:3}
Suppose $\alpha\ge0$ and $f$ satisfies Assumption \ref{ass:main}.
There is a $\bnbhdzero=\bnbhdzero(\alpha,f)\ge\buniq$ such that given $\beta>\bnbhdzero$, $n\ge2$ and an $n\times n$ real-valued matrix $\gmatrix$ with rows sums equal to zero, there exists $\eta_H>0$ such that the following hold:
\begin{itemize}
	\item[(i)] If $\alpha>0$, the unique synchronous SOPS of the $n$-dimensional coupled DDE \eqref{eq:cdde} associated with $(\alpha,\beta,f,\MF_n+\eta\gmatrix)$ is asymptotically stable with an exponential phase for all $\eta\in(-\eta_H,\eta_H)$.
	\item[(ii)] If $\alpha=0$, the eigenvalue $\lambda=0$ of $H$ has simple algebraic multiplicity, and $\Re\lambda>0$ (resp.\ $\Re\lambda<0$) for all $\lambda\in\sigma(H)\setminus\{0\}$, then the unique synchronous SOPS of the $n$-dimensional coupled DDE \eqref{eq:cdde} associated with $(0,\beta,f,\MF_n+\eta\gmatrix)$ (resp.\ $(0,\beta,f,\MF_n-\eta\gmatrix)$) is asymptotically stable with an exponential phase for all $\eta\in(0,\eta_H)$.
	\item[(iii)] If $\alpha=0$ and $\Re\lambda<0$ (resp.\ $\Re\lambda>0$) for some eigenvalue $\lambda\in\sigma(\gmatrix)$, then the unique synchronous SOPS of the $n$-dimensional coupled DDE \eqref{eq:cdde} associated with $(0,\beta,f,\MF_n+\eta\gmatrix)$ (resp.\ $(0,\beta,f,\MF_n-\eta\gmatrix)$) is unstable for all $\eta\in(0,\eta_H)$.
\end{itemize}
\end{theorem}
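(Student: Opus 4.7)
Part (i) follows directly from Theorem \ref{thm:1}. A short computation yields
\[
\nu_\star(0) = \frac{\vr_1 \vr_2}{1 - e^{-\alpha} + \vr_1 \vr_2} < 1 \qquad\text{when } \alpha > 0,
\]
so by continuity of $\nu_\star$ there exist $\epsilon > 0$ and $\delta \in (0,1)$, depending only on $\alpha$ and $f$, with $\overline{\ball_\epsilon(0)} \subset \stable_\star(\delta)$. Fix a compact $K_0 \supset \{1\} \cup \overline{\ball_\epsilon(0)}$ and set $\bnbhdzero = \bmono(\alpha, f, \delta, K_0)$. The discussion preceding the theorem shows that for $|\eta| < 1/\rho(\gmatrix)$ one has $\sigma(\MF_n + \eta\gmatrix) \setminus \{1\} \subset \overline{\ball_{|\eta|\rho(\gmatrix)}(0)}$ with $\lambda = 1$ simple; choosing $\eta_H \le \epsilon/\rho(\gmatrix)$ thus places this spectrum in $K_0$ and ensures $|\nu_\star(\lambda)| < 1 - \delta$ for every $\lambda \in \sigma(\MF_n + \eta \gmatrix) \setminus \{1\}$, so Theorem \ref{thm:1}(i) yields the claim for all $\eta \in (-\eta_H, \eta_H)$.

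For parts (ii) and (iii) with $\alpha = 0$, the identity $\vr_1 + \vr_2 = 1$ gives $\nu_\star(\lambda) = 1 - \lambda/(\vr_1 \vr_2) + \lambda^2/(\vr_1 \vr_2)$, so $\nu_\star(0) = 1$ lies on the common boundary of $\stable_\star$ and $\unstable_\star$ and Theorem \ref{thm:1} is inconclusive for the non-unity eigenvalues of $\MF_n + \eta \gmatrix$ as $\eta \to 0$. The plan is a first-order analytic perturbation of the dominant characteristic multiplier in the family $\{M_{\beta, \lambda}\}$ of scalar monodromy operators produced in the proof of Theorem \ref{thm:1}. At $\lambda = 0$ the associated scalar variational equation is $\dot z = 0$, so $M_{\beta, 0}$ reduces to the rank-one operator $\phi \mapsto \phi(0) \cdot \mathbf{1}$; its nonzero spectrum is the simple eigenvalue $1$ with right eigenvector $\mathbf{1}$ and left eigenvector $\phi \mapsto \phi(0)$. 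Kato's analytic perturbation theory then produces, for every sufficiently large $\beta$, a dominant eigenvalue $\mu_\lambda(\beta)$ analytic in $\lambda$ on a fixed neighborhood of $0$, with $\mu_0(\beta) = 1$ and
\[
\partial_\lambda \mu_\lambda(\beta)\bigr|_{\lambda=0} = \beta \int_0^\period f'(\sops(u))\, du
\]
by the standard first-order formula. Uniform-on-compacta convergence $\mu_\lambda(\beta) \to \nu_\star(\lambda)$ (from the proof of Theorem \ref{thm:1}) together with the Cauchy integral formula give $\partial_\lambda \mu_\lambda(\beta)|_{\lambda=0} \to \nu_\star'(0) = -1/(\vr_1 \vr_2)$ as $\beta \to \infty$, and fixing $\bnbhdzero$ large enough bounds the real part of this derivative above by $-1/(2 \vr_1 \vr_2)$ for all $\beta > \bnbhdzero$.

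Given $\beta > \bnbhdzero$, $n$, and $\gmatrix$, the resulting expansion
\[
|\mu_{\eta\mu}(\beta)|^2 = 1 + 2\eta \Re\bigl(\mu \cdot \partial_\lambda \mu_\lambda(\beta)|_{\lambda=0}\bigr) + O(\eta^2 |\mu|^2)
\]
at any $\eta \mu \in \sigma(\MF_n + \eta\gmatrix)\setminus\{1\}$ has leading sign opposite to that of $\Re\mu$. Choosing $\eta_H$ small enough (depending on $\beta$ and $\gmatrix$) therefore forces $|\mu_{\eta\mu}(\beta)| < 1$ for every such eigenvalue in case (ii) and $|\mu_{\eta\mu}(\beta)| > 1$ for at least one such eigenvalue in case (iii). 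Subdominant multipliers of $M_{\beta, \eta\mu}$ remain close to $0$ by continuity of spectra of compact operators, and the $\lambda = 1$ mode carries only the simple phase-shift multiplier from Xie's theorem, so the same stability and instability criteria that underlie Theorem \ref{thm:1} conclude (ii) and (iii); the sign-reversed statements for $\MF_n - \eta\gmatrix$ follow by substituting $-\mu$ for $\mu$. The principal obstacle is upgrading the pointwise convergence $\mu_\lambda(\beta) \to \nu_\star(\lambda)$ to analytic-in-$\lambda$ convergence on a fixed neighborhood of $0$ uniformly in $\beta$, so that the threshold $\bnbhdzero$ can be chosen independently of $\gmatrix$; this reduces to resolvent estimates on $(M_{\beta, \lambda} - z I)^{-1}$ along a small contour around $z = 1$ that are uniform in both $\beta$ and $\lambda$, after which Kato's formulas transfer directly to the quantitative control on the derivative.
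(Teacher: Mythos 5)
Your overall strategy coincides with the paper's: part (i) is exactly the paper's argument (continuity of $\nu_\star$ at $0$ plus $\nu_\star(0)<1$ for $\alpha>0$, the spectral containment $\sigma(\MF_n+\eta\gmatrix)\setminus\{1\}\subset\overline{\ball(0,|\eta|\rho(\gmatrix))}$, and Theorem \ref{thm:1a}/\ref{thm:msf}), and parts (ii)--(iii) are handled by a first-order perturbation of the dominant multiplier at $\lambda=0$, which is precisely the content of the paper's Theorem \ref{thm:nbhdof0} combined with the relation $\sigminus(\MF_n+\eta\gmatrix)=\eta\,\sigminusz(\gmatrix)$. Your computations check out: $\nu_\star'(0)=-1/(\vr_1\vr_2)$ when $\alpha=0$, the first-order formula $\partial_\lambda\mu_\lambda(\beta)|_{\lambda=0}=\beta\int_0^{\period}f'(\sops(u))\,du$ is correct (and its limit $-(1+ab^{-1})-(1+a^{-1}b)=-1/(\vr_1\vr_2)$ is consistent with the measure limit \eqref{eq:barmu}), and the sign analysis of $|\mu_{\eta\mu}(\beta)|^2$ matches the paper's conclusion that $\Re\lambda>0$ gives stability and $\Re\lambda<0$ gives instability.

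The genuine gap is the step you yourself flag as "the principal obstacle" and then defer: for the argument to produce a threshold $\bnbhdzero$ independent of $n$ and $\gmatrix$, you need a single $\beta$-independent neighborhood of $\lambda=0$ on which, for all $\beta>\bnbhdzero$, a holomorphic branch $\nu_\beta(\lambda)$ of the dominant multiplier exists, is simple, satisfies $\nu_\beta(0)=1$, and converges together with its derivative to $\nu_\star$ and $\nu_\star'$ uniformly. Asserting that this "reduces to resolvent estimates uniform in $\beta$ and $\lambda$" does not discharge it; those estimates are where the work lies. The paper fills this in Sections \ref{sec:limit_char}--\ref{sec:boundarycases}: one must first show that $\beta f'(\sops^\beta(\cdot-1))\,dt$ converges as a measure (Proposition \ref{prop:mubetaconvergence}), deduce operator-norm convergence $\monoU^\beta_\lambda(s)\to\monoU^\star_\lambda(s)$ uniformly on compacta in $\lambda$ (Proposition \ref{prop:mono_convergence}), establish joint continuity of $(\beta,\lambda)\mapsto\monoU_\lambda^\beta(s)$ and holomorphy in $\lambda$ (Proposition \ref{lem:mono_continuity}, which in turn requires continuity of $\beta\mapsto\sops^\beta$ via an implicit-function argument, Lemma \ref{lem:sopsdiffbeta}), and only then apply the simple-eigenvalue perturbation theorem on the compactified parameter $(\beta^{-1},\lambda)$ to get Proposition \ref{prop:nu_lambda_simple}, from which $\nu_\beta'\to\nu_\star'$ follows by Weierstrass/Cauchy. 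You also need a quantitative second-order remainder (the paper's Lemma \ref{lem:taylor}) so that the choice of $\eta_H$ can absorb the $O(\eta^2)$ term; this is routine once the uniform bounds above are in place, but it is part of the same missing package. Everything else in your proposal is sound.
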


\subsubsection{Doubly nonnegative coupled systems}
\label{sec:posdef}

Our final result treats stability of synchronous SOPS for doubly nonnegative coupled systems.
``Doubly nonnegative coupled'' refers to the case that the coupling matrix $G$ is irreducible and doubly nonnegative, where we recall that a real-valued $n\times n$ matrix is \textbf{doubly nonnegative} if it has nonnegative entries and is positive semidefinite.
Doubly nonnegative systems arise, for example, in the study of systems with so-called mean-field coupling and systems of DDEs arranged in a ring with symmetric coupling strengths (see Section \ref{sec:examples}).
Given $\alpha\ge0$ and $f$ satisfying Assumption \ref{ass:main} recall definition \eqref{eq:rho} for $\vr_1,\vr_2\in(0,e^{-\alpha})$ and define the constants
	\be\label{eq:r0}r_0=\frac{e^{-\alpha}}{2}\ee 
and
	\be\label{eq:Delta}\Delta=\lb\frac{\vr_1-\vr_2}{2}\rb^2-(1-\vr_1)(1-\vr_2)=\lb\frac{a-b}{a+b}\rb^2\lb\frac{e^{-\alpha}}{2}\rb^2-\lb1-\frac{ae^{-\alpha}}{a+b}\rb\lb1-\frac{be^{-\alpha}}{a+b}\rb.\ee

\begin{theorem}
\label{thm:4}
Suppose $\alpha\ge0$ and $f$ satisfies Assumption \ref{ass:main}.
Suppose $\Delta<0$.
There exists $\bzoa=\bzoa(\alpha,f)\ge\buniq$ such that for every $\beta>\bzoa$, $n\ge2$ and $n\times n$ irreducible doubly nonnegative coupling matrix $G$ with row sums all equal to 1 (if $\alpha=0$ then additionally assume $G$ is positive definite) the following holds:
\begin{itemize}
	\item [(i)] The unique synchronous SOPS of the $n$-dimensional coupled DDE \eqref{eq:cdde} associated with $(\alpha,\beta,f,G)$ is asymptotically stable with an exponential phase.
\end{itemize}
On the other hand, suppose $\Delta>0$.
Let
	\be\label{eq:ve}0<\ve<\min\lb\sqrt{\Delta},r_0-\sqrt{\Delta}\rb.\ee   
There exists $\bzob=\bzob(\alpha,f,\ve)\ge\buniq$ such that for every $\beta>\bzob$, $n\ge2$ and $n\times n$ irreducible doubly nonnegative coupling matrix $G$ with row sums all equal to 1 (if $\alpha=0$ then additionally assume $G$ is positive definite) the following hold:
\begin{itemize}
	\item[(ii)] If $\abs{\lambda-r_0}>\sqrt{\Delta}+\ve$ for all $\lambda\in\sigma(G)\setminus\{1\}$, then the unique synchronous SOPS of the $n$-dimensional coupled DDE \eqref{eq:cdde} associated with $(\alpha,\beta,f,G)$ is asymptotically stable with an exponential phase.
	\item[(iii)] If $\abs{\lambda-r_0}<\sqrt{\Delta}-\ve$ for some $\lambda\in\sigma(G)$, then the unique synchronous SOPS of the $n$-dimensional coupled DDE \eqref{eq:cdde} associated with $(\alpha,\beta,f,G)$ is unstable.
\end{itemize}
\end{theorem}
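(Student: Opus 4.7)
The plan is to reduce stability of the multidimensional synchronous SOPS to a spectral analysis of a family of scalar monodromy-type operators $V_\beta(\lambda)$ indexed by the eigenvalues $\lambda$ of $G$, via the reduction machinery developed earlier in the paper. Because $G$ is doubly nonnegative, it is symmetric positive semidefinite, hence orthogonally diagonalizable with real spectrum in $[0,1]$; irreducibility and the row-sum-one hypothesis place us in the Perron--Frobenius regime, so $\lambda = 1$ is a simple eigenvalue with eigenvector proportional to $(1,\dots,1)^T$, while the extra positive-definiteness assumption in the $\alpha = 0$ case excludes $\lambda = 0$. After the reduction, the nontrivial characteristic multipliers decompose as $\bigcup_{\lambda \in \sigma(G)} \sigma(V_\beta(\lambda))$; the phase multiplier $1$ arises at $\lambda = 1$ from the translation invariance of $\sops$, so asymptotic stability reduces to $\sigma(V_\beta(\lambda)) \subset \{|z| < 1\}$ for every $\lambda \in \sigma(G) \setminus \{1\}$, and instability reduces to the existence of some $\lambda \in \sigma(G)$ whose operator produces a multiplier outside the closed unit disk.

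Next I would record the geometry of $\nu_\star$ on the real axis. Writing $\nu_\star(\lambda) = \lb(\lambda - r_0)^2 - c^2\rb / \lb(1 - r_0)^2 - c^2\rb$ with $r_0 = (\vr_1 + \vr_2)/2$ and $c = (\vr_1 - \vr_2)/2$, the inequality $|\nu_\star(\lambda)| < 1$ is equivalent on the real axis to $\Delta < (\lambda - r_0)^2 < (1 - r_0)^2$. Under $\Delta < 0$ this is satisfied for every $\lambda \in [0,1) \setminus \{2r_0 - 1\}$, and $2r_0 - 1 = e^{-\alpha} - 1$ is negative for $\alpha > 0$ and equal to $0$ for $\alpha = 0$ (where positive definiteness of $G$ excludes $\lambda = 0$); under $\Delta > 0$ the condition reduces on $[0,1)$ to $|\lambda - r_0| > \sqrt{\Delta}$, matching the hypotheses of parts (ii) and (iii).

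The key complication is that Theorem \ref{thm:1} cannot be applied uniformly in $G$: as $G$ varies over admissible matrices, the second-largest eigenvalue may approach $1$, forcing $|\nu_\star(\lambda)| \to 1$ and collapsing the $\delta$ in Theorem \ref{thm:1}. For (i) and (ii) I would therefore split the eigenvalues. Fix a small $\delta_1 > 0$; for $\lambda \in \sigma(G) \cap [0, 1 - \delta_1]$ apply Theorem \ref{thm:1} with $K = [0, 1 - \delta_1]$ and a uniform $\delta > 0$ determined by $\delta_1$ (and, in case (ii), by $\ve$); for $\lambda \in \sigma(G) \cap (1 - \delta_1, 1)$ run a perturbation argument at $\lambda = 1$. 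Concretely, the monodromy-operator limits established earlier in the paper should give that, for $\beta$ sufficiently large, $V_\beta(\lambda)$ has a simple principal eigenvalue $\mu_\beta(\lambda)$ analytic in $\lambda$ near $1$, with $\mu_\beta(1) = 1$ and $\mu_\beta'(1) \to \nu_\star'(1) = (1-\vr_1)^{-1} + (1-\vr_2)^{-1} > 0$, while the rest of $\sigma(V_\beta(\lambda))$ stays uniformly strictly inside the unit disk. This transversal crossing forces $\mu_\beta(\lambda) < 1$ throughout $(1 - \delta_1, 1)$ once $\delta_1$ is small and $\beta$ is large; $\bzoa$ (resp.\ $\bzob$) is then the maximum of the two resulting thresholds. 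Part (iii) is immediate from Theorem \ref{thm:1}(ii) with $K = [0,1]$, since the hypothesis furnishes $\lambda^\star \in \sigma(G)$ with $|\nu_\star(\lambda^\star)| > 1 + \delta$ for some $\delta > 0$ depending only on $\ve$ and on $(\alpha, f)$.

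The principal obstacle is the perturbation analysis near $\lambda = 1$: one must verify, uniformly in $\beta$ large and in the unbounded family of admissible $G$, that the principal branch $\mu_\beta$ is simple and analytic, crosses $1$ transversally at $\lambda = 1$ with the sign dictated by $\nu_\star'(1) > 0$, and is accompanied by a uniform spectral gap for the remainder of $\sigma(V_\beta(\lambda))$. This is precisely the regime in which Theorem \ref{thm:1} is insufficient, so the argument must lean on the limit theorems for the monodromy-type operators developed earlier in the paper rather than on Theorem \ref{thm:1} alone.
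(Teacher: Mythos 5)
Your overall strategy is the one the paper uses: reduce to the extended monodromy operators via the spectral decomposition $\sigma(\mathbf{M})=\bigcup_{\lambda\in\sigma(G)}\sigma(\mono_\lambda)$, invoke Perron--Frobenius to place $\sigma(G)\setminus\{1\}$ in $[0,1)$ (resp.\ $(0,1)$ under positive definiteness), handle eigenvalues on a compact set where $|\nu_\star|$ is uniformly bounded away from $1$ by the limit theorem, and patch the neighborhood of $\lambda=1$ with a perturbation argument based on the simple, holomorphic principal branch $\nu_\beta$ and the transversality $\nu_\beta'(1)\to\nu_\star'(1)=(1-\vr_1)^{-1}+(1-\vr_2)^{-1}>0$. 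Your real-axis computation ($|\nu_\star(\lambda)|<1$ iff $\Delta<(\lambda-r_0)^2<(1-r_0)^2$) and your treatment of part (iii) are correct and match the paper.

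There is, however, a genuine gap in the case $\alpha=0$: you need a \emph{second} boundary analysis at $\lambda=0$, symmetric to the one you run at $\lambda=1$, and your proposal omits it. When $\alpha=0$ one has $\nu_\star(0)=1$, so $\sup\{|\nu_\star(r)|:r\in[0,1-\delta_1]\}=1$ and there is no uniform $\delta>0$ with which to apply Theorem \ref{thm:1} (or Theorem \ref{thm:1a}) on $K=[0,1-\delta_1]$ --- exactly the same collapse you correctly diagnose at the right endpoint occurs at the left endpoint. Excluding the single point $\lambda=0$ via positive definiteness does not repair this, because the theorem must hold uniformly over the admissible family of matrices, whose nontrivial eigenvalues can accumulate at $0$ (e.g.\ the mean-field matrices $M_{n,\kappa}$ with $\kappa\uparrow1$ have eigenvalue $1-\kappa\downarrow0$ and remain positive definite). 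The missing ingredient is the analogue of your $\lambda=1$ argument at $\lambda=0$: for $\alpha=0$ the principal branch satisfies $\nu_\beta(0)=1$ (the extended variational equation degenerates to $\dot y=0$ there) and $\nu_\beta'(0)\to\nu_\star'(0)=-\lsb(1-\vr_1)(1-\vr_2)\rsb^{-1}<0$, so $|\nu_\beta(\lambda)|<1$ for small real $\lambda>0$; this is the content of the paper's Theorem \ref{thm:nbhdof0} and the set $A_{>0}$, which together with $\stable_\star(\delta)$ on a compact subinterval and $A_{<1}$ near $1$ cover all of $(0,1)$ (Corollary \ref{cor:01}(ii),(iv)). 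With that extra piece added, your proof goes through; the $\alpha>0$ case is unaffected since then $\nu_\star(0)\in(0,1)$.
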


\begin{remark}
A straightforward calculation shows if $\alpha>\ln\frac{1+\sqrt{2}}{2}$, where $\ln$ denotes the natural logarithm, then $\Delta<0$ for all $a,b>0$.
In this case, for every $f$ satisfying Assumption \ref{ass:main}, $\beta>\bzoa=\bzoa(\alpha,f)$, $n\ge2$ and $n\times n$ irreducible doubly nonnegative coupling matrix $G$ with row sums all equal to 1, the unique synchronous SOPS of the $n$-dimensional coupled DDE \eqref{eq:cdde} associated with $(\alpha,\beta,f,G)$ is asymptotically stable with an exponential phase.
\end{remark}

\subsection{Relation to prior work}
\label{sec:related}

There is considerable literature on stability of synchronous oscillatory periodic solutions for coupled systems of DDEs.
Here we mention the works that treat systems of DDEs with a single fixed delay and allow for a nonlinearity $f$ satisfying a negative feedback condition.
The first body of related work is on systems of three coupled oscillators. 
The works \cite{Guo2009,Wu1999} employ the theory of symmetric local Hopf bifurcation (see, e.g., \cite{Wu1998}) to study pattern formations that arise under different coupling strengths.
The next class considers systems of DDEs arranged in a ring.
In \cite{Chen2000} Chen, Huang and Wu show that the synchronous SOPS is unstable when either $n$ is even or $n$ is odd and sufficiently large.
The works \cite{Guo2003,Guo2007} prove related results when allowing for a more general nonlinear feedback structure.
We obtain a comparable result, in the asymptotic regime $\beta\to\infty$, for DDEs arranged in a ring (see Remark \ref{rem:chen}).

The next body of related work allows for general coupling and uses the so-called master stability function to characterize the stability of the coupled system \cite{Choe2010,Dhamala2004,Flunkert2010,Flunkert2014,Kinzel2009,Sieber2013}.
Of particular interest, Flunkert et al.\ \cite{Flunkert2014} and Sieber et al.\ \cite{Sieber2013} consider a system of multidimensional periodic oscillators and allow for a general class of nonlinear feedback as well as a general coupling structure with delayed coupling.
They show that the characteristic multipliers for the synchronous periodic solution can be expressed in terms of the spectra of monodromy-type maps associated with a single system, which is an approach that we adopt here. 
The main difference between their work and ours is that they consider systems where the coupling delay significantly exceeds the period of oscillation, which is relevant in the context of  optical networks.
This is in contrast to our work, where we assume that the period is greater than the delay, which is relevant in systems where the oscillations arise because of the delay (e.g., in gene regulatory networks \cite{glass2016signaling,Mather2014}).
Moreover, this difference in structure leads to an interesting contrast in the results we obtain.
For instance, in \cite{Flunkert2014} the authors demonstrate that if the spectrum of the coupling matrix lies in a disc in the complex plane centered at the origin, then for sufficiently large delays, the synchronous oscillating periodic solution is stable; whereas in Theorem \ref{thm:1} we show that if the spectrum of the coupling matrix lies in the region $\stable_\star(0)$, defined in \eqref{eq:stablestarsets}, then for sufficiently large delays, the synchronous SOPS is stable, and the region $\stable_\star(0)$ is not necessarily convex or even simply connected (see Remark \ref{rem:OvalsofCassini}).

Finally we mention the work of Campbell and Wang \cite{Campbell2018,Wang2017} on stability of synchronous periodic solutions in the case of circulant coupling and when the coupling between oscillators is weak (this setting is closely related to the weakly coupled regime considered in Section \ref{sec:weak}).
They approximate their system with a phase coupled model, which is in contrast to our amplitude coupled model.
They obtain simple criteria for the stability of synchronous oscillatory periodic solutions, which are expressed in terms of the relation between the spectrum of the coupling matrix and the imaginary axis (\cite[Theorem 2]{Campbell2018}).
We obtain a comparable result for weakly coupled systems in the asymptotic regime $\beta\to\infty$ for a more general class of coupling matrices (Theorem \ref{thm:2}).

\subsection{Outline}
\label{sec:outline}

In Section \ref{sec:msf} we introduce the basis for our main results, Theorem \ref{thm:msf}, which states that the characteristic multipliers associated with the synchronous SOPS can be expressed in terms of the spectra of a family of monodromy-type operators $\{\mono_\lambda\}$, indexed by $\lambda$ in the spectrum of the coupling matrix $G$, associated with following parameterized family of (complex) scalar linear variational-type equations about the SOPS $\sops$ of the scalar DDE:
	\be\label{eq:eve}\dot{y}(t)=-\alpha y(t)+\lambda\beta f'(\sops(t-1))y(t-1).\ee
We refer to linear DDE \eqref{eq:eve} as the extended variational equation (associated with $\lambda$) along $\sops$, and we refer to $\{\mono_\lambda\}$ as extended monodromy operators associated with $\sops$.
In this way stability of the synchronous SOPS is reduced to characterizing the spectra of $\{\mono_\lambda\}$ and in Section \ref{sec:asym_char} we state our four results on the spectra of $\{\mono_\lambda\}$ in the asymptotic regime $\beta\to\infty$.
The approach is in the spirit of works by Pecora and Carroll \cite{Pecora1998} on stability of synchronous solutions for coupled systems of ordinary differential equations, and by Flunkert et al.\ \cite{Flunkert2010,Flunkert2014}, who generalized the approach to coupled system of equations, with a delay introduced due to the coupling.
The proof of Theorem \ref{thm:msf} is given in Section \ref{sec:msfproof}.

The advantage of analyzing the system in the limiting regime $\beta\to\infty$ is that the limiting solutions of the extended variational equation \eqref{eq:eve} have explicit expressions.
In Section \ref{sec:normalizedSOPS} we recall results from \cite{Xie1991} on the convergence of certain normalized SOPS, as $\beta\to\infty$.
We say $z$ is the {\bf dominant multiplier} of $M_\lambda$ if $z$ is in the spectrum of $M_\lambda$ and $|z|>|\tilde z|$ for all other eigenvalues in the spectrum of $M_\lambda$.
In Section \ref{sec:limit_char} we prove that the dominant multiplier of $\mono_\lambda$ exists for $\beta$ sufficiently large and converges to $\nu_\star(\lambda)$ as $\beta\to\infty$.
One of the principal challenges is that the term $\beta f'(\sops(t-1))$ appearing in extended variational equation \eqref{eq:eve} does not have a functional limit.
In order to identify the correct limit for solutions to the extended variational equation \eqref{eq:eve}, it is advantageous to instead view $\beta f'(\sops(t-1))dt$ as a signed measure on the real line.
After identifying the correct limiting measure we prove convergence properties for solutions of the extended variational equation \eqref{eq:eve} and the dominant multipliers of  $\{\mono_\lambda\}$, as $\beta\to\infty$.

In Section \ref{sec:boundarycases} we prove that the extended monodromy operators $\{\mono_\lambda\}$ are holomorphic in $\lambda$, and the derivative of the dominant multiplier $\mono_\lambda$ converges to $\partial\nu_\star(\lambda)/\partial\lambda$ as $\beta\to\infty$.
In Section \ref{sec:description} we use this convergence result to characterize the extended characteristic multipliers near $\lambda=1$ and $\lambda=0$, in the regime $\beta\to\infty$.
As a corollary, we also characterize the extended characteristic multipliers for $\lambda$ in the interval $[0,1)$.
The points $\lambda=1$ and $\lambda=0$ are special because the dominant multipliers of $\mono_1$ (if $\alpha\ge0$) and $\mono_0$ (if $\alpha=0$) are both equal to 1, for all $\beta$ sufficiently large (see Proposition \ref{prop:nu_lambda_simple}).
In Section \ref{sec:mainproofs} we use Theorem \ref{thm:msf} and the asymptotic characterization of the extended characteristic multipliers to prove our main results.

In Section \ref{sec:examples} we illustrate our main results with two interesting applications of a mean-field coupled system and a system of DDEs arranged in a ring with nearest-neighbor coupling.
In Section \ref{sec:numerics} we plot solutions of a 3-dimensional coupled DDE for different parameter values.

\subsection{Notation }
\label{sec:notation}

We now collect some commonly used notation. 

\subsubsection{Basic notation}
\label{sec:basicnotation}
 
Let $\N$ denote the natural numbers, $\Z$ denote the integers, $\R$ denote the real line, $\R_+$ denote the nonnegative axis and $\CC$ denote the complex plane.
For $r\in\R$ let $\lfloor r\rfloor=\max\{j\in\Z:j\le r\}$.
Let $i=\sqrt{-1}$. 
For $z\in\CC$ let $\Re z$ and $\Im z$ denote the real and imaginary parts of $z$, respectively, and let $\bar z$ denote the complex conjugate of $z$.
For $r\in\R$ define the open half planes
\begin{equation}\label{eq:halfplane}
	\CC_{<r}=\{z\in\CC:\Re z<r\}\qquad\text{and}\qquad\CC_{>r}=\{z\in\CC:\Re z>r\}.
\end{equation}
For a set $Z\subset\CC$, let $\interior(Z)$ denote its interior, $\clo(Z)$ denote its closure, $\partial Z=\clo(Z)\setminus\interior(Z)$ denote its boundary, $1_Z(\cdot)$ denote the indicator function on $Z$ and, given $r\in\R$, let $rZ=\{rz:z\in Z\}$.
Given another set $Y \subset \CC$ let $d_H(Y,Z)$ denote the Hausdorff metric defined by
	$$d_H(Y,Z)=\max\lcb\sup_{y\in Y}\inf_{z\in Z}|y-z|,\sup_{z\in Z}\inf_{y\in Y}|y-z|\rcb.$$

For an integer $n\geq 2$, let $\R^n$ denote $n$-dimensional Euclidean space and $\CC^n$ denote $n$-dimensional complex space. 
We use bold typeface for vectors in $\R^n$ and $\CC^n$. 
Given a column vector ${\bf v}$ in $\R^n$ or $\CC^n$, let ${\bf v}^T$ denote its transpose and let $v^j$ denote its $j$\textsuperscript{th} component, for $j=1,\dots,n$.
Let ${\bf 1}_n=(1,\dots,1)^T$ denote the column vector in $\R^n$ whose entries all equal 1.
For $r\in\{0,1\}$, let $\M_n^r$ denote the subset of real-valued $n\times n$ matrices whose row sums are all equal to $r$; that is, 
	\be\label{eq:Mnr}\M_n^r=\lcb n\times n\text{ real-valued matrices $\gmatrix$ satisfying }\gmatrix{\bf 1}_n=r{\bf 1}_n\rcb.\ee
Let $\Id_n\in\M_n^1$ denote the $n\times n$ identity matrix and $\MF_n\in\M_n^1$ denote the $n\times n$ matrix whose entries are all equal to $1/n$, i.e., 
\be\label{eq:Jn}\MF_n^{jk}=\frac{1}{n},\qquad j,k=1,\dots,n.\ee

Given a Banach space $(\banach,\norm{\cdot}_\banach)$, an element $\chi\in\banach$ and $r>0$ we let $\ball_\banach(\chi,r)=\{\wt\chi\in\banach:\norm{\wt\chi-\chi}_\banach<r\}$ denote the ball of radius $r$ centered at $\chi$.
When $\banach=\CC$, we drop the subscript $\CC$ and write $\ball(\chi,r)$ for $\ball_\CC(\chi,r)$.

We abbreviate ``such that'' as ``s.t.''.

\subsubsection{Function spaces}
\label{sec:notation_function_spaces}

Given an interval $\interval\subset\R$ and $\banach\in\{\R,\CC,\R^n,\CC^n\}$ let $D(\interval,\banach)$ denote the set of functions from $\interval$ to $\banach$ that are right continuous and have finite left limits.
Let $C(\interval,\banach)$ denote the subset of continuous functions in $D(\interval,\banach)$, and let $C_c(\interval,\banach)$ denote the subset of functions in $C(\interval,\banach)$ with compact support. 
We equip $D(\interval,\banach)$ and its subsets with the topology of uniform convergence on compact subsets of $\interval$. 
When $\interval=[-1,0]$, we use the abbreviated notation $\D(\banach)=D([-1,0],\banach)$ and $\C(\banach)=C([-1,0],\banach)$. 
Define the uniform norm on $\D(\banach)$ by 
	$$\norm{\phi}_{[-1,0]}=\sup_{\theta\in[-1,0]}\abs{\phi(\theta)}<\infty.$$
Then $(\D(\banach),\norm{\cdot}_{[-1,0]})$ and $(\C(\banach),\norm{\cdot}_{[-1,0]})$ are Banach spaces.

The following definitions are stated for $\field\in\{\R,\CC\}$, $n\ge 2$ and $s\ge0$. 
We use bold typeface for $\field^n$-valued functions. 
For $x\in\D([s-1,\infty),\field)$ (resp.\ ${\bf x}\in\D([s-1,\infty),\field^n)$) and $t\ge s$ define $x_t\in\D(\field)$ (resp.\ ${\bf x}_t\in\D(\field^n)$) by $x_t(\theta)=x(t+\theta)$ (resp.\ ${\bf x}_t(\theta)={\bf x}(t+\theta)$) for $\theta\in[-1,0]$. 
Given an interval $\mathbb{I}\subset\R$ a function $x\in C(\mathbb{I},\field)$ (resp.\ ${\bf x}\in C(\mathbb{I},\field^n)$) and $t\in\mathbb{I}$ such that $x$ (resp. ${\bf x}$) is differentiable at $t$, we let $\dot x(t)$ (resp.\ $\dot{\bf x}(t)$) denote the derivative of $x$ (resp.\ ${\bf x}$) at $t$.

For $p\in\{1,\infty\}$ we let $L^p(\R)$ denote the Banach space of Lebesgue measurable functions $g:\R\to\R$ with finite $L^p$-norm, where functions that are almost everywhere equal are identified.
Given a constant $N<\infty$ and a subset $\interval\subset\R$ let
	$$L_N^\infty(\R)=\lcb g\in L^\infty(\R):\norm{g}_{L^\infty(\R)}\le N\rcb$$
denote the subset of functions in $L^\infty(\R)$ bounded by $N$ and let
	\be\label{eq:LCAbounded}L_{N,\interval}^\infty(\R)=\lcb g\in L_N^\infty(\R):g\text{ is Lipschitz continuous on }\interval\text{ with Lipschitz constant }N\rcb.\ee
denote the further subset of functions that are Lipschitz continuous on $\interval$ with Lipschitz constant $N$.

\subsubsection{Solutions of the DDEs}

Let $\alpha\ge0$, $\beta>0$, $f:\R\to\R$ and $G\in\M_n^1$ be given.
Under Assumption \ref{ass:main}, $f$ is uniformly Lipschitz continuous and it follows from \cite[Chapter 2, Theorem 2.3]{Hale1993} that for each $\phi\in\C(\R)$ there exists a unique solution of the scalar DDE \eqref{eq:dde} associated with $(\alpha,\beta,f)$ starting at $\phi$, which we denote by $x(\phi)$. 
Similarly, for each ${\bs\phi}\in\C(\R^n)$ there exists a unique solution of the coupled DDE \eqref{eq:cdde} associated with $(\alpha,\beta,f,G)$ starting at ${\bs\phi}$, which we denote by ${\bf x}({\bs{\phi}})$.

\subsubsection{Bounded and compact linear operators}
\label{sec:notation_linear_operators}

Given Banach spaces $(\banach,\norm{\cdot}_{\banach})$ and $(\banachY,\norm{\cdot}_{\banachY})$, let $B(\banach,\banachY)$ denote the vector space of bounded (continuous) linear operators from $\banach$ to $\banachY$ equipped with the operator norm $\norm{\cdot}$. 
Let $B_0(\banach,\banachY)$ denote the subset of compact linear operators in $B(\banach,\banachY)$.
When $\banachY=\banach$, we use the abbreviations $B(\banach)=B(\banach,\banach)$ and $B_0(\banach)=B_0(\banach,\banach)$.
We let $\Id_\banach$ denote the identity operator in $B(\banach)$.  
Given $A \in B(\banach)$ let $\sigma(A)$ denote the spectrum of $A$ in $\CC$, and let $\rho(A)$ denote the spectral radius of $A$.
If $\lambda\in\sigma(A)$ is such that $|\lambda|>|z|$ for all $z\in\sigma(A)\setminus\{\lambda\}$ then $\lambda$ is referred to as the dominant multiplier of $A$.
For $r\in\{0,1\}$ define
\begin{equation}
\label{eq:sigmaminus1}
	\sigma_{-r}(A)= 
	\begin{cases}
		\sigma(A)\setminus\{r\} & \text{if $r$ is a simple eigenvalue of $A$}, \\
		\sigma(A) & \text{otherwise}.
	\end{cases}
\end{equation}
Recall that $\sigma(\cdot)$ is a continuous function from $B_0(\banach)$ to $\B(\CC)$, where $\B(\CC)$ denotes the Borel subsets of $\CC$ equipped with the Hausdorff metric.
For $\lambda\in\sigma(A)$ let $m_A(\lambda)$ equal the dimension of the generalized eigenspace $E_\lambda=\cup_{j=1}^\infty \Null(A-\lambda\Id_\banach)^j$.
For a Banach space $\banach$ over $\CC$ let $\banach^\ast=B(\banach,\CC)$ denote the dual of $\banach$ equipped with the operator norm.  
For $A\in B_0(\banach)$, let $A^\ast\in B_0(\banach^\ast)$ denote the adjoint of $A$. 
Then $\sigma(A)=\sigma(A^\ast)$ and $m_A(\lambda)=m_{A^\ast}(\lambda)$ for all $\lambda\in\CC$ (see, e.g., \cite[Chapter 6, Theorem 6.1]{Conway1990}).

Let $\mathcal{M}(\R)$ denote the set of signed measures on the $\sigma$-algebra of Borel subset of $\R$.
Each element $\mu\in\mathcal{M}(\R)$ can be viewed as an element of $B(C_c(\R,\R),\R)$, as follows:
	$$\mu(g)=\int_\R g(t)d\mu(t),\qquad g\in C_c(\R,\R).$$
Given a family $\{\mu_r\}_{r>0}$ in $\mathcal{M}(\R)$, we say $\mu_r$ converges to $\mu\in\mathcal{M}(\R)$ in the weak-$^\ast$ topology if $\lim_{r\to\infty}\mu_r(g)=\mu(g)$ for all $g\in C_c(\R,\R)$.

\section{Extended characteristic multipliers}
\label{sec:msf}

In this section we express the characteristic multipliers of a synchronous SOPS $\sopsn=\sops{\bf 1}_n$ in terms of so-called extended characteristic multipliers of the SOPS $\sops$.
We then describe the extended characteristic multipliers in the regime where $\beta$ is large.
In Section \ref{sec:mainproofs} we use this relation between the characteristic multipliers of $\sopsn$ and the extended characteristic multipliers of $\sops$, along with the asymptotic characterization of the extended characteristic multipliers, to prove our main results.

\subsection{Background and definitions}
\label{sec:msf_background}

Recall the definition of $\M_n^1$ given in \eqref{eq:Mnr}.
Fix $\alpha\ge0$, $\beta>\buniq$, $n\ge2$ and $G\in\M_n^1$, and let $\sopsn=\sops{\bf 1}_n$ denote the unique synchronous SOPS with period $\period$ of the $n$-dimensional SOPS for the coupled DDE \eqref{eq:cdde} associated with $(\alpha,\beta,f,G)$.
Linearizing the coupled DDE \eqref{eq:cdde} about $\sopsn$ yields the $n$-dimensional (linear) variational equation
	\be\label{eq:nvariational}\dot{y}^j(t)=-\alpha y^j(t)+\beta\sum_{k=1}^nG^{jk}f'(\sops(t-1))y^k(t-1),\qquad j=1,\dots,n.\ee
Define the \textbf{monodromy operator} ${\bf\mono}:\C(\CC^n)\to\C(\CC^n)$ associated with $\sopsn$ by
	\be\label{eq:nmono}{\bf\mono}{\vfn}={\bf y}_\period({\vfn}),\qquad{\vfn}\in\C(\CC^n),\ee
where ${\bf y}={\bf y}({\bs\vf})=(y^1({\bs\vf}),\dots,y^n({\bs\vf}))^T$ denotes the unique function in $C([-1,\infty),\CC^n)$ that satisfies ${\bf y}_0={\bs{\vf}}$, is continuously differentiable on $(0,\infty)$ and satisfies the variational equation \eqref{eq:nvariational} for all $t>0$.
Then ${\bf M}$ is a compact linear operator (see, e.g., \cite[Chapter III, Corollary 4.7]{Diekmann1991}), the elements of its spectrum, $\sigma({\bf M})$, are invariant under time-translations of $\sopsn$ (see, e.g., \cite[Chapter XIV, p.\ 367]{Diekmann1991}), and the nonzero elements of $\sigma({\bf\mono})$ are referred to as \textbf{characteristic (Floquet) multipliers} of the periodic solution $\sopsn$.
We say $\mu$ is a \textbf{simple} characteristic multiplier if $\mu$ is a simple eigenvalue of ${\bf\mono}$.
Since $\sopsn$ satisfies the coupled DDE \eqref{eq:cdde}, it follows that its time derivative $\dot\sopsn$ is periodic with period $\period$ and satisfies the variational equation \eqref{eq:nvariational}, and so $\mu=1$ is always an eigenvalue of ${\bf\mono}$ with associated eigenvector $\dot\sopsn_0$, and we refer to $\mu=1$ as the trivial characteristic multiplier.
A synchronous SOPS $\sopsn$ for the coupled DDE \eqref{eq:cdde} is {\bf linearly stable} if the trivial characteristic multiplier $\mu=1$ of $\sopsn$ is simple and $\abs{\mu}<1$ for every nontrivial characteristic multiplier of $\sopsn$; and a synchronous SOPS is {\bf linearly unstable} if $\abs{\mu}>1$ for some characteristic multiplier of $\sopsn$.

\begin{remark}
	\label{rem:linearstability}
	Recall the definitions of ``asymptotically stable with an exponential phase'' and ``unstable'' given in Section \ref{sec:main}.
	If a synchronous SOPS is linearly stable then it is asymptotically stable with an exponential phase.
	On the other hand, if a synchronous SOPS is linearly unstable then it is unstable (see, e.g., \cite[Chapter 10, Theorem 3.1 \& Corollary 3.1]{Hale1977}).
\end{remark}

We now define extended characteristic multipliers associated with the SOPS $\sops$.
Recall the extended variational equation \eqref{eq:eve} along $\sops$, parameterized by $\lambda\in\CC$. 
For $\lambda\in\CC$ and $\vf\in\C(\CC)$ let $y=y(\lambda,\vf)$ denote the unique function in $C([-1,\infty),\CC)$ that satisfies $y_0=\vf$, is continuously differentiable on $(0,\infty)$ and satisfies the extended variational equation \eqref{eq:eve} for all $t>0$.
Define the family $\{\mono_\lambda\}=\{\mono_\lambda,\lambda\in\CC\}$ of \textbf{extended monodromy operators} associated with $\sops$ as follows.
For each $\lambda\in\CC$ define $\mono_\lambda:\C(\CC)\to\C(\CC)$ by
	\be\label{eq:gmonodromy}\mono_\lambda\vf=y_\period(\lambda,\vf),\qquad \vf\in\C(\CC).\ee
For each $\lambda\in\CC$, $\mono_\lambda$ is a compact linear operator and its spectrum $\sigma(\mono_\lambda)$ is invariant under time translations of the SOPS (see, e.g., \cite[Chapter XIII.3]{Diekmann1991}).
We refer to the nonzero eigenvalues of the extended monodromy operators $\{\mono_\lambda\}$ as \textbf{extended characteristic (Floquet) multipliers} of the SOPS $\sops$.

When $\lambda=1$, the extended variational equation \eqref{eq:eve} reduces to the following variational equation, which is obtained by linearizing the scalar DDE \eqref{eq:dde} about the SOPS $\sops$:
\begin{align}\label{eq:1variational}
	\dot y(t)=-\alpha y(t)+\beta f'(p(t-1))y(t-1).
\end{align}
The {\bf monodromy operator} associated with the SOPS $\sops$ is the compact linear operator $M$ that maps an element $\psi\in\C(\CC)$ to the solution $y$ of the variational equation \eqref{eq:1variational} with initial condition $y_0=\psi$.
Thus, the monodromy operator $M$ is equal to the extended monodromy operator $M_1$.
The nonzero elements of $\sigma(M)$ are referred to as the characteristic (Floquet) multipliers of the periodic solution $\sops$.
Thus, according to \cite[Theorem 1]{Xie1991}, if $\beta>\buniq$ then $\mu=1$ is a simple characteristic multiplier of $\mono$ and every other multiplier satisfies $\abs{\mu}<1$. 

\subsection{Relation to stability of a synchronous SOPS}

The following theorem, whose proof is given in Section \ref{sec:msfproof}, expresses the characteristic multipliers of the synchronous SOPS $\sopsn=\sops{\bf 1}_n$ in terms of the extended characteristic multipliers of the SOPS $\sops$.
Recall the definitions of $\M_n^1$ and $\sigminus(\cdot)$ given in \eqref{eq:Mnr} and \eqref{eq:sigmaminus1}, respectively.

\begin{theorem}
	\label{thm:msf}
	Suppose $\alpha\ge0$, $f$ satisfies Assumption \ref{ass:main}.
	Given $\beta>\buniq$, $n\ge2$ and $G\in\M_n^1$, let $\sopsn=\sops{\bf 1}_n$ denote the unique synchronous SOPS associated with $(\alpha,\beta,f,G)$.
	Let $\bf\mono$ denote the monodromy operator associated with the synchronous SOPS $\sopsn$ and let $\{\mono_\lambda\}$ denote the family of extended monodromy operators associated with the SOPS $\sops$.
	Then
	\be\label{eq:Floquet}\sigma({\bf\mono})=\bigcup_{\lambda\in\sigma(G)}\sigma(\mono_\lambda).\ee
	Consequently, the following hold:
	\begin{itemize}
		\item[(i)] If $\rho(\mono_\lambda)<1$ for all $\lambda\in\sigminus(G)$ then the unique synchronous SOPS of the $n$-dimensional coupled DDE \eqref{eq:cdde} associated with $(\alpha,\beta,f,G)$ is linearly stable.
		\item[(ii)] If $\rho(\mono_\lambda)>1$ for some $\lambda\in\sigma(G)$ then the unique synchronous SOPS of the $n$-dimensional coupled DDE \eqref{eq:cdde} associated with $(\alpha,\beta,f,G)$ is linearly unstable.
	\end{itemize}
\end{theorem}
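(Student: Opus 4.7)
The plan is to diagonalize the coupling matrix $G$ and thereby decouple the $n$-dimensional variational equation \eqref{eq:nvariational} into a family of scalar extended variational equations \eqref{eq:eve}, one per Jordan block of $G$.

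First I would rewrite \eqref{eq:nvariational} in matrix form as $\dot{\mathbf{y}}(t)=-\alpha\mathbf{y}(t)+\beta f'(\sops(t-1))\,G\,\mathbf{y}(t-1)$. Choose $P\in\CC^{n\times n}$ invertible with $P^{-1}GP=\MF$ in Jordan canonical form, whose blocks $\MF_{\lambda_k}$ correspond to eigenvalues $\lambda_k\in\sigma(G)$. The pointwise linear transformation $\vfn\mapsto P^{-1}\vfn$ is a bounded invertible operator on $\C(\CC^n)$ that conjugates $\bf\mono$ to the monodromy $\widetilde{\mathbf{M}}$ of the transformed system $\dot{\mathbf{z}}(t)=-\alpha\mathbf{z}(t)+\beta f'(\sops(t-1))\,\MF\,\mathbf{z}(t-1)$ (since $\mathbf{y}_t=P\mathbf{z}_t$ pointwise for all $t$), so $\sigma(\bf\mono)=\sigma(\widetilde{\mathbf{M}})$. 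Because $\MF$ is block-diagonal, the $\mathbf{z}$-system uncouples across Jordan blocks, giving $\widetilde{\mathbf{M}}=\bigoplus_k\mathbf{M}_{\MF_{\lambda_k}}$ and hence $\sigma(\widetilde{\mathbf{M}})=\bigcup_k\sigma(\mathbf{M}_{\MF_{\lambda_k}})$.

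Next I would analyze the spectrum of the monodromy restricted to a single Jordan block $\MF_\lambda$ of size $m$, whose subsystem reads $\dot z_j(t)=-\alpha z_j(t)+\beta\lambda f'(\sops(t-1))z_j(t-1)+\beta f'(\sops(t-1))z_{j+1}(t-1)$ for $j<m$ and $\dot z_m(t)=-\alpha z_m(t)+\beta\lambda f'(\sops(t-1))z_m(t-1)$. The bottom component $z_m$ satisfies the extended variational equation \eqref{eq:eve} with parameter $\lambda$, while each $z_j$ with $j<m$ satisfies the same equation driven by $z_{j+1}$ as a forcing term. Viewed as an $m\times m$ matrix of bounded linear operators on $\C(\CC)$, $\mathbf{M}_{\MF_\lambda}$ is therefore block upper-triangular with every diagonal entry equal to $\mono_\lambda$, the strict upper-triangular entries being bounded because they arise from linear DDEs with bounded coefficients on the fixed interval $[0,\period]$. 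From this I would deduce $\sigma(\mathbf{M}_{\MF_\lambda})=\sigma(\mono_\lambda)$ by a back-substitution argument: for $z\notin\sigma(\mono_\lambda)$ the system $(\mathbf{M}_{\MF_\lambda}-zI)(\vf_1,\dots,\vf_m)^T=(\psi_1,\dots,\psi_m)^T$ can be solved uniquely and boundedly from the bottom upward, while for any nonzero $\vf\in\operatorname{Null}(\mono_\lambda-zI)$ the vector $(\vf,0,\dots,0)^T$ is an eigenvector of $\mathbf{M}_{\MF_\lambda}$ with eigenvalue $z$. Assembling this over the Jordan blocks gives \eqref{eq:Floquet}.

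For the stability conclusions, (ii) is immediate: any $\mu\in\sigma(\mono_\lambda)$ with $|\mu|>1$ lies in $\sigma(\bf\mono)$ and is nonzero, so is a characteristic multiplier of $\sopsn$ with $|\mu|>1$, giving linear instability. For (i), the hypothesis $\rho(\mono_\lambda)<1$ for all $\lambda\in\sigminus(G)$, combined with $1\in\sigma(\mono_1)$ from Theorem \ref{thm:xie}, forces $1$ to be a simple eigenvalue of $G$; the associated Jordan block is then of size one, so the algebraic multiplicity of $\mu=1$ in $\sigma(\bf\mono)$ equals its algebraic multiplicity in $\sigma(\mono_1)$, which is one by Theorem \ref{thm:xie}. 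Every other element of $\sigma(\bf\mono)$ lies either in $\sigma(\mono_1)\setminus\{1\}$ (with modulus $<1$ by Theorem \ref{thm:xie}) or in $\sigma(\mono_\lambda)$ for some $\lambda\in\sigma(G)\setminus\{1\}$ (with modulus at most $\rho(\mono_\lambda)<1$), so $\sopsn$ is linearly stable. The main obstacle is the spectral identification for a single Jordan block: one must carefully confirm the block upper-triangular structure of $\mathbf{M}_{\MF_\lambda}$ and verify that the back-substitution produces a bounded inverse on $\C(\CC)^m$ whenever $\mono_\lambda-zI$ is invertible. Once this is in hand, \eqref{eq:Floquet} follows and the stability claims close via Remark \ref{rem:linearstability}.
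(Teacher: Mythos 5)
Your argument is correct and follows essentially the same route as the paper: Lemma \ref{lem:decomp} likewise passes to the Jordan form of $G$ and exploits the resulting upper-triangular structure to prove \eqref{eq:Floquet}, establishing one inclusion by lifting an eigenfunction of $\mono_\lambda$ along an eigenvector of $G$ and the other by examining the last nonzero component of the transformed eigenfunction of $\mathbf{M}$. Your multiplicity count showing that $\mu=1$ is a simple eigenvalue of $\mathbf{M}$ in part (i) is in fact slightly more explicit than the paper's, which leaves that point implicit in the notation $\sigminus(\mathbf{M})$.
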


It follows from Theorem \ref{thm:msf} that the regions $\stable$ and $\unstable$ in $\CC$, defined by
	\be\label{eq:stablesets}\stable=\{\lambda\in\CC:\rho(\mono_\lambda)<1\},\qquad\unstable=\{\lambda\in\CC:\rho(\mono_\lambda)>1\},\ee
which do not depend on the coupling matrix, are important for determining if a synchronous SOPS is linearly stable or unstable.
In particular, given a characterization of $\stable$ and $\unstable$, linear stability of the synchronous SOPS $\sopsn$ associated with a given coupling matrix $G\in\M_n^1$ depends on the placement of its spectrum $\sigma(G)$ --- specifically, the associated synchronous SOPS $\sopsn$ is linearly stable if $\sigminus(G)\subset\stable$, and linearly unstable if $\sigma(G)\cap\unstable\ne\emptyset$.
The remainder of this work is largely devoted to characterizing the sets $\stable$ and $\unstable$ when $\beta$ is large.

\subsection{Asymptotic characterization}
\label{sec:asym_char}

We now state four results describing the regions $\stable$ and $\unstable$ when $\beta$ is sufficiently large, which, along with Theorem \ref{thm:msf}, will respectively be used to prove our four main results in Section \ref{sec:mainproofs}.

\subsubsection{Limits of the extended characteristic multipliers}

In the following theorem, whose proof is given in Section \ref{sec:proofmonodromy}, we describe the limit of the spectrum $\sigma(\mono_\lambda)$ as $\beta\to\infty$.
Recall the definition of $\nu_\star(\lambda)$ given in \eqref{eq:nuast} and the definitions for the sets $\stable_\star(\delta)$ and $\unstable_\star(\delta)$ given in \eqref{eq:stablestarsets}.

\begin{theorem}\label{thm:1a}
	Suppose $\alpha\ge0$ and $f$ satisfies Assumption \ref{ass:main}.
	Let $\delta\in(0,1)$ and $K$ be a compact subset of $\CC$.
	There is a $\bmono=\bmono(\alpha,f,K,\delta)\ge\buniq$ such that if $\beta>\bmono$ then
		\be\label{eq:dHsigUnu} d_H(\sigma(\mono_\lambda),\{0,\nu_\star(\lambda)\})<\delta,\qquad\lambda\in K.\ee
	Consequently, $\stable_\star(\delta)\cap K\subset\stable\cap K$ and $\unstable_\star(\delta)\cap K\subset\unstable\cap K$ for all $\beta>\bmono$.
\end{theorem}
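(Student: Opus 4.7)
The plan is to produce a limiting finite-rank operator $\mono_\lambda^\star$ on $\C(\CC)$ with spectrum $\{0,\nu_\star(\lambda)\}$ and to prove that $\mono_\lambda\to\mono_\lambda^\star$ strongly enough, uniformly in $\lambda\in K$, to force Hausdorff convergence of spectra.

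First I would rewrite the extended variational equation \eqref{eq:eve} in integrated form
\be
y(t) = e^{-\alpha t}y(0) + \lambda\int_0^t e^{-\alpha(t-s)} y(s-1)\,d\mu^\beta(s),
\ee
where $d\mu^\beta(s) := \beta f'(\sops(s-1))\,ds$ is a signed Borel measure on $\R$ of bounded total variation (by the integrability of $f'$ in Assumption \ref{ass:main}). Although $\beta f'(\sops(s-1))$ has no pointwise limit as $\beta\to\infty$, the convergence of the normalized SOPS $\nxbeta$ recalled in Section \ref{sec:normalizedSOPS}, together with the substitution $u=\sops(s-1)$ near each of the two zero-crossings of $\sops(\cdot-1)$ per period, yield weak-$^\ast$ convergence of $\mu^\beta$ to an atomic measure $\mu^\star$ with two atoms per period whose locations and masses are determined by $\alpha$, $a$, and $b$.

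Passing to the limit in the integrated equation, the limit dynamics decays by $e^{-\alpha}$ between atoms and is updated by a jump proportional to $\lambda\,y(\cdot-1)$ at each atom. Since there are exactly two atoms per period, the limiting monodromy operator $\mono_\lambda^\star$ factors through a two-dimensional quotient and has rank at most two; a direct calculation of its $2\times 2$ action identifies the characteristic polynomial, with the two factors $(\lambda-\vr_1)$ and $(\lambda-\vr_2)$ arising from the two half-periods weighted by $a/(a+b)$, $b/(a+b)$ and by $e^{-\alpha}$ (exponential decay across the unit delay between crossings), while the normalization $(1-\vr_1)(1-\vr_2)$ is pinned by the requirement $\nu_\star(1)=1$ (consistent with Theorem \ref{thm:xie}, which gives $1$ as a simple eigenvalue of $\mono_1$). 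Thus $\sigma(\mono_\lambda^\star)=\{0,\nu_\star(\lambda)\}$. I would then verify operator-norm convergence $\mono_\lambda\to\mono_\lambda^\star$ in $B(\C(\CC))$ with a rate uniform in $\lambda\in K$, using that $\lambda$ enters linearly, $K$ is bounded, and the bounds on $\nxbeta$ are $\beta$-uniform; upper semicontinuity of spectra for compact operators, together with the finite-rank structure of the limit, then yields $d_H(\sigma(\mono_\lambda),\{0,\nu_\star(\lambda)\})<\delta$ for all $\lambda\in K$ once $\beta$ exceeds some $\bmono(\alpha,f,K,\delta)$.

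The inclusions $\stable_\star(\delta)\cap K\subset\stable\cap K$ and $\unstable_\star(\delta)\cap K\subset\unstable\cap K$ are then immediate from definitions \eqref{eq:stablesets} and \eqref{eq:stablestarsets}: on $\stable_\star(\delta)$ we have $|\nu_\star(\lambda)|<1-\delta$, so Hausdorff-closeness forces $\sigma(\mono_\lambda)\subset\ball(0,1)$ and hence $\rho(\mono_\lambda)<1$; on $\unstable_\star(\delta)$ the Hausdorff bound forces $\sigma(\mono_\lambda)$ to contain a point of modulus exceeding $1$. The main obstacle I anticipate is upgrading the weak-$^\ast$ convergence of the driving measures $\mu^\beta$ to operator-norm convergence of the solution operators $\mono_\lambda$, uniformly in $\lambda\in K$. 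Weak-$^\ast$ convergence by itself gives only pointwise convergence on $\C(\CC)$; operator-norm convergence additionally requires equicontinuity of $\mono_\lambda\vf$ uniformly for $\vf$ in the unit ball, which in turn requires quantitative control on $\nxbeta$ near its zero-crossings beyond what is stated in Xie's theorem, and a separate argument (leveraging the $L^1$ control on $f'$) ruling out spectral mass escaping to eigenvalues outside $\{0,\nu_\star(\lambda)\}$.
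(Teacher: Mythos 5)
Your proposal is correct and follows essentially the same route as the paper: interpret $\beta f'(\sops^\beta(\cdot))\,dt$ as a signed measure converging to a two-atoms-per-period limit, compute the resulting finite-rank limiting monodromy operator explicitly (the paper's Lemma \ref{lem:Uspectrum} shows it is in fact rank one, with image spanned by $\theta\mapsto e^{-\alpha\theta}$ and unique nonzero eigenvalue $\nu_\star(\lambda)$), upgrade to operator-norm convergence uniformly on $K$ (Propositions \ref{prop:mubetaconvergence} and \ref{prop:mono_convergence}), and conclude via continuity of $\sigma(\cdot)$ on $B_0(\C(\CC))$ in the Hausdorff metric. The obstacle you flag — passing from weak-$^\ast$ convergence of the measures to uniform operator-norm convergence — is exactly where the paper spends its effort, resolved by testing against functions Lipschitz near the atoms, the $\beta$-uniform total-variation bound obtained from the substitution $u=\sops^\beta(t)$ together with lower bounds on $\dot{\bar\sops}^\beta$ near the zero crossings (which do follow from Xie's convergence results as recalled in Proposition \ref{prop:xbar_convergence}), and a Gr\"onwall-type induction over subintervals.
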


\begin{figure}
	\includegraphics[width=1\textwidth]{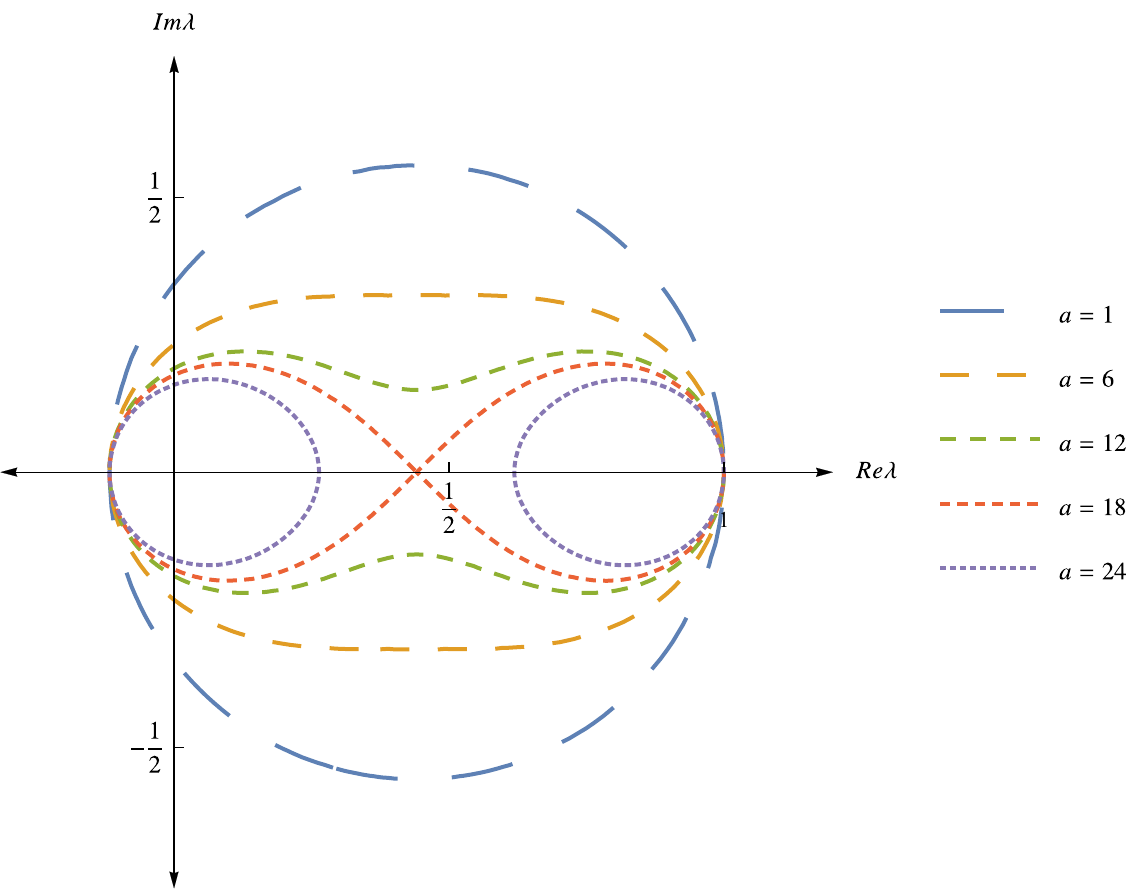}
	\caption[Plots of $\pd\stable_\star(0)$]{
		Plots of the boundaries $\pd\stable_\star(0)=\{\lambda\in\CC:\abs{\nu_\star(\lambda)}=1\}$, where $\nu_\star(\lambda)$ is defined in \eqref{eq:nuast}, for fixed $\alpha=\frac{1}{8}$ and $b=1$, and $a$ varies between 1 and 24.}
	\label{fig:ovals}
\end{figure}

\begin{figure}
	\begin{subfigure}{\textwidth}
		\centering
		\includegraphics{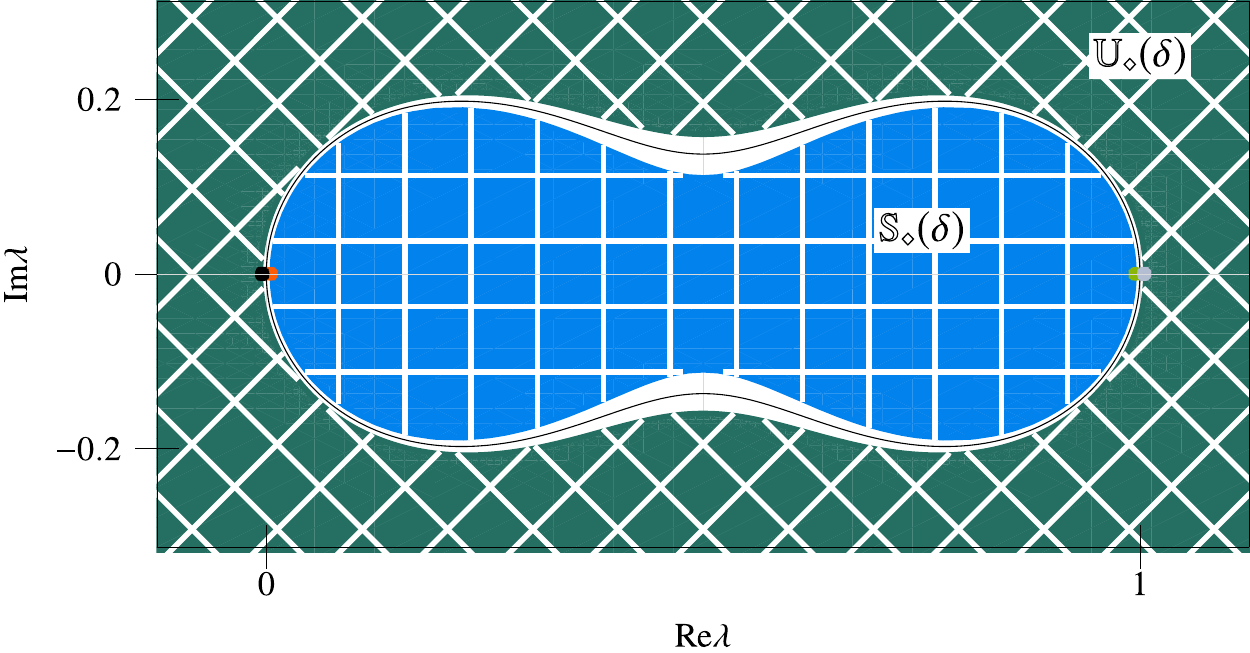}
		\caption{The sets $\stable_\star(\delta)$ and $\unstable_\star(\delta)$.}
	\end{subfigure}\newline\newline\newline
	\begin{subfigure}{0.49\textwidth}
		\centering
		\includegraphics[width=\linewidth]{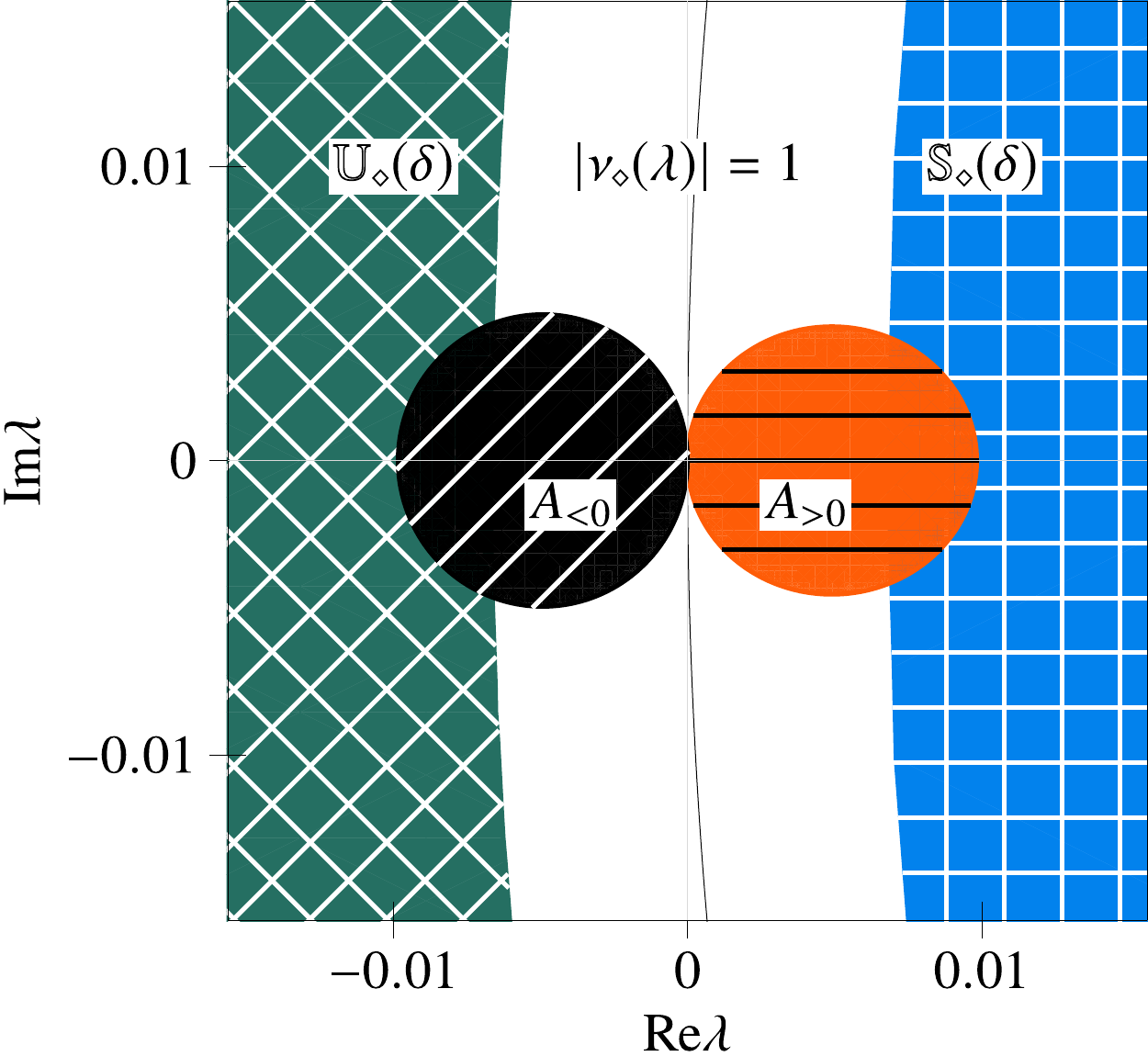}
		\caption{The sets $A_{<0}$ and $A_{>0}$.}
	\end{subfigure}
	\begin{subfigure}{0.49\textwidth}
		\centering
		\includegraphics[width=\linewidth]{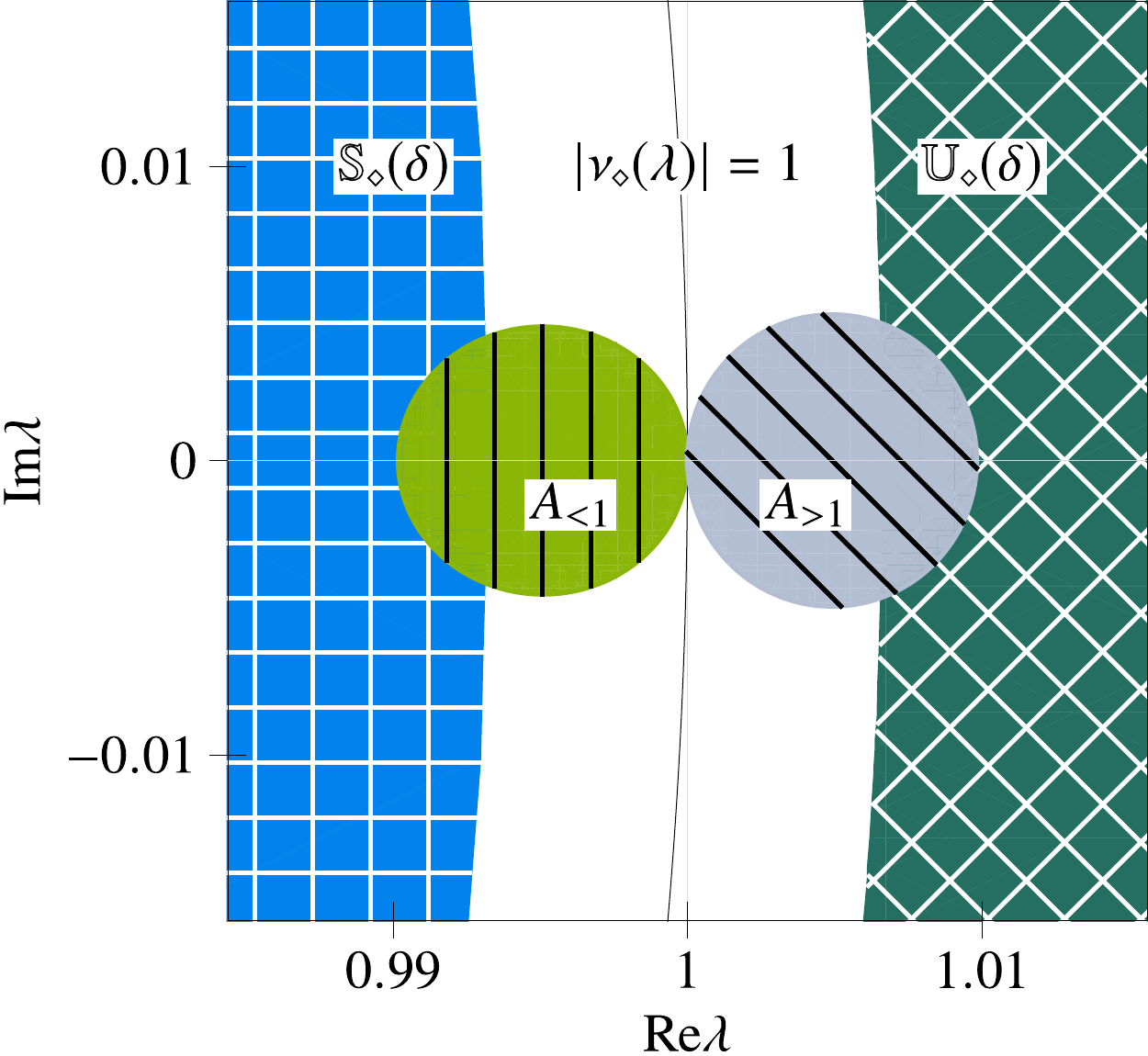}
		\caption{The sets $A_{<1}$ and $A_{>1}$.}
	\end{subfigure}
	\caption[The sets $\stable_\star(\delta)$, $\unstable_\star(\delta)$, $A_{<0}$, $A_{>0}$, $A_{<1}$ and $A_{>1}$]{Subfigure (A) depicts the sets $\stable_\star(\delta)$ (blue region with white horizontal-vertical cross-hatching) and $\unstable_\star(\delta)$ (dark green region with white diagonal cross-hatching) defined in \eqref{eq:stablestarsets}, and the sets $A_{<0}$ (black region with white diagonal lines), $A_{>0}$ (orange region with black horizontal lines), $A_{<1}$ (light green region with vertical lines) and $A_{>1}$ (gray region with black diagonal lines) defined in \eqref{eq:A<0}, \eqref{eq:A>0}, \eqref{eq:A<1} and \eqref{eq:A>1}, respectively, with $\alpha=0$, $a=21$, $b=4$ and $\delta=\frac{1}{20}$.
		The black curve between the dark green and blue regions denotes the set $\{\lambda\in\CC:\abs{\nu_\star(\lambda)}=1\}$.
		Subfigure (B) shows an enlargement of the region containing the sets $A_{<0}$ and $A_{>0}$. Subfigure (C) shows an enlargement of the region containing the sets $A_{<1}$ and $A_{>1}$. Note that the cross-hatching in subfigure (A) is not to scale with the cross-hatching in subfigures (B) and (C).
}
	\label{fig:stability}
\end{figure}

\begin{remark}
	\label{rem:OvalsofCassini}
	In view of Theorem \ref{thm:1a} we see that given $\delta\in(0,1)$ and a compact set $K$ in $\CC$, we have $\stable\cap K\subset\stable_\star(\delta)\cap K$ and $\unstable\cap K\subset\unstable_\star(\delta)\cap K$ for all $\beta$ sufficiently large.
	In particular, the sets $\stable\cap K$ and $\unstable\cap K$ respectively converge to $\stable_\star(0)\cap K$ and $\unstable_\star(0)\cap K$ in the Hausdorff {\color{red}metric}, as $\beta\to\infty$.
	In Figure \ref{fig:ovals} we plot the boundaries $\pd\stable_\star(0)$ when $\alpha=\frac{1}{8}$, $b=1$ and $a$ varies between 1 and 24.
	In Figure \ref{fig:stability}(A) we plot the sets $\stable_\star(\delta)$ and $\unstable_\star(\delta)$ when $\alpha=0$, $a=21$, $b=4$ and $\delta=\frac{1}{20}$.
\end{remark}

For $\beta>\bmono(\alpha,f,\delta,K)$, Theorem \ref{thm:1a} is only informative about the relation between $\rho(\mono_\lambda)$ and 1 if $\lambda\in\stable_\star(\delta)$ or $\lambda\in\unstable_\star(\delta)$ --- see the blue and dark green regions in Figure \ref{fig:stability}.
In particular, Theorem \ref{thm:1a} is not informative about the relation between $\rho(\mono_\lambda)$ and 1 when $\abs{\nu_\star(\lambda)}$ is infinitesimally close to 1 (i.e., when $\lambda$ is infinitesimally close to the black line in Figure \ref{fig:stability}).
Our next two results further characterize the relation between $\rho(\mono_\lambda)$ and 1 for such $\lambda$.

\subsubsection{Extended characteristic multipliers for $\lambda$ near 1}

From the definition of $\nu_\star$ given in \eqref{eq:nuast}, $\nu_\star(1)=1$ and $\nu_\star(\cdot)$ is continuous.
Thus, for fixed $\beta$ large, Theorem \ref{thm:1a} is not informative about the sets $\stable$ and $\unstable$ for $\lambda$ infinitesimally close to $1$, and this region is relevant in the analysis of weakly coupled systems (see Section \ref{sec:weakproof}).
Our next theorem, whose proof is given in Section \ref{sec:proofsnbhd}, further characterizes the sets $\stable$ and $\unstable$ for $\lambda$ infinitesimally close to $1$.

\begin{theorem}
	\label{thm:2a}
	Suppose $\alpha\ge0$ and $f$ satisfies Assumption \ref{ass:main}.
	There are open sets $A_{<1}\subset\CC_{<1}$ and $A_{>1}\subset\CC_{>1}$ with $1\in\partial A_{<1}\cap\partial A_{>1}$ such that the following hold:
	\begin{itemize}
		\item[(i)] For each $\lambda\in\CC_{<0}$ there exists $\eta_\lambda>0$ such that $1+\eta\lambda\in A_{<1}$ for all $\eta\in(0,\eta_\lambda)$.
		\item[(ii)] For each $\lambda\in\CC_{>0}$ there exists $\eta_\lambda>0$ such that $1+\eta\lambda\in A_{>1}$ for all $\eta\in(0,\eta_\lambda)$.
	\end{itemize}
	In addition, there is a $\bnbhdone=\bnbhdone(\alpha,f)\ge\buniq$ such that if $\beta>\bnbhdone$ then the following hold:
	\begin{itemize}
		\item[(iii)] $A_{<1}\subset\stable$, i.e., $\rho(\mono_\lambda)<1$ for all $\lambda\in A_{<1}$.
		\item[(iv)] $A_{>1}\subset\unstable$, i.e., $\rho(\mono_\lambda)>1$ for all $\lambda\in A_{>1}$.
	\end{itemize}
\end{theorem}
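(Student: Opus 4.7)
My plan is to exploit the two properties of the dominant multiplier $\mu_\beta(\lambda)$ of $\mono_\lambda$ that are established in Section \ref{sec:boundarycases}: (a) for all $\beta$ sufficiently large, $\mu_\beta$ is holomorphic in a fixed neighborhood of $\lambda=1$ with $\mu_\beta(1)=1$ (a simple eigenvalue of $\mono_1=\mono$, by Theorem \ref{thm:xie}), and (b) $\mu_\beta'(\lambda)\to\nu_\star'(\lambda)$ uniformly on compact sets as $\beta\to\infty$. A direct computation from \eqref{eq:nuast} gives
$$\nu_\star'(1)=\frac{2-\vr_1-\vr_2}{(1-\vr_1)(1-\vr_2)}=\frac{1}{1-\vr_1}+\frac{1}{1-\vr_2}=:c_\star>0,$$
since $\vr_1,\vr_2\in(0,e^{-\alpha})\subset(0,1)$. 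Because the extended variational equation \eqref{eq:eve} has real coefficients when $\lambda\in\R$, complex conjugation yields $\mu_\beta(\bar\lambda)=\overline{\mu_\beta(\lambda)}$; hence every Taylor coefficient of $\mu_\beta$ at $\lambda=1$ is real, so $c_\beta:=\mu_\beta'(1)\in\R$ and $c_\beta\to c_\star>0$.

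With these facts in hand, I would define
$$A_{<1}=\ball(1-r_0,r_0),\qquad A_{>1}=\ball(1+r_0,r_0),$$
where $r_0=r_0(\alpha,f)>0$ is a sufficiently small constant to be chosen below. These are open disks internally tangent to the line $\Re\lambda=1$ at $\lambda=1$, so $A_{<1}\subset\CC_{<1}$, $A_{>1}\subset\CC_{>1}$ and $1\in\pd A_{<1}\cap\pd A_{>1}$. Properties (i) and (ii) reduce to elementary disk geometry: for $\lambda\in\CC_{<0}$, the condition $1+\eta\lambda\in A_{<1}$ is $|\eta\lambda+r_0|^2<r_0^2$, which simplifies to $\eta|\lambda|^2<-2r_0\Re\lambda$ and holds for all $\eta\in(0,-2r_0\Re\lambda/|\lambda|^2)$; (ii) is symmetric.

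The crux is (iii)--(iv). Writing $w=\lambda-1$, Taylor expand $\mu_\beta(\lambda)=1+c_\beta w+R_\beta(w)$. Uniform boundedness of $\mu_\beta''$ on a fixed small disk around $1$ (obtained from Cauchy's integral formula applied to the uniformly convergent family $\mu_\beta'\to\nu_\star'$) gives $|R_\beta(w)|\le M|w|^2$ with $M=M(\alpha,f)$ independent of $\beta$ large. Since $c_\beta\in\R$,
$$|\mu_\beta(\lambda)|^2-1=2c_\beta\Re w+2\Re R_\beta(w)+|c_\beta w+R_\beta(w)|^2.$$
For $\lambda\in A_{<1}$, the bound $|w+r_0|^2<r_0^2$ yields $\Re w<-|w|^2/(2r_0)$, so $2c_\beta\Re w<-(c_\beta/r_0)|w|^2$, while the remaining two terms are bounded above by $C|w|^2$ for some $C=C(\alpha,f)$. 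Choosing $r_0<c_\star/(4C)$ and $\beta$ so large that $c_\beta>c_\star/2$ makes the coefficient $C-c_\beta/r_0<0$, so $|\mu_\beta(\lambda)|<1$. Theorem \ref{thm:1a} applied with $K=\clo(\ball(1,2r_0))$ and a fixed small $\delta$ ensures that all other elements of $\sigma(\mono_\lambda)$ lie near $0$, hence $\rho(\mono_\lambda)=|\mu_\beta(\lambda)|<1$ and $\lambda\in\stable$. The argument for (iv) is symmetric: on $A_{>1}$ one has $\Re w>|w|^2/(2r_0)$, producing a strict lower bound $|\mu_\beta(\lambda)|>1$.

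The main technical obstacle is the uniform-in-$\beta$ Taylor remainder bound $|R_\beta(w)|\le M|w|^2$, which is what lets $r_0$ be chosen in terms of $\alpha$ and $f$ alone. This rests on uniform holomorphicity of $\mu_\beta$ on a neighborhood of $\lambda=1$ independent of $\beta$ large, obtained by combining simple-eigenvalue perturbation for the isolated eigenvalue $\mu_\beta(1)=1$ of $\mono_1$ with Cauchy estimates applied to the uniformly convergent family $\mu_\beta'$.
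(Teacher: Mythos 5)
Your proposal is correct and follows essentially the same route as the paper's proof: both rest on the holomorphic dominant multiplier $\nu_\beta$ of $\mono_\lambda$ with $\nu_\beta(1)=1$, a real derivative at $1$ converging to $\nu_\star'(1)=\frac{1}{1-\vr_1}+\frac{1}{1-\vr_2}>0$ (Proposition \ref{prop:nu_lambda_simple}), and a $\beta$-uniform quadratic Taylor remainder (the paper's Lemma \ref{lem:taylor}), after which $A_{<1}$ and $A_{>1}$ are chosen so that the linear term dominates the remainder. The only real difference is the shape of the sets: you take the tangent disks $\ball(1-r_0,r_0)$ and $\ball(1+r_0,r_0)$ and exploit the inequality $\Re w<-|w|^2/(2r_0)$ (resp.\ $\Re w>|w|^2/(2r_0)$) to absorb the quadratic terms, whereas the paper uses the slightly more elaborate sets \eqref{eq:A<1}--\eqref{eq:A>1} built from $g_1,g_2$ to accommodate the a priori bracket $d\le\nu_\beta'(1)\le 3d$; both choices are valid.
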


\begin{remark}
Explicit expressions for the sets $A_{<1}$ and $A_{>1}$ are given in equations {\color{red}\eqref{eq:A<1}--\eqref{eq:A>1}} of the proof.
In Figure \ref{fig:stability}(C) we depict the sets $A_{<1}$ and $A_{>1}$ in the case $\alpha=0$, $a=21$ and $b=4$.
\end{remark}

\subsubsection{Extended characteristic multipliers for $\lambda$ near 0}

Next we consider the case that $\lambda$ is infinitesimally close to 0, which is relevant in the analysis of near uniformly coupled systems (see Section \ref{sec:uniformproof}).
By the definition of $\nu_\star$ in \eqref{eq:nuast},
	$$\nu_\star(0)=\lb\frac{e^{-\alpha}-\vr_1}{1-\vr_1}\rb\lb\frac{e^{-\alpha}-\vr_2}{1-\vr_2}\rb\in(0,1].$$
If $\alpha>0$, then $\abs{\nu_\star(0)}<1$ and we can apply Theorem \ref{thm:1a} to show that $0\in\stable$ for $\beta$ sufficiently large.
However, if $\alpha=0$, then $\nu_\star(0)=1$ and since $\nu_\star(\cdot)$ is continuous, it follows that for fixed $\beta$ large, Theorem \ref{thm:1a} is not informative about the sets $\stable$ and $\unstable$ for $\lambda$ infinitesimally close to $0$.
Our next theorem, whose proof is given in Section \ref{sec:proofnbhd0}, further characterizes the sets $\stable$ and $\unstable$ for $\lambda$ infinitesimally close to $0$.

\begin{theorem}\label{thm:nbhdof0}
	Suppose $\alpha=0$ and $f$ satisfies Assumption \ref{ass:main}.
	There is a $\bnbhdzero^\dag=\bnbhdzero^\dag(f)\ge\buniq(0,f)$ and open sets $A_{>0}\subset\CC_{>0}$ and $A_{<0}\subset\CC_{<0}$ with $0\in\partial A_{>0}\cap\partial A_{<0}$ such that
	\begin{itemize}
		\item[(i)] For each $\lambda\in\CC_{>0}$ there exists $\eta_\lambda>0$ such that $\eta\lambda\in A_{>0}$ for all $\eta\in(0,\eta_\lambda)$.
		\item[(ii)] For each $\lambda\in\CC_{<0}$ there exists $\eta_\lambda>0$ such that $\eta\lambda\in A_{<0}$ for all $\eta\in(0,\eta_\lambda)$.
	\end{itemize}
	In addition, if $\beta>\bnbhdzero^\dag$ then
	\begin{itemize}
		\item[(iii)] $A_{>0}\subset\stable$, i.e., $\rho(\mono_\lambda)<1$ for all $\lambda\in A_{>0}$;
		\item[(iv)] $A_{<0}\subset\unstable$, i.e., $\rho(\mono_\lambda)>1$ for all $\lambda\in A_{<0}$.
	\end{itemize}
\end{theorem}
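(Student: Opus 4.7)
The strategy parallels the proof of Theorem \ref{thm:2a}, now expanding around $\lambda = 0$ instead of $\lambda = 1$. The starting observation is that when $\alpha = 0$ and $\lambda = 0$ the extended variational equation \eqref{eq:eve} degenerates to $\dot y(t) = 0$, so solving with $y_0 = \vf$ gives $y(t) \equiv \vf(0)$ on $[0,\infty)$; consequently $(\mono_0 \vf)(\theta) = \vf(0)$ for all $\theta \in [-1,0]$, so $\mono_0$ has rank one and $\sigma(\mono_0) = \{0,1\}$ with $1$ a simple eigenvalue (the constant function). Combining this with the analyticity of $\lambda \mapsto \mono_\lambda$ (Section \ref{sec:boundarycases}) and analytic perturbation theory, for $\beta$ sufficiently large there is a holomorphic function $\nu_\beta$ defined on a disc $\ball(0,r)$ (independent of $\beta$) giving the dominant simple eigenvalue of $\mono_\lambda$, with $\nu_\beta(0) = 1$ and the rest of $\sigma(\mono_\lambda)$ trapped in $\ball(0, \tfrac12)$, uniformly in $\beta$ and $\lambda \in \ball(0,r)$.

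Next I would compute $\nu_\star'(0)$. From \eqref{eq:nuast}, $\nu_\star'(0) = -(\vr_1 + \vr_2)/[(1-\vr_1)(1-\vr_2)]$; in the case $\alpha = 0$ we have $\vr_1 + \vr_2 = 1$, hence $(1-\vr_1)(1-\vr_2) = \vr_1 \vr_2$, giving
\[
\nu_\star'(0) = -\frac{1}{\vr_1 \vr_2} = -\frac{(a+b)^2}{ab},
\]
which is a \emph{negative real} number. By the convergence result from Section \ref{sec:boundarycases}, $\nu_\beta'(0) \to \nu_\star'(0)$ as $\beta \to \infty$. Combining this with the $\beta$-uniform bound on $\nu_\beta$ over $\ball(0,r)$ and Cauchy's estimate on a slightly smaller disc produces a $\beta$-uniform second-order expansion $\nu_\beta(\lambda) = 1 + \nu_\beta'(0)\lambda + R_\beta(\lambda)$ with $|R_\beta(\lambda)| \leq C|\lambda|^2$ and $C$ independent of $\beta > \bnbhdzero^\dag$, hence
\[
|\nu_\beta(\lambda)|^2 = 1 + 2\nu_\beta'(0)\Re\lambda + O(|\lambda|^2).
\]

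I would then define the sets explicitly as
\[
A_{>0} = \{\lambda \in \ball(0,r_0) : \Re\lambda > K_0 |\lambda|^2\}, \qquad A_{<0} = -A_{>0},
\]
choosing $r_0, K_0 > 0$ (depending only on $\alpha, f$) so that, for $\beta > \bnbhdzero^\dag$, the linear term in the expansion strictly dominates the quadratic remainder. Both sets are open, $A_{>0} \subset \CC_{>0}$, $A_{<0} \subset \CC_{<0}$, and $0 \in \partial A_{>0} \cap \partial A_{<0}$. For $\lambda \in A_{>0}$, the combination $\nu_\beta'(0) < 0$ and $\Re\lambda > K_0 |\lambda|^2$ gives $|\nu_\beta(\lambda)|^2 < 1$, and since the nondominant part of $\sigma(\mono_\lambda)$ is bounded by $\tfrac12$, we conclude $\rho(\mono_\lambda) < 1$, proving (iii); (iv) follows by symmetry. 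Properties (i)--(ii) are immediate: for $\lambda \in \CC_{>0}$, $\Re(\eta\lambda)/|\eta\lambda|^2 = \Re\lambda/(\eta|\lambda|^2) \to \infty$ as $\eta \downarrow 0$, so $\eta\lambda \in A_{>0}$ for all sufficiently small $\eta > 0$.

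The main obstacle is securing the $\beta$-uniform second-order control of $\nu_\beta$. The pointwise convergence statements from Section \ref{sec:boundarycases} do not suffice on their own: one needs $\beta$-uniform isolation of the dominant eigenvalue of $\mono_\lambda$ on a fixed complex disc around $\lambda = 0$ so that Cauchy's estimate applies to bound $\nu_\beta''$. This uniform spectral-gap statement can be obtained by applying Theorem \ref{thm:1a} on a small compact annulus around $0$ to trap the nondominant part of $\sigma(\mono_\lambda)$ near $0$, and closely mirrors the analogous step in the proof of Theorem \ref{thm:2a}.
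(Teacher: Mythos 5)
Your proposal is correct and follows essentially the same route as the paper: Proposition \ref{prop:nu_lambda_simple} supplies a holomorphic dominant eigenvalue $\nu_\beta$ near $\lambda=0$ with $\nu_\beta(0)=1$ and $\nu_\beta'(0)\to\nu_\star'(0)<0$, and both arguments define $A_{>0},A_{<0}$ by making the negative linear term of the Taylor expansion dominate a $\beta$-uniform quadratic remainder (the paper via Lemma \ref{lem:taylor} and the explicit functions $g_1,g_2$ in \eqref{eq:A>0}--\eqref{eq:A<0}, you via the condition $\Re\lambda>K_0|\lambda|^2$). The only point you should make explicit is that your identity $|\nu_\beta(\lambda)|^2=1+2\nu_\beta'(0)\Re\lambda+O(|\lambda|^2)$ requires $\nu_\beta'(0)$ to be \emph{real} (otherwise a first-order $\Im\lambda$ cross term appears that is not controlled by $\Re\lambda>K_0|\lambda|^2$); this realness is indeed part of Proposition \ref{prop:nu_lambda_simple}, so the gap is only expository.
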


\begin{remark}
Explicit expressions for the sets $A_{>0}$ and $A_{<0}$ are in \eqref{eq:A>0}--\eqref{eq:A<0} of the proof.
In Figure \ref{fig:stability}(B) we depict the sets $A_{<0}$ and $A_{>0}$ in the case $\alpha=0$, $a=21$, $b=4$.
\end{remark}

\subsubsection{Extended characteristic multipliers for $\lambda\in[0,1)$}

As a corollary of Theorems \ref{thm:1a}, \ref{thm:2a} and \ref{thm:nbhdof0}, we have the following characterization of the $\stable\cap[0,1)$ and $\unstable\cap[0,1)$, which are relevant in the analysis of doubly nonnegative coupled systems (see Section \ref{sec:01proof}).
A detailed proof of the corollary is given in Section \ref{sec:01}.
Recall the definitions of the constants $r_0$ and $\Delta$ given in \eqref{eq:r0} and \eqref{eq:Delta}, respectively.

\begin{cor}\label{cor:01}
	Suppose $\alpha\ge0$ and $f$ satisfies Assumption \ref{ass:main}.
	Suppose $\Delta<0$. 
	Then there exists $\bzoa=\bzoa(\alpha,f)\ge\buniq$ such that for every $\beta>\bzoa$, the following hold:
	\begin{itemize}
		\item[(i)] If $\alpha>0$ then $\stable_\star(0)\cap[0,1)=[0,1)$ and $[0,1)\subset\stable$.
		\item[(ii)] If $\alpha=0$ then $\stable_\star(0)\cap[0,1)=(0,1)$ and $(0,1)\subset\stable$. 
	\end{itemize}
	On the other hand, suppose $\Delta>0$.
	Let $\ve$ satisfy inequality \eqref{eq:ve}.
	Then there exists $\bzob=\bzob(\alpha,f,\ve)\ge\buniq$ such that for every $\beta>\bzob$, the following hold:
	\begin{itemize}
		\item[(iii)] If $\alpha>0$ then $\stable_\star(0)\cap[0,1)=[0,r_0-\sqrt{\Delta})\cup(r_0+\sqrt{\Delta},1)$ and $[0,r_0-\sqrt{\Delta}-\ve)\cup(r_0+\sqrt{\Delta}+\ve,1)\subset\stable$.
		\item[(iv)] If $\alpha=0$ then $\stable_\star(0)\cap[0,1)=(0,r_0-\sqrt{\Delta})\cup(r_0+\sqrt{\Delta},1)$ and $(0,r_0-\sqrt{\Delta}-\ve)\cup(r_0+\sqrt{\Delta}+\ve,1)\subset\stable$.
		\item[(v)] $\unstable_\star(0)\cap[0,1]=(r_0-\sqrt{\Delta},r_0+\sqrt{\Delta})$ and $(r_0-\sqrt{\Delta}+\ve,r_0+\sqrt{\Delta}-\ve)\subset\unstable$.
	\end{itemize}
\end{cor}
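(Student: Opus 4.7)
The plan is to combine the analytic form of $\nu_\star$ on the real line with Theorems \ref{thm:1a}, \ref{thm:2a}, and \ref{thm:nbhdof0} to cover $[0,1)$ (or $(0,1)$ when $\alpha=0$) by finitely many pieces, one handled by each result, together with a compact interval in $(0,1)$ to obtain the instability part of (v).

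\emph{Step 1 (sign analysis of $|\nu_\star|-1$ on $[0,1]$).} Using $\vr_1+\vr_2=e^{-\alpha}=2r_0$, completion of the square gives $\nu_\star(\lambda)=[(\lambda-r_0)^2-((\vr_1-\vr_2)/2)^2]/[(1-\vr_1)(1-\vr_2)]$, and the identity $(1-\vr_1)(1-\vr_2)=((\vr_1-\vr_2)/2)^2-\Delta$ shows that the real solutions of $|\nu_\star(\lambda)|=1$ are $\lambda\in\{1,\,e^{-\alpha}-1\}$ together with $\lambda=r_0\pm\sqrt{\Delta}$ when $\Delta>0$. Since $e^{-\alpha}-1\le 0$ (with equality iff $\alpha=0$), evaluating $\nu_\star$ at $\lambda=0$, $\lambda=r_0$, and $\lambda=1$ determines the sign of $|\nu_\star|-1$ on each subinterval and yields the claimed identities for $\stable_\star(0)\cap[0,1)$ and $\unstable_\star(0)\cap[0,1]$ in (i)--(v); the constraints in \eqref{eq:ve} ensure that $0<r_0-\sqrt{\Delta}-\ve$ and $r_0+\sqrt{\Delta}+\ve<1$.

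\emph{Step 2 (stability inclusions).} Applying Theorem \ref{thm:2a}(i) with the real eigenvalue $-1\in\CC_{<0}$ yields $\delta_1\in(0,1)$ such that $(1-\delta_1,1)\subset A_{<1}\subset\stable$ whenever $\beta>\bnbhdone$. When $\alpha=0$, applying Theorem \ref{thm:nbhdof0}(i) with the real eigenvalue $1\in\CC_{>0}$ yields $\delta_2\in(0,1)$ such that $(0,\delta_2)\subset A_{>0}\subset\stable$ whenever $\beta>\bnbhdzero^\dag$; set $\delta_2=0$ when $\alpha>0$. The remaining set
\[K_{\mathrm{stab}}=\begin{cases}[\delta_2,\,1-\delta_1],&\Delta<0,\\ [\delta_2,\,r_0-\sqrt{\Delta}-\ve]\cup[r_0+\sqrt{\Delta}+\ve,\,1-\delta_1],&\Delta>0,\end{cases}\]
is compact, and by Step 1 together with continuity of $\nu_\star$ there exists $\delta'>0$ with $\max_{\lambda\in K_{\mathrm{stab}}}|\nu_\star(\lambda)|<1-\delta'$. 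Theorem \ref{thm:1a} then gives $K_{\mathrm{stab}}\subset\stable$ for every $\beta>\bmono(\alpha,f,\delta',K_{\mathrm{stab}})$, and the union of the three pieces establishes the stability inclusions in (i)--(iv).

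\emph{Step 3 (instability in (v) and thresholds).} The compact interval $K_{\mathrm{inst}}=[r_0-\sqrt{\Delta}+\ve,\,r_0+\sqrt{\Delta}-\ve]$ satisfies $\min_{\lambda\in K_{\mathrm{inst}}}|\nu_\star(\lambda)|>1+\delta''$ for some $\delta''>0$ depending on $(\alpha,f,\ve)$, so Theorem \ref{thm:1a} yields $K_{\mathrm{inst}}\subset\unstable$ once $\beta$ exceeds $\bmono(\alpha,f,\delta'',K_{\mathrm{inst}})$. Defining $\bzoa$ (resp.\ $\bzob$) as the maximum of the finitely many $\beta$-thresholds produced in the $\Delta<0$ (resp.\ $\Delta>0$) case completes the argument. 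The only mild obstacle is that Theorem \ref{thm:1a} is silent at the endpoints $\lambda=1$ (where $|\nu_\star|=1$) and, when $\alpha=0$, at $\lambda=0$ (where again $|\nu_\star|=1$); Theorems \ref{thm:2a} and \ref{thm:nbhdof0} are designed precisely to cover those boundary strips.
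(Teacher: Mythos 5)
Your proposal is correct and follows essentially the same route as the paper's proof: the same completion-of-the-square sign analysis of $\nu_\star$ on $[0,1]$, the same three-piece covering (a strip near $\lambda=1$ via Theorem \ref{thm:2a}, a strip near $\lambda=0$ via Theorem \ref{thm:nbhdof0} when $\alpha=0$, and a compact middle set via Theorem \ref{thm:1a}), and the same compactness argument for the instability interval in (v). The only differences are cosmetic (you package the compact middle region as a single set per case, and you correctly invoke Theorem \ref{thm:2a}(i) with $\lambda=-1$ where the paper's text has an apparent typo).
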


\begin{remark}
In Figure \ref{fig:stability} we depict the regions $\stable_\star(\delta)$, $\unstable_\star(\delta)$, $A_{<0}$, $A_{>0}$, $A_{<1}$ and $A_{>1}$ in the case that $\alpha=0$, $\Delta<0$ and $\delta\in(0,1)$ is sufficiently small so that $(0,1)\subset A_{>0}\cup\stable_\star(\delta)\cup A_{<1}$, which corresponds to the setting in Corollary \ref{cor:01}(ii).
In this case, if $\beta>\bzoa$, then $(0,1)\subset\stable$.
\end{remark}

\section{Proof of Theorem \ref{thm:msf}}
\label{sec:msfproof}

Before proving Theorem \ref{thm:msf} we first state and prove the following useful lemma.

\begin{lem}\label{lem:decomp}
	Suppose $\gmatrix$ is an $n\times n$ real-valued matrix and $L_1,L_2\in C(\R_+,B(\C(\CC)))$.
	Define ${\bf V}\in C(\R_+,B(\C(\CC^n)))$ by ${\bf V}(t){\bs\vf}={\bf u}_t$ for $t\ge0$ and ${\bs\vf}\in\C(\CC^n)$, where ${\bf u}=(u^1,\dots,u^n)^T\in C([-1,\infty),\CC^n)$ is continuously differentiable on $(0,\infty)$, and satisfies ${\bf u}_0={\bs\vf}$ and
		\be\label{eq:ulinear}\dot{u}^j(t)=L_1(t)u_t^j+\sum_{k=1}^n\gmatrix^{jk}L_2(t)u_t^k,\qquad j=1,\dots,n.\ee
	For $\lambda\in\CC$, define $V_\lambda\in C(\R_+,B(\C(\CC)))$ by $V_\lambda(t)\vf=u_t$ for $t\ge0$ and $\vf\in\C(\CC)$, where $u\in C([-1,\infty),\CC)$ is continuously differentiable on $(0,\infty)$, and satisfies $u_0=\vf$ and
		\be\label{eq:uoned}\dot u(t)=L_1(t)u_t+\lambda L_2(t)u_t.\ee
	Then for each $t_0\ge1$, ${\bf V}(t_0)\in B_0(\C(\CC^n))$ and $V_\lambda(t_0)\in B_0(\C(\CC))$ for all $\lambda\in\CC$, and
		$$\sigma({\bf V}(t_0))=\bigcup_{\lambda\in\sigma(H)}\sigma(V_\lambda(t_0)).$$
\end{lem}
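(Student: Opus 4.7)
The plan is to reduce the $n$-dimensional coupled system on $\C(\CC^n)$ to a family of one-dimensional problems indexed by the Jordan structure of $H$. First I would verify compactness of both ${\bf V}(t_0)$ and $V_\lambda(t_0)$ for $t_0 \ge 1$ by a standard Arzel\`a--Ascoli argument: on $[t_0 - 1, t_0]$ the solution is continuously differentiable with derivative bounded by a constant depending on $t_0$, $\sup_{s\in[0,t_0]}\|L_1(s)\|$, $\sup_{s\in[0,t_0]}\|L_2(s)\|$, and $|H|$, times $\|{\bs\phi}\|_{[-1,0]}$, so the image of a bounded set is uniformly bounded and equicontinuous.

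For the spectral identity, choose $P \in GL(n, \CC)$ with $P^{-1} H P = J$ in Jordan normal form, and set ${\bf w}(t) = P^{-1} {\bf u}(t)$. A direct calculation shows that ${\bf u}$ solves \eqref{eq:ulinear} if and only if ${\bf w}$ solves the analogous system with $H$ replaced by $J$. The induced map ${\bs\phi} \mapsto P^{-1} {\bs\phi}$ is an invertible bounded operator on $\C(\CC^n)$, so ${\bf V}(t_0)$ is similar to the evolution operator $\tilde{\bf V}(t_0)$ of the $J$-system, and the two share the same spectrum. Because $J$ is block-diagonal across Jordan blocks, the $J$-system decouples into invariant subspaces, one copy of $\C(\CC)^m$ per Jordan block of size $m$ at eigenvalue $\lambda$. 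On such a block the system reads
$$\dot w^j(t) = L_1(t) w^j_t + \lambda L_2(t) w^j_t + L_2(t) w^{j+1}_t, \qquad j = 1, \dots, m,$$
with the convention $w^{m+1} \equiv 0$. The associated evolution operator $T_\lambda(t_0) \in B(\C(\CC)^m)$ is upper-triangular in this basis, and each diagonal entry equals $V_\lambda(t_0)$: setting all higher-index components to zero reduces the equation for the single remaining component to exactly \eqref{eq:uoned}.

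It remains to show $\sigma(T_\lambda(t_0)) = \sigma(V_\lambda(t_0))$, from which taking the union over all Jordan blocks of $H$ yields
$$\sigma({\bf V}(t_0)) = \bigcup_{\lambda \in \sigma(H)} \sigma(V_\lambda(t_0)).$$
The spectral claim for $T_\lambda(t_0)$ is the main obstacle; I would handle it by back-substitution on the resolvent equation $(T_\lambda(t_0) - \mu \Id){\bf w} = {\bf g}$. Reading the triangular system from the bottom component $w^m$ upward, at each stage the equation for $w^j$ has the form $(V_\lambda(t_0) - \mu \Id) w^j = g^j - R^j$, where $R^j$ is an explicit bounded expression in the already-determined $w^{j+1}, \dots, w^m$. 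Hence $T_\lambda(t_0) - \mu \Id$ is invertible in $B(\C(\CC)^m)$ precisely when $V_\lambda(t_0) - \mu \Id$ is invertible in $B(\C(\CC))$, which gives the equality of spectra.

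A subsidiary technical point that needs care is the verification that the change of variables ${\bf w} = P^{-1} {\bf u}$ really does intertwine the two evolution operators on the infinite-dimensional spaces $\C(\CC^n)$ and $\C(\CC)^n$ (viewed as isomorphic Banach spaces); this is routine but should be written out, as the evolution operators are defined implicitly through the variational system rather than by an explicit formula.
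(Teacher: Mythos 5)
Your route is essentially the paper's: both pass to the Jordan form of $H$ via the similarity ${\bf w}=P^{-1}{\bf u}$ and exploit the resulting triangular structure, and your Arzel\`a--Ascoli argument for compactness is a fine substitute for the paper's citation. The one place where your write-up asserts more than it proves is the claim that $T_\lambda(t_0)-\mu \Id$ is invertible \emph{precisely when} $V_\lambda(t_0)-\mu\Id$ is. Back-substitution on the resolvent equation gives only one implication (invertibility of $V_\lambda(t_0)-\mu\Id$ implies invertibility of $T_\lambda(t_0)-\mu\Id$, i.e.\ $\sigma(T_\lambda(t_0))\subset\sigma(V_\lambda(t_0))$); for general bounded block-triangular operators on an infinite-dimensional space the converse is false, so "precisely when" does not follow from the argument you give.

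The converse inclusion $\sigma(V_\lambda(t_0))\subset\sigma(T_\lambda(t_0))$ is nonetheless easy here, and you already have the ingredients: since both operators are compact, $0$ lies in both spectra and every nonzero spectral point is an eigenvalue; if $V_\lambda(t_0)\psi=\mu\psi$ with $\mu\ne0$, then padding $\psi$ with zeros in the components that do not feed into it (with your indexing, $(\psi,0,\dots,0)$) gives an eigenvector of $T_\lambda(t_0)$ for the same $\mu$. Conversely, a nonzero eigenvalue of $T_\lambda(t_0)$ yields, by looking at the last nonvanishing component of its eigenvector, an eigenvector of $V_\lambda(t_0)$ --- this is exactly the step the paper performs directly on ${\bf w}=P^{-1}{\bf u}$ without isolating a separate triangular-spectrum lemma. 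With that one sentence added, your proof is complete and matches the paper's argument.
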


\begin{proof}
	First note that the continuity of ${\bf V}(\cdot)$ and the continuity of $V_\lambda(\cdot)$, $\lambda\in\CC$, follow from the continuity of the solutions ${\bf u}$ and $u$. 
	The compactness of ${\bf V}(t)$ and $V_\lambda(t)$, $\lambda\in\CC$, for $t\ge1$ follow, for example, from \cite[Chapter III, Corollary 4.7]{Diekmann1991}.

	Fix $t_0\ge1$.
	Suppose $\lambda\in\sigma(\gmatrix)$, $\mu\in\sigma(V_\lambda(t_0))$ and $\psi_\mu\in\C(\CC)$ is such that $\psi_\mu\not\equiv0$ and $V_\lambda(t_0)\psi_\mu=\mu\psi_\mu$.
	Let $u\in C([-1,\infty),\CC)$ denote the solution of the scalar linear DDE \eqref{eq:uoned} with $u_0=\psi_\mu$ so that $u_{t_0}=\mu\psi_\mu$.
    Choose an eigenvector ${\bf v}_\lambda\in \CC^n$ associated with $\lambda$ so that $\gmatrix{\bf v}_\lambda=\lambda{\bf v}_\lambda$ and define ${\bf u}=u{\bf v}_\lambda\in C([-1,\infty),\CC^n)$. 
	Then for all $t>0$, we have
	\begin{align*}
		\dot{\bf u}(t)&={\bf v}_\lambda L_1(t)u_t+\gmatrix{\bf v}_\lambda L_2(t)u_t
		={\bf L}_1(t){\bf u}_t+\gmatrix{\bf L}_2(t){\bf u}_t,
	\end{align*}
	where ${\bf L}_j(t){\bf u}_t=(L_j(t)u_t^1,\dots,L_j(t)u_t^n)^T$, for $j=1,2$.
	Thus, ${\bf u}$ is a solution of the system of linear DDEs \eqref{eq:ulinear} with ${\bf u}_0={\bf v}_\lambda\psi_\mu$ and ${\bf u}_{t_0}=\mu{\bf v}_\lambda\psi_\mu$.
	It follows that $\mu$ is an eigenvalue of ${\bf V}(t_0)$ with associated eigenvector ${\bs\vf}_\mu={\bf v}_\lambda\vf_\mu$.
	This proves that $\sigma(V_\lambda(t_0))\subset\sigma({\bf V}(t_0))$ for all $\lambda\in\sigma(\gmatrix)$.
	
	Now suppose $\mu\in\sigma({\bf V}(t_0))$ and $\bs{\vf}_\mu\in\C(\CC^n)$ is such that $\bs{\vf}_\mu \not\equiv 0$ and ${\bf V}(t_0){\bs\vf}_\mu=\mu{\bs\vf}_\mu$.
	Let ${\bf u}$ denote the solution of the system of linear DDEs \eqref{eq:ulinear} with ${\bf u}_0={\bs\psi}_\mu$ so that ${\bf u}_{t_0}=\mu{\bs\psi}_\mu$.
	Writing $\gmatrix$ in its Jordan form we have there is a real invertible $n\times n$ matrix $P$ and a complex upper triangular $n\times n$ matrix $D$ such that $\gmatrix=PDP^{-1}$ and the diagonal elements of $D$, which we denote by $\lambda_1,\dots,\lambda_n$, are the eigenvalues of $\gmatrix$.
	Define ${\bf w}=P^{-1}{\bf u}$.
	Using the definition of ${\bf L}_j(t)$, $j=1,2$, given above, we have
		$$\dot{\bf w}(t)=P^{-1}{\bf L}_1(t){\bf u}_t+DP^{-1}{\bf L}_2(t){\bf u}_t={\bf L}_1(t){\bf w}_t+D{\bf L}_2(t){\bf w}_t.$$
	Since ${\bf w}$ is nonzero, there is a unique $k\in\{1,\dots,n\}$ such that $w^k$ is nonzero and $w^j$ is zero for $k<j\le n$.
	Then, due to the fact that $D$ is upper triangular, we see that for all $t>0$,
		$$\dot w^k(t)=L_1(t)w_t^k+\lambda_kL_2(t)w_t^k.$$
	Thus, $w^k$ is a solution of scalar linear DDE \eqref{eq:uoned} with $w_0^k=[P^{-1}{\bf u}_0]^k$ and $w_{t_0}^k=\mu[P^{-1}{\bf u}_0]^k$.
	Therefore, $\mu$ is an eigenvalue of $V_{\lambda_k}(t_0)$ with associated eigenvector $\vf_\mu=[P^{-1}{\bs\vf}_\mu]^k$.
	Hence, $\sigma({\bf V}(t_0))\subset\cup_{\lambda\in\sigma(\gmatrix)}\sigma(V_\lambda(t_0))$, which completes the proof.
\end{proof}

\begin{proof}[Proof of Theorem \ref{thm:msf}]
	Let ${\bf y}={\bf y}({\bs\vf})$ denote the solution to the variational equation \eqref{eq:nvariational} with initial condition ${\bf y}_0={\bs\vf}$.
	Then ${\bf y}=(y^1,\dots,y^n)$ satisfies the system of linear DDEs \eqref{eq:ulinear} with $\gmatrix=G$ and $L_1,L_2\in C(\R_+,B_0(\C(\CC)))$ given by $L_1(t)\vf=-\alpha\vf(0)$ and $L_2(t)\vf=\beta f'(\sops(t-1))\vf(-1)$, for $t\ge0$ and $\vf\in\C(\CC)$.
	It follows from Lemma \ref{lem:decomp} and the respective definitions of $\bf\mono$ and $\mono_\lambda$ in \eqref{eq:nmono} and \eqref{eq:gmonodromy}, that ${\bf V}(\period)={\bf \mono}$, $V_\lambda(\period)=\mono_\lambda$ for all $\lambda\in\CC$, and the equality in \eqref{eq:Floquet} holds.
	
	Next, by Theorem \ref{thm:xie}, $\sops$ is linearly stable, and so the trivial characteristic multiplier $\mu=1$ is a simple eigenvalue of $\mono_1$ and $\abs{\mu}<1$ for all other eigenvalues of $\mono_1$.
	Suppose $\rho(\mono_\lambda)<1$ for all $\lambda\in\sigminus(G)$.
	Then by the equality in \eqref{eq:Floquet}, we have
	$$\max\lcb\abs{\mu}:\mu\in\sigminus({\bf\mono})\rcb=\max\lcb\abs{\mu}:\mu\in\sigminus(\mono_1)\cup\bigcup_{\lambda\in\sigminus(G)}\sigma(\mono_\lambda)\rcb<1.$$
	This proves that ${\bf\sops}$ is linearly stable.
	Now suppose $\rho(\mono_\lambda)>1$ for some $\lambda\in\sigma(G)$.
	Then by the equality in \eqref{eq:Floquet}, we have $\max\lcb\abs{\mu}:\mu\in\sigma({\bf\mono})\rcb\ge\rho(\mono_\lambda)>1$.
	This proves that ${\bf\sops}$ is linearly unstable.
\end{proof}

\section{Convergence of normalized SOPS}
\label{sec:normalizedSOPS}

In this section we recall results from \cite[Section 3]{Xie1991} on the convergence of normalized SOPS to the solution of a scalar DDE with step function nonlinearity.

\textbf{We make the following assumptions, which will hold throughout the remainder of this work unless otherwise noted}.
Fix $\alpha\ge0$ and $f$ satisfying Assumption \ref{ass:main}. 
Without loss of generality, we assume $a\ge b$, where $a,b>0$ are the constants in condition 2 of Assumption \ref{ass:main}.
Let $\buniq>0$ be as in Theorem \ref{thm:xie} so that for each $\beta>\buniq$, there exists a unique SOPS $p^\beta$ and it is linearly stable.
Let $\period^\beta>2$ denote the (minimal) period of $\xbeta$.
We extend the definition of $\xbeta$ to all of $\R$ so that $\xbeta$ is periodic on $\R$ with period $\period^\beta$.
Furthermore, by possibly performing a time translation, we assume that $p^\beta$ satisfies 
	\be\label{eq:pphasefix}\xbeta(-1)=0\qquad\text{and}\qquad\dot p^\beta(-1)>0.\ee
We let $z_1^\beta>0$ and $z_2^\beta>z_1^\beta+1$ be the unique times such that $\period^\beta=z_2^\beta+1$, $\xbeta(t)>0$ for all $-1<t<z_1^\beta$ and $\xbeta(t)<0$ for all $z_1^\beta<t<z_2^\beta$.

For each $\beta>\buniq$ define the normalized SOPS $\nxbeta\in C(\R,\R)$ by 
	\be\label{eq:xbarbeta}\nxbeta(t)=\beta^{-1}\xbeta(t),\qquad t\in\R.\ee 
Define the step function $f^\star:\R\to\R$ by
\begin{equation} \label{eq_def_f_bar}
f^\star(\xi) = \begin{cases}
-a & \text{if } \xi \ge 0, \\
b & \text{if } \xi < 0.
\end{cases}
\end{equation}
For $\alpha>0$ let
\begin{align} \label{eq:q1bar_q2bar}
q_1^\star = \frac{1}{\alpha}\log\lcb 1+a^{-1}b(1-e^{-\alpha})\rcb\quad\text{and}\quad q_2^\star = \frac{1}{\alpha}\log\lcb 1+ab^{-1}(1-e^{-\alpha})\rcb,
\end{align}
and define the function $\bar\sops^\star\in C(\R,\R)$ by
\begin{equation} \label{eq:xbar}
\bar{\sops}^\star(t) = 
\begin{dcases}
\frac{b}{\alpha}\lb 1-e^{-\alpha(t+1)}\rb & \text{for } -q_2^\star-1 \le t \le 0, \\
-\frac{a}{\alpha}\lb 1-e^{-\alpha(t-q_1^\star)}\rb & \text{for } 0 < t \le q_1^\star + 1.
\end{dcases}
\end{equation}
For $\alpha=0$, $q_1^\star,q_2^\star$ and $\bar\sops$ are taken to be the pointwise limits of the definitions in \eqref{eq:q1bar_q2bar} and \eqref{eq:xbar} as $\alpha\downarrow0$; specifically, 
\begin{align} \label{eq:q1bar_q2baralpha0}
q_1^\star=a^{-1}b\quad\text{and}\quad q_2^\star=ab^{-1},
\end{align}
and 
\begin{equation}\label{eq:xbaralpha0}
\bar\sops^\star(t) = 
\begin{dcases}
-a+b(t+q_2^\star+1)&\text{for } -q_2^\star-1 \le t \le 0, \\
b-at&\text{for } 0 < t \le q_1^\star + 1.
\end{dcases}
\end{equation}
Note that our assumption $a\ge b$ implies that $q_1^\star\le q_2^\star$.
Since $\bar\sops^\star(-q_2^\star-1)=\bar\sops^\star(q_1^\star+1)$, we can (uniquely) extend the definition of $\bar\sops^\star$ to all of $\R$ so that $\bar\sops^\star$ is continuous and periodic with period $\period^\star=q_1^\star+q_2^\star+2$. See Figure \ref{fig:xbar} for a graph of $\bar\sops^\star$.

\begin{figure}[h!]
	\centering
	\includegraphics[width=.9\textwidth]{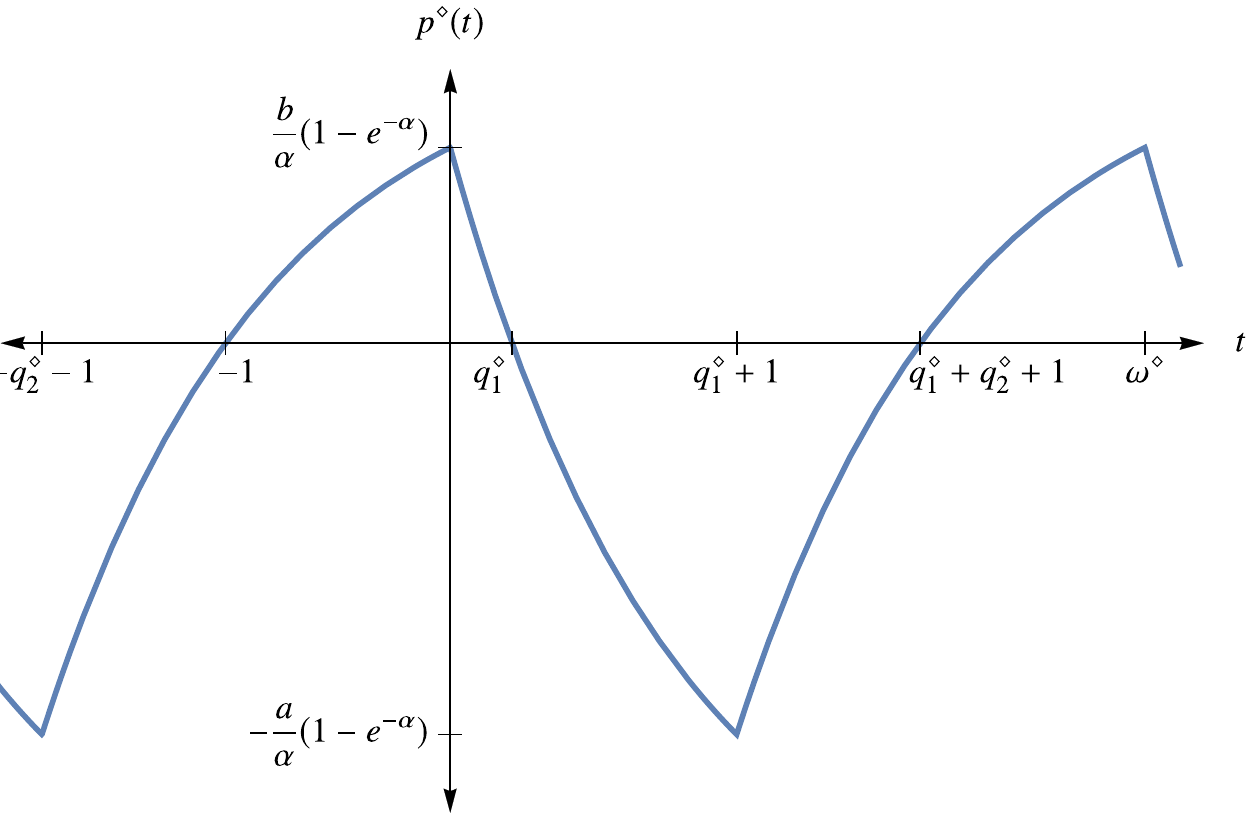}
	\caption[Graph of $\bar\sops^\star$]{Graph of $\bar\sops^\star$ defined in \eqref{eq:xbar} with $\alpha=1$, $a=2$ and $b=1$.}
	\label{fig:xbar}
\end{figure}

Define
\begin{align}
\label{eq:Iset}\Iset&=\{t\in\R:\bar\sops^\star(t)=0\}=\{-1+k\period^\star,q_1^\star+k\period^\star:k\in\Z\},\\ 
\label{eq:Jset}\Jset&=\{t\in\R:\bar\sops^\star(t-1)=0\}=\{k\period^\star,q_1^\star+1+k\period^\star:k\in\Z\}.
\end{align}
Note that $\Jset=\{t+1:t\in\Iset\}$.
Then $\bar\sops^\star$ is continuously differentiable on $\R\setminus\Jset$ and $\dot{\bar\sops}^\star$ satisfies (for $\alpha\ge0$)
\begin{equation} \label{eq:xbardot}
\dot{\bar\sops}^\star(t) = 
\begin{dcases}
be^{-\alpha(t+1)} & \text{if } -q_2^\star-1 < t < 0, \\
-ae^{-\alpha(t-q_1^\star)} & \text{if } 0 < t < q_1^\star + 1.
\end{dcases} 
\end{equation}
We extend the definition of $\dot{\bar\sops}^\star$ to all of $\R$ by setting $\dot{\bar\sops}^\star$ to be right continuous at all $t\in\Jset$, so that $\dot{\bar{\sops}}^\star\in D(\R,\R)$.

We close this section by stating convergence properties of $z_1^\beta$, $z_2^\beta$, $\period^\beta$, $\bar\sops^\beta$  and $\dot{\bar\sops}^\beta(t)$ as $\beta\to\infty$. For $\ve>0$, let
\begin{align}
\label{eq:Isetve}\Iset_\ve&=\{t\in\R:d_\R(\Iset,t)<\ve\},\\
\label{eq:Jsetve}\Jset_\ve&=\{t\in\R:d_\R(\Jset,t)<\ve\}.
\end{align}

\begin{prop}
	[{\cite[Theorem 14 \& Corollary 15]{Xie1991}}]
	\label{prop:xbar_convergence}
	Suppose $\alpha\geq0$ and $f$ satisfies Assumption \ref{ass:main}.
     	Then the following limits hold:
	\begin{align}
	\lim_{\beta\to\infty}|{z_1^\beta-q_1^\star}|&=0,\label{eq:z1_bound}\\
	\lim_{\beta\to\infty}|{z_2^\beta-q_1^\star-q_2^\star-1}|&=0,\label{eq:z2_bound}\\
	\lim_{\beta\to\infty}|{\period^\beta-\period^\star}|&=0,\label{eq:pbound}
	\end{align}
	and for all compact $K\subset\R$ and all $\ve>0$,
	\begin{align}
	\label{eq:pbetaconvergence}\lim_{\beta\to\infty}\sup_{t\in K}|{\bar\sops^\beta(t)-\bar\sops^\star(t)}|&=0,\\
	\lim_{\beta\to\infty}\sup_{t\in K\setminus\Jset_\ve}|{\dot{\bar\sops}^\beta(t)-\dot{\bar\sops}^\star(t)}|&=0.\label{eq:xdot_bound}
	\end{align}
\end{prop}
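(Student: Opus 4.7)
The plan is a standard compactness/uniqueness argument. First, I would establish uniform-in-$\beta$ bounds and equicontinuity of the normalized family $\{\nxbeta\}$ on compact time intervals, so that Arzela--Ascoli yields subsequential limits $(z_1^\beta,z_2^\beta,\period^\beta,\nxbeta)\to(z_1^\infty,z_2^\infty,\period^\infty,\bar y)$. Second, I would identify every such limit as a solution of the limiting step-function DDE
\[\dot{\bar y}(t)=-\alpha\bar y(t)+f^\star(\bar y(t-1))\]
satisfying the phase normalization $\bar y(-1)=0$ and $\dot{\bar y}(-1)\ge0$ inherited from \eqref{eq:pphasefix}. Third, I would solve this limit problem explicitly in the class of slowly oscillating periodic functions and show it admits the unique solution $\bar\sops^\star$ in \eqref{eq:xbar}/\eqref{eq:xbaralpha0} with $z_1^\infty=q_1^\star$, $z_2^\infty=q_1^\star+q_2^\star+1$, $\period^\infty=\period^\star$. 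Uniqueness then upgrades subsequential convergence to the full limits \eqref{eq:z1_bound}--\eqref{eq:pbetaconvergence}.

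For the bounds, condition 2 of Assumption \ref{ass:main} gives $|f|\le M:=\max(a,b)$, so variation of constants applied to the scalar DDE \eqref{eq:dde} yields $|\xbeta(t)|\le\beta M/\alpha$ and hence $\sup_{t\in\R}|\nxbeta(t)|\le M/\alpha$ when $\alpha>0$. For $\alpha=0$ the bound requires a bootstrap: first bound $\sup_{t\in\R}|\nxbeta(t)|$ in terms of $z_1^\beta$ and $z_2^\beta-z_1^\beta$ using $|\dot\nxbeta|\le M$, then control each excursion length by noting that on the interior of a positive excursion of $\xbeta(\cdot-1)$ one has $|\xbeta(t-1)|\to\infty$ and $f(\xbeta(t-1))\to-a$, producing a linear drift $\dot\nxbeta\approx-a$ that forces the corresponding positive excursion of $\nxbeta$ to have length at most $\sim a^{-1}b+o(1)$, and symmetrically for the negative excursion. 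Equicontinuity of $\nxbeta$ then follows from the rescaled equation $\dot\nxbeta(t)=-\alpha\nxbeta(t)+f(\xbeta(t-1))$, whose right-hand side is uniformly bounded.

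Passing to a subsequential limit $\bar y$ via Arzela--Ascoli and a diagonal extraction on exhausting compacts, note that at each $t$ with $\bar y(t-1)\ne0$ we have $|\xbeta(t-1)|=\beta|\nxbeta(t-1)|\to\infty$ with $\operatorname{sign}(\bar y(t-1))$, so $f(\xbeta(t-1))\to f^\star(\bar y(t-1))$. The dominated convergence theorem applied to the integral form of \eqref{eq:dde} shows $\bar y$ satisfies the displayed limit DDE almost everywhere; on each interval of constant nonzero sign of $\bar y(\cdot-1)$ this is a linear ODE with constant forcing, solved in closed form, and matching across the limiting zero crossings with the phase condition uniquely identifies $\bar y=\bar\sops^\star$ together with the asserted zero-crossing times and period. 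Finally, \eqref{eq:xdot_bound} follows by noting that on $K\setminus\Jset_\ve$ one has $|\bar\sops^\star(t-1)|\ge c(\ve)>0$, so by \eqref{eq:pbetaconvergence} also $|\xbeta(t-1)|\ge\tfrac12\beta c(\ve)$ for $\beta$ large, whence $f(\xbeta(t-1))\to f^\star(\bar\sops^\star(t-1))$ uniformly on such sets, and substitution into the rescaled DDE gives the claim.

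The main obstacle is the $\alpha=0$ case of Step 1: without a linear relaxation term to supply an a priori amplitude bound, one must control the amplitude and the excursion lengths simultaneously, essentially anticipating the explicit values $q_1^\star=a^{-1}b$ and $q_2^\star=ab^{-1}$ from \eqref{eq:q1bar_q2baralpha0} before the compactness argument closes. A secondary technical issue is the passage $f(\xbeta(\cdot-1))\to f^\star(\bar\sops^\star(\cdot-1))$ near the zero set $\Jset$, where pointwise convergence of $f(\xbeta(t-1))$ fails; fortunately this occurs on a set of measure tending to zero as $\ve\downarrow0$, so only dominated convergence (not uniform convergence) is used to identify the limit equation.
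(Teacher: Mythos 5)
This proposition is stated in the paper without proof: it is imported verbatim from Xie \cite[Theorem 14 \& Corollary 15]{Xie1991}, so there is no in-paper argument to compare against. Your overall scheme --- normalize by $\beta$, extract subsequential limits of $(z_1^\beta,z_2^\beta,\period^\beta,\nxbeta)$ by Arzel\`a--Ascoli, identify every limit as the unique phase-normalized slowly oscillating periodic solution of the step-function equation $\dot{\bar y}(t)=-\alpha\bar y(t)+f^\star(\bar y(t-1))$, and upgrade to full convergence by uniqueness --- is the standard route for singular limits of this type and is consistent in substance with the cited source. The closed-form limit profile this produces does agree with \eqref{eq:xbar}--\eqref{eq:xbaralpha0}, and your derivation of \eqref{eq:xdot_bound} from \eqref{eq:pbetaconvergence} (uniform divergence of $\abs{\xbeta(t-1)}$ with the correct sign on $K\setminus\Jset_\ve$, combined with the limits of $f$ at $\pm\infty$ in condition 2 of Assumption \ref{ass:main}) is correct once the earlier items are established.

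That said, the proposal leaves its central technical step open rather than resolving it. First, the a priori bounds you flag for $\alpha=0$ are not a deferrable technicality: without a uniform-in-$\beta$ bound on $\period^\beta$ (equivalently on $z_1^\beta$ and $z_2^\beta-z_1^\beta$) the family $\{\nxbeta\}$ need not be uniformly bounded, and the compactness argument cannot start. Your sketched resolution --- that on the interior of an excursion $\abs{\xbeta(t-1)}\to\infty$ so the drift is $\approx-a$ --- is circular as written, since it presupposes that $\nxbeta(t-1)$ stays uniformly bounded away from zero there, which is exactly what the compactness step is supposed to deliver; breaking this circle requires quantitative two-sided excursion estimates obtained directly from the DDE. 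A period upper bound is in fact also needed when $\alpha>0$, where the amplitude bound $M/\alpha$ alone does not control $z_1^\beta$ or $z_2^\beta$. Second, nondegeneracy of subsequential limits is not addressed: one must rule out that $\max\nxbeta\to0$, that the separation between consecutive zeros collapses to the delay length $1$, and that the zero set of a limit $\bar y$ has positive measure. Without these, dominated convergence only identifies the limiting equation off a set that could be large, and the ``unique normalized SOPS of the limit problem'' step has nothing to apply to. These are precisely the estimates that constitute the bulk of the proof in \cite{Xie1991}, so the proposal should be read as a correct outline with the hardest portion acknowledged but not supplied.
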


\section{Limits of the extended characteristic multipliers}
\label{sec:limit_char}

This section is devoted to the proof of Theorem \ref{thm:1a}, which states that the spectrum of the extended monodromy operator, $\sigma(\mono_\lambda)$, converges to $\{0,\nu_\star(\lambda)\}$ as $\beta\to\infty$.
Recall that we have fixed $\alpha\ge0$ and $f$ satisfying Assumption \ref{ass:main}.

\subsection{Convergence of the measure appearing in the extended variational equation}
\label{sec:convmeasures}

In the next section, Section \ref{sec:limitmonodromy}, we analyze limits for solutions to the extended variational equation \eqref{eq:eve}, as $\beta\to\infty$.
The term $\beta f'(p^\beta(t-1))$ appearing on the right hand side of the extended variational equation \eqref{eq:eve} converges, as $\beta\to\infty$, to zero at all $t\in\R\setminus\Jset$ and to negative infinity at all $t\in\Jset$, where we recall the definition of $\Jset$ in \eqref{eq:Jset}.
Therefore, in order to identify the correct limiting object, it is advantageous to instead consider $\beta f'(p^\beta(t-1))dt$ as a (signed) measure on $\R$.  
In this section we prove convergence properties for the measure $\beta f'(p^\beta(t-1))dt$ as $\beta\to\infty$.

To this end, we first make some useful observations.
Recall the assumptions stated at the beginning of Section \ref{sec:normalizedSOPS} and the definitions of $f^\star$, $q_1^\star$, $q_2^\star$, $\sops^\star$ and $\Iset$ in \eqref{eq_def_f_bar}--\eqref{eq:Iset}.
For each $\beta>\buniq$ define the function $h^\beta:\R\to\R$ by 
	\be\label{eq:hbetafsopsbeta}h^\beta(t)=f(\sops^\beta(t)),\qquad t\in\R.\ee
Define $h^\star\in D(\R,\R)$ to be periodic with period $\period^\star$ and satisfy
\be\label{eq:hbar}h^\star(t)=
	\begin{cases}
		-a,&t\in[-1,q_1^\star),\\
		b,&t\in[q_1^\star,q_1^\star+q_2^\star+1).
	\end{cases}
\ee
Then $h^\star$ satisfies $h^\star(t)=f^\star(\bar\sops^\star(t))$ for all $t\in\R\setminus\Iset$ and, in view of Proposition \ref{prop:xbar_convergence} and the fact that $f$ is bounded by part 2 of Assumption \ref{ass:main}, 
	\be\label{eq:hbetaconvergence}\text{$h^\beta$ converges to $h^\star$ uniformly on compact intervals contained in $\R\setminus\Iset$ as $\beta\to\infty$.}\ee
By the definition of $h^\beta$ in \eqref{eq:hbetafsopsbeta}, the chain rule and the definition of $\bar\sops^\beta$ in \eqref{eq:xbarbeta}, we have
	\be\label{eq:hbeta}\beta f'(p^\beta(t))dt=(\dot{\bar\sops}^\beta(t))^{-1}dh^\beta(t),\qquad t\not\in\{s\in\R:\dot p^\beta(s)=0\}.\ee
For $\ve>0$ and $N<\infty$, recall the definitions of $\Iset_\ve$ and $L_{N,\Iset_\ve}^\infty(\R)$ in \eqref{eq:Isetve} and \eqref{eq:LCAbounded}, respectively.
In the next proposition we prove convergence properties for the measure $\beta f'(\sops^\beta(t))dt$, as $\beta\to\infty$.
In particular, the result implies that $\beta f'(\sops^\beta(t))dt$ converges to $(\dot{\bar\sops}^\star(t))^{-1}dh^\star(t)$ in the weak-$^\ast$ topology, as $\beta\to\infty$ (see Remark \ref{rem:dh} following the proposition).

\begin{prop}\label{prop:mubetaconvergence}
	For all $-\infty<t_0<t_1<\infty$,
		\be\label{eq:supbetafprime}\sup_{\beta>\buniq}\beta\int_{t_0}^{t_1}\abs{f'(\sops^\beta(t))}dt<\infty,\ee
	and for all $N<\infty$ and $\ve\in(0,q_1^\star/2)$,
		\be\label{eq:gbetafprimegsopsbardh}\lim_{\beta\to\infty}\sup\lcb\abs{\int_{t_0}^{t_1}g(t)\beta f'(p^\beta(t))dt-\int_{t_0}^{t_1}g(t)(\dot{\bar\sops}^\star(t))^{-1}dh^\star(t)}:g\in L_{N,\Iset_\ve}^\infty(\R)\rcb=0.\ee
\end{prop}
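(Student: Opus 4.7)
The plan is to combine the identity $\beta f'(\sops^\beta(t))\,dt=(\dot{\bar\sops}^\beta(t))^{-1}\,dh^\beta(t)$ from \eqref{eq:hbeta} with Proposition \ref{prop:xbar_convergence} and the integrability $\int_\R|f'(\xi)|\,d\xi<\infty$ from Assumption \ref{ass:main}(3). The underlying mechanism is that, after substituting $\xi=\sops^\beta(t)=\beta\bar\sops^\beta(t)$ on intervals where $\bar\sops^\beta$ is strictly monotone, the measure $\beta f'(\sops^\beta(t))\,dt$ is transformed into $f'(\xi)\,d\xi/\dot{\bar\sops}^\beta(t^\beta(\xi))$, which is integrable uniformly in $\beta$ and concentrates at $\Iset$ as $\beta\to\infty$, recovering the Dirac-mass representation of $(\dot{\bar\sops}^\star)^{-1}\,dh^\star$.

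I would first localize. Since $\bar\sops^\star$ is $\period^\star$-periodic, $\Iset\cap[t_0,t_1]$ is finite. For $\ve\in(0,q_1^\star/2)$ the intervals $[t^*-\ve,t^*+\ve]$ for $t^*\in\Iset$ are pairwise disjoint, and since $q_1^\star<1$ under $a\ge b$ they are also disjoint from $\Jset$. Write $K=[t_0,t_1]\setminus\Iset_\ve$. On $K$, $|\bar\sops^\star|$ is bounded below by some $c_0=c_0(\ve)>0$, so \eqref{eq:pbetaconvergence} gives $|\sops^\beta(t)|\ge\beta c_0/2$ uniformly on $K$ for $\beta$ large, and Assumption \ref{ass:main}(3) yields $\beta|f'(\sops^\beta(t))|\le(2/c_0)|\sops^\beta(t)f'(\sops^\beta(t))|\to 0$ uniformly on $K$. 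The $K$-contribution to \eqref{eq:supbetafprime} is therefore uniformly bounded in $\beta$, and its contribution to \eqref{eq:gbetafprimegsopsbardh} vanishes uniformly over $g\in L_N^\infty(\R)$.

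On each $[t^*-\ve,t^*+\ve]$, \eqref{eq:xdot_bound} yields $\dot{\bar\sops}^\beta\to\dot{\bar\sops}^\star$ uniformly, and since $\dot{\bar\sops}^\star(t^*)\in\{b,-a\}$ is nonzero, for $\beta$ large one has $|\dot{\bar\sops}^\beta|\ge c>0$ and $\sops^\beta$ is strictly monotone. Substituting $\xi=\sops^\beta(t)$ with inverse $t^\beta(\cdot)$ gives
$$\int_{t^*-\ve}^{t^*+\ve}g(t)\beta f'(\sops^\beta(t))\,dt=\int_{\xi_-}^{\xi_+}\frac{g(t^\beta(\xi))\,f'(\xi)}{\dot{\bar\sops}^\beta(t^\beta(\xi))}\,d\xi,$$
where $\xi_\pm=\sops^\beta(t^*\pm\ve)$ satisfy $|\xi_\pm|\to\infty$. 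The absolute value is at most $Nc^{-1}\int_\R|f'(\xi)|\,d\xi<\infty$, so summing over $\Iset\cap[t_0,t_1]$ and combining with the $K$-bound proves \eqref{eq:supbetafprime}. For \eqref{eq:gbetafprimegsopsbardh}, I would write $g(t)=g(t^*)+(g(t)-g(t^*))$ on each neighborhood. For the $g(t^*)$-piece, given $\delta'>0$ choose $R$ with $\int_{|\xi|>R}|f'|\,d\xi<\delta'$: the $|\xi|>R$ tail contributes $O(Nc^{-1}\delta')$, while for $|\xi|\le R$, $\bar\sops^\beta(t^\beta(\xi))=\xi/\beta\to 0$ forces $t^\beta(\xi)\to t^*$ uniformly, and then $\dot{\bar\sops}^\beta(t^\beta(\xi))\to\dot{\bar\sops}^\star(t^*)$ uniformly. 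Letting $\beta\to\infty$ and then $R\to\infty$ gives the limit $g(t^*)(f(\infty)-f(-\infty))/\dot{\bar\sops}^\star(t^*)$ (with the orientation-induced sign folded in), which equals $g(t^*)(h^\star(t^{*+})-h^\star(t^{*-}))/\dot{\bar\sops}^\star(t^*)$, matching the Dirac mass of $(\dot{\bar\sops}^\star)^{-1}\,dh^\star$ at $t^*$. For the $(g(t)-g(t^*))$-piece, introduce $\delta''\in(0,\ve)$: on $|t-t^*|\le\delta''$ the Lipschitz bound $|g(t)-g(t^*)|\le N\delta''$ combined with \eqref{eq:supbetafprime} yields $O(N\delta'')$, while on $\delta''<|t-t^*|\le\ve$, $\beta|f'(\sops^\beta)|\to 0$ uniformly as on $K$ and the contribution vanishes; letting $\beta\to\infty$ and then $\delta''\to 0$ gives zero.

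The main technical obstacle I anticipate is justifying the uniform convergence $t^\beta(\xi)\to t^*$ on compact $\xi$-intervals while the endpoints $\xi_\pm=\sops^\beta(t^*\pm\ve)$ simultaneously escape to $\pm\infty$; this is resolved by combining \eqref{eq:pbetaconvergence}--\eqref{eq:xdot_bound} with the nonvanishing of $\dot{\bar\sops}^\star$ at $t^*$, which provides a quantitative estimate on the inverse of $\bar\sops^\beta$ near $t^*$. All resulting estimates depend on $g$ only through $N$, so the required uniformity over $g\in L_{N,\Iset_\ve}^\infty(\R)$ is automatic.
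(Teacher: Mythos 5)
Your proposal is correct and follows essentially the same strategy as the paper: localize at $\Iset$, use the super-linear decay $\xi f'(\xi)\to0$ to kill the contribution away from $\Iset$, use monotonicity of $\sops^\beta$ near $\Iset$ together with $\|f'\|_{L^1(\R)}$ to control the total mass, and split $g=g(t^\ast)+(g-g(t^\ast))$ with the Lipschitz property handling the remainder. The only (cosmetic) difference is that you compute the limiting Dirac mass by an explicit substitution $\xi=\sops^\beta(t)$ with a truncation $|\xi|\le R$, whereas the paper packages the same computation as the convergence of the measure $dh^\beta(t)=f'(\sops^\beta(t))\dot\sops^\beta(t)\,dt$ to $dh^\star$.
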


\begin{remark}
\label{rem:dh}
	For $\beta>\buniq$ let $\mu^\beta\in\mathcal{M}(\R)$ denote the measure defined by
		$$\mu^\beta((t_0,t_1])=\int_{t_0}^{t_1}\beta f'(p^\beta(t))dt,\qquad -\infty<t_0<t_1<\infty,$$
	and let $\mu^\star\in\mathcal{M}(\R)$ denote the measure defined by
		$$\mu^\star((t_0,t_1])=\int_{t_0}^{t_1}(\dot{\bar\sops}^\star(t))^{-1}dh^\star(t),\qquad-\infty<t_0<t_1<\infty.$$
	Given any Lipschitz continuous $g\in C_c(\R,\R)$, let $N<\infty$ be its Lipschitz constant and choose $t_0<t_1$ such that the support of $g$ is contained in $[t_0,t_1]$.
	Then Proposition \ref{prop:mubetaconvergence} implies that
		\be\label{eq:mubetaconvergence1}\lim_{\beta\to\infty}\int_\R g(t)d\mu^\beta(t)=\int_\R g(t)d\mu^\star(t).\ee
	The limit \eqref{eq:mubetaconvergence1} can be readily extended to all $g\in C_c(\R,\R)$; however this result is not needed here, so we omit the proof. 
	In particular, $\mu^\beta$ converges to $\mu^\star$ in the weak-$^\ast$ topology on $\mathcal{M}(\R)$ as $\beta\to\infty$. 

    In addition, by the equation for $\dot{\bar\sops}$ in \eqref{eq:xbardot} and the definition of $h^\star$ in \eqref{eq:hbar}, given a Borel set $A$ of $\R$,
		\be\label{eq:barmu}\mu^\star(A)=-\lb 1+ab^{-1}\rb\sum_{k\in\Z}\delta_{-1+k\period^\star}(A)-\lb 1+a^{-1}b\rb\sum_{k\in\Z}\delta_{q_1^\star+k\period^\star}(A),\ee
	where, for $t\in\R$, $\delta_t(\cdot)$ is the Dirac delta measure at $t$ defined by $\delta_t(A)=1$ if $t\in A$ and $\delta_t(A)=0$ otherwise.
\end{remark}

The remainder of this section is devoted to the proof of Proposition \ref{prop:mubetaconvergence}.
We first analyze the limit \eqref{eq:gbetafprimegsopsbardh} for $[t_0,t_1]\subset\R\setminus\Iset$ and $[t_0,t_1]\subset\Iset_\ve$ separately, where $\Iset$ and $\Iset_\ve$ are respectively defined in \eqref{eq:Iset} and \eqref{eq:Isetve}. 
In the next lemma we consider compact intervals that are contained in $\R\setminus\Iset$.

\begin{lem}\label{lem:limbetaintA0}
	Let $-\infty<t_0<t_1<\infty$ be such that $[t_0,t_1]\cap\Iset=\emptyset$. 
	Then
		$$\lim_{\beta\to\infty}\int_{t_0}^{t_1} |\beta f'(p^\beta(t))|dt=0.$$
\end{lem}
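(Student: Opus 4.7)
The plan is to exploit the hypothesis $[t_0,t_1]\cap\Iset=\emptyset$, which keeps us uniformly away from the zeros of the limiting normalized SOPS $\bar\sops^\star$, combined with the super-linear decay in part 3 of Assumption~\ref{ass:main}, specifically the condition $\lim_{|\xi|\to\infty}\xi f'(\xi)=0$.

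First I would note that since $[t_0,t_1]$ is a compact interval disjoint from the (closed) zero set $\Iset$ of $\bar\sops^\star$, continuity of $\bar\sops^\star$ gives a constant $c>0$ with
\begin{equation*}
	\inf_{t\in[t_0,t_1]}|\bar\sops^\star(t)|\ge 2c.
\end{equation*}
By the uniform convergence $\bar\sops^\beta\to\bar\sops^\star$ on compact sets (Proposition~\ref{prop:xbar_convergence}, equation \eqref{eq:pbetaconvergence}), for all $\beta$ sufficiently large we have $|\bar\sops^\beta(t)|\ge c$ on $[t_0,t_1]$, and hence, by the definition $\nxbeta=\beta^{-1}\xbeta$ in \eqref{eq:xbarbeta},
\begin{equation*}
	|\sops^\beta(t)|\ge c\beta,\qquad t\in[t_0,t_1].
\end{equation*}

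Next, given $\varepsilon>0$, the condition $\lim_{|\xi|\to\infty}\xi f'(\xi)=0$ yields an $R=R(\varepsilon)>0$ with $|\xi f'(\xi)|<\varepsilon$ whenever $|\xi|>R$. Choosing $\beta$ large enough so that $c\beta>R$, we obtain for every $t\in[t_0,t_1]$,
\begin{equation*}
	\beta|f'(\sops^\beta(t))|=\frac{\beta}{|\sops^\beta(t)|}\,|\sops^\beta(t)f'(\sops^\beta(t))|\le\frac{\beta}{c\beta}\cdot\varepsilon=\frac{\varepsilon}{c}.
\end{equation*}
Integrating over $[t_0,t_1]$ gives $\int_{t_0}^{t_1}|\beta f'(\sops^\beta(t))|\,dt\le (t_1-t_0)\varepsilon/c$, and since $\varepsilon>0$ was arbitrary, the claimed limit follows.

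The argument is quite short and the only genuine ingredients are (a) uniform convergence of the normalized SOPS, which is imported from Proposition~\ref{prop:xbar_convergence}, and (b) the super-linear decay hypothesis $\xi f'(\xi)\to 0$; the ``main obstacle'' is really just recognizing that we should factor $\beta f'$ as $(\beta/|\sops^\beta|)\cdot(\sops^\beta f')$ so that the factor $\beta/|\sops^\beta|$ is controlled by $1/c$ and the remaining factor is small by super-linear decay. Note in particular that we do not use the stronger integrability bound $\int_\R|f'(\xi)|\,d\xi<\infty$ here; that assumption will instead be needed in the companion estimate \eqref{eq:supbetafprime} on intervals that do meet $\Iset$.
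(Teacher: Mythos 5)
Your proof is correct and follows essentially the same route as the paper's: bound $|\bar\sops^\beta|$ below by a positive constant on $[t_0,t_1]$ using Proposition \ref{prop:xbar_convergence} and the fact that the interval avoids $\Iset$, then invoke $\lim_{|\xi|\to\infty}\xi f'(\xi)=0$ to get uniform convergence of $\beta f'(\sops^\beta(t))$ to zero. Your explicit factorization $\beta f'=(\beta/|\sops^\beta|)\cdot(\sops^\beta f')$ simply spells out the step the paper leaves implicit.
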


\begin{proof}
	Since $[t_0,t_1]\cap\Iset=\emptyset$, it follows from definition for $\sops^\star$ in \eqref{eq:q1bar_q2bar}--\eqref{eq:xbaralpha0} that $c=\inf_{t\in[t_0,t_1]}|\bar\sops^\star(t)|>0$. 
	Along with the convergence shown in Proposition \ref{prop:xbar_convergence}, this implies that for all $\beta$ sufficiently large,
		\be\label{eq:limbetaintoA01}\inf_{t\in [t_0,t_1]}|\bar\sops^\beta(t)|\ge\frac{c}{2}.\ee
	In view of the definition of $\nxbeta$ in \eqref{eq:xbarbeta}, the lower bound on $\nxbeta$ in \eqref{eq:limbetaintoA01}, and the super-linear decay of $f'$ in condition 3 of Assumption \ref{ass:main}, we have
		$$\lim_{\beta\to\infty}\sup_{t\in [t_0,t_1]}|\beta f'(p^\beta(t))|=\lim_{\beta\to\infty}\sup_{t\in [t_0,t_1]}|\beta f'(\beta\bar\sops^\beta(t))|=0.$$
	This completes the proof of the lemma.
\end{proof}

In the next two lemmas we consider intervals that are contained in $\Iset_\ve$.
We will use the fact that by the equation for $\dot{\bar\sops}^\star$ in \eqref{eq:xbardot} and the periodicity of $\bar\sops^\star$, given $k\in\Z$,
\begin{align}
	\label{eq:betakbar}
	\inf_{t\in[-1+k\period^\star-q_1^\star/2,-1+k\period^\star+q_1^\star/2]}|\dot{\bar\sops}^\star(t)|&=be^{-\alpha q_1^\star/2},\\
	\label{eq:betakbar2}
	\inf_{t\in[\bar q_1^\star/2+k\period^\star,3\bar q_1^\star/2+k\period^\star]}|\dot{\bar\sops}^\star(t)|&=ae^{-\alpha q_1^\star/2},
\end{align}
and so, by the convergence of $\dot{\bar\sops}^\beta$ stated in \eqref{eq:xdot_bound}, there exists $\beta^{(k)}\ge\buniq$ such that for all $\beta>\beta^{(k)}$,
\begin{align}
	\label{eq:betak}
	\inf_{t\in[-1+k\period^\star-q_1^\star/2,-1+k\period^\star+q_1^\star/2]}|\dot{\bar\sops}^\beta(t)|&\ge\frac{1}{2}be^{-\alpha q_1^\star/2},\\
	\label{eq:betak2}
	\inf_{t\in[q_1^\star/2+k\period^\star,3q_1^\star/2+k\period^\star]}|\dot{\bar\sops}^\beta(t)|&\ge\frac{1}{2}ae^{-\alpha q_1^\star/2}.
\end{align}

\begin{lem}\label{lem:mubetaconvergence}
	Suppose $\ve\in(0,q_1^\star/2)$. Then for any $k\in\Z$,
	\begin{align}\label{eq:f_bounds_i}
		\lim_{\beta\to\infty}\abs{\int_{-1+k\period^\star-\ve}^{-1+k\period^\star+\ve}\beta f'(p^\beta(t))dt-\int_{-1+k\period^\star-\ve}^{-1+k\period^\star+\ve}(\dot{\bar\sops}^\star(t))^{-1}dh^\star(t)} &=0,\\ \label{eq:f_bounds_ii}
		\lim_{\beta\to\infty}\abs{\int_{q_1^\star+k\period^\star-\ve}^{q_1^\star+k\period^\star+\ve}\beta f'(p^\beta(t))dt-\int_{q_1^\star+k\period^\star-\ve}^{q_1^\star+k\period^\star+\ve}(\dot{\bar\sops}^\star(t))^{-1}dh^\star(t)} &=0.
	\end{align}     
\end{lem}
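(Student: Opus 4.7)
The two limits are symmetric; I focus on \eqref{eq:f_bounds_i}, and indicate the modifications for \eqref{eq:f_bounds_ii} at the end. The strategy is to perform a change of variables $u=\beta\bar\sops^\beta(t)$ that converts $\beta f'(\sops^\beta(t))\,dt$ into $f'(u)/\dot{\bar\sops}^\beta(\phi^\beta(u/\beta))\,du$ (with $\phi^\beta$ a local inverse of $\bar\sops^\beta$), and then apply dominated convergence using the $L^1$ decay of $f'$ from Assumption \ref{ass:main}(3).

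Fix $k\in\Z$ and set $t_k=-1+k\period^\star$. Because $\ve<q_1^\star/2$, the interval $[t_k-\ve,t_k+\ve]$ is contained in the interval appearing in \eqref{eq:betak}, so $\dot{\bar\sops}^\beta\ge\tfrac12 b e^{-\alpha q_1^\star/2}>0$ there for all $\beta\ge\beta^{(k)}$, and $\bar\sops^\beta$ is a strictly increasing $C^1$ diffeomorphism onto its image; denote its inverse by $\phi^\beta$. Using $\sops^\beta=\beta\bar\sops^\beta$ from \eqref{eq:xbarbeta} and the substitution $u=\beta\bar\sops^\beta(t)$ (with Jacobian $du=\beta\dot{\bar\sops}^\beta(t)\,dt$), the left-hand integral becomes
\begin{equation*}
\int_{u_-^\beta}^{u_+^\beta}\frac{f'(u)}{\dot{\bar\sops}^\beta(\phi^\beta(u/\beta))}\,du,\qquad u_\pm^\beta:=\beta\bar\sops^\beta(t_k\pm\ve).
\end{equation*}

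To pass to the limit, I first note that by \eqref{eq:xbar}--\eqref{eq:xbaralpha0}, $\bar\sops^\star(t_k-\ve)<0<\bar\sops^\star(t_k+\ve)$, so the uniform convergence $\bar\sops^\beta\to\bar\sops^\star$ from Proposition \ref{prop:xbar_convergence} yields $u_-^\beta\to-\infty$ and $u_+^\beta\to+\infty$. For each fixed $u\in\R$, $u/\beta\to 0$ and hence $\phi^\beta(u/\beta)\to t_k$; since $t_k\notin\Jset$, the convergence in \eqref{eq:xdot_bound} gives $\dot{\bar\sops}^\beta(\phi^\beta(u/\beta))\to\dot{\bar\sops}^\star(t_k)=b$, the value coming from \eqref{eq:xbardot}. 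Extending the integrand by $0$ outside the integration range, it is dominated uniformly (in $\beta$ large) by $(2/b)e^{\alpha q_1^\star/2}|f'(u)|$, which is in $L^1(\R)$ by Assumption \ref{ass:main}(3). Dominated convergence together with $\int_\R f'(u)\,du = f(+\infty)-f(-\infty)=-a-b$ therefore gives
\begin{equation*}
\lim_{\beta\to\infty}\int_{t_k-\ve}^{t_k+\ve}\beta f'(\sops^\beta(t))\,dt=\frac{-a-b}{b}=-(1+ab^{-1}).
\end{equation*}

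The right-hand side of \eqref{eq:f_bounds_i} is readily identified: by \eqref{eq:hbar} and periodicity $h^\star$ jumps at $t_k$ from $b$ to $-a$ and is locally constant on either side, so $dh^\star$ restricted to $(t_k-\ve,t_k+\ve)$ equals $-(a+b)\delta_{t_k}$, giving $(\dot{\bar\sops}^\star(t_k))^{-1}\cdot(-(a+b))=-(1+ab^{-1})$, matching the limit above and proving \eqref{eq:f_bounds_i}. For \eqref{eq:f_bounds_ii} one argues identically, replacing \eqref{eq:betak} by \eqref{eq:betak2} (so that $\bar\sops^\beta$ is strictly \emph{decreasing} near $q_1^\star+k\period^\star$, which flips the sign of the Jacobian but leaves the net sign of the limit the same), using $\dot{\bar\sops}^\star(q_1^\star+k\period^\star)=-a$ and the jump $h^\star(t^+)-h^\star(t^-)=a+b$ of $h^\star$ at this point; both sides then evaluate to $-(1+a^{-1}b)$, consistent with \eqref{eq:barmu}. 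I expect the main technical point to be verifying the change of variables rigorously, namely producing a uniform lower bound on $|\dot{\bar\sops}^\beta|$ on $[t_k-\ve,t_k+\ve]$ and an integrable envelope for the transformed integrand; both are supplied by \eqref{eq:betak}/\eqref{eq:betak2} and Assumption \ref{ass:main}(3), respectively.
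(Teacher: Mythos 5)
Your proof is correct, and while it rests on the same basic identity as the paper's --- namely \eqref{eq:hbeta}, i.e.\ that $\beta f'(\sops^\beta(t))\,dt$ is the pullback of $f'(u)\,du/\dot{\bar\sops}^\beta$ under $u=\sops^\beta(t)$ --- the limit passage is organized differently. The paper stays in $t$-space: it writes the integral as $\int(\dot{\bar\sops}^\beta)^{-1}dh^\beta$ with $h^\beta=f\circ\sops^\beta$ and performs a two-step comparison, first replacing $dh^\beta$ by $dh^\star$ (controlled by evaluating $h^\beta$ at the endpoints $t_k\pm\ve$ and using the uniform convergence \eqref{eq:hbetaconvergence}, which only needs boundedness of $f$), and then replacing $(\dot{\bar\sops}^\beta(t_k))^{-1}$ by $(\dot{\bar\sops}^\star(t_k))^{-1}$ via \eqref{eq:xdot_bound}. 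You instead change variables to $u$-space and apply dominated convergence with the envelope $C|f'|\in L^1(\R)$, obtaining the limit $\frac{1}{b}\int_\R f'(u)\,du=-(1+ab^{-1})$ in a single stroke. Your route is arguably cleaner and more self-contained, but it invokes Assumption \ref{ass:main}(3) ($f'\in L^1$), which the paper reserves for the uniform bound \eqref{eq:supbetafprime}; the paper's decomposition also has the advantage that its structure (isolating the convergence of the measures $dh^\beta\to dh^\star$ from the convergence of the integrand $(\dot{\bar\sops}^\beta)^{-1}$) is what gets reused in Lemma \ref{lem:intgboundedcontinuous} when a test function $g$ is inserted. The only steps you should make explicit in a final write-up are the monotonicity of $\bar\sops^\beta$ on $[t_k-\ve,t_k+\ve]$ for large $\beta$ (the sign, not just the modulus bound \eqref{eq:betak}, which follows from \eqref{eq:xdot_bound} and $\dot{\bar\sops}^\star>0$ there) and the pointwise convergence $\phi^\beta(u/\beta)\to t_k$, both of which follow routinely from Proposition \ref{prop:xbar_convergence} as you indicate.
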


\begin{proof}
	Fix $k\in\Z$.
	We first consider the limit in \eqref{eq:f_bounds_i}. 
	By the relation in equation \eqref{eq:hbeta} and the definition of $h^\star$ in \eqref{eq:hbar}, for $\beta>\beta^{(k)}$,
	\begin{align}
		&\int_{-1+k\period^\star-\ve}^{-1+k\period^\star+\ve}\beta f'(p^\beta(t))dt-\int_{-1+k\period^\star-\ve}^{-1+k\period^\star+\ve}(\dot{\bar\sops}^\beta(t))^{-1}dh^\star(t) \label{eq:f_bounds_1}\\
		&\qquad=\int_{-1+k\period^\star-\ve}^{-1+k\period^\star+\ve}(\dot{\bar\sops}^\beta(t))^{-1}dh^\beta(t)-\int_{-1+k\period^\star-\ve}^{-1+k\period^\star+\ve}(\dot{\bar\sops}^\beta(t))^{-1}dh^\star(t).\notag
	\end{align}
        Then, using the equality in \eqref{eq:f_bounds_1}, the lower bound for $\dot{\bar\sops}^\beta$ in \eqref{eq:betak} and the respective definitions for $h^\beta$ and $h^\star$ in \eqref{eq:hbetafsopsbeta} and \eqref{eq:hbar}, we obtain the following inequality
    \begin{align}
		&\abs{\int_{-1+k\period^\star-\ve}^{-1+k\period^\star+\ve}\beta f'(p^\beta(t))dt-\int_{-1+k\period^\star-\ve}^{-1+k\period^\star+\ve}(\dot{\bar\sops}^\beta(t))^{-1}dh^\star(t)}\label{eq:f_bounds_2}\\
		&\qquad\le \frac{2e^{\alpha q_1^\star/2}}{b}\abs{\int_{-1+k\period^\star-\ve}^{-1+k\period^\star+\ve}dh^\beta(t)-\int_{-1+k\period^\star-\ve}^{-1+k\period^\star+\ve}dh^\star(t)}. \notag
	\end{align}
	Therefore, by the convergence of $h^\beta$ stated in \eqref{eq:hbetaconvergence} and the definition of $\Iset$ in \eqref{eq:Iset}, taking limits as $\beta\to\infty$ on both sides of inequality \eqref{eq:f_bounds_2}, yields
	\begin{equation}\label{eq:int-1kp1}
		\lim_{\beta\to\infty}\abs{\int_{-1+k\period^\star-\ve}^{-1+k\period^\star+\ve}\beta f'(p^\beta(t))dt-\int_{-1+k\period^\star-\ve}^{-1+k\period^\star+\ve}(\dot{\bar\sops}^\beta(t))^{-1}dh^\star(t)}=0.
	\end{equation}
	Next, by the definition of $h^\star$ in \eqref{eq:hbar},
	\begin{align}
		\abs{\int_{-1+k\period^\star-\ve}^{-1+k\period^\star+\ve}\lsb(\dot{\bar\sops}^\beta(t))^{-1}-(\dot{\bar\sops}^\star(t))^{-1}\rsb dh^\star(t)}&=\lb1+ab^{-1}\rb\abs{ \frac{1}{\dot{\bar\sops}^\beta(-1+k\period^\star)}-\frac{1}{\dot{\bar\sops}^\star(-1+k\period^\star)}} \label{eq:f_bounds_3}.
	\end{align}
	Thus, by the respective lower bounds for $\dot{\bar\sops}^\star$ and $\dot{\bar\sops}^\beta$ in equations \eqref{eq:betakbar} and \eqref{eq:betak}, and the convergence of $\dot{\bar\sops}^\beta$ shown in the limit \eqref{eq:xdot_bound}, taking limits as $\beta\to\infty$ on both sides of equation \eqref{eq:f_bounds_3}, we obtain
	\begin{equation}\label{eq:int-1kp2}
		\lim_{\beta\to\infty}\abs{\int_{-1+k\period^\star-\ve}^{-1+k\period^\star+\ve}\lsb(\dot{\bar\sops}^\beta(t))^{-1}-(\dot{\bar\sops}^\star(t))^{-1}\rsb dh^\star(t)}=0.
	\end{equation}
	Together the limits \eqref{eq:int-1kp1} and \eqref{eq:int-1kp2} imply the limit in \eqref{eq:f_bounds_i}.
	The proof of the limit in \eqref{eq:f_bounds_ii} follows an analogous argument, but uses equations \eqref{eq:betakbar2} and \eqref{eq:betak2} in place of \eqref{eq:betakbar} and \eqref{eq:betak}, respectively.
	To avoid repetition, we omit the details.
\end{proof}

\begin{lem}\label{lem:intgboundedcontinuous}
	Let $N<\infty$ and $\ve\in(0,q_1^\star/2)$.
	Then, for all $k\in\Z$,
	\begin{align}
		\label{eq:gmubeta-1}&\lim_{\beta\to\infty}\sup_{g\in L_{N,\Iset_\ve}^\infty(\R)}\abs{\int_{-1+k\period^\star-\ve}^{-1+k\period^\star+\ve}g(t)\beta f'(p^\beta(t))dt-\int_{-1+k\period^\star-\ve}^{-1+k\period^\star+\ve}g(t)(\dot{\bar\sops}^\star(t))^{-1}dh^\star(t)}=0,\\
		\label{eq:gmubetaq1}&\lim_{\beta\to\infty}\sup_{g\in L_{N,\Iset_\ve}^\infty(\R)}\abs{\int_{q_1^\star+k\period^\star-\ve}^{q_1^\star+k\period^\star+\ve}g(t)\beta f'(p^\beta(t))dt-\int_{q_1^\star+k\period^\star-\ve}^{q_1^\star+k\period^\star+\ve}g(t)(\dot{\bar\sops}^\star(t))^{-1}dh^\star(t)}=0.
	\end{align}
\end{lem}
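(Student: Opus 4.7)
The plan is to deduce \eqref{eq:gmubeta-1} and \eqref{eq:gmubetaq1} from Lemma \ref{lem:mubetaconvergence} via a Lipschitz decomposition of the weight $g$. Fix $k \in \Z$, and denote $t_k = -1 + k\period^\star$ for \eqref{eq:gmubeta-1} (the argument for \eqref{eq:gmubetaq1} is identical with $t_k = q_1^\star + k\period^\star$ and the coefficient $-(1 + a^{-1}b)$ in place of $-(1 + ab^{-1})$). I would write $g(t) = g(t_k) + [g(t) - g(t_k)]$ on the integration domain $[t_k - \ve, t_k + \ve] \subset \Iset_\ve$, so that the Lipschitz hypothesis on $g$ supplies both $|g(t_k)| \le N$ and $|g(t) - g(t_k)| \le N|t - t_k|$, and then bound the two resulting integrals separately.

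The constant-in-$g$ piece is exactly $g(t_k)$ times the unweighted difference handled by Lemma \ref{lem:mubetaconvergence}, so $|g(t_k)| \le N$ immediately yields uniform (in $g$) convergence to zero. For the $(g - g(t_k))$ piece, observe first that by the explicit formula \eqref{eq:barmu}, the restriction of the limit measure $(\dot{\bar\sops}^\star(t))^{-1} dh^\star(t)$ to $[t_k - \ve, t_k + \ve]$ is the point mass $-(1 + ab^{-1})\delta_{t_k}$; consequently the remainder integral against this limiting measure equals $[g(t_k) - g(t_k)] \cdot (-(1 + ab^{-1})) = 0$ exactly. Thus it remains to show that
$$R^\beta(g) := \int_{t_k - \ve}^{t_k + \ve} [g(t) - g(t_k)]\, \beta f'(\sops^\beta(t))\, dt \longrightarrow 0 \quad \text{as } \beta \to \infty,$$
uniformly in $g \in L_{N,\Iset_\ve}^\infty(\R)$.

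To estimate $R^\beta(g)$ I would introduce an auxiliary parameter $\eta \in (0, \ve)$ and split the domain into an inner slab $I_\eta = (t_k - \eta, t_k + \eta)$ and an outer set $O_\eta = [t_k - \ve, t_k - \eta] \cup [t_k + \eta, t_k + \ve]$. On $I_\eta$ the Lipschitz bound gives $|g(t) - g(t_k)| \le N\eta$, and combined with the uniform total-variation bound \eqref{eq:supbetafprime} this contribution is at most $N\eta \cdot C$, where $C := \sup_{\beta > \buniq} \int_{t_k-\ve}^{t_k+\ve} \beta |f'(\sops^\beta(t))|\, dt < \infty$ is independent of $g$ and $\beta$. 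On $O_\eta$, which is a finite union of compact intervals disjoint from $\Iset$ (since $t_k$ is the only point of $\Iset$ in $[t_k-\ve, t_k+\ve]$), Lemma \ref{lem:limbetaintA0} applied to each component gives $\int_{O_\eta} \beta |f'(\sops^\beta(t))|\, dt \to 0$ as $\beta \to \infty$; since $|g(t) - g(t_k)| \le 2N\ve$ on $O_\eta$, the outer contribution vanishes uniformly in $g$. Taking $\limsup_{\beta \to \infty}$ first and then sending $\eta \downarrow 0$ yields $\sup_g |R^\beta(g)| \to 0$.

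The only subtlety lies in arranging the two-scale splitting so that both estimates pass uniformly through the supremum over $g$; since every bound depends on $g$ only through the common Lipschitz and $L^\infty$ constant $N$, no further work is needed, and no technical input beyond Lemmas \ref{lem:limbetaintA0} and \ref{lem:mubetaconvergence}, the total-variation bound \eqref{eq:supbetafprime}, and the explicit form \eqref{eq:barmu} of $\mu^\star$ is required.
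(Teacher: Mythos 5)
Your proof is correct and follows essentially the same route as the paper's: decompose $g$ as $g(-1+k\period^\star)$ plus a remainder vanishing at the atom, dispose of the constant part with Lemma \ref{lem:mubetaconvergence}, observe that the remainder integrates to zero against the limiting point mass, kill the outer annulus with Lemma \ref{lem:limbetaintA0}, and control the inner slab by the small oscillation of $g$ times a mass bound uniform in $\beta$. The one deviation is that you take the mass bound from \eqref{eq:supbetafprime}, whereas the paper rewrites $\beta f'(\sops^\beta(t))\,dt$ as $(\dot{\bar\sops}^\beta(t))^{-1}dh^\beta(t)$ and uses the lower bound \eqref{eq:betak} together with the boundedness of $f$; this substitution is legitimate and not circular, since the paper's proof of \eqref{eq:supbetafprime} uses only Lemma \ref{lem:limbetaintA0}, the monotonicity of $\sops^\beta$ near $\Iset$ and $f'\in L^1(\R)$ --- not the present lemma --- though it does require establishing \eqref{eq:supbetafprime} before, rather than after, this lemma.
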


\begin{proof}
	Fix $k\in\Z$.
	We first consider the limit in \eqref{eq:gmubeta-1}. 
	Given $g\in L_{N,\Iset_\ve}^\infty(\R)$, we have
	\begin{align*}
		\int_{-1+k\period^\star-\ve}^{-1+k\period^\star+\ve}g(t)\beta f'(p^\beta(t))dt&=g(-1+k\period^\star)\int_{-1+k\period^\star-\ve}^{-1+k\period^\star+\ve}\beta f'(p^\beta(t))dt\\
		&\qquad+\int_{-1+k\period^\star-\ve}^{-1+k\period^\star+\ve}(g(t)-g(-1+k\period^\star))\beta f'(p^\beta(t))dt,
	\end{align*}
	so in view of the convergence shown in Lemma \ref{lem:mubetaconvergence}, it suffices to show that
		\be\label{eq:gtgmubeta0}\lim_{\beta\to\infty}\sup_{g\in L_{N,\Iset_\ve}^\infty(\R)}\abs{\int_{-1+k\period^\star-\ve}^{-1+k\period^\star+\ve}(g(t)-g(-1+k\period^\star))\beta f'(\sops^\beta(t))dt}=0.\ee
	Let $\delta\in(0,1)$. 
	In view of the definitions of $\Iset_\ve$ and $L_{N,\Iset_\ve}^\infty(\R)$ in \eqref{eq:Isetve} and \eqref{eq:LCAbounded}, respectively, we can choose $\eta\in(0,\ve)$ sufficiently small so that
		\be\label{eq:gg1ub}\sup_{g\in L_{N,\Iset_\ve}^\infty(\R)}|g(t)-g(-1+k\period^\star)|<\frac{ be^{-\alpha q_1^\star/2}}{2\lb a+b\rb}\delta,\qquad t\in(-1+k\period^\star-\eta,-1+k\period^\star+\eta).\ee
	Then by upper bound in \eqref{eq:gtgmubeta0}, the limit established in Lemma \ref{lem:limbetaintA0} and the definition of $\Iset$ in \eqref{eq:Iset},
	\begin{align}
		&\lim_{\beta\to\infty}\sup_{g\in L_{N,\Iset_\ve}^\infty(\R)}\abs{\int_{-1+k\period^\star-\ve}^{-1+k\period^\star+\ve}(g(t)-g(-1+k\period^\star))\beta f'(\sops^\beta(t))dt} \notag \\
		&\qquad=\lim_{\beta\to\infty}\sup_{g\in L_{N,\Iset_\ve}^\infty(\R)}\abs{\int_{-1+k\period^\star-\eta}^{-1+k\period^\star+\eta}(g(t)-g(-1+k\period^\star))\beta f'(\sops^\beta(t))dt}\label{eq:gg1up1} .
	\end{align}
	By the respective definitions of $h^\beta$ and $h^\star$ in \eqref{eq:hbetafsopsbeta} and \eqref{eq:hbar}, the upper bound in \eqref{eq:gg1ub} and the lower bound for $\dot{\bar\sops}^\beta$ in \eqref{eq:betak}, for all $\beta>\beta^{(k)}$,
	\begin{align}
		&\sup_{g\in L_{N,\Iset_\ve}^\infty(\R)}\abs{\int_{-1+k\period^\star-\eta}^{-1+k\period^\star+\eta}(g(t)-g(-1+k\period^\star))(\dot{\bar\sops}^\beta(t))^{-1}dh^\beta(t)} \notag\\
		&\qquad<\frac{\delta}{a+b}\abs{f(\beta\bar\sops^\beta(-1+k\period^\star+\eta))-f(\beta\bar\sops^\beta(-1+k\period^\star-\eta))}\label{eq:gg1up2}. 
	\end{align}
	Using the limit in \eqref{eq:gg1up1} and the upper bound in \eqref{eq:gg1up2}, the convergence of $\nxbeta$ shown in Proposition \ref{prop:xbar_convergence}, the definition of $\bar\sops^\star$ given in \eqref{eq:xbar} and \eqref{eq:xbaralpha0}, and the limits for $f$ stated in part 2 of Assumption \ref{ass:main}, we see that
	\begin{align*}
		&\lim_{\beta\to\infty}\sup_{g\in L_{N,\Iset_\ve}^\infty(\R)}\abs{\int_{-1+k\period^\star-\ve}^{-1+k\period^\star+\ve}(g(t)-g(-1+k\period^\star))(\dot{\bar\sops}^\beta(t))^{-1}dh^\beta(t)}\\
		&\qquad<\frac{\delta}{a+b}\lim_{\beta\to\infty}\abs{f(\beta\bar\sops^\beta(-1+k\period^\star+\eta))-f(\beta\bar\sops^\beta(-1+k\period^\star-\eta))}\\
		&\qquad<\delta.
	\end{align*}
	Since $\delta\in(0,1)$ was arbitrary, this proves the limit in \eqref{eq:gtgmubeta0} holds, and so the limit in \eqref{eq:gmubeta-1} also holds.
	The proof of the limit in \eqref{eq:gmubetaq1} follows an analogous argument, but uses inequality \eqref{eq:betak2} in place of inequality \eqref{eq:betak}.
	To avoid repetition, we omit the details.
\end{proof}

We now use Lemmas \ref{lem:limbetaintA0} and \ref{lem:intgboundedcontinuous} to prove Proposition \ref{prop:mubetaconvergence}.

\begin{proof}[Proof of Proposition \ref{prop:mubetaconvergence}]
Fix $-\infty<t_0<t_1<\infty$.
We first prove the bound in \eqref{eq:supbetafprime}.
Define 
\begin{align}
	\label{eq:k0}
	k_0&=\max\{k\in\Z:-1+k\period^\star-q_1^\star/2\le t_0\},\\
	\label{eq:k1}
	k_1&=\min\{k\in\Z:q_1^\star+k\period^\star+q_1^\star/2\ge t_1\}.
\end{align}
For $k\in\{k_0,\dots,k_1\}$, let $\beta^{(k)}\ge\buniq$ be sufficiently large so that the uniform lower bounds on $\dot{\bar\sops}^\beta$ in inequalities \eqref{eq:betak} and \eqref{eq:betak2} hold for all $\beta>\beta^{(k)}$.
Then, for $\beta>\max\{\beta^{(k)}:k=k_0,\dots,k_1\}$,
\begin{align}
	\label{eq:1}
	\beta\int_{t_0}^{t_1}\abs{f'(\sops^\beta(t))}dt&\le\beta\int_{-1+k_0\period^\star-q_1^\star/2}^{q_1^\star+k_1\period^\star+q_1^\star/2}\abs{f'(\sops^\beta(t))}dt\\
	\notag
	&\le\sum_{k=k_0}^{k_1}\int_{-1+k\period^\star-q_1^\star/2}^{-1+k\period^\star+q_1^\star/2}\frac{\beta}{\abs{\dot{\sops}^\beta(t)}}\abs{f'(\sops^\beta(t))\dot{\sops}^\beta(t)}dt\\
	\notag
	&\qquad+\sum_{k=k_0}^{k_1}\beta\int_{-1+k\period^\star+q_1^\star/2}^{q_1^\star+k\period^\star-q_1^\star/2}\abs{f'(\sops^\beta(t))}dt\\
	\notag
	&\qquad+\sum_{k=k_0}^{k_1}\int_{q_1^\star+k\period^\star-q_1^\star/2}^{q_1^\star+k\period^\star+q_1^\star/2}\frac{\beta}{\abs{\dot{\sops}^\beta(t)}}\abs{f'(\sops^\beta(t))\dot{\sops}^\beta(t)}dt.
\end{align}
In view of the lower bound for $\dot{\bar\sops}^\beta$ in inequality \eqref{eq:betak}, we see that $\sops^\beta$ is monotone on the intervals $[-1+k\period-q_1^\star/2,-1+k\period+q_1^\star/2]$ and $[q_1^\star+k\period-q_1^\star/2,q_1^\star+k\period+q_1^\star/2]$.
Along with the fact that $f'\in L^1(\R)$ by part 3 of Assumption \ref{ass:main}, this implies
\begin{align}
	\label{eq:3}
	\int_{-1+k\period^\star-q_1^\star/2}^{-1+k\period^\star+q_1^\star/2}\frac{\beta}{\abs{\dot{\sops}^\beta(t)}}\abs{f'(\sops^\beta(t))\dot{\sops}^\beta(t)}dt
	&\le\frac{2e^{\alpha q_1^\star/2}}{b}\norm{f'}_{L^1(\R)},
\end{align}
and
\begin{align}
	\label{eq:4}
	\int_{q_1^\star+k\period^\star-q_1^\star/2}^{q_1^\star+k\period^\star+q_1^\star/2}\frac{\beta}{\abs{\dot{\sops}^\beta(t)}}\abs{f'(\sops^\beta(t))\dot{\sops}^\beta(t)}dt
	&\le\frac{2e^{\alpha q_1^\star/2}}{a}\norm{f'}_{L^1(\R)}.
\end{align}
By Lemma \ref{lem:limbetaintA0}, for each $k=k_0,\dots,k_1$,
\begin{align}
	\label{eq:2}
	\lim_{\beta\to\infty}\beta\int_{-1+k\period^\star+q_1^\star/2}^{q_1^\star+k\period^\star-q_1^\star/2}\abs{f'(\sops^\beta(t))}dt=0.
\end{align}
Combining inequalities \eqref{eq:1}--\eqref{eq:4}, the limit \eqref{eq:2} and using the assumption $a\ge b$ yields
\begin{align*}
	\limsup_{\beta\to\infty}\beta\int_{t_0}^{t_1}\abs{f'(\sops^\beta(t))}dt&\le\frac{4e^{\alpha q_1^\star/2}}{b}(k_1-k_0+1)\norm{f'}_{L^1(\R)}.
\end{align*}
This proves the bound in \eqref{eq:supbetafprime}.
Next we show that the limit in \eqref{eq:gbetafprimegsopsbardh} holds.
Let $N<\infty$ and $\ve\in(0,q_1^\star/2)$.
First recall that $h^\star$ is piecewise constant and its discontinuities are contained in the set $\Iset$, and so, for each $k=k_0,\dots,k_1$,
\begin{align*}
	\int_{-1+k\period^\star+\ve}^{q_1^\star+k\period^\star-\ve}g(t)(\dot{\bar\sops}^\star(t))^{-1}dh^\star(t)=\int_{q_1^\star+k\period^\star+\ve}^{-1+(k+1)\period^\star-\ve}g(t)(\dot{\bar\sops}^\star(t))^{-1}dh^\star(t)=0.
\end{align*}
Thus, using the respective definitions for $k_0$ and $k_1$ in \eqref{eq:k0} and \eqref{eq:k1}, and the definition of $L_{N,\Iset_\ve}^\infty(\R)$ in \eqref{eq:LCAbounded} with $\interval=\Iset_\ve$, we have 
\begin{align}\label{eq:intAgsumm}
	&\sup_{g\in L_{N,\Iset_\ve}^\infty(\R)}\abs{\int_{t_0}^{t_1}g(t)\beta f'(p^\beta(t))dt-\int_{t_0}^{t_1}g(t)(\dot{\bar\sops}^\star(t))^{-1}dh^\star(t)}\\
	\notag&\qquad
	\le\sum_{k=k_0}^{k_1}\sup_{g\in L_{N,\Iset_\ve}^\infty(\R)}\abs{\int_{-1+k\period^\star-\ve}^{-1+k\period^\star+\ve}g(t)dh^\beta(t)-\int_{-1+k\period^\star-\ve}^{-1+k\period^\star+\ve}g(t)(\dot{\bar\sops}^\star(t))^{-1}dh^\star(t)}\\
	\notag&\qquad\qquad
	+\sum_{k=k_0}^{k_1}\sup_{g\in L_{N,\Iset_\ve}^\infty(\R)}\abs{\int_{q_1^\star+k\period^\star-\ve}^{q_1^\star+k\period^\star+\ve}g(t)dh^\beta(t)-\int_{q_1^\star+k\period^\star-\ve}^{q_1^\star+k\period^\star+\ve}g(t)(\dot{\bar\sops}^\star(t))^{-1}dh^\star(t)}\\
	\notag&\qquad\qquad
	+C\sum_{k=k_0}^{k_1}\int_{-1+k\period^\star+\ve}^{q_1^\star+k\period^\star-\ve}\abs{\beta f'(p^\beta(t))}dt+C\sum_{k=k_0}^{k_1}\int_{q_1^\star+k\period^\star+\ve}^{-1+(k+1)\period^\star-\ve}\abs{\beta f'(p^\beta(t))}dt.
\end{align}
By the continuity of $g$ on $\Iset_\ve$ and the limits established in Lemma \ref{lem:intgboundedcontinuous}, the first two terms on the right hand side of the inequality \eqref{eq:intAgsumm} vanish as $\beta\to\infty$.
By the facts that $[-1+k\period^\star+\ve,q_1^\star+k\period^\star-\ve]\cap\Iset=\emptyset$ and $[q_1^\star+k\period^\star+\ve,-1+(k+1)\period^\star-\ve]\cap\Iset=\emptyset$, for each $k=k_0,\dots,k_1$, and the limit established in Lemma \ref{lem:limbetaintA0}, the last two terms on the right hand side of the inequality \eqref{eq:intAgsumm} also vanish as $\beta\to\infty$.
This completes the proof.
\end{proof}

\subsection{Convergence of the extended monodromy operators}
\label{sec:limitmonodromy}

In this section we prove convergence of extended monodromy operators.
Since the eigenvalues of the extended monodromy operator are invariant under time translations of the SOPS, it will be advantageous for us to consider a family of time-dependent extended monodromy operators described as follows.
Given $\beta>\buniq$ let $\sops^\beta\in C(\R,\R)$ denote the SOPS with period $\period^\beta$ that satisfy the relations in \eqref{eq:pphasefix}. 
For $\lambda\in\CC$, $s\in\R$ and $\vf\in\C(\CC)$, let $y^\beta=y^\beta(\lambda,s,\vf)\in C([s-1,\infty),\CC)$ denote the unique function that is continuously differentiable on $(s,\infty)$, satisfies the extended variational equation \eqref{eq:eve} for all $t>s$ and has initial condition $y_s^\beta=\vf$.
Let $\{\monoU_\lambda^\beta\}=\{\monoU_\lambda^\beta(s),\lambda\in\CC,s\in\R\}$ be the family of extended monodromy operators associated with $\xbeta$ defined, for each $\lambda\in\CC$ and $s\in\R$, by 
	\be\label{eq:Ulambdabetavf}\monoU_\lambda^\beta(s)\vf=y_{s+\period^\beta}(\lambda,s,\vf),\qquad\vf\in\C(\CC).\ee
Then, as remarked on in Section \ref{sec:msf}, $\monoU_\lambda^\beta(s)\in B_0(\C(\CC))$ and the spectrum of $\monoU_\lambda^\beta(s)$ is independent of $s\in\R$.
In addition, when $s=0$, the operator $\monoU_\lambda^\beta(0)$ coincides with the monodromy operator $\mono_\lambda$ defined in \eqref{eq:gmonodromy} for fixed $\beta>\buniq$.
Therefore, for fixed $\beta>\buniq$, $\sigma(\mono_\lambda)=\sigma(\monoU_\lambda^\beta(s))$ for all $s\in\R$.

Next, we define a family of limiting extended monodromy operators, which we denote by $\{\monoU_\lambda^\star\}=\{\monoU_\lambda^\star(s),\lambda\in\CC,s\in\R\}$, associated with the limiting SOPS $\bar\sops^\star$. 
In order to define the limiting extended monodromy operator, we first define a limiting extended variational equation.
In the previous section, Section \ref{sec:convmeasures},  we showed that $\beta f'(\sops(t))dt$ converges to $(\dot{\bar{\sops}}^\star(t))^{-1}dh^\star(t)$ in the weak-$^\ast$ topology as $\beta\to\infty$.
We define the limiting extended variational equation to be the formal limit of the extended variational equation \eqref{eq:eve} as $\beta\to\infty$ with $(\dot{\bar{\sops}}^\star(t))^{-1}dh^\star(t)$ serving as the formal limit of $\beta f'(\sops(t))dt$. 
In particular, for $\lambda\in\CC$, the limiting extended variational equation is given by
	\be\label{eq:limitvariational}dy^\star(t)=-\alpha y^\star(t)dt+\lambda y^\star(t-1)(\dot{\bar\sops}^\star(t-1))^{-1}dh^\star(t-1).\ee 
Recall the definitions of $\D(\CC)$ and $D([s-1,\infty),\CC)$ given in Section \ref{sec:notation_function_spaces}.
For $\lambda\in\CC$, $s\in\R$ and $\vf\in\D(\CC)$ we say $y^\star\in D([s-1,\infty),\CC)$ is a solution of the limiting extended variational equation \eqref{eq:limitvariational} on $[s-1,\infty)$ with initial condition $\vf$ if $y^\star_s=\vf$ and
	\be\label{eq:variationalbar}y^\star(t)=\vf(0)-\alpha\int_s^ty^\star(u)du+\lambda\int_{s-1}^{t-1}y^\star(u)(\dot{\bar\sops}^\star(u))^{-1}dh^\star(u),\qquad t>s.\ee

\begin{lem}\label{lem:ybar}
Let $\lambda\in\CC$, $s\in\R$ and $\vf\in\D(\CC)$ be given.
	Then $y^\star\in D([s-1,\infty),\CC)$ satisfies $y_s^\star=\psi$ and \eqref{eq:variationalbar} if and only if $y_s^\star=\psi$ and the following holds:
        \be\label{eq:ybar}y^\star(t)=e^{-\alpha(t-s)}y^\star(s)+\lambda\int_{s-1}^{t-1}e^{-\alpha(t-1-u)}y^\star(u)(\dot{\bar\sops}^\star(u))^{-1}dh^\star(u),\qquad t>s.\ee	
	Consequently, there exists a unique solution of the limiting extended variational equation \eqref{eq:limitvariational} on $[s-1,\infty)$ with initial condition $\vf$, which we denote by $y^\star(\lambda,s,\vf)$.
\end{lem}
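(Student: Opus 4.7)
My plan is to separate the two assertions: first the equivalence between \eqref{eq:variationalbar} and \eqref{eq:ybar} (for any $y^\star \in D([s-1,\infty),\CC)$ with $y_s^\star = \psi$), and second the existence and uniqueness of a solution. Throughout, I will exploit the fact that the Stieltjes measure $(\dot{\bar\sops}^\star(u))^{-1}dh^\star(u)$ coincides with the purely atomic signed measure $\mu^\star$ described in \eqref{eq:barmu} of Remark \ref{rem:dh}; in particular, its only atoms lie on the discrete set $\Iset$ with weights $-(1+ab^{-1})$ at $-1+k\period^\star$ and $-(1+a^{-1}b)$ at $q_1^\star+k\period^\star$, $k\in\Z$.

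For the equivalence, the key observation is that \eqref{eq:variationalbar} is the integral form of the measure-valued initial value problem
$$dy^\star(t) = -\alpha y^\star(t)\, dt + dF(t), \qquad t>s, \qquad y^\star(s) = \psi(0),$$
with the measure $dF(u) := \lambda\, y^\star(u-1)(\dot{\bar\sops}^\star(u-1))^{-1} dh^\star(u-1)$, while \eqref{eq:ybar} (after the substitution $u = v+1$) is the corresponding variation-of-constants formula. To pass from \eqref{eq:variationalbar} to \eqref{eq:ybar}, I would multiply both sides by $e^{\alpha t}$ and use integration by parts for Lebesgue--Stieltjes integrals on the $-\alpha\int_s^t y^\star(u)du$ term, producing $e^{\alpha t} y^\star(t) = e^{\alpha s}\psi(0) + \int_s^t e^{\alpha u}dF(u)$, and dividing through by $e^{\alpha t}$ gives \eqref{eq:ybar}. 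Conversely, substituting \eqref{eq:ybar} into the right-hand side of \eqref{eq:variationalbar} and applying Fubini to exchange the order of integration in $-\alpha\int_s^t\!\int_s^u e^{-\alpha(u-v)}dF(v)\,du$ yields $\int_s^t (e^{-\alpha(t-v)}-1)\,dF(v)$, which when combined with $-\alpha\int_s^t e^{-\alpha(u-s)}\psi(0)du = \psi(0)(e^{-\alpha(t-s)}-1)$ and the remaining $F(t)$ term reproduces $y^\star(t) - \psi(0)$.

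For existence and uniqueness, I will solve \eqref{eq:ybar} stepwise across the atoms of $dh^\star(\cdot - 1)$. Enumerate the atoms of $d\mu^\star$ in $[s-1,\infty)$ as $s-1 \le \tau_1 < \tau_2 < \cdots$ and write $c_k := \mu^\star(\{\tau_k\})$. Then \eqref{eq:ybar} reduces to the explicit formula
$$y^\star(t) = e^{-\alpha(t-s)}\psi(0) + \lambda \sum_{k\, :\, s-1 \le \tau_k \le t-1} e^{-\alpha(t-1-\tau_k)} c_k\, y^\star(\tau_k), \qquad t > s.$$
On $(s, \tau_1+1]$, the sum involves only values $y^\star(\tau_k)$ with $\tau_k \le s$, which are given by the initial condition $\psi$. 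Inductively, on $(\tau_j+1, \tau_{j+1}+1]$ the sum involves only values at times $\le \tau_j$, which have been determined on previous intervals (at least one unit earlier, by the delay). Thus $y^\star$ is uniquely and explicitly constructed on each successive interval as exponential decay plus a single jump at its right endpoint; the resulting function is right continuous with finite left limits and hence lies in $D([s-1,\infty),\CC)$.

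The main obstacle is essentially bookkeeping: one must fix a consistent convention for Lebesgue--Stieltjes integration against the atomic measure $\mu^\star$ (e.g., integrating over half-open intervals $(s-1, t-1]$ so that the integrand at an atom is evaluated at its right-continuous value, making the output of \eqref{eq:ybar} right continuous) and verify that this convention propagates unchanged through the integration-by-parts step and the Fubini exchange in the equivalence proof. The edge case where $s-1$ or $s$ itself coincides with an atom of $\mu^\star$ requires a brief separate check, but it is handled by the same convention without substantive difficulty.
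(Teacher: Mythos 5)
Your argument is correct, but it reaches the equivalence by a genuinely different route than the paper. The paper exploits the specific structure of $h^\star$: since $h^\star$ is piecewise constant, it partitions $(s,\infty)$ by the successive points $t_k$ of $\Jset$ and runs an induction in which, between consecutive atoms, \eqref{eq:variationalbar} reduces to the pure exponential relation $y^\star(t)=y^\star(t_k)e^{-\alpha(t-t_k)}$, with the jump added at each $t_{k+1}$; the converse is handled by the symmetric induction. You instead prove the equivalence for an arbitrary finite atomic measure via the integrating factor $e^{\alpha t}$, the Lebesgue--Stieltjes product rule (valid without covariation corrections because $e^{\alpha t}$ is continuous), and Fubini for the converse --- a cleaner and more general variation-of-constants argument that does not use piecewise constancy at all. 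Your stepwise construction for existence and uniqueness is the right idea and is essentially what the paper leaves implicit (the lemma's ``consequently'' clause); the well-foundedness comes from the delay, since the value at $t$ depends only on values at times $\le t-1$. One small bookkeeping slip: on $(\tau_j+1,\tau_{j+1}+1]$ the sum at the right endpoint $t=\tau_{j+1}+1$ involves $y^\star(\tau_{j+1})$, and $\tau_{j+1}>\tau_j$ need not lie in $[s-1,\tau_j+1]$; it is nevertheless determined because it lies strictly inside the current interval at time $t-1<t$, so the recursion within each interval must be read as proceeding from left to right rather than drawing only on strictly earlier intervals. This does not affect the validity of the construction, and your explicit attention to the half-open-interval convention for the atomic measure is exactly the point where care is needed in both approaches.
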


\begin{remark}\label{rem:ydiscontinuities}
	It follows from equation \eqref{eq:ybar} for $y^\star$, the definition of $h^\star$ in \eqref{eq:hbar} and equation \eqref{eq:xbardot} for $\dot{\bar{\sops}}^\star$, that given any $\lambda\in\CC$, $s\in\R$ and $\vf\in\C(\CC)$, the function $y^\star(\lambda,s,\vf)$ is continuous on $[s-1,\infty)\setminus\Jset$, where $\Jset$ is the set defined in \eqref{eq:Jset}.
\end{remark}

\begin{proof}
    Suppose $y^\star\in D([s-1,\infty),\CC)$ satisfies $y^\star_s=\vf$ and equation \eqref{eq:variationalbar}.
    Let $t_0=s$ and $t_k=\inf\{u>t_{k-1}:u\in\Jset\}$ for all $k\in\N$.
    From the definition of $h^\star$ in \eqref{eq:hbar} and $\Jset$ in \eqref{eq:Jset} we see that $h^\star$ is constant on $(t_k-1,t_{k+1}-1)$ for all $k\in\N$ and $t_k\to\infty$ as $k\to\infty$.
    We use the principle of mathematical induction to show that $y^\star$ satisfies \eqref{eq:ybar} for all $t\in(s,t_k]$, for $k\in\N\cup\{0\}$.
    The base case $k=0$ trivially holds.
    Now let $k\in\N$ and suppose the following induction hypothesis holds: $y^\star$ satisfies \eqref{eq:ybar} for all $t\in(s,t_k]$.
    By \eqref{eq:variationalbar} and the fact that $h^\star$ is constant on $(t_k-1,t_{k+1}-1)$, we have
    	\begin{align*}
		y^\star(t)=y^\star(t_k)e^{-\alpha(t-t_k)},\qquad t\in(t_k,t_{k+1}).
    	\end{align*}
	Furthermore, again using \eqref{eq:variationalbar} and the fact that $h^\star$ is constant on $(t_k-1,t_{k+1}-1)$, we see that
	\begin{align*}
		y^\star(t_{k+1})=y^\star(t_k)e^{-\alpha(t-t_k)}+\lambda\int_{t_k-1}^{t_{k+1}-1}e^{-\alpha(t_{k+1}-1-u)}y^\star(u)(\dot{\bar\sops}^\star(u))^{-1}dh^\star(u).
	\end{align*}
	Therefore, by the induction hypothesis and the fact that $h^\star$ is constant on $(t_k-1,t_{k+1}-1)$,
	\begin{align*}
		y^\star(t)=e^{-\alpha(t-s)}y^\star(s)+\lambda\int_{s-1}^{t-1}e^{-\alpha(t-1-u)}y^\star(u)(\dot{\bar\sops}^\star(u))^{-1}dh^\star(u),\qquad t\in(t_k,t_{k+1}].
	\end{align*}
	In particular, $y^\star$ satisfies \eqref{eq:ybar} on $(s,t_{k+1}]$.
	This completes the induction step.
	Therefore, by the principle of mathematical induction an the fact that $t_k\to\infty$ as $k\to\infty$, $y^\star$ satisfies \eqref{eq:ybar}.
	To complete the proof, a similar argument can be used to show that if $y^\star$ satisfies \eqref{eq:ybar} then $y^\star$ also satisfies \eqref{eq:variationalbar}.
	To avoid repetition, we omit the details.
\end{proof}

For $\lambda\in\CC$ and $s\in\R$ define $\monoU_\lambda^\star(s)\in B(\D(\CC))$ by
	\be\label{eq:Ubar}\monoU_\lambda^\star(s)\vf=y^\star_{s+\period^\star}(\lambda,s,\vf),\qquad\vf\in \D(\CC).\ee
The following is the main convergence result of this section.

\begin{prop}\label{prop:mono_convergence}
	Let $s\in(-q_1^\star,0)$ and $K$ be a compact set in $\CC$. Then
	\begin{equation*}
	\lim_{\beta\to\infty}\sup_{\lambda\in K}\lVert \monoU_\lambda^\beta(s)-\monoU_\lambda^\star(s)\rVert=0.
	\end{equation*}  
\end{prop}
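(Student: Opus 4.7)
The plan is to compare solutions $y^\beta(\lambda, s, \vf)$ of the extended variational equation \eqref{eq:eve} with solutions $y^\star(\lambda,s,\vf)$ of the limiting equation \eqref{eq:limitvariational}, exploiting the weak-$^\ast$ convergence of measures $\beta f'(\sops^\beta(u))\,du \to (\dot{\bar\sops}^\star(u))^{-1}\,dh^\star(u)$ provided by Proposition \ref{prop:mubetaconvergence}. By variation of parameters,
\[
y^\beta(t) = e^{-\alpha(t-s)}\vf(0) + \lambda\int_{s-1}^{t-1} e^{-\alpha(t-1-u)} y^\beta(u)\,\beta f'(\sops^\beta(u))\,du,\qquad t\ge s,
\]
with $y^\beta_s = \vf$, and the analogous identity \eqref{eq:ybar} holds for $y^\star$, so that the two equations differ only in the driving measure.

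First I would establish a uniform $L^\infty$ bound on $y^\beta$ over $[s-1,s+\period^\star+1]$, uniform in $\lambda\in K$, $\vf\in\ball_{\C(\CC)}(0,1)$, and $\beta$ large, by applying a Gronwall argument to the integral equation together with the total-variation bound \eqref{eq:supbetafprime}; the analogous bound for $y^\star$ follows from \eqref{eq:ybar}. Next, fixing $\ve\in(0,q_1^\star/2)$ small enough that $\{u-1:u\in\Iset_\ve\}$ is uniformly bounded away from $\Iset$, the super-linear decay of $f'$ in Assumption \ref{ass:main} together with Proposition \ref{prop:xbar_convergence} forces $\sup_{u\in\Iset_\ve}|\beta f'(\sops^\beta(u-1))|\to 0$; combined with the $L^\infty$ bound this yields a uniform Lipschitz bound on $y^\beta(\lambda,s,\vf)$ restricted to $\Iset_\ve\cap[s,s+\period^\star+1]$. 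Hence the integrand $u\mapsto e^{-\alpha(t-1-u)}y^\beta(u)$, restricted to $u\ge s$, lies in $L_{N,\Iset_\ve}^\infty(\R)$ for a uniform constant $N=N(K)$.

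I would then iterate on unit-length subintervals $[s+k,s+k+1]$, $k=0,1,\ldots$: split the integral as $\int_{s-1}^{t-1}=\int_{s-1}^{s}+\int_{s}^{t-1}$, apply Proposition \ref{prop:mubetaconvergence} to each piece (using the Lipschitz bound just established for the second), and propagate the resulting errors from one interval to the next by Gronwall. Since $\period^\beta\to\period^\star$ by Proposition \ref{prop:xbar_convergence} and $y^\beta$ is uniformly bounded, this yields the desired $\sup_{\lambda\in K}\norm{\monoU_\lambda^\beta(s)\vf-\monoU_\lambda^\star(s)\vf}_{[-1,0]}\to 0$ uniformly over the unit ball in $\C(\CC)$.

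The main obstacle is the initial piece $\int_{s-1}^{s}$: there $y^\beta(u)=\vf(u-s)$ is merely continuous, so when $s\in[-1,0)$ the integrand $u\mapsto e^{-\alpha(t-1-u)}\vf(u-s)$ need not be uniformly Lipschitz near $u=-1\in\Iset$, and the supremum in Proposition \ref{prop:mubetaconvergence} does not directly deliver convergence that is uniform in $\vf$ over the unit ball. I expect this to be resolved either by approximating $\vf$ by Lipschitz functions in $\C(\CC)$, passing to the limit in $\beta$ for the approximation, and absorbing the remainder using $\norm{\vf}_{[-1,0]}\le 1$ together with the uniform $L^1$ bound \eqref{eq:supbetafprime}; or alternatively by exploiting the compactness of $\monoU_\lambda^\beta(s)$ together with pointwise convergence on a dense subspace to upgrade strong convergence to operator-norm convergence.
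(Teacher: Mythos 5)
You follow the paper's route in all essentials: the variation-of-constants identity \eqref{eq:ylambdasolution}, a uniform bound via Gronwall and \eqref{eq:supbetafprime} (the paper's Lemma \ref{lem:yuniformbound}), a stepwise comparison over subintervals of length about one with errors propagated forward by Gronwall (the paper's Lemma \ref{lem:y_convergence}, which in addition places its checkpoints inside $\mathcal{I}_{s,\ve}$ rather than at the translates $s+k$, some of which may fall in $\Jset_\ve$ where $y^\star$ jumps), and the final adjustment for $\period^\beta\ne\period^\star$ via the Lipschitz continuity of $y^\star$ on the terminal window. You have also correctly isolated the genuinely delicate point: the initial piece $\int_{s-1}^{s}$, where the integrand involves the raw datum $\vf$ and the atom of $\mu^\star$ at $u=-1$ lies in the interior of $[s-1,s]$. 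Neither of your proposed repairs closes this gap. For the first, there is no constant $N(\epsilon)$ such that every $\vf$ in the unit ball of $\C(\CC)$ lies within $\epsilon$ of an $N(\epsilon)$-Lipschitz function, so Lipschitz approximation yields convergence for each fixed $\vf$ but not uniformly over the unit ball, which is what the operator norm requires. For the second, strong convergence of compact operators does not upgrade to norm convergence: the rank-one operators $\vf\mapsto\vf(\theta_n)g$ with $\theta_n\to\theta_0$ converge strongly to $\vf\mapsto\vf(\theta_0)g$ yet remain at norm distance $2\norm{g}_{[-1,0]}$ from the limit.

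In fact the obstacle you found is not removable, because operator-norm convergence fails here. By Lemma \ref{lem:Uspectrum}, $[\monoU_\lambda^\star(s)\vf](\theta)=F(s,\lambda,\vf(0),\vf(-1-s))e^{-\alpha\theta}$ carries the interior point evaluation $\vf\mapsto\vf(-1-s)$ with coefficient $\lambda(1+ab^{-1})e^{-\alpha s}\tfrac{\lambda-\vr_1}{1-\vr_2}e^{-\alpha(q_2^\star+1)}e^{-\alpha\theta}$, nonzero for $\lambda\notin\{0,\vr_1\}$, whereas iterating \eqref{eq:ylambdasolution} shows that for every $\beta$ and $t>s$ the functional $\vf\mapsto y^\beta(t;\lambda,s,\vf)$ has the form $c\,\vf(0)+\int_{-1}^{0}\vf(\theta)K(\theta)\,d\theta$ with $K\in L^1$. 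A Dirac mass at an interior point and an absolutely continuous measure are mutually singular, so the dual (total-variation) norm of their difference is at least the mass of the Dirac part; hence $\norm{\monoU_\lambda^\beta(s)-\monoU_\lambda^\star(s)}$ is bounded below, uniformly in $\beta$, by a positive constant for each $\lambda\notin\{0,\vr_1\}$. Concretely, unit-norm $\vf_n$ vanishing outside a $1/n$-neighborhood of $-1-s$ with $\vf_n(-1-s)=1$ satisfy $\monoU_\lambda^\beta(s)\vf_n\to0$ as $n\to\infty$ for each fixed $\beta$, while $\norm{\monoU_\lambda^\star(s)\vf_n}_{[-1,0]}$ is a fixed positive number. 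What your estimates (and the paper's) actually deliver is $\sup_{\lambda\in K}\norm{\monoU_\lambda^\beta(s)\vf-\monoU_\lambda^\star(s)\vf}_{[-1,0]}\to0$ uniformly over bounded equi-Lipschitz families of $\vf$, i.e.\ uniform strong convergence; the paper's own proof passes over this by asserting in Lemma \ref{lem:y_convergence} that $y^\star(\lambda,s,\vf)$ is Lipschitz on $\mathcal{I}_{s,\ve}$ with a constant independent of $\vf$, which fails on $[s-1,s]$ where $y^\star(u)=\vf(u-s)$. Any complete argument must therefore either restrict the operators to an equi-Lipschitz subset (or precompose with a fixed smoothing map), or replace norm convergence by uniform strong convergence together with collective compactness before drawing the spectral conclusions of Theorem \ref{thm:1a}.
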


The remainder of this section is devoted to the proof of Proposition \ref{prop:mono_convergence}.
Our first step is to explicitly construct solutions of the limiting variational equation \eqref{eq:limitvariational}, which yields an explicit form of the limiting extended monodromy operator.
To this end, define $F:(-q_2^\star,0)\times\CC^3\to\CC$ by
\be\label{eq:Fslambdaz1z2}F(s,\lambda,z_1,z_2)=-\left[z_1-\lambda z_2(1+ab^{-1})e^{-\alpha s}\right]\frac{\lambda-\vr_1}{1-\vr_2}e^{-\alpha(q_2^\star+1)}.\ee
Also, define the continuous function $Q:\CC\to[2,\infty)$ by
\be\label{eq:Glambda}Q(\lambda)=\lsb1+\abs{\lambda}(1+ab^{-1})\rsb\lsb1+\abs{\lambda}(1+a^{-1}b)\rsb.\ee
Observe that $F$ and $Q$ also depend on $\alpha\ge0$ and $a,b>0$, which are fixed throughout this section.

\begin{lem}\label{lem:Uspectrum}
	For $\lambda\in\CC$, $s\in(-q_1^\star,0)$ and $\vf\in\C(\CC)$, the solution $y^\star(\lambda,s,\vf)$ of the limiting extended variational equation \eqref{eq:variationalbar} with initial condition $\vf$ is continuously differentiable on $(0,q_1^\star+1)\cup(q_1^\star+1,\period^\star)$,
		\be\label{eq:yastbound}\sup_{t\in[s-1,\period^\star-\ve)}\abs{y^\star(t;\lambda,s,\vf)}\le\norm{\psi}_{[-1,0]}Q(\lambda)\quad\text{for}\quad \ve\in(0,\period^\star).\ee
	and its derivative satisfies
		\be\abs{\frac{dy^\star(t;\lambda,s,\psi)}{dt}}\le\norm{\vf}_{[-1,0]}Q(\lambda)\max\{\alpha,1\},\qquad t\in(0,q_1^\star+1)\cup(q_1^\star+1,\period^\star).\ee 
	In addition, the associated limiting extended monodromy operator is explicitly given by
		\be\label{eq:Uastexplicit}\lsb \monoU_\lambda^\star(s)\vf\rsb(\theta)=F(s,\lambda,\psi(0),\psi(-1-s))e^{-\alpha\theta},\qquad\theta\in[-1,0].\ee
	Consequently, the restriction of $\monoU_\lambda^\star(s)$ to $\C(\CC)$ is a compact linear operator from $\C(\CC)$ to $\C(\CC)$, i.e., $\monoU_\lambda^\star(s)|_{\C(\CC)}\in B_0(\C(\CC))$, and the spectrum of $\monoU_\lambda^\star(s)|_{\C(\CC)}$ is equal to $\{0,\nu_\star(\lambda)\}$.
	Furthermore, if $\lambda\not\in\{\vr_1,\vr_2\}$, then $\nu_\star(\lambda)$ is the unique nonzero eigenvalue of $\monoU_\lambda^\star(s)|_{\C(\CC)}$, is a simple eigenvalue of $\monoU_\lambda^\star(s)|_{\C(\CC)}$, and has corresponding unit eigenfunction $\vf_0\in\C(\CC)$ given by 
		\be\label{eq:psi0}\vf_0(\theta)=e^{-\alpha(\theta+1)},\qquad\theta\in[-1,0].\ee
\end{lem}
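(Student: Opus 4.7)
My plan is to solve the integral equation \eqref{eq:ybar} for $y^\star$ explicitly on $[s-1,s+\period^\star]$, exploiting that by \eqref{eq:barmu} the driving measure $(\dot{\bar\sops}^\star(u))^{-1}dh^\star(u)$ is purely atomic, charging only the points of $\Iset=\{-1+k\period^\star,q_1^\star+k\period^\star:k\in\Z\}$. Since $a\ge b$ forces $q_1^\star\le 1$, the range $s\in(-q_1^\star,0)$ sits inside $(-1,0)$, so $-1\in[s-1,s]$ and the only atoms in the integration range $(s-1,t-1]$ for $t\in(s,s+\period^\star]$ are at $u=-1$ (mass $-(1+ab^{-1})$) and $u=q_1^\star$ (mass $-(1+a^{-1}b)$). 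These produce jumps in $y^\star$ at $t=0$ and $t=q_1^\star+1$; by Remark \ref{rem:ydiscontinuities} $y^\star$ is continuous elsewhere, and on each of the three subintervals $(s,0)$, $(0,q_1^\star+1)$, $(q_1^\star+1,s+\period^\star)$ the map $t\mapsto e^{\alpha t}y^\star(t)$ is constant.

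\textbf{Piecewise construction and identification of $F$.} Using $y^\star(-1)=\vf(-1-s)$, I first get $y^\star(t)=e^{-\alpha(t-s)}\vf(0)$ on $(s,0]$. Evaluating \eqref{eq:ybar} through the first atom yields $y^\star(t)=C_1 e^{-\alpha t}$ on $[0,q_1^\star+1)$ with
\[ C_1=e^{\alpha s}\vf(0)-\lambda(1+ab^{-1})\vf(-1-s). \]
Applying \eqref{eq:ybar} at the second atom, with $y^\star(q_1^\star)=C_1 e^{-\alpha q_1^\star}$, gives $y^\star(t)=C_2 e^{-\alpha t}$ on $[q_1^\star+1,s+\period^\star]$ with
\[ C_2=C_1\bigl[1-\lambda(1+a^{-1}b)e^{\alpha}\bigr]. \]
Consequently $[\monoU_\lambda^\star(s)\vf](\theta)=C_2 e^{-\alpha(s+\period^\star)}e^{-\alpha\theta}$ for $\theta\in[-1,0]$. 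The essential algebraic step is to apply the identities $1-\lambda(1+a^{-1}b)e^{\alpha}=-\tfrac{a+b}{a}e^{\alpha}(\lambda-\vr_1)$ and $\tfrac{1}{1-\vr_2}=\tfrac{a+b}{a}e^{-\alpha q_1^\star}$ to repackage $C_2 e^{-\alpha(s+\period^\star)}$ precisely as $F(s,\lambda,\vf(0),\vf(-1-s))$ in the form \eqref{eq:Fslambdaz1z2}.

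\textbf{Bounds and spectrum.} The sup and derivative bounds follow directly from the piecewise exponential form: $|C_1|\le\|\vf\|_{[-1,0]}(1+|\lambda|(1+ab^{-1}))$ and $|C_2|\le|C_1|(1+|\lambda|(1+a^{-1}b))$ multiply to yield the constant $Q(\lambda)$, and $|\dot y^\star|=\alpha|y^\star|\le\max\{\alpha,1\}\,\|\vf\|_{[-1,0]}Q(\lambda)$ on each smooth subinterval. Since $\monoU_\lambda^\star(s)\vf$ is always a scalar multiple of $\theta\mapsto e^{-\alpha\theta}$, the restriction $\monoU_\lambda^\star(s)|_{\C(\CC)}$ is rank at most one, hence compact, and its spectrum is $\{0,\mu\}$ for at most one nonzero eigenvalue $\mu$. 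To pin down $\mu$ I plug $\vf_0(\theta)=e^{-\alpha(\theta+1)}$ into $F$: then $\vf_0(0)=e^{-\alpha}$ and $\vf_0(-1-s)=e^{\alpha s}$, so $\vf_0(0)-\lambda(1+ab^{-1})e^{-\alpha s}\vf_0(-1-s)=e^{-\alpha}-\lambda(1+ab^{-1})=-(1+ab^{-1})(\lambda-\vr_2)$, using $e^{-\alpha}=(1+ab^{-1})\vr_2$. Combining with the prefactor and the identity $(1+a^{-1}b)(1-\vr_1)=e^{\alpha q_2^\star}$ collapses the formula to $F(s,\lambda,e^{-\alpha},e^{\alpha s})=\nu_\star(\lambda)e^{-\alpha}$; hence $\monoU_\lambda^\star(s)\vf_0=\nu_\star(\lambda)\vf_0$. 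When $\lambda\notin\{\vr_1,\vr_2\}$ we have $\nu_\star(\lambda)\ne0$, the range of $\monoU_\lambda^\star(s)|_{\C(\CC)}$ is exactly one-dimensional, and $\nu_\star(\lambda)$ is a simple eigenvalue with $\vf_0$ spanning the corresponding eigenspace.

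\textbf{Main obstacle.} The only non-mechanical ingredient is the algebraic repackaging that converts the two jump factors $(1+ab^{-1})$ and $[1-\lambda(1+a^{-1}b)e^{\alpha}]$, together with the overall $e^{-\alpha(s+\period^\star)}$, into the factorized quadratic $(\lambda-\vr_1)(\lambda-\vr_2)/((1-\vr_1)(1-\vr_2))$ that defines $\nu_\star(\lambda)$; once the identifications $e^{-\alpha}=(1+ab^{-1})\vr_2=(1+a^{-1}b)\vr_1$ are recognized, both the formula \eqref{eq:Fslambdaz1z2} for $F$ and the eigenvalue computation drop out of the piecewise integration.
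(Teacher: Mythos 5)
Your proposal is correct and follows essentially the same route as the paper: integrate the purely atomic measure piecewise across the two atoms at $u=-1$ and $u=q_1^\star$, read off the explicit rank-one form of $\monoU_\lambda^\star(s)$, and identify $\nu_\star(\lambda)$ via the exponential eigenfunction $\vf_0$ (the paper derives the eigenvalue from the eigenvalue equation $\mu\vf(\theta)=F(\cdots)e^{-\alpha\theta}$ rather than verifying $\vf_0$ directly, but the two are equivalent). One small slip: the identity you invoke in the eigenvalue computation should read $(1+ab^{-1})(1-\vr_1)=e^{\alpha q_2^\star}$, not $(1+a^{-1}b)(1-\vr_1)=e^{\alpha q_2^\star}$, consistent with your own identification $e^{-\alpha}=(1+ab^{-1})\vr_2$; with that correction the computation collapses to $\nu_\star(\lambda)e^{-\alpha}$ as claimed.
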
 

\begin{proof}
	First observe that by the definitions of $\vr_1,\vr_2$ in \eqref{eq:rho} and the definitions of $q_1^\star,q_2^\star$ in \eqref{eq:q1bar_q2bar} and \eqref{eq:q1bar_q2baralpha0}, we have
	\begin{align}
	\frac{\lambda-\vr_2}{1-\vr_1}e^{\alpha(q_2^\star+1)}&=\lambda e^\alpha\lb1+\frac{a}{b}\rb-1,\label{eq:lambdavr2}\\
	\frac{\lambda-\vr_1}{1-\vr_2}e^{\alpha(q_1^\star+1)}&=\lambda e^\alpha\lb1+\frac{b}{a}\rb-1.\label{eq:lambdavr1}
	\end{align}
	Let $\lambda\in\CC$, $s\in(-q_1^\star,0)$, $\vf\in\C(\CC)$ and $y^\star=y^\star(\lambda,s,\vf)$.
	By the equation for $y^\star$ in \eqref{eq:ybar} and Remark \ref{rem:dh} we see that $y^\star(t)=\vf(0)e^{-\alpha(t-s)}$ for $t\in[s,0)$ 
	\begin{align}\label{eq:yast0}
	   y^\star(0)=\vf(0)e^{\alpha s}+\lambda y^\star(-1-s)\mu^\star(\{-1\})=\vf(0)e^{\alpha s}-\lambda\vf(-1-s)(1+ab^{-1}),
	\end{align}
	where we have used the fact that $q_1^\star\le 1$, $-1-s\in[-1,0]$ and the definition of $h^\star$ in \eqref{eq:hbar}. 
	Continuing, we have $y^\star(t)=y^\star(0)e^{-\alpha t}$ for $t\in[0,q_1^\star+1)$ and
	\begin{align}\label{eq:yast0q11}
	   y^\star(q_1^\star+1)&=y^\star(0)e^{-\alpha(q_1^\star+1)}-\lambda y^\star(0)e^{-\alpha q_1^\star}(1+a^{-1}b)=-y^\star(0)\frac{\lambda-\vr_1}{1-\vr_2},
	\end{align}
	where we have used the definition of $h^\star$ in \eqref{eq:hbar} and the identity in \eqref{eq:lambdavr1}. 
	Next,
	\be\label{eq:yastbarq11}y^\star(t)=-y^\star(0)\frac{\lambda-\vr_1}{1-\vr_2}e^{-\alpha (t-q_1^\star-1)},\qquad t\in[q_1^\star+1,\period^\star).\ee
	Along with the expression for $y^\star(0)$ in equation \eqref{eq:yast0} this proves that equation \eqref{eq:Uastexplicit} for $\monoU_\lambda^\star(s)\vf$ holds, which immediately implies that $\monoU_\lambda^\star(s)|_{\C(\CC)}\in B_0(\C(\CC))$.
	In addition, it follows that $y^\star$ is continuously differentiable on $(0,q_1^\star+1)\cup(q_1^\star+1,\period^\star)$ with derivative bounded by $\norm{\vf}_{[-1,0]}Q(\lambda)\max\{\alpha,1\}$, and the bound in \eqref{eq:yastbound} holds.
	Now suppose $\mu$ is a nonzero eigenvalue of $\monoU_\lambda^\star(s)|_{\C(\CC)}$.
	Let $\vf\in\C(\CC)$ be a corresponding eigenfunction.
	Using the equation for $\monoU_\lambda^\star(s)\vf$ in \eqref{eq:Uastexplicit} we obtain
		\be\label{eq:muvf}\mu\vf(\theta)=F(s,\lambda,\vf(0),\vf(-1-s))e^{-\alpha\theta},\qquad\theta\in[-1,0].\ee
	Therefore, $\vf(\theta)=ze^{-\alpha\theta}$, $\theta\in[-1,0]$, for some constant $z\in\CC$.
	First, substituting this form of $\vf$ into equation \eqref{eq:muvf}, then using the definition of $F$ in \eqref{eq:Fslambdaz1z2}, the identity in \eqref{eq:lambdavr2} and the definition of $\nu_\star$ in \eqref{eq:nuast} yields $\mu=\nu_\star(\lambda)$.
	This proves that $\nu_\star(\lambda)$ is the only nonzero eigenvalue (provided $\lambda\not\in\{\vr_1,\vr_2\}$) and its associated generalized eigenspace is equal to the span of $\vf_0$.
	Thus, $\nu_\star(\lambda)$ is a simple eigenvalue of $\monoU_\lambda^\star(s)$.
\end{proof}

\begin{remark}
	Given $\lambda\in\CC$, $s\in\R$ and $\vf\in\C(\CC)$, we let $y^\beta=y^\beta(\lambda,s,\vf)\in C([s-1,\infty),\CC)$ denote the unique solution of the extended variational equation \eqref{eq:eve} that satisfies $y_s^\beta=\vf$.	
	Then $y^\beta$ satisfies
	\begin{equation}\label{eq:ylambdasolution}
		y^\beta(t)=\vf(0)e^{-\alpha(t-s)}+\lambda\beta\int_{s-1}^{t-1}e^{-\alpha(t-u-1)}f'(\sops^\beta(u))y^\beta(u)du,\qquad t>s.
	\end{equation}
\end{remark}

\begin{lem}\label{lem:yuniformbound}
	Suppose $s,t\in\R\setminus\Iset$ satisfy $s\le t$ and $K$ is a compact subset of $\CC$.
	There exists $R_0=R_0(s,t,K)<\infty$ such that for any $\lambda\in K$ and $\beta>\buniq$,
		$$\sup_{u\in[s-1,t]}\abs{y^\beta(u;\lambda,s,\vf)}\le R_0\norm{\vf}_{[-1,0]}.$$
\end{lem}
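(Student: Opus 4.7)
The plan is to use the integral (variation-of-constants) form \eqref{eq:ylambdasolution} of the extended variational equation together with a Gr\"onwall-type argument, with the uniform bound supplied by Proposition \ref{prop:mubetaconvergence}. The restriction $s,t\in\R\setminus\Iset$ plays no role in the bound itself; only the compactness of the time interval and of $K$ is used.

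First I would reduce the problem to a pointwise inequality. For $u\in[s-1,s]$, we have $y^\beta(u;\lambda,s,\vf)=\vf(u-s)$, so $|y^\beta(u)|\le\norm{\vf}_{[-1,0]}$ trivially. For $u\in[s,t]$, \eqref{eq:ylambdasolution} combined with the bounds $e^{-\alpha(u-s)}\le1$ and $e^{-\alpha(u-v-1)}\le 1$ (valid since $\alpha\ge 0$ and $v\le u-1$) yields
\begin{equation*}
    \abs{y^\beta(u)}\le\norm{\vf}_{[-1,0]}+|\lambda|\beta\int_{s-1}^{u-1}\abs{f'(\sops^\beta(v))}\abs{y^\beta(v)}dv.
\end{equation*}

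Next I would introduce the monotone envelope $Y(u)=\sup_{v\in[s-1,u]}|y^\beta(v)|$ and observe that the previous display, together with the trivial bound on $[s-1,s]$, implies
\begin{equation*}
    Y(u)\le\norm{\vf}_{[-1,0]}+|\lambda|\beta\int_{s-1}^{u-1}\abs{f'(\sops^\beta(v))}Y(v)dv,\qquad u\in[s-1,t].
\end{equation*}
Since the measure $|\lambda|\beta|f'(\sops^\beta(v))|dv$ is finite on $[s-1,t-1]$, Gr\"onwall's inequality in its integral form gives
\begin{equation*}
    Y(t)\le\norm{\vf}_{[-1,0]}\exp\!\lb|\lambda|\beta\int_{s-1}^{t-1}\abs{f'(\sops^\beta(v))}dv\rb.
\end{equation*}

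Finally, Proposition \ref{prop:mubetaconvergence} furnishes a constant $C=C(s,t)<\infty$ independent of $\beta>\buniq$ such that $\beta\int_{s-1}^{t-1}|f'(\sops^\beta(v))|dv\le C$. Setting
\begin{equation*}
    R_0=R_0(s,t,K)=\exp\!\lb C\sup_{\lambda\in K}|\lambda|\rb<\infty
\end{equation*}
(finite by compactness of $K$) yields the claimed bound. There is no real obstacle in this argument: the only substantive input is the already-proved uniform $L^1$ estimate \eqref{eq:supbetafprime}, which prevents the delay coefficient $\beta f'(\sops^\beta(\cdot))$ from concentrating too much mass despite its lack of a pointwise limit.
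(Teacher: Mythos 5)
Your proof is correct and follows essentially the same route as the paper: the paper likewise reads the exponential bound directly off the integral form \eqref{eq:ylambdasolution} via Gr\"onwall and then invokes the uniform $L^1$ estimate \eqref{eq:supbetafprime} to get a $\beta$-independent constant $R_0(s,t,K)=\exp[\Lambda_K\sup_{\beta>\buniq}\beta\int_{s-1}^{t-1}|f'(\sops^\beta(r))|dr]$. Your version merely makes the Gr\"onwall step explicit (via the monotone envelope $Y$), and your side remark that the hypothesis $s,t\notin\Iset$ is not actually used in the bound is accurate.
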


\begin{proof}
	Let $\Lambda_K=\sup\{|\lambda|:\lambda\in K\}<\infty$.
	It follows from equation \eqref{eq:ylambdasolution} that for $\lambda\in K$,
        $$\sup_{u\in[t-1,s]}|y^\beta(u;\lambda,s,\vf)|\le \norm{\vf}_{[-1,0]}\exp\lsb \Lambda_K\beta\int_{s-1}^{t-1}\abs{f'(\sops^\beta(r))}dr \rsb.$$
	Then by inequality \eqref{eq:supbetafprime} of Proposition \ref{prop:mubetaconvergence}, the lemma holds with
		$$R_0(s,t,K)=\exp\lsb\Lambda_K\sup_{\beta>\buniq}\beta\int_{s-1}^{t-1}\abs{f'(\sops^\beta(r))}dr\rsb<\infty.$$
\end{proof}

\begin{lem}\label{lem:y_convergence}
	Let $\ve\in(0,q_1^\star/4)$ and $K$ be a compact set in $\CC$. 
	Then for all $s\in(-q_1^\star+2\ve,-2\ve)$,
  	\begin{align*}
  		\lim_{\beta\to\infty}\sup\lcb\abs{y^\beta(t;\lambda,s,\vf)-y^\star(t;\lambda,s,\vf)}:t\in\mathcal{I}_{s,\ve},\lambda\in K,\vf\in\C(\CC)\text{ s.t.\ }\norm{\vf}_{[-1,0]}\le 1\rcb=0,
  	\end{align*}
  	where $\mathcal{I}_{s,\ve}=[s-1,s+\bar \period+2\ve]\setminus\Jset_\ve=[s-1,-\ve]\cup[\ve,q_1^\star+1-\ve]\cup[q_1^\star+1+\ve,s+\period^\star+2\ve]$.
\end{lem}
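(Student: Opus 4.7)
The plan is to establish the convergence causally, partitioning $\mathcal{I}_{s,\ve}$ into its three subintervals and proceeding in order of increasing $t$, using at each stage the integral equation representations
$$y^\beta(t)=\vf(0)e^{-\alpha(t-s)}+\lambda\int_{s-1}^{t-1}e^{-\alpha(t-u-1)}\beta f'(\sops^\beta(u))y^\beta(u)du,$$
$$y^\star(t)=\vf(0)e^{-\alpha(t-s)}+\lambda\int_{s-1}^{t-1}e^{-\alpha(t-u-1)}y^\star(u)d\mu^\star(u),$$
together with Lemmas \ref{lem:limbetaintA0}, \ref{lem:mubetaconvergence}, \ref{lem:intgboundedcontinuous}, \ref{lem:yuniformbound}, \ref{lem:Uspectrum} and a Gronwall estimate. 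On $[s-1,s]$ both functions equal $\vf(\cdot-s)$ by construction, so the difference vanishes. For $t\in(s,-\ve]$, the assumption $a\ge b$ forces $q_1^\star\le 1$ (an elementary estimate on \eqref{eq:q1bar_q2bar}--\eqref{eq:q1bar_q2baralpha0}), so the integration range $[s-1,t-1]$ sits to the left of $-1$, the smallest element of $\Iset$. Then $d\mu^\star$ contributes nothing and Lemma \ref{lem:limbetaintA0} combined with the uniform bound from Lemma \ref{lem:yuniformbound} gives uniform convergence on $[s-1,-\ve]$.

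For $t\in[\ve,q_1^\star+1-\ve]$, I would apply the semigroup property with base point $-\ve$, so that the transferred initial term $[y^\beta(-\ve)-y^\star(-\ve)]e^{-\alpha(t+\ve)}$ inherits uniform smallness from the first phase. Decompose the resulting integral difference as a forcing term $\lambda\int e^{-\alpha(t-u-1)}y^\star(u)[\beta f'(\sops^\beta)du-d\mu^\star]$ and a Gronwall term $\lambda\int e^{-\alpha(t-u-1)}\beta f'(\sops^\beta)(y^\beta-y^\star)du$. Partition the integration interval into a small window $[-1-\ve,-1+\ve]$ around the single $\Iset$ point $u=-1$ and its complement. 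On the complement, Lemma \ref{lem:limbetaintA0} handles the $\beta f'$ term while $d\mu^\star$ is zero; in the window the forcing contribution is analyzed via Lemmas \ref{lem:mubetaconvergence}--\ref{lem:intgboundedcontinuous}, and the Gronwall contribution is controlled using the uniform total-mass bound \eqref{eq:supbetafprime}. A discrete Gronwall inequality, iterated over finitely many sub-intervals of length less than $1$, then closes the estimate.

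The third interval $[q_1^\star+1+\ve,s+\period^\star+2\ve]$ is handled analogously using the semigroup property from base point $q_1^\star+1-\ve$. The integration now encounters the additional $\Iset$ point at $u=q_1^\star$, but by Lemma \ref{lem:Uspectrum} the function $y^\star(u)$ near $q_1^\star$ is the smooth exponential $y^\star(0)e^{-\alpha u}$, so the relevant test function is automatically Lipschitz on $\Iset_\ve$ with a constant uniform in $\vf\in\ball_{\C(\CC)}(0,1)$ and $\lambda\in K$ (thanks to the explicit formula for $y^\star(0)$), and Lemma \ref{lem:intgboundedcontinuous} applies directly.

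The principal technical obstacle arises in the second phase at the crossing $u=-1$: since $q_1^\star\le 1$ places $-1$ in $[s-1,s)$, one has $y^\star(u)=\vf(u-s)$ there, so the forcing test function $\lambda e^{-\alpha(t-u-1)}\vf(u-s)$ is only continuous, not Lipschitz, at $u=-1$, and Lemma \ref{lem:intgboundedcontinuous} does not apply directly with a uniform Lipschitz constant over $\vf\in\ball_{\C(\CC)}(0,1)$. To overcome this I would extract the principal part by writing $\vf(u-s)=\vf(-1-s)+[\vf(u-s)-\vf(-1-s)]$: the constant factor $\vf(-1-s)$ is bounded by $1$ and its coefficient, namely $\lambda\int_{-1-\ve}^{-1+\ve}e^{-\alpha(t-u-1)}[\beta f'(\sops^\beta)du-d\mu^\star]$, converges uniformly to zero by Lemma \ref{lem:intgboundedcontinuous} applied to the Lipschitz test function $\lambda e^{-\alpha(t-u-1)}$. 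The residual $\lambda\int_{-1-\ve}^{-1+\ve}e^{-\alpha(t-u-1)}[\vf(u-s)-\vf(-1-s)]\beta f'(\sops^\beta(u))du$ is then estimated by a secondary localization to a shrinking subwindow $|u-(-1)|<\delta$, using the uniform total-variation bound \eqref{eq:supbetafprime} together with Lemma \ref{lem:limbetaintA0} for the shell $\delta\le|u-(-1)|\le\ve$.
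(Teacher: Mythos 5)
Your architecture is the paper's: a causal/inductive march across subintervals of length at most one, with the difference $y^\beta-y^\star$ split at each stage into a transferred initial error, a forcing term measuring the convergence of the measure $\beta f'(\sops^\beta(u))\,du$ against the test function $e^{-\alpha(t-u-1)}y^\star(u)$, and a feedback term closed by a discrete Gronwall iteration using \eqref{eq:supbetafprime}. (The paper runs the induction over a chain of base points $r_k\in\mathcal{I}_{s,\ve}$ with spacing in $[\tfrac12,1]$ rather than over the three named subintervals, but that is cosmetic.) You have also correctly isolated the one genuinely delicate point, which the paper's own write-up elides: in the window about the atom $u=-1$ one has $y^\star(u)=\vf(u-s)$, which is merely continuous, so the test function does not lie in $L^\infty_{N,\Iset_\ve}(\R)$ for any $N$ uniform over $\lcb\vf:\norm{\vf}_{[-1,0]}\le1\rcb$, and Proposition \ref{prop:mubetaconvergence} / Lemma \ref{lem:intgboundedcontinuous} cannot be invoked directly. (The paper's assertion that $y^\star(\lambda,s,\vf)$ is Lipschitz on $\mathcal{I}_{s,\ve}$ with constant depending only on $\alpha$, $K$, $\ve$ is false on $[s-1,s]$, and its estimate \eqref{eq:intAybarmubetamu} is stated for $k\ge2$ but used for $k=1$.)

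The gap is in your repair of this point. After you subtract the principal part $\vf(-1-s)$, the residual over the shrinking subwindow $\{|u+1|<\delta\}$ is controlled only by $C\,\omega_\vf(\delta)$, where $\omega_\vf$ is the modulus of continuity of $\vf$ near $-1-s$. The unit ball of $\C(\CC)$ is not equicontinuous, so $\sup\lcb\omega_\vf(\delta):\norm{\vf}_{[-1,0]}\le1\rcb=2$ for every $\delta>0$, and this step yields convergence for each fixed $\vf$ (equivalently, uniformly over any equicontinuous family) but not the uniform statement being proved. The failure is not repairable by a sharper estimate: taking $\vf_\beta$ in the unit ball with $\vf_\beta(-1-s)=0$ but $\vf_\beta\equiv1$ outside an $O(\beta^{-2})$-neighborhood of $-1-s$, equation \eqref{eq:ylambdasolution} together with Lemmas \ref{lem:limbetaintA0} and \ref{lem:mubetaconvergence} gives $y^\beta(\ve;\lambda,s,\vf_\beta)-y^\star(\ve;\lambda,s,\vf_\beta)\to-\lambda(1+ab^{-1})e^{-\alpha\ve}\ne0$, because the absolutely continuous measure $\beta f'(\sops^\beta(u))\,du$ converges to the atom $-(1+ab^{-1})\delta_{-1}$ only weakly, not in total variation. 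So the supremum in the lemma does not tend to zero at interior times such as $t=\ve$; any complete argument must either restrict the class of initial data or exploit that the final evaluation window $[s+\period^\beta-1,s+\period^\beta]$ sees $\vf$ only through a further concentrating integral. Your proposal, like the paper's proof, stops short of this, so the uniformity over $\norm{\vf}_{[-1,0]}\le1$ is not established.
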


\begin{proof}
	Let $s\in(-q_2^\star+2\ve,-2\ve)$.
    Define the positive integer $m$ and the finite sequence $\{r_k\}_{k=1,\dots,m}$ in $[s,s+\period^\star+2\ve]$ as follows:
    set $r_1=s$ and for $k\ge1$ such that $r_k<s+\period^\star+2\ve$, recursively define
    	$$r_{k+1}=\max\lcb[r_k+1/2,r_k+1]\cap\mathcal{I}_{s,\ve}\rcb.$$
    If $r_{k+1}=t+\period^\star+2\ve$ for some $k\in\N$, set $m=k+1$ and end the sequence. 
    Since $\ve<q_1^\star/4\le 1/4$ and $\mathcal{I}_{s,\ve}$ is a closed set, it follows that $[r_k+1/2,r_k+1]\cap\mathcal{I}_{s,\ve}$ is nonempty for each $k=1,\dots,m-1$, and
    	\be\label{eq:skmathcalItve} r_k\in\mathcal{I}_{s,\ve},\qquad k=1,\dots,m.\ee
    Let $R_0=R_0(t,t+\period^\star+2\ve,K)$ be as in Lemma \ref{lem:yuniformbound} and set
    	$$\Lambda_K=\sup_{\lambda\in K}Q(\lambda)<\infty,$$
    where $Q(\lambda)$ is defined in \eqref{eq:Glambda} and clearly satisfies $Q(\lambda)\ge\abs{\lambda}$ for all $\lambda\in K$.
    Define
    	\be\label{eq:Cdef}C_1=2+R_0+\Lambda_K+\Lambda_K\sup_{\beta>\buniq}\beta\int_{s-1}^{s+\period^\star+2\ve}\abs{f'(\sops^\beta(u))}du,\ee
    where the last term on the right hand side is finite due to the bound in \eqref{eq:supbetafprime} of Proposition \ref{prop:mubetaconvergence}.
    Let $\eta>0$ and define
        \be\label{eq:deltak}\eta_k=\eta C_1^{-(m-k)},\qquad k=1,\dots,m.\ee
	By Proposition \ref{prop:xbar_convergence}, there is a $\beta'>\buniq$ such that for each $\beta>\beta'$,
    	\be\label{eq:periodbetabarperiod}|{\period^\beta-\period^\star}|<2\ve.\ee
    From Lemma \ref{lem:Uspectrum} and the fact that $s+\period^\star+2\ve<\period^\star$ we see that for any $\lambda\in K$ and $\vf\in\C(\CC)$ satisfying $\norm{\vf}_{[-1,0]}\le1$, $y^\star(\lambda,s,\vf)$ is continuously differentiable on $(0,q_1^\star+1)\cup(q_1^\star+1,\period^\star)$ with derivative bounded by $\Lambda_K\max\{\alpha,1\}$ and $y^\star(\lambda,s,\vf)$ satisfies the uniform bound
    	$$\sup_{t\in[s-1,s+\period^\star+2\ve]}\abs{y^\star(t;\lambda,s,\vf)}\le\Lambda_K.$$
    Consequently, $y^\star(\lambda,s,\vf)$ is Lipschitz continuous on $\mathcal{I}_{s,\ve}$ with Lipschitz constant depending only on $\alpha$, $K$ and $\ve$.
	Therefore, by Proposition \ref{prop:mubetaconvergence} and Lemma \ref{lem:yuniformbound}, and by choosing $\beta'$ possibly larger, we have, for each $\beta>\beta'$, $\lambda\in K$, $\vf\in\C(\CC)$ satisfying $\norm{\vf}_{[-1,0]}\le1$, $k=2,\dots,m$ and $t\in[r_k,r_{k+1}]\cap\mathcal{I}_{s,\ve}$,
    \begin{align}
    	\label{eq:intAybarmubetamu}
    	&\abs{\int_{r_k-1}^{t-1} e^{-\alpha(t-u) }y^\star(u;\lambda,s,\vf)\beta f'(p^\beta(u))du-\int_{r_k-1}^{t-1} e^{-\alpha(t-u) }y^\star(u;\lambda,s,\vf)(\dot{\bar\sops}^\star(u))^{-1}dh^\star(u)}\\
    	\notag
    	&\qquad<\frac{\eta_k}{\Lambda_K}.
    \end{align}
    By Lemma \ref{lem:limbetaintA0} and Proposition \ref{prop:mubetaconvergence}, and again choosing $\beta'$ possibly larger, for each $\beta>\beta'$, we have
    	\be\label{eq:hbetaJve}\int_{r_k-1}^{r_{k+1}-1}1_{[r_k-1,r_{k+1}-1]\setminus\Jset_\ve}(u)\beta|f'(p^\beta(u))|du<\frac{\eta_k}{\Lambda_K}.\ee
	Suppose $\beta>\beta'$. 
    Let $\lambda\in K$ and $\vf\in\C(\CC)$ be such that $\norm{\vf}_{[-1,0]}\le1$.
    We use finite induction to prove that for each $k=1,\dots,m$,
    	\be\label{eq:supybarysk}\sup\lcb\abs{y^\beta(t;\lambda,s,\vf)-y^\star(t;\lambda,s,\vf)}:t\in[s,r_k]\cap\mathcal{I}_{s,\ve}\rcb<\eta_k.\ee
    Since $r_m=s+\period^\star+2\ve$ and $\eta>0$ was arbitrary, this will complete the proof.
    For notational convenience, throughout the remainder of the proof we write $y^\beta$ and $y^\star$ for $y^\beta(\lambda,s,\vf)$ and $y^\star(\lambda,s,\vf)$, respectively.
    The base case ($k=1$) holds because $r_1=s$ and $y_s^\beta=y^\star_s=\vf$ by definition.
    Next, we prove the induction step.
    Suppose the bound in \eqref{eq:supybarysk} holds for some $k\in\{1,\dots,m-1\}$.
    Let $t\in[r_k,r_{k+1}]\cap\mathcal{I}_{s,\ve}$.
    The respective equations for $y^\beta$ and $y^\star$ in \eqref{eq:ylambdasolution} and \eqref{eq:ybar} imply the following equality
    \begin{align}
        \notag
    	y^\beta(t)-y^\star(t)&=y^\beta(r_k)-y^\star(r_k)\\
    	\notag
    	&\qquad+\lambda\int_{r_k-1}^{t-1}1_{[r_k-1,t-1]\setminus\Jset_\ve}(u)e^{-\alpha(t-u)}(y^\beta(u)-y^\star(u))\beta f'(p^\beta(u))du\\
    	\notag
    	&\qquad+\lambda\int_{r_k-1}^{t-1}1_{\Jset_\ve}(u)e^{-\alpha(t-u)}(y^\beta(u)-y^\star(u))\beta f'(p^\beta(u))du\\	
    	\notag
    	&\qquad+\lambda\int_{r_k-1}^{t-1}e^{-\alpha(t-u)}y^\star(u)\beta f'(p^\beta(u))du\\
    	&\qquad-\lambda\int_{r_k-1}^{t-1}e^{-\alpha(t-u)}(\dot{\bar\sops}^\star(u))^{-1}y^\star(u)dh^\star(u) \label{eq:ybeta_star1}.
    \end{align}
    In view of the inclusion stated in \eqref{eq:skmathcalItve} and the induction hypothesis \eqref{eq:supybarysk}, we have
    	\be\label{eq:ybeta_star2} \abs{y^\beta(r_k)-y^\star(r_k)}<\eta_k.\ee
    Due to the bound $\abs{\lambda}\le\Lambda_K$, the uniform bound on $y^\beta$ established in Lemma \ref{lem:yuniformbound}, the uniform bound for $y^\star$ in \eqref{eq:yastbound}, and the bound in \eqref{eq:hbetaJve}, it follows that
    \begin{align}
    	&\abs{\lambda\int_{r_k-1}^{t-1}1_{[r_k-1,t-1]\setminus\Jset_\ve}(u)e^{-\alpha(t-u)}(y^\beta(u)-y^\star(u))\beta f'(p^\beta(u))du}\notag\\
    	&\qquad\le\Lambda_K\sup_{u\in[r_k-1,r_{k+1}-1]}\lb\abs{y^\beta(u)}+\abs{y^\star(u)}\rb\int_{r_k-1}^{r_{k+1}-1}1_{[r_k-1,r_{k+1}-1]\setminus\Jset_\ve}\beta |f'(p^\beta(u))|du\notag\\
    	&\qquad\le\eta_k\lb R_0+\Lambda_K\rb \label{eq:ybeta_star3}. 
    \end{align}
    Continuing, by the bound $\abs{\lambda}\le\Lambda_K$ and our induction hypothesis \eqref{eq:supybarysk}, we have
    \begin{align}
     	&\abs{\lambda\int_{r_k-1}^{t-1}1_{\Jset_\ve}(u)e^{-\alpha(t-u)}(y^\beta(u)-y^\star(u))\beta f'(p^\beta(u))du}\le\eta_k\Lambda_K\sup_{\beta>\beta'}\beta\int_{r_k-1}^{r_{k+1}-1}\abs{f'(p^\beta(u))}du\label{eq:ybeta_star4}.
    \end{align}
    Then, by the bound in \eqref{eq:intAybarmubetamu} and the bound $\abs{\lambda}\le\Lambda_K$, we see that
    	\be\label{eq:ybeta_star5}\abs{\lambda\int_{r_k-1}^{t-1}e^{-\alpha(t-u)}y^\star(u)\beta f'(p^\beta(u))du-\lambda\int_{r_k-1}^{t-1}e^{-\alpha(t-u)}(\dot{\bar\sops}^\star(u))^{-1}y^\star(u)dh^\star(u)}<\eta_k.\ee
	Using the expression for $y^\beta(t)-y^\star(t)$ in equation \eqref{eq:ybeta_star1} along with the inequalities \eqref{eq:ybeta_star1}--\eqref{eq:ybeta_star5} yields
	\begin{align*}
		\abs{y^\beta(t)-y^\star(t)}&\le\eta_k\lb2+R_0+\Lambda_K+\Lambda_K\sup_{\beta>\beta'}\beta\int_{r_k-1}^{r_{k+1}-1}\abs{f'(\sops^\beta(u))}du\rb.
	\end{align*}
	Substituting in with the respective definitions for $C_1$ and $\eta_{k+1}$ in \eqref{eq:Cdef} and \eqref{eq:deltak}, we obtain
		\be\label{eq:ybeta_star6}\abs{y^\beta(t)-y^\star(t)}\le\eta_kC_1\le\eta_{k+1}.\ee
	Since the inequality \eqref{eq:ybeta_star6} holds for all $t\in[r_k,r_{k+1}]$, it follows that the induction hypothesis \eqref{eq:supybarysk} holds with $k+1$ in place of $k$.
	Then by the principle of mathematical induction, the bound in \eqref{eq:supybarysk} holds for all $k=1,\dots,m$, thus completing the proof.
\end{proof}

We can now prove the main result of this section.

\begin{proof}[Proof of Proposition \ref{prop:mono_convergence}]
	Fix $\eta>0$.
	Choose $\ve\in(0,q_1^\star/2)$ sufficiently small so that $s\in(-q_1^\star+\ve,-\ve)$.
    By Proposition \ref{prop:xbar_convergence} and Lemma \ref{lem:y_convergence} there exists $\beta^\dagger>0$ such that for all $\beta>\beta^\dagger$, the following hold:
    	\be\label{eq:pbetabarpve}|\period^\beta-\period^\star|<\min\lb\ve,\frac{\eta}{2\max\{\alpha,1\}\sup_{\lambda\in K}Q(\lambda)}\rb,\ee
    where $Q(\lambda)$ is the continuous function defined in \eqref{eq:Glambda}, and, for all $\lambda\in K$ and $\vf\in\C(\CC)$ satisfying $\norm{\vf}_{[-1,0]}\le1$,
    	\be\label{eq:pbetabarpve1}\sup\lcb\abs{y^\beta(t;\lambda,s,\vf)-y^\star(t;\lambda,s,\vf)}:t\in[s+\period^\star-1-\ve,s+\period^\star+\ve]\rcb<\frac{\eta}{2},\ee
    where we have used the fact that $\period^\star=q_1^\star+q_2^\star+2$ and so $[s+\period^\star-1-\ve,s+\period^\star+\ve]\subset[q_1^\star+1+\ve,s+\period^\star+\ve]$.
    Suppose $\beta>\beta^\dagger$. 
    Let $\lambda\in K$ and $\vf\in\C(\CC)$ be such that $\norm{\vf}_{[-1,0]}\le1$.
    By the Lipschitz continuity of $y^\star(\lambda,s,\vf)$ (see Lemma \ref{lem:Uspectrum}), the fact that $\norm{\vf}_{[-1,0]}\le1$ and the bound in \eqref{eq:pbetabarpve},
    \begin{align}\label{eq:barypbetabarybarp}
    	|y^\star(s+\period^\beta+\theta;\lambda,s,\vf)-y^\star(s+\period^\star+\theta;\lambda,s,\vf)|<\frac{\eta}{2}.
    \end{align}
    Therefore, by the triangle inequality and the last two inequalities \eqref{eq:pbetabarpve1} and \eqref{eq:barypbetabarybarp},
    \begin{align*}
    	\lVert \monoU_\lambda^\beta(s)\vf-\monoU_\lambda^\star(s)\vf\rVert_{[-1,0]}&=\sup_{\theta\in[-1,0]}\abs{y^\beta(s+\period^\beta+\theta;\lambda,s,\vf)-y^\star(s+\period^\star+\theta;\lambda,s,\vf)}<\eta.
    \end{align*}
    Since this holds for all $\lambda\in K$ and $\vf\in\C(\CC)$ satisfying $\norm{\vf}_{[-1,0]}\le 1$, and $\eta>0$ was arbitrary, this proves the proposition.
\end{proof}

\subsection{Proof of Theorem \ref{thm:1a}}
\label{sec:proofmonodromy}

\begin{proof}[Proof of Theorem \ref{thm:1a}]
	Suppose $\ve=q_1^\star/2$ and $s\in (-q_1^\star+\ve,-\ve)$.
	Throughout this proof we write $\monoU_\lambda^\star(s)$ to mean its restriction to $\C(\CC)$.
	Let $\{\beta_k\}_{k=1}^\infty$ be a sequence in $(\buniq,\infty)$ with $\lim_{k\to\infty}\beta_k=\infty$ and $\{\lambda_k\}_{k=1}^\infty$ be a sequence in $K$.
	By possibly taking a subsequence, we can assume there exists $\lambda\in K$ such that $\lim_{k\to\infty}\lambda_k=\lambda$.
	Then by Proposition \ref{prop:mono_convergence}, the explicit expression for $\monoU_\lambda^\star(s)$ in \eqref{eq:Uastexplicit} and the continuity of $\lambda\mapsto F(s,\lambda,\vf(0),\vf(-1-s))$ that follows from its definition in \eqref{eq:Fslambdaz1z2},
		$$\lim_{k\to\infty}\lVert \monoU_{\lambda_k}^{\beta_k}(s)-\monoU_\lambda^\star(s)\rVert\le\lim_{k\to\infty}\lVert \monoU_{\lambda_k}^{\beta_k}(s)-\monoU_{\lambda_k}^\star(s)\rVert+\lim_{k\to\infty}\lVert \monoU_{\lambda_k}^\star(s)-\monoU_\lambda^\star(s)\rVert=0.$$
	Therefore, by the continuity of $\sigma(\cdot)$ (see Section \ref{sec:notation_linear_operators}), Lemma \ref{lem:Uspectrum} and the continuity of $\nu_\star(\cdot)$ that follows from its definition in \eqref{eq:nuast},
	\begin{align*}
		\lim_{k\to\infty}d_H\lb\sigma(\monoU_{\lambda_k}^{\beta_k}(s)),\sigma(\monoU_\lambda^\star(s))\rb&\le \lim_{k\to\infty} d_H\lb\sigma(\monoU_{\lambda_k}^{\beta_k}(s)),\{0,\nu_\star(\lambda)\}\rb+\lim_{k\to\infty}|\nu_\star(\lambda)-\nu_\star(\lambda_k)|\\
		&=0.
	\end{align*}
	Since this holds for every such pair of sequences $\{\beta_k\}_{k=1}^\infty$ and $\{\lambda_k\}_{k=1}^\infty$, we have
		$$\lim_{\beta\to\infty}\sup_{\lambda\in K}d_H(\sigma(\monoU_\lambda^\beta(s)),\{0,\nu_\star(\lambda)\})=0.$$
	The theorem then follows because $\sigma(\mono_\lambda)=\sigma(U_\lambda^\beta(s))$ for fixed $\beta$.
\end{proof}

\section{Limits of the derivatives of the extended characteristic multipliers}
\label{sec:boundarycases}

In this section we show that the derivative (with respect to $\lambda$) of the dominant multiplier of the extended monodromy operator $\mono_\lambda$ exists for $\beta$ sufficiently large and converges to $\pd\nu_\star(\lambda)/\pd\lambda$, as $\beta\to\infty$.
This is used in the next section to characterize the sets $\stable$ and $\unstable$, defined in \eqref{eq:stablesets}, near $\lambda=1$ and $\lambda=0$, and on the interval $[0,1)$.
The following is the main result of this section.
Recall the definition of $\monoU_\lambda^\beta(s)$ in \eqref{eq:Ulambdabetavf}, and that we have fixed $\alpha\ge0$ and $f$ satisfying Assumption \ref{ass:main}.

\begin{prop}
\label{prop:nu_lambda_simple}
	Let $s\in(-q_1^\star,0)$.
	Suppose $K$ is a compact set in $\CC\setminus\{\vr_1,\vr_2\}$.
    	For all $\beta>\buniq$ sufficiently large, there exists a unique function $\nu_\beta:K\to\CC$ such that $\nu_\beta(\cdot)$ is holomorphic on $\interior(K)$, and
    	\be\label{eq:rhoUbetanubeta}\nu_\beta(\lambda)\in\sigma(\monoU_\lambda^\beta(s))\qquad\text{and}\qquad\rho(\monoU_\lambda^\beta(s))=|\nu_\beta(\lambda)|\qquad\text{for all }\lambda\in K.\ee
    	Furthermore, $\nu_\beta(r)\in\R$ and $\pd\nu_\beta(r)/\pd\lambda\in\R$ for all $r\in K\cap\R$; if $1\in K$, then $\nu_\beta(1)=1$; and if $\alpha=0$ and $0\in K$, then $\nu_\beta(0)=1$.
    	In addition,
    	\begin{align}
	   	\label{eq:mubetanu} \lim_{\beta\to\infty}\sup_{\lambda\in K}\abs{\nu_\beta(\lambda)-\nu_\star(\lambda)}=0\qquad\text{and}\qquad\lim_{\beta\to\infty}\sup_{\lambda\in K}\abs{\frac{\pd\nu_\beta(\lambda)}{\pd\lambda}-\frac{\pd\nu_\star(\lambda)}{\pd\lambda}}&=0.
   	\end{align}
\end{prop}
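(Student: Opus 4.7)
The plan is to combine the operator-norm convergence $\monoU_\lambda^\beta(s)\to\monoU_\lambda^\star(s)$ from Proposition~\ref{prop:mono_convergence} with the analytic perturbation theory of simple isolated eigenvalues of compact operators. The first step is to show that, for each fixed $\beta>\buniq$, the map $\lambda\mapsto\monoU_\lambda^\beta(s)$ is holomorphic from $\CC$ into $B(\C(\CC))$. Since the integral equation \eqref{eq:ylambdasolution} is linear in $\lambda$, its Picard iterates are polynomials in $\lambda$ of increasing degree, and the bound \eqref{eq:supbetafprime} forces uniform convergence of these iterates on compact subsets of $\CC$, which promotes to operator-norm holomorphy of $\monoU_\lambda^\beta(s)$.

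Next, I would enlarge $K$ to a compact set $K'\subset\CC\setminus\{\vr_1,\vr_2\}$ with $K\subset\interior(K')$. By Lemma~\ref{lem:Uspectrum}, $\sigma(\monoU_\lambda^\star(s))=\{0,\nu_\star(\lambda)\}$ with $\nu_\star(\lambda)$ a simple nonzero eigenvalue on $K'$, and by compactness and continuity of $\nu_\star$ there is an $r>0$ such that the circles $\Gamma_\lambda=\{z:|z-\nu_\star(\lambda)|=r\}$ separate $\nu_\star(\lambda)$ from $0$ uniformly in $\lambda\in K'$. Using Proposition~\ref{prop:mono_convergence}, for $\beta$ sufficiently large the resolvents $(z-\monoU_\lambda^\beta(s))^{-1}$ exist and are bounded uniformly in $(\lambda,z)\in K'\times\Gamma_\lambda$, so the Riesz projections
\[
P_\lambda^\beta=\frac{1}{2\pi i}\oint_{\Gamma_\lambda}(z-\monoU_\lambda^\beta(s))^{-1}\,dz
\]
are well-defined, depend holomorphically on $\lambda\in\interior(K')$, and converge in operator norm to the analogous (rank-one) projections $P_\lambda^\star$ uniformly in $\lambda\in K'$. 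Since rank is preserved under norm perturbations of size less than $1$, each $P_\lambda^\beta$ is rank one, and setting $\nu_\beta(\lambda)=\operatorname{tr}(\monoU_\lambda^\beta(s)P_\lambda^\beta)$ produces a function that is holomorphic on $\interior(K')$ and equals the unique eigenvalue of $\monoU_\lambda^\beta(s)$ enclosed by $\Gamma_\lambda$. Uniform convergence $\nu_\beta\to\nu_\star$ on $K'$ is immediate, and Cauchy's integral formula on $K\subset\interior(K')$ upgrades this to uniform convergence $\partial_\lambda\nu_\beta\to\partial_\lambda\nu_\star$ on $K$.

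The remaining claims are then verified individually. Dominance follows from Theorem~\ref{thm:1a}: for $\delta$ small enough that the $\delta$-neighborhood of $\{0,\nu_\star(\lambda)\}$ stays inside $\Gamma_\lambda\cup B(0,r/2)$ uniformly on $K'$, every spectral point of $\monoU_\lambda^\beta(s)$ outside $\Gamma_\lambda$ lies in a small ball around $0$ for $\beta$ large, so $|\nu_\beta(\lambda)|=\rho(\monoU_\lambda^\beta(s))$; this also gives uniqueness of $\nu_\beta$, since the dominant eigenvalue is simple and pointwise uniquely determined. Reality of $\nu_\beta$ on $K\cap\R$ follows because for real $\lambda$ the operator $\monoU_\lambda^\beta(s)$ preserves real-valued functions, so its spectrum is closed under complex conjugation; uniqueness of the eigenvalue inside $\Gamma_\lambda$ (centered at $\nu_\star(\lambda)\in\R$) then forces $\nu_\beta(\lambda)\in\R$, and real-valued holomorphy on a real interval gives $\partial_\lambda\nu_\beta(r)\in\R$. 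For $1\in K$: differentiating the scalar DDE \eqref{eq:dde} shows $\dot p^\beta$ satisfies \eqref{eq:1variational} and is $\period^\beta$-periodic, so $\dot p^\beta_s$ is an eigenfunction of $\monoU_1^\beta(s)$ at eigenvalue $1$, and Theorem~\ref{thm:xie} provides the simplicity that forces $\nu_\beta(1)=1$. For $\alpha=0$ and $0\in K$, the extended variational equation with $\lambda=0$ reduces to $\dot y=0$, so $\monoU_0^\beta(s)\vf\equiv\vf(0)$ is rank one with spectrum $\{0,1\}$, giving $\nu_\beta(0)=1$.

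The main obstacle is the uniform-in-$\lambda$ bookkeeping: a single contour radius $r$ and a single threshold $\beta^\dagger$ must work simultaneously for all $\lambda\in K'$ so that every Riesz projection $P_\lambda^\beta$ is exactly rank one and $\Gamma_\lambda$ cleanly separates the two spectral clusters. This requires exploiting the compactness of $K'$, the continuity of $\nu_\star$, and the uniform operator-norm convergence in Proposition~\ref{prop:mono_convergence}. Once these uniform estimates are secured, the rest reduces to standard applications of Cauchy's integral formula and analytic perturbation theory.
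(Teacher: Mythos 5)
Your proposal is correct, and it reaches the conclusion by a genuinely different technical route for the central perturbation step. The paper converts the limit $\beta\to\infty$ into continuity at $\zeta=0$ of the two-variable map $V_s(\zeta,\lambda)$ (with $\zeta=1/\beta$), then invokes an implicit-function-theorem-style perturbation result for simple eigenvalues (Theorem \ref{thm:mu_simple}) together with Corollary \ref{cor:mu_eigen}, whose hypothesis is verified by explicitly computing an adjoint eigenfunctional of $\monoU_\lambda^\star(s)$ in Lemma \ref{lem:psiast}; it then covers $K$ by finitely many balls and patches the local eigenvalue branches. You instead build Riesz spectral projections $P_\lambda^\beta$ by contour integration around $\nu_\star(\lambda)$, use rank stability under small operator-norm perturbations to conclude each projection is rank one, and extract $\nu_\beta(\lambda)=\operatorname{tr}(\monoU_\lambda^\beta(s)P_\lambda^\beta)$. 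Your route packages existence, simplicity, local uniqueness inside the contour, and holomorphy into a single construction and dispenses with the adjoint-eigenfunctional computation entirely; its price is the uniform resolvent estimates on the contours $\Gamma_\lambda$, which you correctly identify as the main bookkeeping burden, and a little care in fixing the contour locally in $\lambda$ so that $P_\lambda^\beta$ is manifestly holomorphic. Your derivation of holomorphy of $\lambda\mapsto\monoU_\lambda^\beta(s)$ via Picard iterates is essentially the paper's method-of-steps induction in Proposition \ref{lem:mono_continuity} (for a unit delay the solution is in fact a polynomial in $\lambda$ on each bounded time interval, so no limit of iterates is even needed); and your Cauchy-integral upgrade to derivative convergence matches the paper's appeal to the standard theorem on uniform limits of holomorphic functions. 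The remaining claims --- dominance via Theorem \ref{thm:1a}, reality on $K\cap\R$ by conjugation symmetry, $\nu_\beta(1)=1$ via $\dot p^\beta$ and Theorem \ref{thm:xie}, and $\nu_\beta(0)=1$ when $\alpha=0$ --- are argued exactly as in the paper.
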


The remainder of this section is devoted to the proof of Proposition \ref{prop:nu_lambda_simple}.

\subsection{Regularity of the extended monodromy operators}
\label{sec:monoregular}

In this section we show that for certain fixed $s\in\R$ the operator $\monoU_\lambda^\beta(s)$ is continuous in $(\beta,\lambda)$ and holomorphic in $\lambda$.
These results will be used in the next section to prove convergence of the derivatives of the extended monodromy operators.

To this end, we first need some definitions.
Define the solution maps $S:(0,\infty)\times\C(\R)\times\R_+\to\C(\R)$ and $T:(0,\infty)\times\C(\R)\times\R_+\to\R$, for $(\beta,\phi,t)\in\C(\R)\times\R_+$, by
\begin{align*}
	S(\beta,\phi,t)&=x_t^\beta(\phi),\\
	T(\beta,\phi,t)&=x^\beta(t;\phi).
\end{align*}
Then $S$ is continuous on its domain and continuously differentiable on $(0,\infty)\times\C(\R)\times(1,\infty)$ (see, e.g., \cite[Chapter 2, Theorem 4.1]{Hale1993} \& \cite[Chapter VII, Theorem 6.2]{Diekmann1991}).
Since the projection mapping $\vf\mapsto\vf(-1)$ from $\C(\R)$ to $\R$ is continuously differentiable, the function $T$ is continuously differentiable on $(0,\infty)\times\C(\R)\times(0,\infty)$.
Define the hyperplane
	$$\hyper=\{\phi\in\C(\R):\phi(-1)=0\}$$
and note that by the time-translations performed at the beginning of Section \ref{sec:normalizedSOPS}, $\sops_0^\beta\in\hyper$ for all $\beta>\buniq$.
The following lemma, which is a consequence of the implicit function theorem (see, e.g., \cite[Chapter 2, Theorem 2.3]{Chow1982}), is a version of \cite[Lemma 1]{Xie1992} stated for our setting.

\begin{lem}\label{lem:qbeta}
	For each $\beta>\buniq$ there exists a neighborhood $W^\beta$ of $(\beta,\sops_0^\beta)$ in $(\buniq,\infty)\times\hyper$ and a unique function $q^\beta:W^\beta\to(1,\infty)$ such that $q^\beta(\beta,\sops_0^\beta)=\period^\beta-1$ and $T(\tilde\beta,\phi,q^{\beta}({\tilde\beta},\phi))=0$ for all $(\tilde\beta,\phi)\in W^\beta$.
	Furthermore, $q^\beta$ is continuously differentiable on $W^\beta$.
\end{lem}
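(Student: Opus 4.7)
The plan is to apply the implicit function theorem to the scalar map $T$ restricted to the codimension-one slice $(\buniq,\infty)\times\hyper\times(1,\infty)$ of its domain. Recall that $T(\beta,\phi,t)=x^\beta(t;\phi)$ is continuously differentiable there, and that the phase convention \eqref{eq:pphasefix} fixes $\sops^\beta(-1)=0$ and $\dot\sops^\beta(-1)>0$, with $\sops^\beta$ extended periodically of period $\period^\beta$.

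First I would verify that $(\tilde\beta,\phi,t)=(\beta,\sops_0^\beta,\period^\beta-1)$ is a zero of $T$. By the periodicity of $\sops^\beta$ and the phase condition in \eqref{eq:pphasefix},
\begin{equation*}
T(\beta,\sops_0^\beta,\period^\beta-1)=\sops^\beta(\period^\beta-1)=\sops^\beta(-1)=0,
\end{equation*}
so $\sops_0^\beta\in\hyper$ and this point lies in the constrained domain. Next I would compute the partial derivative with respect to the time argument at this point. Since $t\mapsto T(\beta,\sops_0^\beta,t)=\sops^\beta(t)$ is continuously differentiable on $(0,\infty)$ with
\begin{equation*}
\left.\frac{\partial T}{\partial t}(\beta,\sops_0^\beta,t)\right|_{t=\period^\beta-1}=\dot\sops^\beta(\period^\beta-1)=\dot\sops^\beta(-1)>0,
\end{equation*}
the partial derivative is a nonzero element of $B(\R,\R)$, hence invertible.

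With these two ingredients in hand, the implicit function theorem (applied on the Banach space $\R\times\hyper\times\R$ to the $C^1$ scalar-valued map $T$) yields an open neighborhood $W^\beta$ of $(\beta,\sops_0^\beta)$ in $(\buniq,\infty)\times\hyper$ and a unique continuously differentiable function $q^\beta\colon W^\beta\to(1,\infty)$ with $q^\beta(\beta,\sops_0^\beta)=\period^\beta-1$ and $T(\tilde\beta,\phi,q^\beta(\tilde\beta,\phi))=0$ for all $(\tilde\beta,\phi)\in W^\beta$; by possibly shrinking $W^\beta$, $q^\beta$ may be taken to map into $(1,\infty)$ since $\period^\beta-1>1$.

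There is no real obstacle here: the delicate point is merely to confirm that $T$, regarded as a map into $\R$ rather than into $\C(\R)$, inherits continuous differentiability from the corresponding statement for $S$ (together with the continuous linear projection $\phi\mapsto\phi(-1)$), so that the hypotheses of the implicit function theorem are genuinely in force at the point $(\beta,\sops_0^\beta,\period^\beta-1)$. Once this is noted, both existence and uniqueness of $q^\beta$, as well as its $C^1$ dependence on $(\tilde\beta,\phi)$, are immediate from the theorem.
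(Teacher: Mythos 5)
Your proof is correct and follows essentially the same route as the paper: verify that $T(\beta,\sops_0^\beta,\period^\beta-1)=\sops^\beta(-1)=0$ and $D_tT(\beta,\sops_0^\beta,\period^\beta-1)=\dot\sops^\beta(-1)>0$, then invoke the implicit function theorem, using $\period^\beta>2$ to place the range of $q^\beta$ in $(1,\infty)$. The additional remark about $T$ inheriting $C^1$ regularity from $S$ via the projection $\phi\mapsto\phi(-1)$ is exactly the observation the paper makes just before stating the lemma.
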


\begin{proof}
	Let $\beta>\buniq$.
	By the periodicity of $\sops^\beta$ and the fact that $\sops_0^\beta\in\hyper$, we see that $T(\beta,\sops_0^\beta,\period^\beta-1)=\sops^\beta(-1)=0$ and $D_tT(\beta,\sops_0^\beta,\period^\beta-1)=\dot\sops^\beta(-1)>0$.
	The lemma then follows immediately from the implicit function theorem and the fact that $\period^\beta>2$.
\end{proof}

Recall that for $\phi\in\C(\R)$ we let $x(\phi)$ denote the solution of the DDE starting at $\phi$.
For $t\ge-1$, we let $x(t;\phi)$ denote evaluation of $x(\phi)$ at time $t$.
Given $\beta>\buniq$ define the shift map $\Phi^\beta:W^\beta\to\hyper$ by
	\be\label{eq:Phibeta}\Phi^\beta(\tilde\beta,\phi)=S({\tilde\beta},\phi,q^\beta({\tilde\beta},\phi)+1)=x^{\tilde\beta}_{q^\beta({\tilde\beta},\phi)+1}(\phi),\qquad({\tilde\beta},\phi)\in W^\beta.\ee
Observe that $\Phi^\beta(\beta,\sops_0^\beta)=\sops_0^\beta$ and due to the regularity properties of $S$ and $q^\beta$, it follows that $\Phi^\beta$ is continuously differentiable on $W^\beta$.
The following lemma is a version of \cite[Theorem 9]{Xie1992} stated for our setting.
	
\begin{lem}\label{lem:DPhi}
	Let $\beta>\buniq$.
	Then $D_\phi\Phi^\beta(\beta,\sops_0^\beta)\in B_0(\C(\R))$ and $1\not\in\sigma( D_\phi\Phi^\beta(\beta,\sops_0^\beta))$.
\end{lem}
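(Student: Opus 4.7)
The plan is to write $D_\phi\Phi^\beta(\beta,\sops_0^\beta)$ explicitly, via the chain rule, as a rank-one perturbation of the monodromy operator $\mono$ from Section \ref{sec:msf}, and then use the simplicity of $\mu=1\in\sigma(\mono)$ (Theorem \ref{thm:xie}) together with the Riesz decomposition for compact operators to rule $1$ out of $\sigma(D_\phi\Phi^\beta(\beta,\sops_0^\beta))$.

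Differentiating \eqref{eq:Phibeta} at $(\beta,\sops_0^\beta)$ (where $q^\beta+1=\period^\beta$), I would identify $D_\phi S(\beta,\sops_0^\beta,\period^\beta)=\mono$ by uniqueness of solutions to the variational equation \eqref{eq:1variational} and $D_tS(\beta,\sops_0^\beta,\period^\beta)=\dot\sops_0^\beta$ by periodicity of $\sops^\beta$; then by implicit differentiation of $T(\tilde\beta,\phi,q^\beta(\tilde\beta,\phi))=0$ at the reference point and the fact $\dot\sops^\beta(-1)>0$ from \eqref{eq:pphasefix}, I would obtain $D_\phi q^\beta(\beta,\sops_0^\beta)\psi=-[\mono\psi](-1)/\dot\sops^\beta(-1)$. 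Assembling these pieces yields
\[
D_\phi\Phi^\beta(\beta,\sops_0^\beta)\psi=\mono\psi-\frac{[\mono\psi](-1)}{\dot\sops^\beta(-1)}\,\dot\sops_0^\beta,\qquad\psi\in\hyper.
\]
Compactness then follows at once: $\mono\in B_0(\C(\R))$ and the corrective term is rank one, so their difference is compact, and by the construction of $q^\beta$ (Lemma \ref{lem:qbeta}) it maps $\hyper$ into itself.

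For the spectral claim, suppose $\psi\in\hyper$ is fixed by $D_\phi\Phi^\beta(\beta,\sops_0^\beta)$. Rearranging the explicit formula gives $(\mono-\Id)\psi=c\,\dot\sops_0^\beta$ with $c=[\mono\psi](-1)/\dot\sops^\beta(-1)\in\R$. By Theorem \ref{thm:xie}, $\mu=1$ is a simple eigenvalue of the compact operator $\mono$ with eigenvector $\dot\sops_0^\beta$, so the Riesz decomposition at the isolated eigenvalue $1$ yields $\C(\R)=\mathrm{span}(\dot\sops_0^\beta)\oplus\Range(\mono-\Id)$. In particular $\dot\sops_0^\beta\notin\Range(\mono-\Id)$, forcing $c=0$; then $\psi\in\Null(\mono-\Id)=\mathrm{span}(\dot\sops_0^\beta)$, and since $\dot\sops^\beta(-1)>0$ implies $\dot\sops_0^\beta\notin\hyper$, we conclude $\psi=0$. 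As $D_\phi\Phi^\beta(\beta,\sops_0^\beta)$ is compact on the infinite-dimensional Banach space $\hyper$, the absence of $1$ as an eigenvalue gives $1\notin\sigma(D_\phi\Phi^\beta(\beta,\sops_0^\beta))$.

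I expect no substantive obstacle beyond bookkeeping once the explicit rank-one-perturbation formula is in hand; the mild point to verify carefully is that $D_\phi\Phi^\beta(\beta,\sops_0^\beta)$ genuinely has codomain $\hyper$, which follows by differentiating the identity $\Phi^\beta(\tilde\beta,\phi)(-1)=T(\tilde\beta,\phi,q^\beta(\tilde\beta,\phi))=0$ from Lemma \ref{lem:qbeta}, and that the Riesz projection at the isolated simple eigenvalue $1$ of $\mono$ is available — both are standard facts for compact operators.
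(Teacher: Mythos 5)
Your derivation of the explicit rank\mbox{-}one\mbox{-}perturbation formula
\[
D_\phi\Phi^\beta(\beta,\sops_0^\beta)\psi \;=\; \mono\psi-\frac{[\mono\psi](-1)}{\dot\sops^\beta(-1)}\,\dot\sops_0^\beta
\]
is exactly the computation in the paper's Appendix~\ref{apdx:meansfieldlemma} (there written with $L\phi=\phi(-1)$ and $\monoU_1^\beta(0)$ in place of $\mono$), and the compactness conclusion is the same. Where you genuinely diverge is the spectral step. The paper feeds the formula into Theorem~\ref{thm:eigenreduction} (the eigenvalue-reduction theorem adapted from Xie) with $V=\mono$, $\chi_0=\dot\sops_0^\beta/\dot\sops^\beta(-1)$ and $\mu=1$, obtaining the full identification $\sigma(D_\phi\Phi^\beta(\beta,\sops_0^\beta))=\sigminus(\mono)$, from which $1\notin\sigma$ follows by simplicity of the trivial multiplier. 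You instead argue directly: a fixed point $\psi\in\hyper$ gives $(\mono-\Id)\psi=c\,\dot\sops_0^\beta$, the Riesz splitting $\C(\R)=\mathrm{span}(\dot\sops_0^\beta)\oplus\Range(\mono-\Id)$ at the simple isolated eigenvalue $1$ forces $c=0$, and then $\psi\in\Null(\mono-\Id)\cap\hyper=\{0\}$ because $\dot\sops_0^\beta(-1)>0$. This is correct and more self-contained --- it uses only standard Riesz--Fredholm theory for compact operators rather than the multiplicity bookkeeping of Theorem~\ref{thm:eigenreduction} --- but it buys only the single fact $1\notin\sigma$ rather than the identification of the whole spectrum of the perturbed operator; since the lemma needs nothing more, that is a fair trade. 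One cosmetic point to reconcile with the statement: the lemma asserts membership in $B_0(\C(\R))$ while you work on $\hyper$. This is harmless, because the displayed formula extends verbatim to all of $\C(\R)$ and visibly maps $\C(\R)$ into $\hyper$ (the correction term cancels the value at $-1$), so any eigenvector for a nonzero eigenvalue already lies in $\hyper$ and your injectivity argument covers the operator on $\C(\R)$ as well.
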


Lemma \ref{lem:DPhi} follows from the main functional analytic result in \cite{Xie1992}.
In Appendix \ref{apdx:meansfieldlemma} we state the main result from \cite{Xie1992} and prove Lemma \ref{lem:DPhi}.
We can now show that the family of SOPS are continuously differentiable in $\beta$.

\begin{lem}\label{lem:sopsdiffbeta}
	For each $t\in\R$ the function $\beta\mapsto\sops_t^\beta$ from $(\buniq,\infty)$ to $\C(\R)$ is continuously differentiable.
\end{lem}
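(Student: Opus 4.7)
The plan is to fix $\beta^* > \buniq$ and show that $\tilde\beta \mapsto \sops_0^{\tilde\beta}$ is continuously differentiable in a neighborhood of $\beta^*$; continuous differentiability of $\tilde\beta \mapsto \sops_t^{\tilde\beta}$ for arbitrary $t \in \R$ will then follow by composing with the solution map $S$ and using the periodicity of $\sops^{\tilde\beta}$. The central tool is the implicit function theorem applied to the equation $\Phi^{\beta^*}(\tilde\beta,\phi) = \phi$ on $\hyper$, where $\Phi^{\beta^*}$ is the shift map defined in \eqref{eq:Phibeta}. By Lemma \ref{lem:qbeta} and the continuous differentiability of $S$ on $(0,\infty) \times \C(\R) \times (1,\infty)$, the map $\Phi^{\beta^*}$ is $C^1$ on $W^{\beta^*}$, and periodicity of $\sops^{\beta^*}$ gives $\Phi^{\beta^*}(\beta^*, \sops_0^{\beta^*}) = \sops_0^{\beta^*}$. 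Lemma \ref{lem:DPhi} yields $1 \notin \sigma(D_\phi \Phi^{\beta^*}(\beta^*, \sops_0^{\beta^*}))$, so $D_\phi \Phi^{\beta^*}(\beta^*, \sops_0^{\beta^*}) - \Id_{\hyper}$ is invertible on $\hyper$, and the implicit function theorem produces an open neighborhood $V \subset (\buniq, \infty)$ of $\beta^*$, an open neighborhood $U \subset \hyper$ of $\sops_0^{\beta^*}$, and a unique $C^1$ function $\phi: V \to U$ with $\phi(\beta^*) = \sops_0^{\beta^*}$ satisfying $\Phi^{\beta^*}(\tilde\beta, \phi(\tilde\beta)) = \phi(\tilde\beta)$ for every $\tilde\beta \in V$.

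The next step is to identify $\phi(\tilde\beta)$ with $\sops_0^{\tilde\beta}$ for all $\tilde\beta$ in a possibly smaller neighborhood of $\beta^*$. The fixed-point identity together with the definition of $\Phi^{\beta^*}$ in \eqref{eq:Phibeta} shows that the solution $x^{\tilde\beta}(\,\cdot\,; \phi(\tilde\beta))$ returns to $\phi(\tilde\beta)$ at time $q^{\beta^*}(\tilde\beta, \phi(\tilde\beta)) + 1$, and hence extends to a periodic solution on $\R$. By continuous dependence of $S$ on its arguments and the smallness of $V$ and $U$, this solution is uniformly close to $\sops^{\beta^*}$ on any fixed compact interval, so it inherits the slowly oscillating structure (one positive lobe and one negative lobe per period, each of length greater than $1$). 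Hence it is an SOPS of the $\tilde\beta$-DDE, and by the orbital uniqueness in Theorem \ref{thm:xie} its orbit coincides with $\orbit_{\sops^{\tilde\beta}}$. The phase-fixing conditions $\phi(\tilde\beta)(-1) = 0$ (which holds because $\phi(\tilde\beta) \in \hyper$) together with positivity of the right derivative of $x^{\tilde\beta}(\,\cdot\,; \phi(\tilde\beta))$ at time $-1$ (inherited from $\dot\sops^{\beta^*}(-1) > 0$ by continuous dependence) then single out the correct phase, giving $\phi(\tilde\beta) = \sops_0^{\tilde\beta}$ and thereby establishing that $\tilde\beta \mapsto \sops_0^{\tilde\beta}$ is $C^1$ on a neighborhood of $\beta^*$.

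To promote the result from $t = 0$ to arbitrary $t \in \R$, observe that $\period^{\tilde\beta} = q^{\beta^*}(\tilde\beta, \sops_0^{\tilde\beta}) + 1$ is $C^1$ on $V$ by Lemma \ref{lem:qbeta} and the $C^1$ regularity of $\tilde\beta \mapsto \sops_0^{\tilde\beta}$ just established. For any fixed $t \in \R$, choose $n \in \N$ large enough that $t + n\period^{\tilde\beta} > 1$ on a (possibly smaller) neighborhood of $\beta^*$; then the periodicity identity $\sops_t^{\tilde\beta} = S(\tilde\beta, \sops_0^{\tilde\beta}, t + n\period^{\tilde\beta})$ combined with the regularity of $S$ realizes $\tilde\beta \mapsto \sops_t^{\tilde\beta}$ as a composition of $C^1$ maps, completing the argument. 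The main obstacle will be the identification step $\phi(\tilde\beta) = \sops_0^{\tilde\beta}$: the implicit function theorem alone only guarantees uniqueness of the fixed point within the prescribed neighborhood $U$ and does not a priori connect $\phi(\tilde\beta)$ to the distinguished SOPS $\sops^{\tilde\beta}$, so one must combine continuous dependence on parameters, the orbital uniqueness of Theorem \ref{thm:xie}, and the phase-fixing convention \eqref{eq:pphasefix} to make the identification rigorous.
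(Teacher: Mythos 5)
Your proposal is correct and follows essentially the same route as the paper: the implicit function theorem applied to the fixed-point equation for the shift map $\Phi^{\beta}$ on $\hyper$ (the paper phrases it via $\Psi^\beta=\Phi^\beta-\mathrm{id}$ and Lemma \ref{lem:DPhi}), followed by identification of the resulting branch with $\sops_0^{\tilde\beta}$ through orbital uniqueness and the phase convention \eqref{eq:pphasefix}, and extension to all $t$ via the solution map and periodicity. The only place the paper is slightly more careful is the slow-oscillation check: rather than arguing that uniform closeness "inherits" the lobe structure, it picks $\ve$ with $\sops_\ve^{\beta}>0$ on the full delay interval, propagates this positivity to nearby $\tilde\beta$ by continuity, and invokes the standard lemma that positivity on one delay interval forces slow oscillation.
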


\begin{proof}
	Let $\beta>\buniq$.
	Let $W^\beta$ and $q^\beta$ be as in Lemma \ref{lem:qbeta}, and define $\Phi^\beta:W^\beta\to\hyper$ as in \eqref{eq:Phibeta}.
	Define $\Psi^\beta:W^\beta\to\C(\R)$ by
		$$\Psi^\beta(\tilde\beta,\phi)=\Phi^\beta(\tilde\beta,\phi)-\phi,\qquad(\tilde\beta,\phi)\in W^\beta.$$
	Since $(\beta,\sops_0^\beta)$ is a fixed point of $\Phi^\beta$, we have $\Psi^\beta(\beta,\sops_0^\beta)=0$.
	By Lemma \ref{lem:DPhi}, $\Psi^\beta$ is continuously differentiable and its Frech\'et derivative with respect to $\phi$ satisfies $D_\phi\Psi^\beta(\beta,\sops_0^\beta)=D_\phi \Phi^\beta(\beta,\sops_0^\beta)-I_{\C(\R)}$.
	Since $1\not\in\sigma(D_\phi \Phi^\beta(\beta,\sops_0^\beta))$ by Lemma \ref{lem:DPhi}, it follows that $\Null D_\phi\Psi^\beta(\beta,\sops_0^\beta)=\{0\}$.
	Thus, by the implicit function theorem, there is a neighborhood $A^\beta$ of $\beta$ in $(\buniq,\infty)$ and a continuously differentiable function $P^\beta:A^\beta\to\hyper$ such that $P^\beta(\beta)=\sops_0^\beta$ and $\Psi^\beta(\tilde\beta,P^\beta(\tilde\beta))=0$ for all $\tilde\beta\in A^\beta$.
	Thus $\Phi^\beta(\tilde\beta,P^\beta(\tilde\beta))=P^\beta(\tilde\beta)$ for all $\tilde\beta\in A^\beta$.
	It follows that, for each $t>1$, the function $\tilde\beta\mapsto S(\tilde\beta,P^\beta(\tilde\beta),t)=x_t^{\tilde\beta}(P^\beta(\tilde\beta))$ is continuously differentiable on $A^\beta$ and, for each $\tilde\beta\in A^\beta$, $x^{\tilde\beta}(P^\beta(\tilde\beta))$ is a periodic solution of the scalar DDE \eqref{eq:dde}.
	We now argue that there is a possibly smaller neighborhood $\wt A^\beta$ of $\beta$ such that, for each $\tilde\beta\in \wt A^\beta$, $x^{\tilde\beta}(P^\beta(\tilde\beta))$ is an SOPS.
	To see this, recall that $z_1^\beta>0$ and $\sops^\beta(s)>0$ for all $s\in(-1,z_1^\beta)$.
	Let $0<\ve<\min\{z_1^\beta,1\}$ so that $S(\beta,P^\beta(\beta),\ve)(\theta)=\sops_\ve^\beta(\theta)>0$ for all $\theta\in[-1,0]$.
	Since $\tilde\beta\mapsto S(\tilde\beta,P^\beta(\tilde\beta),\ve)=x_\ve^{\tilde\beta}(P^\beta(\tilde\beta))$ is continuous on $A^\beta$, there is a neighborhood $\wt A^\beta\subset A^\beta$ of $\beta$ such that $x_\ve^{\tilde\beta}(P^\beta(\tilde\beta))(\theta)>0$ for all $\theta\in[-1,0]$.
	By a standard argument, this implies that $x^{\tilde{\beta}}(P^\beta(\tilde{\beta}))$ is slowly oscillating (see, e.g., \cite[Chapter XV, Lemma 3.2]{Diekmann1991}), and therefore an SOPS.
	In particular, $x^{\tilde\beta}(P^\beta(\tilde\beta))$ is an SOPS that satisfies $x_0^{\tilde\beta}(P^\beta(\tilde\beta))\in\hyper$ and $x_\ve^{\tilde\beta}(s;P^\beta(\tilde\beta))>0$ for all $s\in[\ve-1,\ve]$. 
	By the uniqueness of SOPS stated in Theorem \ref{thm:xie} and the time translations performed at the beginning of Section \ref{sec:normalizedSOPS}, it must hold that $P^\beta(\tilde\beta)=\sops_0^{\tilde\beta}$ and $x^{\tilde\beta}(P^\beta(\tilde\beta))=\sops^{\tilde\beta}$ for all $\tilde\beta\in\wt A^\beta$.
	Since $\beta>\buniq$ was arbitrary, this proves that the function $\beta\mapsto\sops_0^\beta$ from $(\buniq,\infty)$ to $\C(\R)$ is continuously differentiable.
	Therefore, $\beta\mapsto S(\beta,\sops_0^\beta,t)$ is continuously differentiable on $(\buniq,\infty)$, for each $t>1$.
	The periodicity of $\sops^\beta$ then implies this holds for all $t\in\R$.
\end{proof}

\begin{prop}
	\label{lem:mono_continuity}
	Given $s\in\R$ the function $(\beta,\lambda)\mapsto\monoU_\lambda^\beta(s)$ from $(\buniq,\infty)\times\CC$ to $B_0(\C(\CC))$ is continuous in $\beta$ and holomorphic in $\lambda$.
\end{prop}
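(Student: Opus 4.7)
The plan is to handle holomorphy and continuity separately, with $s\in\R$ fixed throughout.

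\emph{Holomorphy in $\lambda$.} For fixed $\beta>\buniq$, variation of parameters applied to the extended variational equation \eqref{eq:eve} with initial datum $y_s=\vf\in\C(\CC)$ yields
\[y(t)=e^{-\alpha(t-s)}\vf(0)+\lambda\beta\int_s^t e^{-\alpha(t-u)}f'(\sops^\beta(u-1))y(u-1)\,du,\qquad t\ge s.\]
Solving by the method of steps on successive intervals of length $1$ and iterating the Volterra fixed-point map, I would expand
\[\monoU_\lambda^\beta(s)\vf=\sum_{k=0}^\infty\lambda^k A_k^\beta\vf,\]
where each $A_k^\beta\in B(\C(\CC))$ is independent of $\lambda$ and is given by an explicit $k$-fold iterated integral involving $\beta^k$ and $f'(\sops^\beta(\cdot-1))$. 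Boundedness of $f'$ on the bounded range of $\sops^\beta$ together with the finite time horizon $\period^\beta$ yield $\lVert A_k^\beta\rVert\le C^k/k!$ for some $C<\infty$, so the series converges in $B(\C(\CC))$ uniformly on compact subsets of $\CC$. Since $\monoU_\lambda^\beta(s)\in B_0(\C(\CC))$ for every $\lambda$ and $B_0(\C(\CC))$ is closed in $B(\C(\CC))$, this shows $\lambda\mapsto\monoU_\lambda^\beta(s)$ is entire as a $B_0(\C(\CC))$-valued function.

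\emph{Continuity in $\beta$.} Let $\beta_0>\buniq$ and $\lambda\in\CC$. Lemma \ref{lem:sopsdiffbeta} gives $\sops_t^\beta\to\sops_t^{\beta_0}$ in $\C(\R)$ as $\beta\to\beta_0$ for each $t$; combining this with the equicontinuity of $\{\sops^\beta\}$ as solutions of \eqref{eq:dde} in a neighborhood of $\beta_0$ upgrades the convergence to $\sops^\beta\to\sops^{\beta_0}$ uniformly on compact subsets of $\R$. Continuity of $\beta\mapsto\period^\beta$ at $\beta_0$ follows from the implicit function statement in Lemma \ref{lem:qbeta} applied at $(\beta_0,\sops_0^{\beta_0})$. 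Continuity of $f'$ (from Assumption \ref{ass:main}) then gives
\[\beta f'(\sops^\beta(u-1))\longrightarrow\beta_0 f'(\sops^{\beta_0}(u-1))\qquad\text{uniformly on }[s-1,s+\period^{\beta_0}+1].\]
A standard Gronwall estimate applied to the difference $y^\beta-y^{\beta_0}$, which itself satisfies a linear DDE whose forcing term vanishes in the uniform norm as $\beta\to\beta_0$, then gives
\[\sup_{\norm{\vf}_{[-1,0]}\le 1}\sup_{t\in[s-1,s+\period^{\beta_0}+1]}\abs{y^\beta(t;\lambda,s,\vf)-y^{\beta_0}(t;\lambda,s,\vf)}\longrightarrow 0,\]
the uniformity in $\vf$ being automatic from linearity. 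Combining this with $\period^\beta\to\period^{\beta_0}$ and a uniform bound on $\dot y^{\beta_0}(\cdot;\lambda,s,\vf)$ over $\norm{\vf}_{[-1,0]}\le 1$ on the relevant interval converts the above into the operator-norm convergence $\monoU_\lambda^\beta(s)\to\monoU_\lambda^{\beta_0}(s)$ in $B(\C(\CC))$.

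\emph{Main obstacle.} The holomorphy portion is essentially mechanical. In the continuity portion, the delicate points are: (a) upgrading the pointwise-in-$t$ continuity of $\beta\mapsto\sops_t^\beta$ supplied by Lemma \ref{lem:sopsdiffbeta} to uniform-in-$t$ continuity on compact intervals (handled by equicontinuity of the $\sops^\beta$ via \eqref{eq:dde}); and (b) accommodating the $\beta$-dependent time shift $\period^\beta$ when comparing $\monoU_\lambda^\beta(s)\vf$ with $\monoU_\lambda^{\beta_0}(s)\vf$ uniformly over $\vf$ in the unit ball, which is dispatched by a uniform Lipschitz estimate on the limiting solution.
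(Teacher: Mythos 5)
Your proposal is correct, and the two halves relate to the paper's proof differently. For continuity in $\beta$ you follow essentially the paper's route: both arguments rest on the integral (variation-of-parameters) form of the extended variational equation together with Lemma \ref{lem:sopsdiffbeta}; the paper propagates the estimate by induction over unit intervals using bounded convergence, while you package the same propagation as a delay-Gronwall bound on $y^\beta-y^{\beta_0}$ with a forcing term that vanishes uniformly. You are in fact more explicit than the paper on one point it glosses over, namely the $\beta$-dependence of the evaluation time $s+\period^\beta$ in the definition \eqref{eq:Ulambdabetavf}, which you correctly dispatch via continuity of $\beta\mapsto\period^\beta$ and a uniform Lipschitz bound on $y^{\beta_0}$ over the unit ball of initial data. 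For holomorphy in $\lambda$ your route is genuinely different: the paper verifies complex differentiability of $\lambda\mapsto y^\beta(r;\lambda)$ directly, by an induction on unit intervals showing uniform convergence of difference quotients, whereas you expand the Volterra iteration into a power series $\sum_k\lambda^kA_k^\beta$ with $\lambda$-independent operator coefficients. This is cleaner and gives more: because the kernel carries a unit delay, $T^ky_0$ vanishes on $[s-1,s+k-1]$, so on the finite horizon $[s-1,s+\period^\beta]$ the series is actually a finite sum and $\lambda\mapsto\monoU_\lambda^\beta(s)$ is a polynomial in $\lambda$ with coefficients in $B(\C(\CC))$ (so the factorial bound $\norm{A_k^\beta}\le C^k/k!$, while true by the usual iterated-integral volume argument, is not even needed). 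The paper's inductive approach has the mild advantage of producing the integral identity \eqref{eq:eq:limhdydlambda2} for $\pd y^\beta/\pd\lambda$, which is not used elsewhere; your approach yields the holomorphy statement of the proposition more directly.
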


\begin{proof}
	Fix $s\in\R$ and $t\ge s$.
	Let $m=\min\{j\in\N:j\ge t-s\}$.
	Set $s_j=s+j$ for $j=0,\dots,m-1$ and set $s_m=t$.
	Let $\vf\in\C(\CC)$ and write $y^\beta(\lambda)$ in place of $y^\beta(\lambda,s,\vf)$.
	For $t\ge s-1$ we let $y^\beta(t;\lambda)$ denote evaluation of $y^\beta(\lambda)$ at time $t$.
	We show that for each $j=0,\dots,m$, the following hold:
	\begin{itemize}
		\item[(i)] $(\beta,\lambda)\mapsto y_{s_j}^\beta(\lambda)$ is continuous on $(\buniq,\infty)\times\CC$; and
		\item[(ii)] For each $(\beta,\lambda)\in(\buniq,\infty)\times\CC$, $\pd y^\beta(r;\lambda)/\pd\lambda$ exists for all $r\in[s-1,s_j]$, $\pd y^\beta(\cdot;\lambda)/\pd\lambda$ is continuous on $[s-1,s_j]$ and
			\be\label{eq:limhdydlambda}\lim_{z\to0}\sup_{r\in[s-1,s_j]}\abs{\frac{\partial y^\beta(r;\lambda)}{\partial\lambda}-\frac{y^\beta(r;\lambda+z)-y^\beta(r;\lambda)}{z}}=0.\ee
	\end{itemize}
	Since $t=s_m$, this will prove that $(\beta,\lambda)\mapsto y_t^\beta(\lambda)$ is continuous and holomorphic in $\lambda$.
	The fact that $(\beta,\lambda)\mapsto\monoU_\lambda^\beta(s)$ is continuously differentiable then follows by taking $t=s+\period^\beta$ and using the linearity of the operator $\monoU_\lambda^\beta(s)$.
	
	Fix $(\beta,\lambda)\in(\buniq,\infty)\times\CC$.
	The proof proceeds by induction.
	The base case $j=0$ follows from the fact that $y_{s_0}^\beta(\lambda)=\vf$ for all $(\beta,\lambda)\in(\buniq,\infty)\times\CC$.
	Now suppose that (i) and (ii) hold for some $j\in\{0,\dots,m-1\}$.
	We first prove (i) holds with $j+1$ in place of $j$.
	By equation \eqref{eq:ylambdasolution} for $y^\beta$ and the induction hypothesis, for $(\beta,\lambda),(\tilde\beta,\tilde\lambda)\in(\buniq,\infty)\times\CC$,
	\begin{align}
		\sup_{r\in[s-1,s_{j+1}]}|{y^{\beta}(r;\lambda)-y^{\tilde\beta}(r;\tilde\lambda)}|&\le|{\lambda\beta-\tilde\lambda\tilde\beta}|\int_{s-1}^{s_j}|f'(\sops^{\beta}(u))y^{\beta}(u;\lambda)|du \notag\\
		&\qquad+|{\tilde\lambda\tilde\beta}|\int_{s-1}^{s_j}|{f'(\sops^{\beta}(u))y^{\beta}(u;\lambda)-f'(\sops^{\tilde\beta}(u))y^{\tilde\beta}(u;\tilde\lambda)}|du \label{eq:limhdydlambda1}.
	\end{align}
	It follows from the bounded convergence theorem, the induction hypothesis and Lemma \ref{lem:sopsdiffbeta} that the right hand side of the inequality \eqref{eq:limhdydlambda1} converges to zero as $(\tilde\beta,\tilde\lambda)\to(\beta,\lambda)$.
	Thus, $(\lambda,\beta)\mapsto y_{s_j}^\beta(\lambda)$ is continuous.
	This completes the proof of (i).
	
	Next we prove (ii) holds with $j+1$ in place of $j$.
		By equation \eqref{eq:ylambdasolution} for $y^\beta$, the induction hypothesis and the dominated convergence theorem, we have, for $r\in[s_j,s_{j+1}]$,
	\begin{align}
		\frac{\pd y^\beta(r;\lambda)}{\pd\lambda}&=\lim_{z\to0}\frac{y^\beta(r;\lambda+z)-y^\beta(r;\lambda)}{z} \notag\\
		&=\beta\lim_{z\to0}\int_{s-1}^{r-1}e^{-\alpha(r-u-1)}f'(\sops^\beta(u))y^\beta(u;\lambda+z)du \notag\\
		&\qquad+\lambda\beta\lim_{z\to0}\int_{s-1}^{r-1}e^{-\alpha(r-u-1)}f'(\sops^\beta(u))\frac{y^\beta(u;\lambda+z)-y^\beta(u;\lambda)}{z}du \notag\\
		&=\beta\int_{s-1}^{r-1}e^{-\alpha(r-u-1)}f'(\sops^\beta(u))y^\beta(u;\lambda)du \notag\\
		&\qquad+\lambda\beta\int_{s-1}^{r-1}e^{-\alpha(r-u-1)}f'(\sops^\beta(u))\frac{\pd y^\beta(u;\lambda)}{\pd\lambda}du \label{eq:eq:limhdydlambda2}.
	\end{align}
	It follows that $\pd y^\beta(r;\lambda)/\pd\lambda$ exists for all $r\in[-1,s_{j+1}]$ and $\pd y^\beta(\cdot;\lambda)/\pd\lambda$ is continuous on $[-1,s_{j+1}]$.
	In addition, by equation \eqref{eq:ylambdasolution} for $y^\beta$, equation \eqref{eq:eq:limhdydlambda2}, and the induction hypothesis, for nonzero $z\in\CC$,
	\begin{align*}\label{eq:suprDeltaypdylambda}
		&\sup_{r\in[s-1,s_{j+1}]}\abs{\frac{y^\beta(r;\lambda+z)-y^\beta(r;\lambda)}{z}-\frac{\pd y^\beta(r;\lambda)}{\pd\beta}}\\
		\notag&\qquad\le\abs{\lambda}\sup_{r\in[s-1,s_j]}\int_{s-1}^{s_j}\abs{f'(\sops^{\beta}(u))y^\beta(u;\lambda+z)-f'(\sops^\beta(u))y^\beta(u;\lambda)}du\\
		\notag&\qquad\qquad+\abs{\lambda}\beta\sup_{r\in[s-1,s_j]}\abs{\frac{y^{\beta}(r;\lambda+z)-y^\beta(r;\lambda)}{z}-\frac{\pd y(r;\lambda)}{\pd\lambda}}\int_{s-1}^{s_j}\abs{f'(\sops^\beta(u))}du.
	\end{align*}
	By the induction hypothesis, taking limits as $z\to0$ in the last inequality yields the limit in \eqref{eq:limhdydlambda} with $s_{j+1}$ in place of $s_j$.
	This completes the proof of (ii).
	This proves the induction step.
	Thus, by the principle of mathematical induction, (i) and (ii) hold with $j=m$.
\end{proof}

To complete the proof of the continuity of $\nu_\beta$ at $\beta=\infty$, it is useful work in a neighborhood of 0 as opposed to infinity. To this end set $\zeta_0=\buniq^{-1}$ and, for $s\in(-q_1^\star,0)$, define the function $V_s:(-\zeta_0,\zeta_0)\times\CC\to B_0(\C(\CC))$, for $(\zeta,\lambda)\in(-\zeta_0,\zeta_0)\times\CC$, by
\be\label{eq:V}
	V_s(\zeta,\lambda)=
	\begin{cases}
		\monoU_\lambda^{|\zeta|^{-1}}(s),&\text{if }\zeta\neq0,\\
		\monoU_\lambda^\star(s),&\text{if }\zeta=0.
	\end{cases}
\ee
We have the following corollary.

\begin{cor}\label{cor:mono_Continuity}
	Let $s\in(-q_1^\star,0)$.
	The function $V_s(\cdot,\cdot)$ is continuous on $(-\zeta_0,\zeta_0)\times\CC$ and for each $\zeta\in(-\zeta_0,\zeta_0)$, the function $V_s(\zeta,\cdot)$ is holomorphic on $\CC$.
	Finally, if $\lambda\not\in\{\vf_1,\vf_2\}$ then $\nu_\star(\lambda)$ is the unique nonzero eigenvalue of $V_s(0,\lambda)$ and $\nu_\star(\lambda)$ is a simple eigenvalue of $V_s(0,\lambda)$.
\end{cor}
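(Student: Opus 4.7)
The plan is to assemble the corollary from three ingredients already in hand: Proposition \ref{lem:mono_continuity} (joint continuity of $(\beta,\lambda)\mapsto\monoU_\lambda^\beta(s)$ and holomorphicity in $\lambda$), Proposition \ref{prop:mono_convergence} (uniform-on-compacts convergence $\monoU_\lambda^\beta(s)\to\monoU_\lambda^\star(s)$ as $\beta\to\infty$), and Lemma \ref{lem:Uspectrum} (explicit formula for $\monoU_\lambda^\star(s)$ together with its spectral description). Since $\beta\mapsto\pm\beta^{-1}$ is a continuous bijection between $(\buniq,\infty)$ and $(0,\zeta_0)$ (respectively $(-\zeta_0,0)$), the first result handles continuity and holomorphicity away from $\zeta=0$, and the only delicate point is matching the two definitions of $V_s$ at $\zeta=0$.

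First I would establish continuity on $(\{\zeta\ne0\})\times\CC$: the map $(\zeta,\lambda)\mapsto(|\zeta|^{-1},\lambda)$ is continuous from this set into $(\buniq,\infty)\times\CC$, and composing with the continuous operator-valued map from Proposition \ref{lem:mono_continuity} gives continuity of $V_s$ there. Holomorphicity of $V_s(\zeta,\cdot)$ for $\zeta\ne0$ is then immediate from the same proposition.

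For continuity at $\zeta=0$, fix $\lambda_0\in\CC$ and a sequence $(\zeta_n,\lambda_n)\to(0,\lambda_0)$ with $\zeta_n\ne0$ (the case $\zeta_n=0$ reduces to the second step below). Choose a compact neighborhood $K\subset\CC$ of $\lambda_0$ that eventually contains $\lambda_n$. Then by the triangle inequality
\[
\lVert V_s(\zeta_n,\lambda_n)-V_s(0,\lambda_0)\rVert\le\sup_{\lambda\in K}\lVert\monoU_\lambda^{|\zeta_n|^{-1}}(s)-\monoU_\lambda^\star(s)\rVert+\lVert\monoU_{\lambda_n}^\star(s)-\monoU_{\lambda_0}^\star(s)\rVert.
\]
The first term tends to $0$ by Proposition \ref{prop:mono_convergence} since $|\zeta_n|^{-1}\to\infty$. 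The second vanishes by invoking the explicit formula $[\monoU_\lambda^\star(s)\vf](\theta)=F(s,\lambda,\vf(0),\vf(-1-s))e^{-\alpha\theta}$ from Lemma \ref{lem:Uspectrum}, since the definition \eqref{eq:Fslambdaz1z2} shows $F$ is polynomial in $\lambda$, which in turn yields operator-norm continuity of $\lambda\mapsto\monoU_\lambda^\star(s)$ on $\C(\CC)$.

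Finally, that same polynomial dependence of $F$ on $\lambda$ shows that $V_s(0,\cdot)$ is an entire $B_0(\C(\CC))$-valued function, and the spectral description is exactly what Lemma \ref{lem:Uspectrum} already proves: for $\lambda\notin\{\vr_1,\vr_2\}$, $\nu_\star(\lambda)$ is the unique nonzero eigenvalue of $\monoU_\lambda^\star(s)|_{\C(\CC)}$ and is simple, with eigenvector $\vf_0(\theta)=e^{-\alpha(\theta+1)}$. I do not anticipate any substantial obstacle; the corollary is essentially a repackaging of Propositions \ref{lem:mono_continuity} and \ref{prop:mono_convergence} and Lemma \ref{lem:Uspectrum} into the single-variable form $V_s(\zeta,\lambda)$ needed for the subsequent application of holomorphic perturbation theory to track the dominant multiplier.
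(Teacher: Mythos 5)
Your proposal is correct and follows essentially the same route as the paper's proof, which likewise derives continuity from Propositions \ref{lem:mono_continuity} and \ref{prop:mono_convergence} together with the explicit formula \eqref{eq:Uastexplicit}, and reads off the spectral statement from Lemma \ref{lem:Uspectrum}. Your write-up simply makes explicit the triangle-inequality argument at $\zeta=0$ and the polynomial dependence of $F$ on $\lambda$, which the paper leaves implicit.
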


\begin{proof}
	The continuity of $V_s$ follows from Propositions \ref{lem:mono_continuity} and \ref{prop:mono_convergence}, and the definition of $\monoU_\lambda^\star(s)$ in \eqref{eq:Uastexplicit}.
	The fact that, for each $\zeta\in(-\zeta_0,\zeta_0)$, the function $V_s(\zeta,\cdot)$ is holomorphic on $\CC$ follows from Proposition \ref{lem:mono_continuity} and the definition of $\monoU_\lambda^\star(s)$.
	The final line follows from Lemma \ref{lem:Uspectrum}.
\end{proof}

\subsection{Useful functional analysis results}
\label{sec:useful}

In preparation for proving Proposition \ref{prop:nu_lambda_simple}, we state some useful results.

\begin{theorem}[{\cite[Chapter 14, Corollary 3.2]{Chow1982}}]
	\label{thm:mu_simple} 
	Let $\banach$ and $\banachY$ be Banach spaces, $O\subset\banach$ be an open set, $V:O\to B(\banachY)$ be a continuous function and $\chi_0\in O$.
	Suppose $\mu_0\in\CC$ is a simple eigenvalue of $V(\chi_0)$ with associated unit eigenfunction $\psi_0\in\banachY$.
	Then there is a $\delta\in(0,1)$ and continuous functions $\mu:\ball_\banach(\chi_0,\delta)\to\ball(\mu_0,\delta)$ and $\psi:\ball_\banach(\chi_0,\delta)\to\banachY$ such that $\mu(\chi_0)=\mu_0$, $\psi(\chi_0)=\psi_0$ and for each $\chi\in\ball_\banach(\chi_0,\delta)$, $\mu(\chi)$ is the unique simple eigenvalue of $V(\chi)$ and $\psi(\chi)$ is a unit eigenfunction of $V(\chi)$ associated with $\mu(\chi)$.	
	In addition, if $V$ is holomorphic, then $\mu$ and $\psi$ are also holomorphic.
\end{theorem}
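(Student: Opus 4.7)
The plan is to invoke the Riesz spectral projection, which is the standard tool for tracking an isolated eigenvalue under perturbation. Since $\mu_0$ is simple, it is an isolated point of $\sigma(V(\chi_0))$; I will choose $r\in(0,1)$ so that $\clo(\ball(\mu_0,r))\cap\sigma(V(\chi_0))=\{\mu_0\}$ and set $\Gamma=\{z\in\CC:|z-\mu_0|=r\}$. The resolvent $(zI_\banachY-A)^{-1}$ depends continuously on $A\in B(\banachY)$ whenever $z\in\rho(A)$, so by continuity of $V$ at $\chi_0$ there exists $\delta\in(0,r)$ with $\ball_\banach(\chi_0,\delta)\subset O$ and $\Gamma\subset\rho(V(\chi))$ for every $\chi\in\ball_\banach(\chi_0,\delta)$. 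I then define the spectral projection
\begin{equation*}
P(\chi)=\frac{1}{2\pi i}\oint_\Gamma\lb zI_\banachY-V(\chi)\rb^{-1}\,dz.
\end{equation*}

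Standard spectral calculus gives that $P(\chi)$ is a projection onto the sum of generalized eigenspaces of $V(\chi)$ for eigenvalues inside $\Gamma$, commutes with $V(\chi)$, and inherits the regularity of $V$ (continuous if $V$ is, holomorphic if $V$ is). Since $\mu_0$ is simple, $\Range P(\chi_0)=\operatorname{span}\{\psi_0\}$ is one-dimensional. After possibly shrinking $\delta$ so that $\|P(\chi)-P(\chi_0)\|<1$, the standard rank-stability argument (via the isomorphism $P(\chi_0):\Range P(\chi)\to\Range P(\chi_0)$) forces $\dim\Range P(\chi)=1$, so $V(\chi)$ has exactly one eigenvalue $\mu(\chi)\in\ball(\mu_0,\delta)$ and it is simple. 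Fixing $\psi_0^\ast\in\banachY^\ast$ with $\ip{\psi_0^\ast,\psi_0}=1$, the identity $V(\chi)P(\chi)=\mu(\chi)P(\chi)$ then yields
\begin{equation*}
\mu(\chi)=\frac{\ip{\psi_0^\ast,V(\chi)P(\chi)\psi_0}}{\ip{\psi_0^\ast,P(\chi)\psi_0}},
\end{equation*}
which is continuous (resp.\ holomorphic) in $\chi$ and satisfies $\mu(\chi_0)=\mu_0$.

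For the eigenvector I will set $\tilde\psi(\chi)=P(\chi)\psi_0$, which is continuous (resp.\ holomorphic), equals $\psi_0$ at $\chi_0$, and satisfies $V(\chi)\tilde\psi(\chi)=\mu(\chi)\tilde\psi(\chi)$. In the continuous case I take $\psi(\chi)=\tilde\psi(\chi)/\|\tilde\psi(\chi)\|_\banachY$ to obtain a unit eigenvector depending continuously on $\chi$. In the holomorphic case the norm is generally not holomorphic, so I would instead divide by the scalar $\ip{\psi_0^\ast,\tilde\psi(\chi)}$, which equals $1$ at $\chi_0$ and is therefore nonzero and holomorphic on a possibly smaller ball, producing a holomorphic eigenvector with $\psi(\chi_0)=\psi_0$; any two such eigenvectors differ by a scalar, so the unit-norm assertion and the holomorphy assertion can be arranged compatibly on overlapping neighborhoods.

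The main obstacle I anticipate is justifying that $P(\chi)$ really is holomorphic in $\chi$ when $V$ is. My plan is to use the Neumann expansion
\begin{equation*}
(zI_\banachY-V(\chi))^{-1}=\sum_{k=0}^\infty(zI_\banachY-V(\chi_0))^{-1}\lsb(V(\chi)-V(\chi_0))(zI_\banachY-V(\chi_0))^{-1}\rsb^k,
\end{equation*}
which converges uniformly for $z\in\Gamma$ and $\chi$ in a small ball thanks to the uniform bound $\sup_{z\in\Gamma}\|(zI_\banachY-V(\chi_0))^{-1}\|<\infty$; termwise contour integration then expresses $P(\chi)$ as a convergent power series in $\chi-\chi_0$ with coefficients in $B(\banachY)$, which is the holomorphy I need. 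The rank-stability and normalization steps are routine by comparison.
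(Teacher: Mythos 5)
The paper does not prove this statement---it is quoted from Chow and Hale---so there is no internal proof to compare against; I will assess your argument on its own terms. Your Riesz-projection route is the standard alternative to the Lyapunov--Schmidt/implicit-function-theorem argument behind the cited result, and it actually delivers strictly more than the theorem asserts: $\dim\Range P(\chi)=1$ means $\mu(\chi)$ is the unique point of $\sigma(V(\chi))$ inside $\Gamma$, not merely a simple eigenvalue there. In particular your construction would subsume Corollary \ref{cor:mu_eigen}, which the paper proves separately (under the extra hypothesis $\psi_0^\ast(\psi_0)\ne0$) precisely because the cited theorem does not by itself give local uniqueness of the spectrum. Your handling of the normalization (unit norm versus holomorphy of $\psi$) is correct, and the holomorphy of $\chi\mapsto P(\chi)$ does follow from your Neumann-series argument---though the phrase ``power series in $\chi-\chi_0$'' is loose, since $V(\chi)-V(\chi_0)$ need not be linear in $\chi-\chi_0$; the clean statement is that a locally uniform limit of holomorphic operator-valued maps is holomorphic, or simply that inversion is holomorphic on the invertible group so the integrand, hence the contour integral, is holomorphic in $\chi$.

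The one genuine gap is the opening assertion that ``since $\mu_0$ is simple, it is an isolated point of $\sigma(V(\chi_0))$.'' For a general element of $B(\banachY)$ this is false: an eigenvalue whose generalized eigenspace is one-dimensional can be embedded in continuous spectrum. For example, on $\banachY=\CC\oplus L^2([0,2])$ take $V(\chi_0)=1\oplus M$ with $M$ multiplication by the variable; then $\mu_0=1$ is a simple eigenvalue but $\sigma(V(\chi_0))=[0,2]$, so no admissible contour $\Gamma$ exists, and in that situation the eigenvalue need not persist under arbitrarily small perturbations, so the theorem as literally stated fails. Isolation of $\mu_0$ (equivalently, a splitting $\banachY=\Null(V(\chi_0)-\mu_0 I_{\banachY})\oplus\Range(V(\chi_0)-\mu_0 I_{\banachY})$, which is part of what ``simple eigenvalue'' means in the source) must be taken as a hypothesis rather than derived. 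In every application in this paper $V$ takes values in $B_0(\banachY)$ and $\mu_0\ne0$, so $\mu_0$ is automatically isolated and your proof goes through verbatim; but as written that first step needs either this observation or an explicit isolation assumption.
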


We will need the following corollary, whose proof is deferred to Appendix \ref{apdx:meansfieldlemma}.
For a Banach space $\banachY$ and an operator $A\in B_0(\banachY)$, recall from Section \ref{sec:notation_linear_operators} that $\banachY^\ast=B_0(\banachY,\CC)$ denotes the dual space of $\banachY$, $A^\ast\in B_0(\banachY^\ast)$ denotes the adjoint of $A$, and $\sigma(A)=\sigma(A^\ast)$.

\begin{cor}\label{cor:mu_eigen} 
    Let $\banach$ and $\banachY$ be Banach spaces, $O\subset\banach$ be an open set, $V:O\to B_0(\banachY)$ be a continuous function and $\chi_0\in O$.
    Suppose $\mu_0\in\CC\setminus\{0\}$ is a simple eigenvalue of $V(\chi_0)$ and $\psi_0\in\banachY$ is an associated unit eigenfunction.
    Let $\delta\in(0,1)$, $\mu:\ball_\banach(\chi_0,\delta)\to\ball(\mu_0,\delta)$ and $\psi:\ball_\banach(\chi_0,\delta)\to\banachY$ be as in Theorem \ref{thm:mu_simple}.
    Suppose $\psi^\ast_0\in\banachY^\ast$ is a unit eigenfunction of $V^\ast(\chi_0)$ associated with $\mu_0$ such that $\psi^\ast_0(\psi_0)\neq0$.
    Then there is a $\delta_0\in(0,\delta]$ such that for all $\chi\in\ball_\banach(\chi_0,\delta_0)$, $\mu(\chi)$ is the unique eigenvalue of $V(\chi)$ in $\ball(\mu_0,\delta_0)$.
\end{cor}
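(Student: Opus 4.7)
The plan is to use the Riesz spectral projection to track the total algebraic multiplicity of the eigenvalues of $V(\chi)$ inside a small disc around $\mu_0$, and then argue by continuity that this multiplicity remains equal to $1$ for $\chi$ near $\chi_0$. Since $V(\chi_0)\in B_0(\banachY)$ and $\mu_0\ne 0$ is an isolated eigenvalue of $V(\chi_0)$, I would first choose $\delta_1\in(0,\delta]$ small enough so that $\clo(\ball(\mu_0,\delta_1))\subset\CC\setminus\{0\}$ and $\sigma(V(\chi_0))\cap\clo(\ball(\mu_0,\delta_1))=\{\mu_0\}$.

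Next, using the fact that $z\mapsto(z\Id_\banachY-V(\chi_0))^{-1}$ is uniformly bounded on the compact circle $|z-\mu_0|=\delta_1$, together with the continuity of $V$ at $\chi_0$ and a Neumann-series argument via the identity $z\Id_\banachY-V(\chi)=(z\Id_\banachY-V(\chi_0))[\Id_\banachY-(z\Id_\banachY-V(\chi_0))^{-1}(V(\chi)-V(\chi_0))]$, I would find $\delta_0\in(0,\delta_1]$ such that for every $\chi\in\ball_\banach(\chi_0,\delta_0)$ the circle $|z-\mu_0|=\delta_1$ lies in the resolvent set of $V(\chi)$, the resolvent is uniformly bounded there, and $\sigma(V(\chi))\cap\clo(\ball(\mu_0,\delta_1))\subset\ball(\mu_0,\delta_0)$. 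Consequently the Riesz projection
$$P(\chi)=\frac{1}{2\pi i}\oint_{|z-\mu_0|=\delta_1}(z\Id_\banachY-V(\chi))^{-1}dz$$
is well defined on $\ball_\banach(\chi_0,\delta_0)$ and continuous there in operator norm.

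Because $\mu_0$ is a simple eigenvalue of $V(\chi_0)$ with $\psi_0^\ast(\psi_0)\ne 0$, the projection $P(\chi_0)$ is the explicit rank-one operator $x\mapsto(\psi_0^\ast(\psi_0))^{-1}\psi_0^\ast(x)\psi_0$; the nonvanishing pairing is what makes this well defined. I would then invoke the classical stability of rank under small norm perturbations of idempotents---if $\norm{P(\chi)-P(\chi_0)}<1$ and both are projections, then $P(\chi)$ and $P(\chi_0)$ are conjugate via an invertible operator close to the identity and hence have equal (finite) rank---to conclude, after possibly shrinking $\delta_0$, that $P(\chi)$ has rank $1$ for all $\chi\in\ball_\banach(\chi_0,\delta_0)$.

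Finally, the rank of $P(\chi)$ equals the sum of the algebraic multiplicities of the eigenvalues of $V(\chi)$ lying in $\ball(\mu_0,\delta_1)$. By Theorem \ref{thm:mu_simple}, $\mu(\chi)$ is an eigenvalue of $V(\chi)$, and by our choice of $\delta_0$ this eigenvalue actually lies in $\ball(\mu_0,\delta_0)\subset\ball(\mu_0,\delta_1)$. Since the total multiplicity on the disc is $1$, $\mu(\chi)$ must be the unique eigenvalue of $V(\chi)$ in $\ball(\mu_0,\delta_1)$, and in particular in the smaller ball $\ball(\mu_0,\delta_0)$ demanded by the statement. The main technical point is the rank-stability step for projections, which is a short perturbation argument; the remaining ingredients (existence of the contour separating $\mu_0$ from the rest of the spectrum, continuity of the resolvent in $\chi$, and the explicit rank-one form of $P(\chi_0)$) are routine given the hypotheses.
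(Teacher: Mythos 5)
Your argument is correct, and it reaches the conclusion by a genuinely different route than the paper. You work with the Riesz spectral projection $P(\chi)=\frac{1}{2\pi i}\oint_{|z-\mu_0|=\delta_1}(z\Id_\banachY-V(\chi))^{-1}dz$, use the Neumann-series resolvent bound to get its existence and norm-continuity near $\chi_0$, and invoke rank stability of nearby idempotents to conclude that the total algebraic multiplicity of $\sigma(V(\chi))$ inside the disc stays equal to $1$; this is classical Kato-style perturbation theory and is self-contained modulo standard Riesz--Schauder facts. The paper instead deflates: it uses the adjoint eigenfunction $\psi^\ast(\chi)$ (obtained by applying Theorem \ref{thm:mu_simple} to $V^\ast$) to build a continuous functional $L(\chi)$ with $L(\chi)\psi(\chi)=1$, forms $W(\chi)\xi=V(\chi)\xi-(L(\chi)\xi)\psi(\chi)$, and applies Theorem \ref{thm:eigenreduction} so that $\sigma(W(\chi))$ equals $\sigma(V(\chi))$ with the eigenvalue $\mu(\chi)$ removed; continuity of $\chi\mapsto\sigma(W(\chi))$ then keeps $\sigma(W(\chi))$ away from $\ball(\mu_0,\delta_0)$. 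Your approach buys more with less: it controls the full multiplicity in the disc directly (so it would in fact reprove the relevant part of Theorem \ref{thm:mu_simple} as well), it does not need the deflation theorem or the continuity of the spectrum map, and the hypothesis $\psi_0^\ast(\psi_0)\neq0$ is used only cosmetically to write $P(\chi_0)$ explicitly --- rank one of $P(\chi_0)$ already follows from simplicity of $\mu_0$. The paper's route avoids contour integration and reuses machinery (Theorem \ref{thm:eigenreduction}, the adjoint eigenfunctions of Lemma \ref{lem:psiast}) that it needs elsewhere anyway. One small point of care in your write-up: to assert $\sigma(V(\chi))\cap\clo(\ball(\mu_0,\delta_1))\subset\ball(\mu_0,\delta_0)$ you need the resolvent bound on the whole closed annulus $\{\delta_0\le|z-\mu_0|\le\delta_1\}$, not just on the circle; this is harmless (the annulus is a compact subset of the resolvent set of $V(\chi_0)$), and in any case that inclusion is not needed --- multiplicity one in the disc together with $\mu(\chi)\in\ball(\mu_0,\delta_0)$, which follows from continuity of $\mu$ after shrinking $\delta_0$, already gives the stated uniqueness.
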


For $s\in(-q_1^\star,0)$ and $\lambda\in\CC$ let $(\monoU_\lambda^\star(s)|_{\C(\CC)})^\ast:B(\C(\CC),\CC)\mapsto B(\C(\CC),\CC)$ denote the adjoint of $\monoU_\lambda^\star(s)|_{\C(\CC)}$.
Recall the definition for $F(s,\lambda,z_1,z_2)$ in \eqref{eq:Fslambdaz1z2}.

\begin{lem}\label{lem:psiast}
	Let $s\in(-q_1^\star,0)$.
	The spectrum of $(\monoU_\lambda^\star(s)|_{\C(\CC)})^\ast$ is equal to $\{0,\nu_\star(\lambda)\}$ and if $\lambda\not\in\{\vr_1,\vr_2\}$, then $\nu_\star(\lambda)$ is the unique simple eigenvalue of $(\monoU_\lambda^\star(s)|_{\C(\CC)})^\ast$ and has corresponding eigenfunction $\vf_\lambda^\ast\in B(\C(\CC),\CC)$ given by 
		$$\vf_\lambda^\ast(\vf)=F(s,\lambda,\vf(0),\vf(-1-s))e^\alpha,\qquad\vf\in\C(\CC),$$
	which satisfies $\vf_\lambda^\ast(\vf_0)=\nu_\star(\lambda)$, where $\vf_0$ is defined in \eqref{eq:psi0}.
\end{lem}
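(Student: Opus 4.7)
The plan has two parts that mirror the structure of the claim. First, the assertions about $\sigma((\monoU_\lambda^\star(s)|_{\C(\CC)})^\ast)$ and the simplicity of $\nu_\star(\lambda)$ as an eigenvalue of the adjoint are immediate from Lemma \ref{lem:Uspectrum}. Indeed, Lemma \ref{lem:Uspectrum} shows $\monoU_\lambda^\star(s)|_{\C(\CC)}\in B_0(\C(\CC))$, and the standard duality results recalled in Section \ref{sec:notation_linear_operators} give $\sigma(A)=\sigma(A^\ast)$ and $m_A(\mu)=m_{A^\ast}(\mu)$ for any $A\in B_0(\banach)$ and $\mu\in\CC$. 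Applying this with $A=\monoU_\lambda^\star(s)|_{\C(\CC)}$ transfers the spectral description $\{0,\nu_\star(\lambda)\}$ and the simplicity of $\nu_\star(\lambda)$ (when $\lambda\not\in\{\vr_1,\vr_2\}$) directly to the adjoint.

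Second, I would verify by direct computation that the stated $\vf_\lambda^\ast$ is an eigenfunction with eigenvalue $\nu_\star(\lambda)$. Boundedness and linearity are immediate because $\vf\mapsto\vf(0)$ and $\vf\mapsto\vf(-1-s)$ are bounded linear functionals on $\C(\CC)$ and $F(s,\lambda,\cdot,\cdot)$ is linear in its last two arguments. To obtain the eigenrelation, I fix $\vf\in\C(\CC)$ and set $c=F(s,\lambda,\vf(0),\vf(-1-s))$ so that the explicit formula \eqref{eq:Uastexplicit} gives $[\monoU_\lambda^\star(s)\vf](0)=c$ and $[\monoU_\lambda^\star(s)\vf](-1-s)=c\, e^{\alpha(1+s)}$. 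Substituting these two values into the definition \eqref{eq:Fslambdaz1z2} of $F$, the product $e^{\alpha(1+s)}\cdot e^{-\alpha s}=e^\alpha$ collapses the coefficient of $\lambda$ to give
$$F(s,\lambda,c,c\,e^{\alpha(1+s)})=-c\bigl[1-\lambda(1+ab^{-1})e^\alpha\bigr]\frac{\lambda-\vr_1}{1-\vr_2}e^{-\alpha(q_2^\star+1)}.$$
The identity \eqref{eq:lambdavr2} established inside the proof of Lemma \ref{lem:Uspectrum} rewrites $-[1-\lambda(1+ab^{-1})e^\alpha]$ as $\frac{\lambda-\vr_2}{1-\vr_1}e^{\alpha(q_2^\star+1)}$, so the two exponential factors cancel and one is left with $c\cdot\nu_\star(\lambda)$, by the definition of $\nu_\star$. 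Multiplying by $e^\alpha$ yields $\vf_\lambda^\ast(\monoU_\lambda^\star(s)\vf)=\nu_\star(\lambda)\vf_\lambda^\ast(\vf)$, which is the desired eigenvalue equation.

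Finally, the identity $\vf_\lambda^\ast(\vf_0)=\nu_\star(\lambda)$ is an application of the very same computation with $\vf=\vf_0$: using $\vf_0(0)=e^{-\alpha}$ and $\vf_0(-1-s)=e^{\alpha s}$ in \eqref{eq:Fslambdaz1z2} and again invoking \eqref{eq:lambdavr2} produces $F(s,\lambda,e^{-\alpha},e^{\alpha s})=e^{-\alpha}\nu_\star(\lambda)$, so multiplying by $e^\alpha$ gives the stated value. There is no real obstacle here; the only thing to watch is the bookkeeping of exponential factors, but the algebra is designed to cancel cleanly because the identity \eqref{eq:lambdavr2} was set up precisely to accomplish this reduction. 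All of the analytic content has already been absorbed into Lemma \ref{lem:Uspectrum}, and this lemma is essentially its dual-space companion.
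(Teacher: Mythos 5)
Your proposal is correct and follows essentially the same route as the paper: the spectral and multiplicity claims are transferred from Lemma \ref{lem:Uspectrum} by the duality facts $\sigma(A)=\sigma(A^\ast)$, $m_A(\mu)=m_{A^\ast}(\mu)$, and the eigenrelation is verified by plugging the explicit formula \eqref{eq:Uastexplicit} into $F$ and invoking the identity \eqref{eq:lambdavr2}. The only cosmetic difference is ordering — the paper first computes $\vf_\lambda^\ast(\vf_0)=\nu_\star(\lambda)$ and then exploits the rank-one form $\monoU_\lambda^\star(s)\vf=\vf_\lambda^\ast(\vf)\vf_0$, whereas you evaluate $F$ on the image directly — but the algebra is identical.
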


\begin{proof}
	Let $\lambda\not\in\{\vr_1,\vr_2\}$.
	By the respective definitions of $F(s,\lambda,z_1,z_2)$ and $\psi_0$ in \eqref{eq:Fslambdaz1z2} and \eqref{eq:psi0},
	\begin{align}
	   \vf_\lambda^\ast(\vf_0)&=F(s,\lambda,\vf_0(0),\vf_0(-1-s))e^\alpha \notag\\
	   &=-\lsb 1-\lambda e^\alpha(1+ab^{-1})\rsb\frac{\lambda-\vr_1}{1-\vr_2}e^{-\alpha(q_2^\star+1)} \notag\\
	   &=\nu_\star(\lambda)\ne0. \label{eq:psilambdapsi0}
	\end{align}
	Suppose $\vf\in\C(\CC)$.
	Then by equation \eqref{eq:Uastexplicit} for $\monoU_\lambda^\star(s)\psi$ and equation \eqref{eq:psilambdapsi0},
	\begin{align*}
	\lsb(\monoU_\lambda^\star(s))^\ast-\nu_\star(\lambda)\Id_{B(\C(\CC),\CC)}\rsb\circ\vf_\lambda^\ast(\vf) &= \vf_\lambda^\ast\circ\lsb\monoU_\lambda^\star(s)-\nu_\star(\lambda)\Id_{B(\C(\CC),\CC)}\rsb(\vf) \\
	&= F(s,\lambda,\vf(0),\vf(-1-s))e^\alpha\vf_\lambda^\ast(\vf_0)-\nu_\star(\lambda)\vf_\lambda^\ast(\vf)\\
	&= 0
	\end{align*}
	It follows that $\nu_\star(\lambda)$ is an eigenvalue of $(\monoU_\lambda^\star(s))^\ast$ with associated eigenfunction $\vf_\lambda^\ast$.
\end{proof}
    
\subsection{Proof of Proposition \ref{prop:nu_lambda_simple}}
\label{sec:boundary}

\begin{proof}[Proof of Proposition \ref{prop:nu_lambda_simple}]
	Let $\delta\in(0,1)$ and $K\subset\CC\setminus\{\vr_1,\vr_2\}$ be compact.
    Since $\nu_\star$ is continuous and nonzero on $K$, we have 
    	$$\ve_0=\frac{1}{2}\inf\{\abs{\nu_\star(\lambda)}:\lambda\in K\}>0.$$
	Set $\zeta_0=\buniq^{-1}$ and define $V_s$ as in \eqref{eq:V}.	
	By Corollary \ref{cor:mono_Continuity}, $V_s(\cdot,\cdot)$ is continuous on $(-\zeta_0,\zeta_0)\times\CC$ and $\nu_\star(\lambda)$ is a simple eigenvalue of $V_s(0,\lambda)$ for each $\lambda\in\CC$.
	Let $\lambda_1\in K$ and define $\vf_{\lambda_1}^\ast\in B_0(\C(\CC),\CC)$ as in Lemma \ref{lem:psiast} so that $\vf_{\lambda_1}^\ast$ is an eigenfunction of $(\monoU_{\lambda_1}^\star(s))^\ast$ associated with $\nu_\star(\lambda_1)$ that satisfies $\vf_{\lambda_1}^\ast(\vf_0)\ne0$.
	By renormalizing, we can assume that $\vf_{\lambda_1}^\ast$ is a unit eigenfunction associated with $\nu_\star(\lambda_1)$ that satisfies $\vf_{\lambda_1}^\ast(\vf_0)\ne0$.
	We now apply Theorem \ref{thm:mu_simple} and Corollary \ref{cor:mu_eigen} with $\banach=\R\times\CC$ and norm $\norm{(\xi,z)}_\banach=\max\{\abs{\xi},\abs{z}\}$ for $(\xi,z)\in\banach$, $O=(-\zeta_0,\zeta_0)\times\CC$, $\banachY=\C(\CC)$, $\banachY^\ast=B_0(\C(\CC),\CC)$, $V=V_s$, and $\chi_0=(0,\lambda_1)$.
	Together they imply there is an $0<\ve_1<\min(\zeta_0,\ve_0)$ such that for each $\zeta\in(-\ve_1,\ve_1)$ and $\lambda\in\ball(\lambda_1,\ve_1)$, there is a unique eigenvalue $\mu_1(\zeta,\lambda)$ of $V_s(\zeta,\lambda)$ in $\ball(\nu_\star(\lambda_1),\ve_1)$.
    Furthermore, $\mu_1(\cdot,\cdot)$ is continuous on $(-\ve_1,\ve_1)\times\ball(\lambda_1,\ve_1)$.
    Since $K$ is compact, there are finitely many $\lambda_1,\dots,\lambda_m\in K$ and $0<\ve_1,\dots,\ve_m<\min\{\zeta_0,\ve_0\}$ such that $K\subset\ball(\lambda_1,\ve_1)\cup\cdots\cup\ball(\lambda_m,\ve_m)$ and for each $k=1,\dots,m$, there exists $\mu_k:(-\ve_k,\ve_k)\times\ball(\lambda_k,\ve_k)\to\CC$ satisfying the aforementioned properties (with $k$ in place of $1$).    
    Let $\ve=\min(\delta,\ve_1,\dots,\ve_m)$ and define the continuous function $\mu:(-\ve,\ve)\times K\to\CC$ by
    	$$\mu(\zeta,\lambda)=\mu_k(\zeta,\lambda)\qquad\text{if }\lambda\in\ball(\lambda_k,\ve_k),\qquad k=1,\dots,m.$$
    To see that $\mu(\cdot,\cdot)$ is well defined on $(-\ve,\ve)\times K$, suppose $\eta\in(-\ve,\ve)$ and $\lambda\in\ball(\lambda_j,\ve_j)\cap\ball(\lambda_k,\ve_k)$ for some $j\neq k$.
    Then $\mu_j(\eta,\lambda)$ and $\mu_k(\eta,\lambda)$ are both equal to the unique eigenvalue of $V_s(\eta,\lambda)$ in $\ball(\nu_\star(\lambda),\ve_j)\cap\ball(\nu_\star(\lambda),\ve_k)$.
    Thus, $\mu$ is well defined and continuous on $(-\ve,\ve)\times K$, and for each $(\zeta,\lambda)\in(-\ve,\ve)\times K$, $\mu(\zeta,\lambda)$ is the unique eigenvalue of $V_s(\zeta,\lambda)$ in $\ball(\nu_\star(\lambda),\ve)$.
    Next we show that $\mu$ is holomorphic in $\lambda$.
    Let $\zeta\in(-\ve,\ve)$.
    By Corollary \ref{cor:mono_Continuity}, $V_s(\zeta,\cdot)$ is holomorphic on $\CC$.
    Let $\lambda\in \interior(K)$.
    By a second application of Theorem \ref{thm:mu_simple}, this time with $O=\banach=\CC$, $\banachY=B_0(\C(\CC))$, $V(\cdot)=V_s(\zeta,\cdot)$ and $\chi_0=\lambda$, there exists $\wt\ve>0$ such that $\mu(\zeta,\cdot)$ is holomorphic on $\ball(\lambda,\wt\ve)$.
    Since this holds for all $\lambda\in \interior(K)$, it follows that $\mu(\zeta,\cdot)$ is holomorphic on all of $\interior(K)$.
    
    We can now define the functions $\nu_\beta:K\to\CC$.
    For each $\beta>\ve^{-1}$ and $\lambda\in K$, define $\nu_\beta(\lambda)=\mu(\beta^{-1},\lambda)$.
    Then $\nu_\beta(\cdot)$ is holomorphic on $\interior(K)$ and, for each $\lambda\in K$, $\nu_\beta(\lambda)$ is the unique eigenvalue of $\monoU_\lambda^\beta(s)=V_s(\beta^{-1},\lambda)$ in $\ball(\nu_\star(\lambda),\ve)$.
    By the definition of $\nu_\beta$ and the regularity of $\mu$,
    \begin{align*}
    	\lim_{\beta\to\infty}\sup_{\lambda\in K}\abs{\nu_\beta(\lambda)-\nu_\star(\lambda)}&=\lim_{\zeta\to0}\sup_{\lambda\in K}|\mu(\zeta,\lambda)-\mu(0,\lambda)|=0.
\end{align*}
Since $\nu_\beta(\cdot)$ is holomorphic and $\nu_\beta$ converges to $\nu_\star$ uniformly on compacts, as $\beta\to\infty$, it follows that the derivative $\pd\nu_\beta$ converges to $\pd\nu_\star$ uniformly on compact sets, as $\beta\to\infty$ (see, e.g., \cite[Theorem 1.2]{Lang1999}); that is,
\begin{align*}
    	\lim_{\beta\to\infty}\sup_{\lambda\in K}\abs{\frac{\pd\nu_\beta(\lambda)}{\pd\lambda}-\frac{\pd\nu_\star(\lambda)}{\pd\lambda}}&=\lim_{\zeta\to0}\sup_{\lambda\in K}\abs{\frac{\pd\mu(\zeta,\lambda)}{\pd\lambda}-\frac{\pd\mu(0,\lambda)}{\pd\lambda}}=0.
    \end{align*}
    Then along with Theorem \ref{thm:1a} this implies there exists $\beta'\ge\buniq$ such that for all $\lambda\in K$, the limits in \eqref{eq:mubetanu} hold and $d_H(\sigma(\monoU_\lambda^\beta(s)),\{0,\nu_\star(\lambda)\})<\ve$ for all $\beta>\beta'$.
    Since $\nu_\beta(\lambda)$ is the unique eigenvalue of $\monoU_\lambda^\beta(s)$ in $\ball(\nu_\star(\lambda),\ve)$ and $\ve\le\frac{1}{2}\abs{\nu_\star(\lambda)}$, it follows that \eqref{eq:rhoUbetanubeta} holds.

    Suppose $\beta>\beta'$.
    Let $r\in K\cap\R$.
    Note that solutions of the extended variational equation \eqref{eq:eve} with real-valued initial conditions are real-valued.
    Thus, $\monoU_r^\beta(s)|_{\C(\R)}\in B_0(\C(\R))$ and so the elements of $\sigma(\monoU_r^\beta(s)|_{\C(\R)})$ are real or in conjugate pairs.
    Since $\nu_\star(r)\in\R$, $\nu_\beta(r)$ is the unique simple eigenvalue of $\monoU_r^\beta(s)$ in $\ball(\nu_\star(r),\ve)$ and the non-real-valued eigenvalues of $\monoU_r^\beta(s)$ are in conjugate pairs, it follows that $\nu_\beta(r)$ must be real-valued.
    Using the facts that $\nu_\beta(\cdot)$ maps $K\cap\R$ to $\R$ and is holomorphic, we conclude that $\pd\nu_\beta(r)/\pd\lambda$ is real-valued for all $r\in K\cap\R$.
	To prove the final statement, suppose $1\in K$.
	It follows from Theorem \ref{thm:xie} that 1 is a simple eigenvalue of $\monoU_1^\beta(0)$. 
	Since $\nu_\star(1)=1$ and $\nu_\beta(1)$ is the unique eigenvalue of $\monoU_1^\beta(0)$ in $\ball(\nu_\star(1),\ve)$, we see that $\nu_\beta(1)=1$.
	Now suppose $0\in K$.
	Note that when $\alpha=\lambda=0$, the extended variational equation \eqref{eq:eve} reduces to $\dot{y}(t)=0$.
	It follows that $\sigma(\monoU_0^\beta(0))=\{0,1\}$ and 1 is a simple eigenvalue of $\monoU_0^\beta(0)$ with corresponding eigenspace equal to the span of $\tilde\vf$,
    where $\tilde\vf(\cdot)\equiv1$ is the constant function identically equal to 1.
	Hence, $\nu_\beta(0)=1$. 
\end{proof}

\section{Limits of the extended characteristic multipliers for $\lambda$ near 1, 0 and in $[0,1)$}
\label{sec:description}

We now prove our remaining results on the characterization of the extended characteristic multipliers when $\beta$ is large.

\subsection{Proof of Theorem \ref{thm:2a}}
\label{sec:proofsnbhd}

The next lemma, which follows from Laurent's Theorem for holomorphic functions, will be useful in the proof of Theorem \ref{thm:2a}, as well as in the proof of Theorem \ref{thm:nbhdof0} in the next section, Section \ref{sec:proofnbhd0}.

\begin{lem}\label{lem:taylor}
	Suppose $z\in\CC$, $r>0$ and $g$ is holomorphic on an open set that contains $\ball(z,r)$. 
	Then
	$$\abs{g(\tilde{z})-g(z)-g'(z)(\tilde{z}-z)}\leq \frac{N\abs{\tilde{z}-z}^2}{r(r-|\tilde{z}-z|)},\qquad\tilde{z}\in\ball(z,r),$$
	where $N=\sup\{\abs{g(\tilde{z})}:\tilde{z}\in \pd \ball(z,r)\}$.
\end{lem}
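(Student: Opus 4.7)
The plan is to use the Cauchy integral formula (equivalently, the Taylor expansion with Cauchy's estimates on the coefficients) together with a geometric series to bound the tail.

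First, since $g$ is holomorphic on an open set containing $\clo(\ball(z,r))$, Taylor's theorem yields a convergent expansion
\begin{equation*}
g(\tilde z)=\sum_{k=0}^\infty \frac{g^{(k)}(z)}{k!}(\tilde z-z)^k,\qquad \tilde z\in\ball(z,r).
\end{equation*}
Subtracting the first two terms,
\begin{equation*}
g(\tilde z)-g(z)-g'(z)(\tilde z-z)=\sum_{k=2}^\infty \frac{g^{(k)}(z)}{k!}(\tilde z-z)^k.
\end{equation*}

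Next I would apply the classical Cauchy estimates: since $g$ is holomorphic in a neighborhood of $\clo(\ball(z,r))$, the Cauchy integral formula gives
\begin{equation*}
\abs{\tfrac{g^{(k)}(z)}{k!}}\le \frac{N}{r^k},\qquad k\ge 0,
\end{equation*}
where $N=\sup\{\abs{g(\tilde z)}:\tilde z\in\pd\ball(z,r)\}$. Combining this with the series above and writing $q=\abs{\tilde z-z}/r\in[0,1)$, the triangle inequality yields
\begin{equation*}
\abs{g(\tilde z)-g(z)-g'(z)(\tilde z-z)}\le N\sum_{k=2}^\infty q^k=\frac{Nq^2}{1-q}.
\end{equation*}

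Finally, substituting $q=\abs{\tilde z-z}/r$ and simplifying gives
\begin{equation*}
\frac{Nq^2}{1-q}=\frac{N\abs{\tilde z-z}^2}{r(r-\abs{\tilde z-z})},
\end{equation*}
which is the desired bound. There is essentially no obstacle here; the only thing to be mindful of is that the Cauchy estimates require holomorphy on a set containing the closed ball $\clo(\ball(z,r))$, which is provided by the hypothesis that $g$ is holomorphic on an open set containing $\ball(z,r)$ (interpreting this in the standard way so that the bound $N$ on the circle $\pd\ball(z,r)$ is well defined and finite).
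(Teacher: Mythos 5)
Your proof is correct and follows essentially the same route as the paper's: expand $g$ in its Taylor series about $z$, bound the coefficients by the Cauchy estimates $\abs{g^{(k)}(z)/k!}\le N/r^k$, and sum the resulting geometric series from $k=2$ onward. The paper invokes Laurent's theorem rather than Taylor's theorem by name, but the argument is identical.
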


\begin{proof}
	By Laurent's theorem (see, e.g., \cite[Chapter III, Theorem 7.3]{Lang1999}),
	\be\label{eq:taylor}g(\tilde{z})=g(z)+g'(z)(\tilde{z}-z)+\sum_{k=2}^\infty\frac{g^{(k)}(z)}{k!}(\tilde{z}-z)^k,\qquad\tilde{z}\in\ball(z,r),\ee
	and
		\be\label{eq:taylor1}\abs{\sum_{k=2}^\infty\frac{g^{(k)}(z)}{k!}(\tilde{z}-z)^k}\le N\sum_{k=2}^\infty\frac{|\tilde{z}-z|^k}{r^k}=\frac{N|\tilde{z}-z|^2}{r(r-|\tilde{z}-z|)}.\ee
	The lemma then follows after rearranging equation \eqref{eq:taylor} and using inequality \eqref{eq:taylor1}.
\end{proof}

\begin{proof}[Proof of Theorem \ref{thm:2a}] 
    By the definition of $\nu_\star$ in \eqref{eq:nuast}, we see that $\nu_\star(1)=1$ and 
		\be\label{eq:nuprime1}\nu_\star'(1)=\frac{1}{1-\vr_1}+\frac{1}{1-\vr_2}\ge2.\ee
	By definitions of $\vr_1,\vr_2$ in \eqref{eq:rho} and our assumption that $a\ge b>0$, we have $0<\vr_2\le\vr_1<1$.
    Let $\ve=(1-\vr_1)/2>0$ and define the compact set $K=\clo(\ball(1,\ve))\subset\CC\setminus\{\vr_1,\vr_2\}$ so that, by the definition of $\nu_\star(\lambda)$,
    	$$m=\inf\{\abs{\nu_\star(\lambda)}:\lambda\in K\}>0.$$
    Define the positive constants
    	\be\label{eq:consts}d=\frac{\nu_\star'(1)}{2},\qquad\delta=\min(m,d),\qquad N=\sup\{\abs{\nu_\star(\lambda)}:\lambda\in K\}+\delta,\qquad c=\frac{\ve^2}{4N}.\ee 
    By Proposition \ref{prop:nu_lambda_simple}, there exists $\bnbhdone=\bnbhdone(\alpha,f)\ge\buniq$ such that for each $\beta>\bnbhdone$, there exists a function $\nu_\beta:K\to\CC$, holomorphic on $\interior(K)$, such that $\nu_\beta(1)=1$ and the following hold:
    	\be\label{eq:rhoUlambdanulambdabeta}\rho(\monoU_\lambda^\beta(s))=|\nu_\beta(\lambda)|,\qquad\lambda\in K,\ee
    and
    	\be\label{eq:nubetaM}\sup\lcb\abs{\nu_\beta(\lambda)}:\lambda\in K\rcb\le N,\ee
    and
    	\be\label{eq:nubetaprimebound} d\le\nu_\beta'(1)\le 3d.\ee  
    
    {\bf Definitions of $A_{<1}$ and $A_{>1}$}:
    Define the continuous functions $g_1,g_2:\CC\to\R_+$ by
    \begin{align*}
    	g_1(\lambda)&=\abs{1+d(\Re\lambda-1)+i3d\Im\lambda}+\frac{|\lambda-1|^2}{c},\qquad\lambda\in\CC,\\
    	g_2(\lambda)&=\abs{1+d(\Re\lambda-1)+id\Im\lambda}-\frac{|\lambda-1|^2}{c},\qquad\lambda\in\CC.
    \end{align*}
    
    Let
    \begin{align}
        \label{eq:A<1}A_{<1}&=\lcb\lambda\in\CC_{<1}\cap \ball(1,\ve/2):g_1(\lambda)<1\rcb,\\
        \label{eq:A>1}A_{>1}&=\lcb\lambda\in\CC_{>1}\cap \ball(1,\ve/2):g_2(\lambda)>1\rcb.
    \end{align}
    Since $g_1$ and $g_2$ are continuous, $g_1(1)=g_2(1)=1$, $g_1(r)\in(0,1)$ for all $r\in(1-cd,1)$ and $g_2(r)\in(1,\infty)$ for all $r\in(1,1+cd)$, it follows that $A_{<1}$ and $A_{>1}$ are nonempty open sets with $1\in\partial A_{<1}\cap\partial A_{>1}$.
    
    {\bf Proof of parts (i) and (ii)}:
    Let $\lambda\in\CC_{<0}$ and let $r>0$ and $\theta\in(-\pi/2,\pi/2)$ be such that $\lambda=-re^{i\theta}$.
    Then for $\delta\in(0,1)$,
	    $$g_1(1+\delta\lambda)=\sqrt{1-2d\delta r\cos\theta+(d\delta r\cos\theta)^2+(3d\delta r\sin\theta)^2}+\frac{\delta^2r^2}{c},$$
    and
    \begin{align*}
	    \lim_{\delta\downarrow0}\frac{g_1(1+\delta\lambda)-g_1(1)}{\delta}&=-dr\cos\theta<0.
    \end{align*}
    Therefore, since $g_1(1)=1$, there exists $\delta_\lambda>0$ such that $1+\delta\lambda\in \ball(1,\ve/2)$ and $g_1(1+\delta(\lambda))<1$ for all $\delta\in(0,\delta_\lambda)$, so $1+\delta\lambda\in A_{<1}$.
    Thus, part (i) holds.
    Now let $\lambda\in\CC_{>0}$ and let $r>0$ and $\theta\in(-\pi/2,\pi/2)$ be such that $\lambda=re^{i\theta}$.
    An analogous argument (using $g_2$ instead of $g_1$) shows that there exists $\delta_\lambda>0$ such that $1+\delta\lambda\in A_{>1}$ for all $\delta\in(0,\delta_\lambda)$, so (ii) holds.
    To avoid repetition, we omit the details.
    
    {\bf Proofs of parts (iii) and (iv)}:
    Fix $\beta>\bnbhdone$.
    Recall that $\nu_\beta(1)=1$.
    Suppose $\lambda\in\ball(1,\ve/2)$ so that $\ve-|\lambda-1|>\ve/2$.
    By Lemma \ref{lem:taylor} with $z=1$ and $r=\ve$, the upper bound for $\nu_\beta$ in \eqref{eq:nubetaM} and definitions of $N$ and $c$ in \eqref{eq:consts}, we have
    \begin{equation}\label{eq:taylor1}
        \abs{\nu_\beta(\lambda)-1-\nu'_\beta(1)(\lambda-1)}<\frac{N \abs{\lambda-1}^2}{\ve(\ve-|\lambda-1|)}\le\frac{|\lambda-1|^2}{c}.
    \end{equation}
    Let $\lambda\in A_{<1}\subset \ball(1,\ve/2)$.
    Inequalities \eqref{eq:nubetaprimebound} and \eqref{eq:taylor1} and the definition of $A_{<1}$ imply
    \begin{align*}
        \abs{\nu_\beta(\lambda)}&\le\abs{1+\nu'_\beta(1)(\Re\lambda-1)+i\nu_\beta'(1)\Im\lambda}+\abs{\nu_\beta(\lambda)- 1-\nu'_\beta(1)(\lambda-1)} \\
        &\le\abs{1+d(\Re\lambda-1)+i3d\Im\lambda}+\frac{\abs{\lambda-1}^2}{c} \\
        &<1.
    \end{align*}    
    Along with equation \eqref{eq:rhoUlambdanulambdabeta} for $\rho(\monoU_\lambda^\beta(s))$, this proves that $\rho(\monoU_\lambda^\beta(s))<1$ for all $\lambda\in A_{<1}$, so part (iii) holds.
    Alternatively, let $\lambda\in A_{>1}$.
    An analogous argument, which we omit to avoid repetition, shows that $\abs{\nu_\beta(\lambda)}>1$ for all $\lambda\in A_{>1}$. 
    Along with equation \eqref{eq:rhoUlambdanulambdabeta} this proves that $\rho(\monoU_\lambda^\beta(s))>1$, so part (iv) holds.
\end{proof}

\subsection{Proof of Theorem \ref{thm:nbhdof0}}
\label{sec:proofnbhd0}

The proof of Theorem \ref{thm:nbhdof0} follows an exactly analogous structure to the proof of Theorem \ref{thm:2a}.
For completeness, we include the details of the proof. 

\begin{proof}[Proof of Theorem \ref{thm:nbhdof0}]
	By the definition of $\nu_\star$ in \eqref{eq:nuast}, the definitions of $\vr_1,\vr_2$ in \eqref{eq:rho} and the fact that $\alpha=0$, we have $\nu_\star(0)=1$, $\vr_1+\vr_2=1$ and
		$$\nu_\star'(0)=-\frac{1}{(1-\vr_1)(1-\vr_2)}\le-4.$$
	By our assumption $a\ge b>0$, we have $0<\vr_2\le\vr_1<1$.
	Let $\ve=\vr_2/2$ and define the compact set $K=\clo(\ball(0,\ve))\subset\CC\setminus\{\vr_1,\vr_2\}$ so that, by the definition of $\nu_\star$ in \eqref{eq:nuast},
		$$m=\inf\{\abs{\nu_\star(\lambda)}:\lambda\in K\}>0.$$
	Define the positive constants
		\be\label{eq:consts1}d=-\frac{\nu_\star'(0)}{2},\qquad \delta=\min(m,d),\qquad N=\sup\{\abs{\nu_\star(\lambda)}:\lambda\in K\}+\delta,\qquad c=\frac{\ve^2}{4N}.\ee
	By Proposition \ref{prop:nu_lambda_simple}, there $\bnbhdzero^\dag\ge\buniq$ such that for each $\beta>\bnbhdzero^\dag$ there exists a function $\nu_\beta:K\to\CC$, holomorphic on $\interior(K)$, such that $\nu_\beta(0)=1$ and the following hold:
		\be\label{eq:rhoUnubeta0}\rho(\monoU_\lambda^\beta(s))=|\nu_\beta(\lambda)|,\qquad \lambda\in K,\ee
		and
		\be\label{eq:supnubetaK0}\sup\{\abs{\nu_\beta(\lambda)}:\lambda\in K\}\le N,\ee
		and
		\be\label{eq:dnuprime0}-3d\le\nu_\beta'(1)\le-d.\ee
	Let
    $$g_1(\lambda)=\abs{1+d(\Re\lambda-1)+i3d\Im\lambda}+\frac{|\lambda-1|^2}{c},\qquad\lambda\in\CC$$
    	
	{\bf Definitions of $A_{>0}$ and $A_{<0}$}:
	Define the continuous functions $g_1,g_2:\CC\to\R_+$ by
	\begin{align*}
		g_1(\lambda)=\abs{1-d\Re\lambda+i3d\Im\lambda}+\frac{|\lambda|^2}{c},\qquad \lambda\in\CC,\\
		g_2(\lambda)=\abs{1-d\Re\lambda+id\Im\lambda}-\frac{|\lambda|^2}{c},\qquad \lambda\in\CC.
	\end{align*}
	Set
	\begin{align}
		\label{eq:A>0}A_{>0} &= \lcb \lambda\in \CC_{>0}\cap \ball(0,\ve/2):g_1(\lambda)<1 \rcb,\\
		\label{eq:A<0}A_{<0} &= \lcb \lambda\in \CC_{<0}\cap \ball(0,\ve/2):g_2(\lambda)>1 \rcb.
	\end{align}
	Since $g_1$ and $g_2$ are continuous, $g_1(0)=g_2(0)=1$, $g_1(r)\in(0,1)$ for all $r\in(0,cd)$ and $g_2(r)\in(1,\infty)$ for all  $r\in(-cd,0)$, it follows that $A_{>0}$ and $A_{<0}$ are nonempty open sets with $0\in\partial A_{>0}\cap\partial A_{<0}$.
	
	{\bf Proofs of parts (i) and (ii)}:
	Let $\lambda\in\CC_{>0}$ and let $r>0$ and $\theta\in(-\pi/2,\pi/2)$ be such that $\lambda=re^{i\theta}$.
	Then for $\delta\in(0,1)$,
	\begin{align*}
		g_1(\delta\lambda)=\sqrt{1-2d\delta r\cos\theta+(d\delta r\cos\theta)^2+(3d\delta r\sin\theta)^2}+\frac{\delta^2r^2}{c},
	\end{align*}
	and
	\begin{align*}
		\lim_{\delta\downarrow0}\frac{g_1(\delta\lambda)-g_1(0)}{\delta}&=-dr\cos\theta<0.
	\end{align*}
	Therefore, since $g_1(0)=1$, there exists $\delta_\lambda>0$ such that $\delta\lambda\in\ball(0,\ve/2)$ and $g_1(\delta\lambda)<1$ for all $\delta\in(0,\delta_\lambda)$, so $\delta\lambda\in A_{>0}$.
	Thus part (i) holds.
	Now, let $\lambda\in\CC_{<0}$ and $r>0$ and $\theta\in(-\pi/2,\pi/2)$ be such that $\lambda=-re^{i\theta}$.
	An analogous argument (using $g_2$ instead of $g_1$) shows that there exists $\delta_\lambda>0$ such that $\delta\lambda\in A_{<0}$ for all $\delta\in(0,\delta_\lambda)$, so (ii) holds.
	To avoid repetition, we omit the details.
	
	{\bf Proofs of parts (iii) and (iv)}:
	Fix $\beta>\bnbhdzero^\dag$.
	Recall that $\nu_\beta(0)=1$.
	Suppose $\lambda\in\ball(0,\ve/2)$ so that $\ve-|\lambda|>\ve/2$.
	By Lemma \ref{lem:taylor} with $z=0$ and $r=\ve$, the bound in \eqref{eq:supnubetaK0} and the definitions of $N$ and $c$ in \eqref{eq:consts1}, we have
	\begin{equation}\label{eq:taylor0}
		\abs{\nu_\beta(\lambda)-1-\nu'_\beta(0)\lambda}<\frac{2 N \abs{\lambda}^2}{\ve(\ve-|\lambda|)}\le\frac{|\lambda|^2}{c}.
	\end{equation}
	Let $\lambda\in A_{>0}\subset\ball(0,\ve/2)$.
	The triangle inequality, the inequalities \eqref{eq:dnuprime0} and \eqref{eq:taylor0}, and the definition of $A_{>0}$ in \eqref{eq:A>0} imply
	\begin{align*}
	\abs{\nu_\beta(\lambda)}&\le\abs{1+\nu'_\beta(0)\Re\lambda+i\nu'_\beta(0)\Im\lambda}+\abs{\nu_\beta(\lambda)- 1-\nu'_\beta(0)\lambda}\\
	&\le\abs{1-d\Re\lambda+i3d\Im\lambda}+\frac{|\lambda|^2}{c} \\
	&<1.
	\end{align*}    
	Along with equation \eqref{eq:rhoUnubeta0} for $\rho(\monoU_\lambda^\beta(s))$, this proves that $\rho(\monoU_\lambda^\beta(s))<1$ for all $\lambda\in A_{>0}$, so part (iii) holds.
	Alternatively, let $\lambda\in A_{<0}$.
	An analogous argument, which we omit to avoid repetition, shows that $\abs{\nu_\beta(\lambda)}>1$ for all $\lambda\in A_{<0}$.
	Along with equation \eqref{eq:rhoUnubeta0} this proves that $\rho(\monoU_\lambda^\beta(s))>1$ for all $\lambda\in A_{<0}$, so part (iv) holds.
\end{proof}

\subsection{Proof of Corollary \ref{cor:01}}
\label{sec:01}

\begin{proof}[Proof of Corollary \ref{cor:01}]
    By the definition of $\nu_\star$ in \eqref{eq:nuast}, for $r\in[0,1]$,
    \begin{align}\label{eq:nuastr}
	    {\nu_\star(r)}&={\frac{(1-\vr_1-(1-r))(1-\vr_2-(1-r))}{(1-\vr_1)(1-\vr_2)}}\\
	    \notag
	    &=1-\frac{(1-e^{-\alpha}+r)(1-r)}{(1-\vr_1)(1-\vr_2)}\\
	    \notag
	    &=-1-\frac{\Delta-(r-r_0)^2}{(1-\vr_1)(1-\vr_2)}.
    \end{align}
    It follows that $\nu_\star(r)<1$ for all $r\in(0,1)$.
    Solving for $\nu_\star(r)<-1$ yields the characterization of $\stable_\star(0)$ in parts (i)--(iv) and the characterization of $\unstable_\star(0)$ in part (v).
    
    Next, we complete the proofs of parts (i) and (iii).
    Suppose $\alpha>0$.
    Let $\bnbhdone(\alpha,f)\ge\buniq$ and $A_{<1}\subset\CC_{<1}$ be as in Theorem \ref{thm:2a}.
    By Theorem \ref{thm:2a}(i) with $\lambda=0$, there exists $\eta>0$ such that $[1-\eta,1)\subset A_{<1}$.
    Suppose $\Delta<0$.
    Then it follows from equation \eqref{eq:nuastr} for $\nu_\star(r)$ and the assumption $\alpha>0$ that
    	$$\gamma_a=\sup\lcb\abs{\nu_\star(r)}:r\in[0,1-\eta]\rcb<1.$$
    Set $\delta_a=1-\gamma_a>0$ so that $[0,1)\subset\stable_\star(\delta_a)\cup A_{<1}$.
    Let $\bmono(\alpha,f,[0,1-\eta],\delta_a)\ge\buniq$ be as in Theorem \ref{thm:1a}.
    By Theorems \ref{thm:1a} and \ref{thm:2a}(iii), if $\beta>\max(\bmono(\alpha,f,[0,1-\eta],\delta_a),\bnbhdone(\alpha,f))$ then $\stable_\star(\delta_a)\cup A_{<1}\subset\stable$.
    This proves part (i).
    Now suppose $\Delta>0$ and $\ve\in(0,\sqrt{\Delta})$.
    It follows from equation \eqref{eq:nuastr} for $\nu_\star(r)$ and the assumption $\alpha>0$ that 
    	$$\gamma_c=\sup\lcb\abs{\nu_\star(r)}:r\in\lsb0,r_0-\sqrt{\Delta}-\ve\rsb\cup\lsb r_0+\sqrt{\Delta}+\ve,1-\eta\rsb\rcb<1.$$
    Set $\delta_c=1-\gamma_c>0$ so that  $[0,r_0-\sqrt{\Delta}-\ve)\cup(r_0+\sqrt{\Delta}+\ve,1)\subset\stable_\star(\delta_c)\cup A_{<1}$.
    Let $\bmono(\alpha,f,[0,1-\eta],\delta_c)\ge\buniq$ be as in Theorem \ref{thm:1a}.
    By Theorems \ref{thm:1a} and \ref{thm:2a}(iii), if $\beta>\max(\bmono(\alpha,f,[0,1-\eta],\delta_c),\bnbhdone(\alpha,f))$ then $\stable_\star(\delta_c)\cup A_{<1}\subset\stable$.
    This proves part (iii).
    
    Next, we complete the proofs of parts (ii) and (iv).
    Suppose $\alpha=0$.
    Let $\bnbhdone(0,f)\ge\buniq$ and $A_{<1}\subset\CC_{<1}$ be as in Theorem \ref{thm:2a}, and let $\bnbhdzero(f)\ge\buniq$ and $A_{>0}\subset\CC_{>0}$ be as in Theorem \ref{thm:nbhdof0}.
    By Theorem \ref{thm:2a}(i) with $\lambda=0$ and Theorem \ref{thm:nbhdof0}(i) with $\lambda=1$, there exists $\eta>0$ such that $[1-\eta,1)\subset A_{<1}$ and $(0,\eta]\subset A_{>0}$.
    Suppose $\Delta<0$.
    By equation \eqref{eq:nuastr} for $\nu_\star(r)$, 
    $$\gamma_b=\sup\{\abs{\nu_\star(r)}:r\in[\eta,1-\eta]\}<1.$$
    Set $\delta_b=1-\gamma_b>0$ so that $(0,1)\subset A_{>0}\cup\stable_\star(\delta_b)\cup A_{<1}$.
    Let $\bmono(\alpha,f,[\eta,1-\eta],\delta_b)\ge\buniq$ be as in Theorem \ref{thm:1a}.
    By Theorems \ref{thm:1a}, \ref{thm:2a}(iii) and \ref{thm:nbhdof0}(iii), if $\beta>\max(\bmono(\alpha,f,[\eta,1-\eta],\delta_b),\bnbhdone(0,f),\bnbhdzero(f))$ then $ A_{>0}\cup\stable_\star(\delta_b)\cup A_{<1}\subset\stable$.
    This proves part (ii).
    
    Now suppose $\Delta>0$ and $\ve\in(0,\sqrt{\Delta})$.
    By equation \eqref{eq:nuastr} for $\nu_\star(r)$,
    	$$\gamma_d=\sup\lcb\abs{\nu_\star(r)}:r\in\lsb\eta,r_0-\sqrt{\Delta}-\ve\rsb\cup\lsb r_0+\sqrt{\Delta}+\ve,1-\eta\rsb\rcb<1.$$
    Set $\delta_d=1-\gamma_d>0$ so that $(0,r_0-\sqrt{\Delta}-\ve)\cup(r_0+\sqrt{\Delta}+\ve,1)\subset A_{>0}\cup\stable_\star(\delta_d)\cup A_{<1}$.
    Let $\bmono(\alpha,f,[\eta,1-\eta],\delta_d)\ge\buniq$ be as in Theorem \ref{thm:1a}.
    By Theorems \ref{thm:1a} and \ref{thm:2a}(iii), if $\beta>\max(\bmono(\alpha,f,[\eta,1-\eta],\delta_d),\bnbhdone(0,f),\bnbhdzero(f))$ then $A_{>0}\cup\stable_\star(\delta_d)\cup A_{<1}\subset\stable$.
    This proves part (iv).
    
    Lastly, we complete the proof of part (v).
    Suppose $\Delta>0$ and $\ve\in(0,r_0-\sqrt{\Delta})$.
    By equation \eqref{eq:nuastr} for $\nu_\star(r)$,
   		$$\gamma_e=\inf\lcb\abs{\nu_\star(r)}:r\in\lsb r_0-\sqrt{\Delta}+\ve,r_0+\sqrt{\Delta}-\ve\rsb\rcb>1.$$
    Set $\delta_e=\gamma_e-1>0$ so that $(r_0-\sqrt{\Delta}+\ve,r_0+\sqrt{\Delta}-\ve)\subset\unstable_\star(\delta_e)$.
    By Theorem \ref{thm:1a}, if $\beta>\bmono(\alpha,f,[r_0-\sqrt{\Delta},r_0+\sqrt{\Delta}],\delta_e)$ then $\unstable_\star(\delta_e)\subset\unstable$.
    This proves part (v).
\end{proof}

\section{Proofs of main results}
\label{sec:mainproofs}

We now use Theorem \ref{thm:msf} on the relation between extended characteristic multipliers and stability of a synchronous SOPS, along with our description for the asymptotics of the extended characteristic multipliers given in Section \ref{sec:asym_char} to prove our main results.
Throughout this section we recall that if a synchronous SOPS is linearly stable then it is asymptotically stable with an exponential phase, and if a synchronous SOPS is linearly unstable then it is unstable (see Remark \ref{rem:linearstability}).

\subsection{Proof of Theorem \ref{thm:1}}
\label{sec:generalproof}

In this section we use the limits of the extended characteristic multipliers established in Theorem \ref{thm:1a} to prove our main result on systems with general coupling.

\begin{proof}[Proof of Theorem \ref{thm:1}]
	Let $\bmono=\bmono(\alpha,\beta,f,G)\ge\buniq$ be as in Theorem \ref{thm:1a}.
	Let $\beta>\bmono$, $n\ge2$ and $G\in\M_n^1$ be such that $\sigma(G)\subset K$.
	Suppose the eigenvalue $\lambda=1$ of $G$ has simple algebraic multiplicity and $\abs{\nu_\star(\lambda)}<1-\delta$ for all other eigenvalues $\lambda$ of $G$.
	In view of Theorem \ref{thm:1a}, this implies that
		$$\rho(\mono_\lambda)<\abs{\nu_\star(\lambda)}+\delta<1,\qquad\text{ for all }\lambda\in\sigminus(G).$$
	Thus, by Theorem \ref{thm:msf}, the synchronous SOPS $\sopsn$ for the $n$-dimensional coupled DDE \eqref{eq:cdde} associated with $(\alpha,\beta,f,G)$ is linearly stable, and therefore asymptotically stable with an exponential phase.
	On the other hand, suppose $\abs{\nu_\star(\lambda)}>1+\delta$ for some eigenvalue $\lambda$ of $G$.
	In view of Theorem \ref{thm:1a}, this implies that
		$$\rho(\mono_\lambda)>\abs{\nu_\star(\lambda)}-\delta>1,\qquad\text{ for some }\lambda\in\sigma(G).$$
	Hence, by Theorem \ref{thm:msf}, the synchronous SOPS $\sopsn$ for the $n$-dimensional coupled DDE \eqref{eq:cdde} associated with $(\alpha,\beta,f,G)$ is linearly unstable, and therefore unstable.
\end{proof}

\subsection{Proof of Theorem \ref{thm:2}}
\label{sec:weakproof}

\begin{proof}[Proof of Theorem \ref{thm:2}]
	Let $A_{<1}\subset\CC_{<1}$, $A_{>1}\subset\CC_{>1}$ and $\bnbhdone=\bnbhdone(\alpha,f)\ge\buniq$ be as in Theorem \ref{thm:2a}.
	Let $\beta>\bnbhdone$, $n\ge2$ and $\gmatrix\in\M_n^0$.
	Suppose $\sigminusz(\gmatrix)\subset\CC_{<0}$.
	It follows from the expression \eqref{eq:sigmaweakmatrix} for $\sigminusz(I_n+\eta\gmatrix)$ and parts (i) and (iii) of Theorem \ref{thm:2a} that there exists $\eta_\gmatrix>0$ such that $\sigminus(I_n+\eta\gmatrix)\subset A_{<1}\subset\stable$ for all $\eta\in(0,\eta_\gmatrix)$.
	Linear stability of the synchronous SOPS then follows from Theorem \ref{thm:msf}.
	Therefore the synchronous SOPS is asymptotically stable with an exponential phase.
	The case that $\sigminusz(\gmatrix)\subset\CC_{>0}$ can be shown using an exactly analogous argument (and by choosing $\eta_\gmatrix>0$ possibly smaller), and so we omit the details.
	This proves (i).

	On the other hand, suppose $\sigminusz(\gmatrix)\cap{\CC}_{>0}\ne\emptyset$.
	Let $\lambda\in\sigminus(\gmatrix)\cap{\CC}_{>0}$.
	By the expression \eqref{eq:sigmaweakmatrix} for $\sigminusz(I_n+\eta\gmatrix)$ and parts (ii) and (iv) of Theorem \ref{thm:2a}, and by possibly choosing $\eta_\gmatrix>0$ smaller, we see that $1+\eta\lambda\in A_{>1}\subset\unstable$ for all $\eta\in(0,\eta_\gmatrix)$.
	Thus, $\sigminus(I_n+\eta \gmatrix)\cap\unstable\ne\emptyset$ for all $\eta\in(0,\eta_\gmatrix)$.
	Linear instability of the synchronous SOPS then follows from Theorem \ref{thm:msf}.
	Therefore the synchronous SOPS is unstable.
	The case that $\sigminusz(\gmatrix)\cap\CC_{<0}\ne\emptyset$ can be shown using an exactly analogous argument (and by choosing $\eta_\gmatrix>0$ possibly smaller), and so we omit the details.
	This proves part (ii). 
\end{proof}

\subsection{Proof of Theorem \ref{thm:3}}
\label{sec:uniformproof}

\begin{proof}[Proof of Theorem \ref{thm:3}]
	Suppose $\alpha>0$.
	Let $\delta=\frac{1}{2}\lb1-\nu_\star(0)\rb>0$, $K=\overline{\ball(0,\delta)}\cup\{1\}$, and $\bmono=\bmono(\alpha,f,\delta,K)\ge\buniq$ be as in Theorem \ref{thm:1a}.
	Let $\beta>\bmono$, $n\ge 2$ and $\gmatrix\in\M_n^0$.
 	Recall that ${\bf 1}_n$ is an eigenvector of $\MF_n$ (resp.\ $\gmatrix$) associated with eigenvalue 1 (resp.\ 0), and any eigenvector ${\bf v}$ of $\MF_n+\eta\gmatrix$ are either in the span of ${\bf 1}_n$ or is orthogonal to ${\bf 1}_n$.
	If ${\bf v}$ is an eigenvector of $\MF_n+\eta\gmatrix$ associated with eigenvalue $\lambda$ that is orthogonal to ${\bf 1}_n$, then $(\MF_n+\eta\gmatrix){\bf v}=\eta\gmatrix{\bf v}=\eta\lambda{\bf v}$.
	Thus,
		\be\label{eq:sigminusrelation}\sigminus(\MF_n+\eta\gmatrix)=\eta\sigminusz(\gmatrix).\ee
	Therefore, we can choose $\eta_\gmatrix>0$ such that $\eta\sigminusz(H)\subset\ball(0,\delta)$ for all $\eta\in(-\eta_\gmatrix,\eta_\gmatrix)$.
	Thus, $\sigma(\MF_n+\eta\gmatrix)=(\eta\sigminus(\gmatrix))\cup\{1\}\subset K$ for all $\eta\in(-\eta_H,\eta_H)$ and
		$$\rho(\mono_\lambda)<\nu_\star(0)+\delta=\frac{1}{2}(1+\nu_\star(0))<1\qquad\text{for all }\lambda\in\sigminusz(H).$$
	Linear stability of the synchronous SOPS then follows from Theorem \ref{thm:msf}.
	Therefore the synchronous SOPS is asymptotically stable with an exponential phase.

	Let $A_{>0}\subset\CC_{>0}$, $A_{<0}\subset\CC_{<0}$ and $\bnbhdzero^\dag=\bnbhdzero^\dag(f)\ge\buniq$ be as in Theorem \ref{thm:nbhdof0}.
	Let $\beta>\bnbhdzero^\dag$, $n\ge2$ and $\gmatrix\in\M_n^0$.
	Suppose $\sigminusz(\gmatrix)\subset\CC_{>0}$.
	It follows from relation \eqref{eq:sigminusrelation} and parts (i) and (iii) of Theorem \ref{thm:nbhdof0}, and by possibly choosing $\eta_\gmatrix>0$ smaller, that $\sigminus(\MF_n+\eta\gmatrix)\subset A_{>0}\subset\stable$ for all $\eta\in(0,\eta_\gmatrix)$.
	Linear stability of the synchronous SOPS then follows from Theorem \ref{thm:msf}.
	Therefore the synchronous SOPS is asymptotically stable with an exponential phase.
	The case that $\sigminusz(\gmatrix)\subset\CC_{<0}$ can be shown using an exactly analogous argument (and by choosing $\eta_\gmatrix>0$ possibly smaller), and so we omit the details.
	This proves (ii).
	
	On the other hand, suppose $\sigminusz(\gmatrix)\cap{\CC}_{<0}\ne\emptyset$.
	Let $\lambda\in\sigminus(\gmatrix)\cap{\CC}_{<0}$.
	By relation \eqref{eq:sigminusrelation} and parts (ii) and (iv) of Theorem \ref{thm:nbhdof0}, and by possibly choosing $\eta_\gmatrix>0$ smaller, we see that $\eta\lambda\in A_{<0}\subset\unstable$ for all $\eta\in(0,\eta_\gmatrix)$.
	Thus, $\sigminus(\MF_n+\eta \gmatrix)\cap\unstable\ne\emptyset$ for all $\eta\in(0,\eta_\gmatrix)$.
	Linear instability of the synchronous SOPS then follows from Theorem \ref{thm:msf}.
	Therefore the synchronous SOPS is unstable.
	The case that $\sigminusz(\gmatrix)\cap\CC_{>0}\ne\emptyset$ can be shown using an exactly analogous argument (and by choosing $\eta_\gmatrix>0$ possibly smaller), and so we omit the details.
	This proves part (iii). 	
	
	This completes the proof with $\bnbhdzero(\alpha,f)=\max(\bmono(\alpha,f,\delta,K),\bnbhdzero^\dag(f))$.
\end{proof}

\subsection{Proof of Theorem \ref{thm:4}}
\label{sec:01proof}

\begin{proof}[Proof of Theorem \ref{thm:4}]
First recall that if $G\in\M_n^1$ is an irreducible doubly nonnegative matrix, then by the Perron-Frobenius theorem (see, e.g., \cite[Theorem 8.4.4]{horn2012matrix}) and the fact that $G$ has nonnegative eigenvalues, we have $\sigminus(G)\subset[0,1)$.
If $G$ is also positive definite, then $G$ has positive eigenvalues and so $\sigminus(G)\subset(0,1)$.

Suppose $\Delta<0$.
Let $\bzoa=\bzoa(\alpha,f)\ge\buniq$ be as in Corollary \ref{cor:01}.
Let $\beta>\bzoa$, $n\ge2$ and $G\in\M_n^1$ be an irreducible doubly nonnegative matrix.
If $\alpha>0$ then by the fact that $\sigminus(G)\subset[0,1)$ and Corollary \ref{cor:01}, $\sigminus(G)\subset\stable$.
Alternatively, if $\alpha=0$ and $G$ is also positive definite, then $\sigminus(G)\subset(0,1)\subset\stable$.
In either case, Theorem \ref{thm:msf} implies that the synchronous SOPS is linearly stable and so (i) holds.

Next, suppose $\Delta>0$ and let $\ve$ satisfy the bounds \eqref{eq:ve}.
Let $\beta>\bzob$, $n\ge2$ and $G\in\M_n^1$ be an irreducible doubly nonnegative matrix.
Suppose $\abs{\lambda-r_0}>\sqrt{\Delta}+\ve$ for all $\lambda\in\sigminus(G)$.
If $\alpha>0$ then by the fact that $\sigminus(G)\subset[0,1)$ and and Corollary \ref{cor:01}, $\sigminus(G)\subset[0,r_0-\sqrt{\Delta}-\ve)\cup(r_0+\sqrt{\Delta}+\ve,1)\subset\stable$.
Alternatively, if $\alpha=0$ and $G$ is also positive definite then by the fact that $\sigminus(G)\subset(0,1)$ and Corollary \ref{cor:01}, $\sigminus(G)\subset(0,r_0-\sqrt{\Delta}-\ve)\cup(r_0+\sqrt{\Delta}+\ve,1)\subset\stable$.
In either case, Theorem \ref{thm:msf} implies that the synchronous SOPS is linearly unstable and so (ii) holds.
Now suppose there is $\lambda\in\sigma(G)$ such that $\abs{\lambda-r_0}<\sqrt{\Delta}-\ve$. 
By the fact that $\sigminus(G)\subset[0,1)$ and Corollary \ref{cor:01}, $\lambda\in(r_0-\sqrt{\Delta}+\ve,r_0+\sqrt{\Delta}-\ve)\subset\unstable$.
Thus, Theorem \ref{thm:msf} implies (iii) holds.
\end{proof}

\section{Examples}
\label{sec:examples}

In this section we apply our results to examples of systems of DDEs with mean-field coupling and systems of coupled DDEs arranged in a ring with nearest neighbor coupling, which are commonly studied coupling topologies in both the scientific and mathematical literature.

\subsection{Systems of DDEs with mean-field coupling}
\label{ex:meanfield}

Systems of coupled oscillators with mean field coupling were among the first to be addressed (in the setting without delays) \cite{Kuramoto1984,Winfree1967}.
Suppose $\alpha\ge0$ and $f$ satisfies Assumption \ref{ass:main}.
For each $n\geq2$ and $\kappa\in\R$, define the coupling matrix $M_{n,\kappa}=(M_{n,\kappa}^{jk})\in\M_n^1$ by
\be\label{eq:mfmatrix}
M_{n,\kappa}^{jk}=
\begin{cases}
	\displaystyle
	1-\frac{(n-1)\kappa}{n}&\text{if }j=k,\\
	\displaystyle
	\frac{\kappa}{n}&\text{if }j\neq k.
\end{cases}
\ee
The spectrum of $M_{n,\kappa}$ satisfies $\sigma(M_{n,\kappa})=\{1-\kappa,1\}$, where $\lambda=1$ is a simple eigenvalue of $M_{n,\kappa}$ and the eigenvalue $\lambda=1-\kappa$ has multiplicity $n-1$. 
Observe that for $\kappa\in(0,1]$ the matrix $M_{n,\kappa}$ is irreducible and doubly nonnegative, and if $\kappa<1$ also holds then $M_{n,\kappa}$ is positive definite.
Thus, we have the following immediate corollary of Theorem \ref{thm:4}.

\begin{cor}
	\label{cor:mfcoupling}
	Suppose $\alpha\ge0$ and $f$ satisfies Assumption \ref{ass:main}.
	Define $r_0$ and $\Delta$ as in \eqref{eq:r0} and \eqref{eq:Delta}, respectively.
	Suppose $\Delta<0$.
	Then there exists $\bzoa=\bzoa(\alpha,f)\ge\buniq$ such that for every $\beta>\bzoa$, $n\ge2$, and $\kappa\in(0,1]$ (if $\alpha=0$ then additionally assume $\kappa<1$), the following holds:
	\begin{itemize}
		\item[(i)] The synchronous SOPS of the $n$-dimensional coupled DDE associated with $(\alpha,\beta,f,M_{n,\kappa})$ is asymptotically stable with an exponential phase.
	\end{itemize}
	On the other hand, suppose $\Delta>0$.
	Let $\ve$ satisfy \eqref{eq:ve}.
	Then there exists $\bzob=\bzob(\alpha,f,\ve)\ge\buniq$ such that for every $\beta>\bzob$, $n\ge2$ and $\kappa\in(0,1]$ (if $\alpha=0$ then additionally assume $\kappa<1$), the following hold:
	\begin{itemize}
		\item[(ii)] If 
		$$\kappa<1-r_0-\sqrt{\Delta}-\ve\qquad\text{or}\qquad\kappa>1-r_0+\sqrt{\Delta}+\ve,$$ 
		the unique synchronous SOPS of the $n$-dimensional coupled DDE associated with $(\alpha,\beta,f,M_{n,\kappa})$ is asymptotically stable with an exponential phase.
		\item[(iii)] If 
		$$1-r_0-\sqrt{\Delta}+\ve<\kappa<1-r_0+\sqrt{\Delta}-\ve,$$ 
		the unique synchronous SOPS of the $n$-dimensional coupled DDE associated with $(\alpha,\beta,f,M_{n,\kappa})$ is unstable.
	\end{itemize}
\end{cor}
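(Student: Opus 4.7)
The plan is to apply Theorem \ref{thm:4} directly to the specific coupling matrix $G = M_{n,\kappa}$, so the work reduces to two verifications: (a) that $M_{n,\kappa}$ satisfies the structural hypotheses of Theorem \ref{thm:4} under the stated conditions on $\kappa$, and (b) that the spectral conditions of Theorem \ref{thm:4} translate into the explicit inequalities on $\kappa$ stated in the corollary.

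For (a), I would first note the decomposition $M_{n,\kappa} = (1-\kappa)\Id_n + \kappa\MF_n$, which makes the spectral structure transparent: since $\MF_n$ has eigenvalue $1$ (simple, with eigenvector ${\bf 1}_n$) and $0$ (multiplicity $n-1$, on the orthogonal complement of ${\bf 1}_n$), it follows that $\sigma(M_{n,\kappa}) = \{1, 1-\kappa\}$, with $\lambda = 1$ simple and $\lambda = 1 - \kappa$ having multiplicity $n-1$, as stated in the excerpt. For $\kappa\in(0,1]$, every entry of $M_{n,\kappa}$ is nonnegative (the off-diagonal entries $\kappa/n$ are strictly positive, and the diagonal entries $1 - (n-1)\kappa/n$ are nonnegative), so $M_{n,\kappa}$ is irreducible (its directed graph is complete). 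The decomposition shows $M_{n,\kappa}$ is positive semidefinite for $\kappa\in(0,1]$ (eigenvalues $1$ and $1-\kappa\ge 0$) and positive definite for $\kappa\in(0,1)$. The row sums equal $1$ by construction. Hence the hypotheses of Theorem \ref{thm:4} are met: doubly nonnegative and irreducible with row sums $1$, and additionally positive definite when $\kappa<1$ (which is needed when $\alpha=0$).

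For (b), I would simply evaluate the conditions of Theorem \ref{thm:4} with $\sigma(M_{n,\kappa})\setminus\{1\} = \{1-\kappa\}$. The condition $|\lambda - r_0| > \sqrt{\Delta}+\ve$ for all $\lambda\in\sigma(G)\setminus\{1\}$ becomes $|1-\kappa-r_0| > \sqrt{\Delta}+\ve$, which unpacks as
\begin{equation*}
\kappa < 1 - r_0 - \sqrt{\Delta} - \ve \qquad\text{or}\qquad \kappa > 1 - r_0 + \sqrt{\Delta} + \ve,
\end{equation*}
giving part (ii). Similarly, $|\lambda-r_0| < \sqrt{\Delta}-\ve$ for some $\lambda\in\sigma(G)$ becomes $|1-\kappa-r_0|<\sqrt{\Delta}-\ve$ (noting that the eigenvalue $\lambda=1$ satisfies $|1-r_0| = 1 - e^{-\alpha}/2 > \sqrt{\Delta}$ so it cannot trigger instability), which unpacks as
\begin{equation*}
1 - r_0 - \sqrt{\Delta} + \ve < \kappa < 1 - r_0 + \sqrt{\Delta} - \ve,
\end{equation*}
giving part (iii). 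Part (i), for the case $\Delta<0$, follows with no spectral condition needed since Theorem \ref{thm:4}(i) applies unconditionally.

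No step appears to present a real obstacle; the corollary is essentially a dictionary entry specializing Theorem \ref{thm:4} to the mean-field spectrum. The only thing to watch is the constants $\bzoa$ and $\bzob$, which are taken directly from Theorem \ref{thm:4} and do not depend on $n$ or $\kappa$, so the uniformity over $n\ge 2$ and $\kappa\in(0,1]$ (or $(0,1)$ when $\alpha=0$) in the corollary follows automatically.
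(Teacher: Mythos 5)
Your proposal is correct and follows exactly the route the paper takes: the paper presents this corollary as an immediate specialization of Theorem \ref{thm:4}, relying on the same observations you make (the spectrum $\{1,1-\kappa\}$ with $\lambda=1$ simple, irreducibility and double nonnegativity for $\kappa\in(0,1]$, positive definiteness for $\kappa<1$, and the translation $|1-\kappa-r_0|\gtrless\sqrt{\Delta}\pm\ve$ into the stated inequalities on $\kappa$). Your additional checks — the decomposition $M_{n,\kappa}=(1-\kappa)\Id_n+\kappa\MF_n$, the verification that $\lambda=1$ cannot trigger the instability condition since $|1-r_0|>\sqrt{\Delta}$, and the uniformity of $\bzoa,\bzob$ over $n$ and $\kappa$ — are all sound and merely make explicit what the paper leaves implicit.
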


\subsection{Systems of DDEs arranged in a ring}
\label{sec:ring}

Consider a system of $n\ge 3$ coupled DDEs arranged in a ring with nearest neighbor coupling; that is, the coupling matrix $R_n=(R_n^{jk})$ satisfies $R_n^{jk}=0$ if $j-k\;(\text{mod } n)>1$.
Systems of DDEs arranged in a ring arise in biological applications, including models of neuronal networks \cite{Perlikowski2010,Popovich2011,Su2017} and cardiac rhythms \cite{Tian2017}, and the stability of the synchronous state when the dimension of the system is large has been studied in the mathematical literature \cite{Chen2000,Guo2009}.

For $n \geq 3$ and $\kappa\in\R^2\setminus\{(0,0)\}$ define the coupling matrix $R_{n,\kappa}\in\M_n^1$ by
\begin{equation}\label{eq:ring}
	R_{n,\kappa}^{jk}=\begin{cases}
		\frac{\kappa^1}{2}&\text{if }j=(k-1)\mod n,\\
		1-\frac{\kappa^1+\kappa^2}{2}&\text{if }j=k,\\
		\frac{\kappa^2}{2}&\text{if }j=(k+1)\mod n,\\
		0&\text{otherwise}.
	\end{cases}
\end{equation}
The spectrum of the coupling matrix is given by $\sigma(R_{n,\kappa})=\{z_j\}_{j=0}^{n-1}$, where
\begin{equation}
	\label{eq:lambdaj}
	z_j=z_j(n,\kappa)= 1-\frac{\kappa^1+\kappa^2}{2}\lb 1-\cos\frac{2\pi j}{n}\rb+i\frac{\kappa^1-\kappa^2}{2}\sin\frac{2\pi j}{n},\qquad j=0,\dots,n-1,
\end{equation}
and $z_0=1$ is a simple eigenvalue of $R_{n,\kappa}$ (see, e.g., \cite[Theorem 3.1]{Gray2006}).
Consider the case of symmetric coupling (i.e., $\kappa^1=\kappa^2$).
In this case
	\be\label{eq:lambdajreal}z_j=1-\kappa^1\lb 1-\cos\frac{2\pi j}{n}\rb,\qquad j=0,\dots,n-1,\ee
and so $\sigma(R_{n,\kappa})\subset[1-2\kappa^1,1]$ for all $n\ge 3$ and $\kappa^1\ge0$.
Note that for $\kappa^1\in(0,\frac{1}{2}]$ the coupling matrix $R_{n,\kappa}$ is irreducible and doubly nonnegative matrix, and if $\kappa^1<\frac{1}{2}$ also holds then $R_{n,\kappa}$ is positive definite.
We have the following corollary of Theorem \ref{thm:4}.
Given $\alpha\ge0$ and $f$ satisfying Assumption \ref{ass:main}, recall the definitions of $r_0$ and $\Delta$ given in \eqref{eq:r0} and \eqref{eq:Delta}, respectively.

\begin{cor}\label{cor:symring}
	Suppose $\alpha\ge0$ and $f$ satisfies Assumption \ref{ass:main}.
	Suppose $\Delta<0$.
	Let $\bzoa=\bzoa(\alpha,f)\ge\buniq$ be as in Theorem \ref{thm:4}.
	Then for every $\beta>\bzoa$, $n\ge3$, and $\kappa\in(0,\frac{1}{2}]$, with additional assumption $\kappa<\frac{1}{2}$ if $\alpha=0$, the following holds:
	\begin{itemize}
		\item[(i)] The synchronous SOPS of the $n$-dimensional coupled DDE associated with $(\alpha,\beta,f,R_{n,\kappa})$ is asymptotically stable with an exponential phase.
	\end{itemize}
	On the other hand, suppose $\Delta>0$.
	Suppose $\ve$ satisfies \eqref{eq:ve}.
	Let $\bzob=\bzob(\alpha,f,\ve)\ge\buniq$ be as in Theorem \ref{thm:4}.
	Then for every $\beta>\bzob$, $n\ge3$ and $\kappa\in(0,\frac{1}{2}]$ (if $\alpha=0$ then additionally assume $\kappa<\frac{1}{2}$), the following hold:
	\begin{itemize}
		\item[(ii)] If for each $1\le j\le n-1$, one of the following inequalities holds
			$$\kappa<\frac{1-r_0-\sqrt{\Delta}-\ve}{1-\cos\frac{2\pi j}{n}}\qquad\text{or}\qquad\kappa>\frac{1-r_0+\sqrt{\Delta}+\ve}{1-\cos\frac{2\pi j}{n}},$$ 
		then the unique synchronous SOPS of the $n$-dimensional coupled DDE associated with $(\alpha,\beta,f,R_{n,\kappa})$ is asymptotically stable with an exponential phase.
		\item[(iii)] If for some $1\le j\le n-1$, the following inequality holds
		\be\label{eq:kappabound}\frac{1-r_0-\sqrt{\Delta}+\ve}{1-\cos\frac{2\pi j}{n}}<\kappa<\frac{1-r_0+\sqrt{\Delta}-\ve}{1-\cos\frac{2\pi j}{n}},\ee
		then the unique synchronous SOPS of the $n$-dimensional coupled DDE associated with $(\alpha,\beta,f,R_{n,\kappa})$ is unstable.
	\end{itemize}
\end{cor}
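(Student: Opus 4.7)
The plan is to deduce Corollary \ref{cor:symring} directly from Theorem \ref{thm:4} applied to the matrix $R_{n,\kappa}$, with the bulk of the work being (a) verifying that $R_{n,\kappa}$ satisfies the hypotheses of Theorem \ref{thm:4} and (b) translating the abstract spectral conditions of Theorem \ref{thm:4}(ii)--(iii) into the explicit inequalities on $\kappa$ in the corollary. Since \eqref{eq:lambdajreal} gives the spectrum of $R_{n,\kappa}$ explicitly when $\kappa^1=\kappa^2=\kappa$, no additional spectral-theoretic work is needed beyond this translation.

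First I would verify the hypotheses of Theorem \ref{thm:4}. By inspection of \eqref{eq:ring} the rows of $R_{n,\kappa}$ sum to $1$, and for $\kappa\in(0,1/2]$ all entries are nonnegative (the diagonal entry is $1-\kappa\ge 1/2>0$ and off-diagonal nonzero entries equal $\kappa/2>0$). Irreducibility is immediate because the associated directed graph is a cycle on $n$ vertices. From \eqref{eq:lambdajreal}, the eigenvalues $z_j=1-\kappa(1-\cos(2\pi j/n))$ lie in $[1-2\kappa,1]\subset[0,1]$ for $\kappa\in(0,1/2]$, so $R_{n,\kappa}$ is positive semidefinite; and when $\kappa<1/2$ the inclusion becomes $[1-2\kappa,1]\subset(0,1]$, so $R_{n,\kappa}$ is positive definite, as required in the $\alpha=0$ case.

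Next I would translate the spectral conditions. Since $z_0=1$ and the remaining eigenvalues are the real numbers $z_j=1-\kappa(1-\cos(2\pi j/n))$ for $j=1,\dots,n-1$, we have
\begin{equation*}
|z_j-r_0|=\bigl|1-\kappa(1-\cos(2\pi j/n))-r_0\bigr|,\qquad j=1,\dots,n-1.
\end{equation*}
For $j\neq 0$ the quantity $1-\cos(2\pi j/n)$ is strictly positive, so the inequality $|z_j-r_0|>\sqrt{\Delta}+\ve$ is equivalent to
\begin{equation*}
\kappa<\frac{1-r_0-\sqrt{\Delta}-\ve}{1-\cos(2\pi j/n)}\quad\text{or}\quad\kappa>\frac{1-r_0+\sqrt{\Delta}+\ve}{1-\cos(2\pi j/n)},
\end{equation*}
and likewise $|z_j-r_0|<\sqrt{\Delta}-\ve$ is equivalent to \eqref{eq:kappabound}. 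Part (i) then follows from Theorem \ref{thm:4}(i), part (ii) from Theorem \ref{thm:4}(ii) applied with the pair of inequalities above holding for every $j=1,\dots,n-1$, and part (iii) from Theorem \ref{thm:4}(iii) applied to any single $j$ for which \eqref{eq:kappabound} holds. The constants $\bzoa$ and $\bzob$ are inherited directly from Theorem \ref{thm:4} since they depend only on $(\alpha,f)$ and $(\alpha,f,\ve)$ respectively, independent of $n$ and $\kappa$.

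There is no real obstacle here: the result is a bookkeeping corollary. The only mild subtlety is ensuring that the $\alpha=0$ hypothesis forces the strict restriction $\kappa<1/2$ (rather than $\kappa\le 1/2$), which is why the corollary adds this extra requirement to guarantee positive definiteness of $R_{n,\kappa}$; this parallels the analogous restriction for mean-field coupling in Corollary \ref{cor:mfcoupling}.
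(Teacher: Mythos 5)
Your proposal is correct and matches the paper's (essentially implicit) argument: the paper likewise records that for symmetric coupling with $\kappa\in(0,\tfrac12]$ the matrix $R_{n,\kappa}$ is irreducible and doubly nonnegative (positive definite when $\kappa<\tfrac12$) with real spectrum given by \eqref{eq:lambdajreal}, and then reads the corollary off Theorem \ref{thm:4} by exactly the translation $|z_j-r_0|\gtrless\sqrt{\Delta}\pm\ve$ that you carry out. The only cosmetic point is that concluding positive (semi)definiteness from the sign of the eigenvalues uses the symmetry of $R_{n,\kappa}$ (which holds here since $\kappa^1=\kappa^2$), and it is worth saying so explicitly.
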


We also have the following corollary of Theorem \ref{thm:1} for the case that $\kappa>\frac{1-r_0}{4}$.

\begin{cor}
	Suppose $\alpha\ge0$ and $f$ satisfies Assumption \ref{ass:main}.
	Given $\delta\in(0,1)$, let $\bmono=\bmono\lb\alpha,f,\delta,[-1,1+\delta]\rb\ge\buniq$ be as in Theorem \ref{thm:1}.
	Then for every $\beta>\bmono$, $n\ge 3$, and $\kappa\in\lb(1-r_0)(1+\sqrt{1+\delta})/4,1\rb$, the following hold:
	\begin{itemize}
		\item[(i)] If $n$ is even then the synchronous SOPS of the $n$-dimensional coupled DDE associated with $(\alpha,\beta,f,R_{n,\kappa})$ is unstable.
		\item[(ii)] If $n$ is odd and 
			\be\label{eq:kappanodd}\kappa>\frac{(1-r_0)(1+\sqrt{1+\delta})}{1-\cos\frac{(n-1)\pi}{n}},\ee
		then the synchronous SOPS of the $n$-dimensional coupled DDE associated with $(\alpha,\beta,f,R_{n,\kappa})$ is unstable.
	\end{itemize}
\end{cor}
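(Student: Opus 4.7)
The plan is to invoke Theorem \ref{thm:1}(ii) by exhibiting, for each pair $(n,\kappa)$ in the hypothesis, an explicit real eigenvalue $\lambda^\sharp$ of $R_{n,\kappa}$ that lies in $K=[-1,1+\delta]$ and at which $|\nu_\star|$ exceeds $1+\delta$. Under the symmetric coupling hypothesis $\kappa^1=\kappa^2=\kappa$, equation \eqref{eq:lambdajreal} gives $\sigma(R_{n,\kappa})=\{z_j:j=0,\ldots,n-1\}$ with $z_j=1-\kappa(1-\cos(2\pi j/n))$ real-valued; since $1-\cos(2\pi j/n)\in[0,2]$ and $\kappa\in(0,1)$, every eigenvalue lies in $[1-2\kappa,1]\subset[-1,1+\delta]=K$, so the spectral containment required by Theorem \ref{thm:1} is automatic.

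For case (i), I would take $\lambda^\sharp=z_{n/2}=1-2\kappa$, which is available precisely because $n$ is even. For case (ii), I would take $\lambda^\sharp=z_{(n-1)/2}=1-\kappa(1-\cos((n-1)\pi/n))$, the real eigenvalue furthest from $1$ when $n$ is odd. In both cases, writing $\lambda^\sharp=1-s$ with $s=2\kappa$ in case (i) and $s=\kappa(1-\cos((n-1)\pi/n))$ in case (ii), the definition of $\nu_\star$ in \eqref{eq:nuast} combined with the identity $\vr_1+\vr_2=e^{-\alpha}=2r_0$ (from \eqref{eq:rho} and \eqref{eq:r0}) yields, after expanding the numerator,
\begin{equation*}
\nu_\star(1-s)=\frac{(1-\vr_1-s)(1-\vr_2-s)}{(1-\vr_1)(1-\vr_2)}=1+\frac{s\bigl(s-2(1-r_0)\bigr)}{(1-\vr_1)(1-\vr_2)}.
\end{equation*}

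To show this exceeds $1+\delta$, I would apply the AM--GM inequality to $1-\vr_1$ and $1-\vr_2$ to get $(1-\vr_1)(1-\vr_2)\le(1-r_0)^2$, and reduce the problem to the inequality $s\bigl(s-2(1-r_0)\bigr)>\delta(1-r_0)^2$. Setting $u=s/(1-r_0)$ turns this into the scalar quadratic condition $u^2-2u-\delta>0$, whose relevant root is $u=1+\sqrt{1+\delta}$; the inequality is therefore equivalent to $s>(1-r_0)(1+\sqrt{1+\delta})$. Dividing by $2$ in case (i) and by $1-\cos((n-1)\pi/n)$ in case (ii) recovers exactly the thresholds on $\kappa$ given in the statement, in particular \eqref{eq:kappanodd} for case (ii).

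Once $|\nu_\star(\lambda^\sharp)|>1+\delta$ has been established for an eigenvalue $\lambda^\sharp\in\sigma(R_{n,\kappa})\cap K$, Theorem \ref{thm:1}(ii) immediately yields linear instability of the synchronous SOPS, and Remark \ref{rem:linearstability} upgrades this to the topological instability asserted. No new analytic machinery beyond Theorem \ref{thm:1} is needed; the entire argument is an elementary spectral verification. The main obstacle is purely algebraic bookkeeping: carrying out the factorization leading to the displayed identity, handling the AM--GM estimate on the denominator carefully, and matching the quadratic threshold $u>1+\sqrt{1+\delta}$ to the exact form of the stated constants in the two parity cases.
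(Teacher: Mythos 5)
Your overall strategy is the right one and is evidently what the paper intends (the corollary is stated without proof): exhibit the real eigenvalue of $R_{n,\kappa}$ farthest from $1$, write it as $1-s$, and use the identity $\nu_\star(1-s)=1+\frac{s\bigl(s-2(1-r_0)\bigr)}{(1-\vr_1)(1-\vr_2)}$ together with $(1-\vr_1)(1-\vr_2)\le(1-r_0)^2$ to reduce the instability criterion of Theorem \ref{thm:1}(ii) to $s>(1-r_0)(1+\sqrt{1+\delta})$. Your algebra up to that point is correct, the spectral containment $\sigma(R_{n,\kappa})\subset[1-2\kappa,1]\subset[-1,1+\delta]$ is correctly checked, and for part (ii) the threshold $s=\kappa\bigl(1-\cos\frac{(n-1)\pi}{n}\bigr)>(1-r_0)(1+\sqrt{1+\delta})$ is exactly \eqref{eq:kappanodd}.

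The gap is in part (i), in the final sentence ``dividing by $2$ \dots recovers exactly the thresholds on $\kappa$ given in the statement.'' With $s=2\kappa$, your sufficient condition is $\kappa>(1-r_0)(1+\sqrt{1+\delta})/2$, whereas the stated hypothesis only provides $\kappa>(1-r_0)(1+\sqrt{1+\delta})/4$; these differ by a factor of $2$, so your argument does not cover the full stated range of $\kappa$. This is not merely a bookkeeping slip you can absorb: for $\kappa$ in the uncovered range the conclusion can actually fail. Take $\alpha=0$, $a=b$ (so $\vr_1=\vr_2=\tfrac12$, $r_0=\tfrac12$, $\Delta=-\tfrac14<0$), $n=4$, $\kappa=0.3$ and $\delta$ small; then $\kappa$ exceeds the stated lower bound $\approx\tfrac14$, but $\sigminus(R_{4,\kappa})=\{0.7,0.4,0.7\}$ with $|\nu_\star(0.4)|=0.04$ and $|\nu_\star(0.7)|=0.16$, and Theorem \ref{thm:4}(i) (with $R_{4,\kappa}$ irreducible, doubly nonnegative, positive definite) gives asymptotic \emph{stability}. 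So the corollary as printed is erroneous in part (i) — the $/4$ should be $/2$, which is also what Remark \ref{rem:chen} implicitly uses when it requires $(1-r_0)(1+\sqrt{1+\delta})<2$ before setting $\kappa=1$. Your proof is correct for the corrected threshold $\kappa>(1-r_0)(1+\sqrt{1+\delta})/2$ in part (i) and for part (ii) as stated, but as written it asserts a match with the printed constant that does not hold, and no argument can close that gap because the printed statement is false on the extra range.
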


\begin{remark}
	\label{rem:chen}
	We compare the corollary with the main result in \cite{Chen2000}, which states that if $\alpha>0$ and $\kappa=1$, then for any $\beta>\buniq$, and either $n$ odd and sufficiently large or $n$ even, the synchronous SOPS is unstable.
	Fix $\alpha>0$ and $\kappa=1$.
	Let $\delta>0$ be sufficiently small so that $(1-r_0)(1+\sqrt{1+\delta})<2$.
	Then, by Corollary \ref{cor:symring}, if $\beta>\bmono=\bmono(\alpha,f,\delta,[-1,1+\delta])$ and either $n$ is odd and sufficiently large so that inequality \eqref{eq:kappanodd} holds with $\kappa=1$ or $n$ is even, then the synchronous SOPS associated with $(\alpha,\beta,f,R_{n,\kappa})$ is unstable.
	In particular, we recover the main result from \cite{Chen2000} in the asymptotic regime $\beta\to\infty$.
\end{remark}

\section{Numerical simulations}
\label{sec:numerics}

In this section we plot examples of solutions of the coupled DDE \eqref{eq:cdde} to gain insight into how large $\beta$ needs to be in order for our limit theorems to be informative about the stability of the synchronous SOPS.
Let $\alpha=\frac18$, $a=24$ and $b=1$, and let $f:\R\to\R$ be the hyperbolic tangent function defined in \eqref{eq:fmf}.
In Figure \ref{fig:nu_star} we depict the corresponding level sets $\pd\stable_\star(0.1)=\{\lambda\in\CC:\abs{\nu_\star(\lambda)}=0.9\}$, $\pd\stable_\star(0)=\{\lambda\in\CC:\abs{\nu_\star(\lambda)}=1\}$ and $\pd\unstable_\star(0.1)=\{\lambda\in\CC:\abs{\nu_\star(\lambda)}=1.1\}$ defined in Section \ref{sec:general}, and we mark the following points (and their conjugates) with red dots (we also list the approximate the value of $\nu_\star$ at each point):
\begin{align*}
	\lambda_1&=\text{argmax}\{\Re\lambda:\lambda\in\stable_\star(0.1)\cap\CC_{>\frac12}\}\approx 0.99,&&\nu_\star(\lambda_1)=0.9,\\
	\lambda_2&=\text{argmax}\{\Im\lambda:\lambda\in\stable_\star(0.1)\cap\CC_{>\frac12}\}\approx 0.80+i0.16,&&\nu_\star(\lambda_2)\approx -0.49+i0.98,\\
	\lambda_3&=\text{argmin}\{\Re\lambda:\lambda\in\stable_\star(0.1)\cap\CC_{>\frac12}\}\approx 0.62,&&\nu_\star(\lambda_3)=-0.9,\\
	\lambda_4&=\text{argmax}\{\Re\lambda:\lambda\in\unstable_\star(0.1)\cap\CC_{>\frac12}\}\approx 1.01,&&\nu_\star(\lambda_4)=1.1,\\
	\lambda_5&=\text{argmax}\{\Im\lambda:\lambda\in\unstable_\star(0.1)\cap\CC_{>\frac12}\}\approx 0.80+i0.2,&&\nu_\star(\lambda_5)\approx -0.40+i0.81,\\
	\lambda_6&=\text{argmin}\{\Re\lambda:\lambda\in\unstable_\star(0.1)\cap\CC_{>\frac12}\}\approx 0.49,&&\nu_\star(\lambda_6)=-1.1.
\end{align*}
Recall the definition of the coupling matrix $R_{3,\kappa}\in\M_n^1$ defined in Equation \eqref{eq:ring}.
Using Equation \eqref{eq:lambdaj} we solve for $\kappa_{\ell}\in\R^2$ such that $\sigminus(R_{3,\kappa_{\ell}})=\{\lambda_{\ell},\bar\lambda_{\ell}\}$, for $j=1,\dots,6$.
The approximate values for $\kappa_{\ell}$ are
\begin{align*}
	\kappa_1&\approx(0.0045,0,0045),&&\kappa_2\approx(-0.03,0.15),&&\kappa_3\approx(0.13,0.13),\\
	\kappa_4&\approx(-0.0043,-0.0043),&&\kappa_5\approx(-0.05,-0.18),&&\kappa_6\approx(0.17,0.17).
\end{align*}

\begin{figure}   
    \centering
    \includegraphics[width=\textwidth]{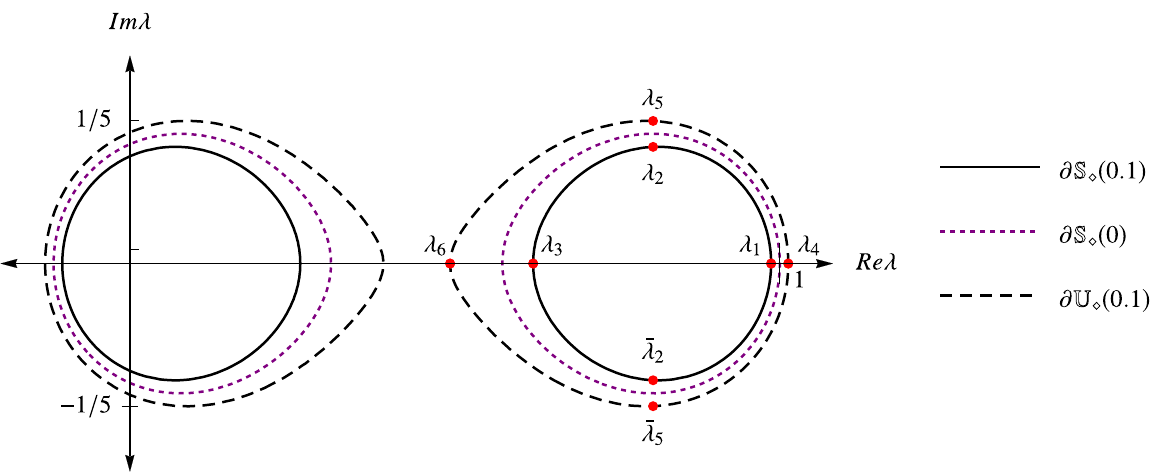}
    \caption[Level sets $\pd\stable_\star(0)$, $\pd\stable_\star(0.1)$ and $\pd\unstable_\star(0.1)$]{The level sets $\pd\stable_\star(0.1)$, $\pd\stable_\star(0)$, and $\pd\unstable_\star(0.1)$ corresponding to parameters $\alpha=\frac18$, $a=24$ and $b=1$.
    See the text for the coordinates of the red points lying in the level sets $\pd\stable_\star(0.1)$ and $\pd\unstable(0.1)$.
    }
    \label{fig:nu_star}
\end{figure}

According to Theorem \ref{thm:1}, for $\beta$ sufficiently large, the synchronous SOPS of the coupled DDE \eqref{eq:cdde} associated with $(\frac18,\beta,f,R_{3,\kappa_\ell})$ is asymptotically stable with an exponential phase (resp.\ unstable) for $\ell=1,2,3$ (resp.\ $\ell=4,5,6$).
Note that the minimal value of $\beta$ is $\beta_\text{Hopf}$, defined by
	$$\beta_\text{Hopf}=\frac{1}{\abs{f'(0)}}\frac{\theta_0}{\sin\theta_0}\approx1.65,$$
where $\theta_0\in(\tfrac{\pi}{2},\pi)$ denotes the solution to $\theta_0+\alpha\tan\theta_0=0$ if $\alpha>0$ and $\theta_0=\tfrac{\pi}{2}$ if $\alpha=0$.
At $\beta_\text{Hopf}$ there is a Hopf bifurcation such that for $\beta>\beta_\text{Hopf}$ the trivial solution of the scalar DDE \eqref{eq:dde} is linearly unstable (see, e.g., \cite{Hadeler1977}).
In order to quantify how ``synchronous'' a solution ${\bf x}=(x^1,x^2,x^3)$ is, we also plot $g_{\bf x}\in C([0,\infty),\R_+)$ defined by
	\be\label{eq:gxt}g_{\bf x}(t)=\sup\left\{|x^j(s)-x^k(s)|:j\ne k,\max(t-\period^\star,0)\le s\le t\right\},\qquad t\ge0,\ee
where we recall that $\period^\star$ is the period of the limiting SOPS $\sops^\star$ defined in Section \ref{sec:normalizedSOPS}.
In Figure \ref{fig:5}, we plot $g_{\bf x}$, where ${\bf x}$ is the solution of the coupled DDE \eqref{eq:cdde} associated with $(\frac18,\beta,f,R_{3,\kappa_{\ell}})$, for $\ell=1,\dots,6$ and different $\beta$, and initial condition $\mb{x}_0={\bs\phi}$ defined by
	$${\bs\phi}(t)=\beta t{\bf 1}_3+\frac{1}{10\sqrt{2}}(0,1,-1)^T,\qquad t\in[-1,0].$$
In Figures \ref{fig:3}--\ref{fig:8} we plot the solutions ${\bf x}$, which were generated using Mathematica's NDSolve function.
The plots suggest that in some cases the stability/instability established in our limit theorems appears to in fact hold for all $\beta>\beta_\text{Hopf}$, whereas in other cases $\beta$ must be chosen larger.
Based on Figures \ref{fig:5}--\ref{fig:3}, as well as additional numerical tests that are not included, we conclude with the following conjecture, whose proof is beyond the scope of this work.
Suppose $\sopsn$ is a synchronous SOPS of the coupled DDE \eqref{eq:cdde} associated with $(\alpha,\beta,f,G)$, where $\alpha\ge0$ and $f$ satisfies Assumption \ref{ass:main}.
We conjecture that $\sopsn$ is asymptotically stable with an exponential phase provided that $\beta>\beta_\text{Hopf}$, $f$ is \emph{monotone} and $\sigminus(G)\subset\stable_\star(0)$.
Based on Figures \ref{fig:5} and \ref{fig:4}--\ref{fig:8}, the case where $\sigma(G)\cap\unstable_\star(0)\ne\emptyset$ appears to be more complicated. 

\begin{figure}
	\begin{subfigure}{.49\textwidth}
		\centering
		\includegraphics[width=\textwidth]{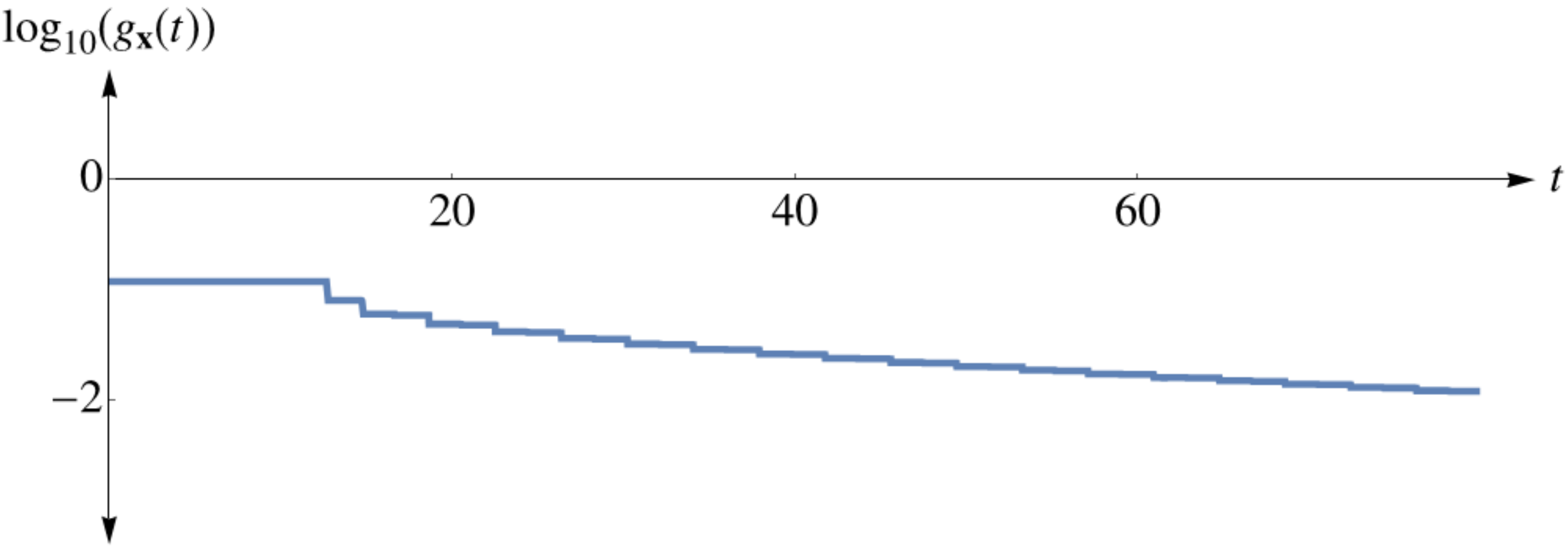}
		\caption{$\ell=1$, $\beta=\beta_\text{Hopf}+0.01$.}
	\end{subfigure}
	\begin{subfigure}{.49\textwidth}
		\centering
		\includegraphics[width=\textwidth]{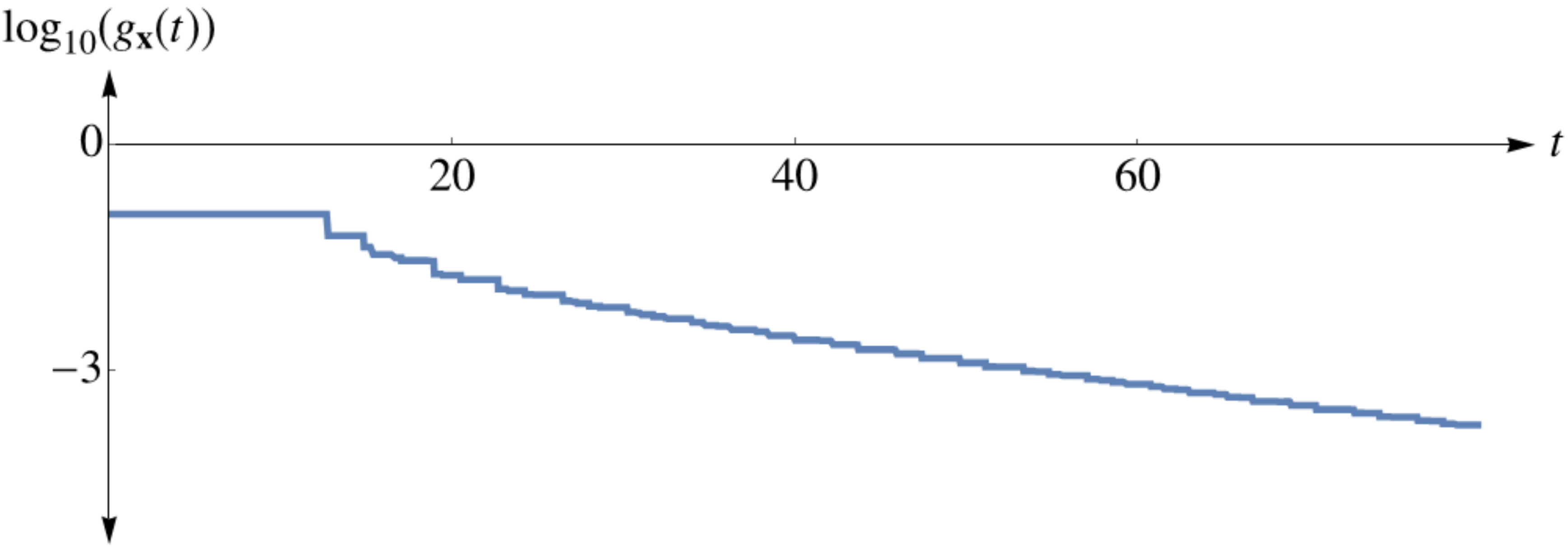}
		\caption{$\ell=2$, $\beta=\beta_\text{Hopf}+0.01$.}
	\end{subfigure}\newline\newline\newline
	\begin{subfigure}{.49\textwidth}
		\centering
		\includegraphics[width=\textwidth]{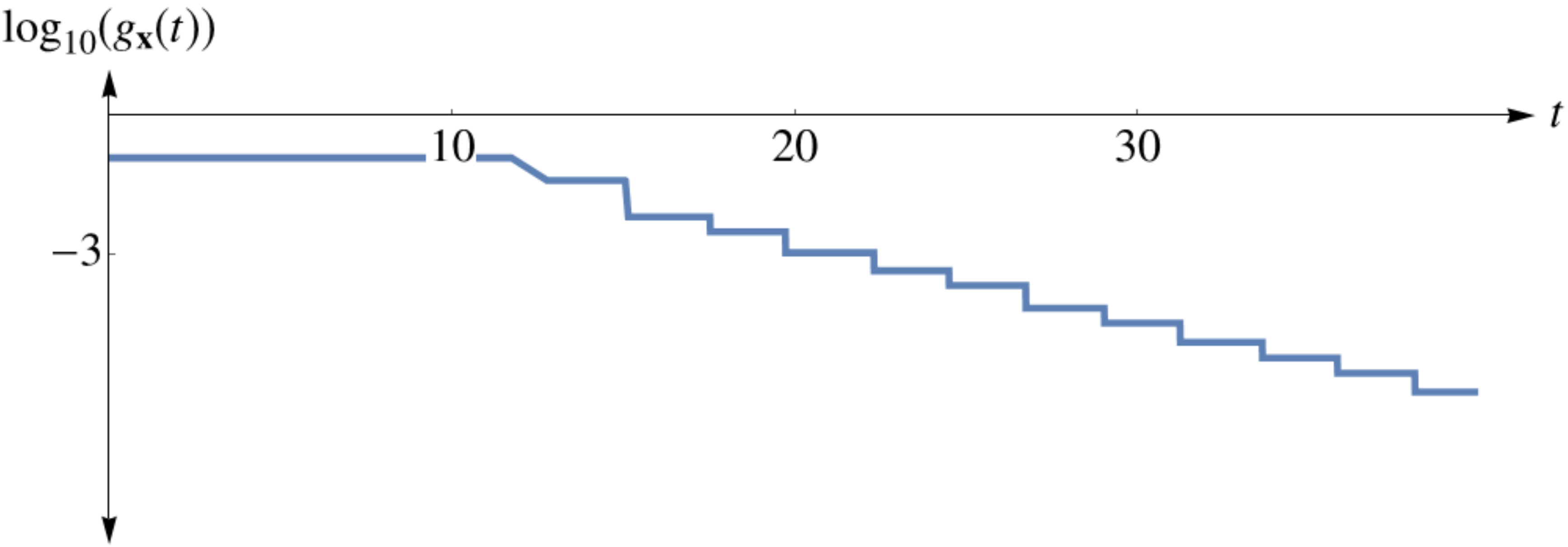}
		\caption{$\ell=3$, $\beta=\beta_\text{Hopf}+0.01$.}
	\end{subfigure}
	\begin{subfigure}{.49\textwidth}
		\centering
		\includegraphics[width=\textwidth]{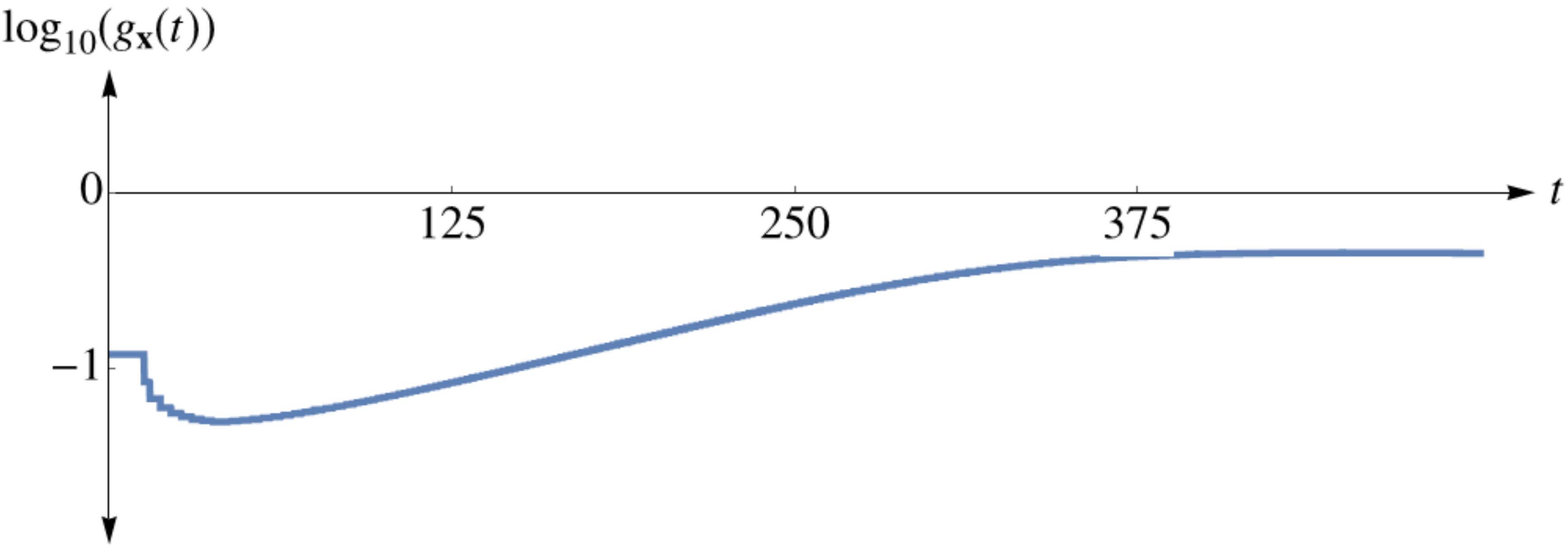}
		\caption{$\ell=4$, $\beta=\beta_\text{Hopf}+0.01$.}
	\end{subfigure}
	\begin{subfigure}{.49\textwidth}
		\centering
		\includegraphics[width=\textwidth]{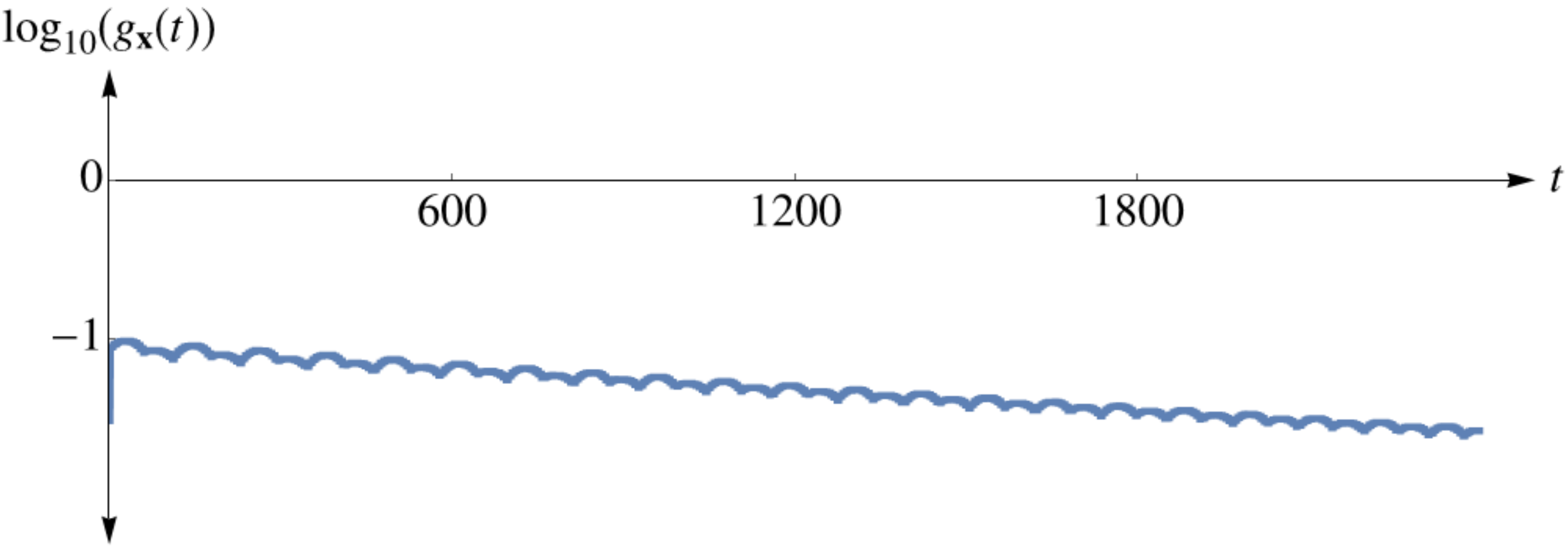}
		\caption{$\ell=5$, $\beta=6.6$.}
	\end{subfigure}
	\begin{subfigure}{.49\textwidth}
		\centering
		\includegraphics[width=\textwidth]{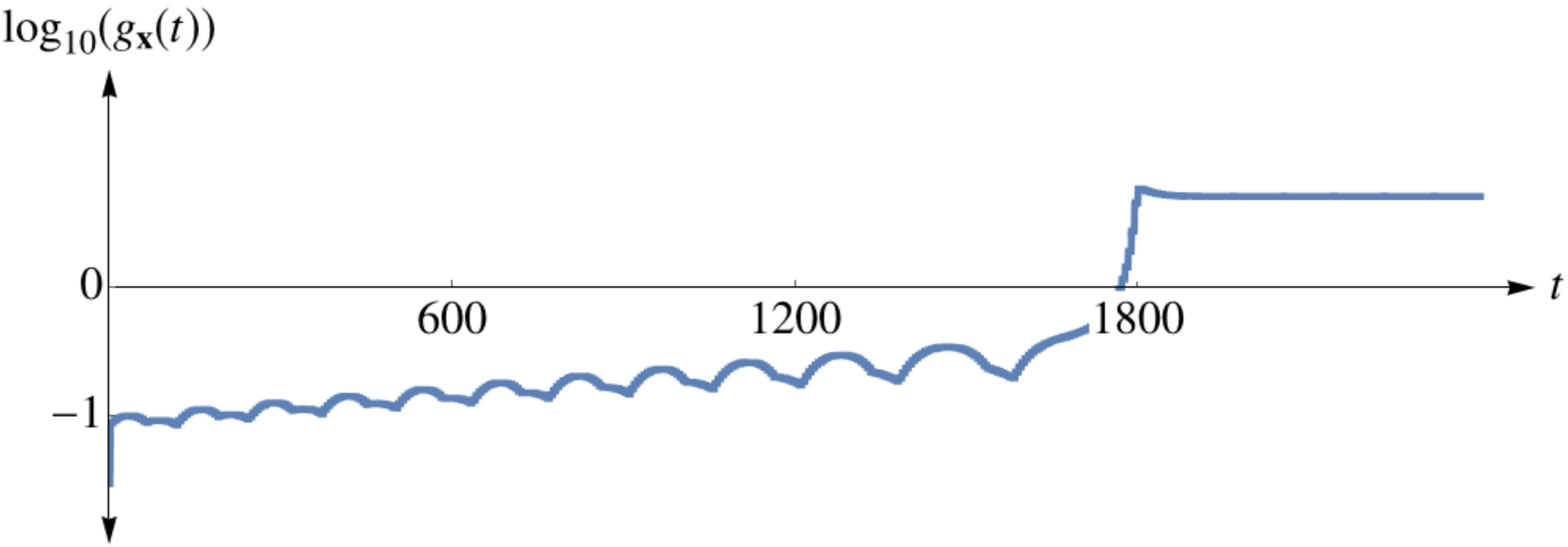}
		\caption{$\ell=5$, $\beta=6.7$.}
	\end{subfigure}
	\begin{subfigure}{.49\textwidth}
		\centering
		\includegraphics[width=\textwidth]{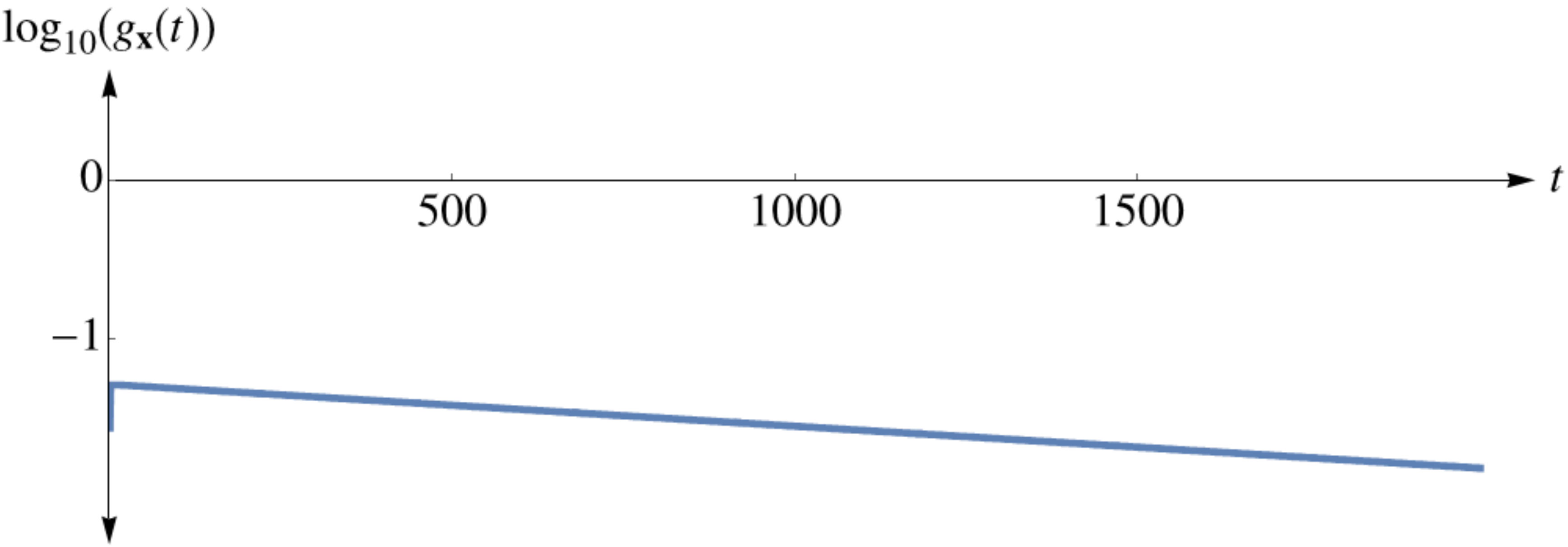}
		\caption{$\ell=6$, $\beta=7.35$.}
	\end{subfigure}
	\begin{subfigure}{.49\textwidth}
		\centering
		\includegraphics[width=\textwidth]{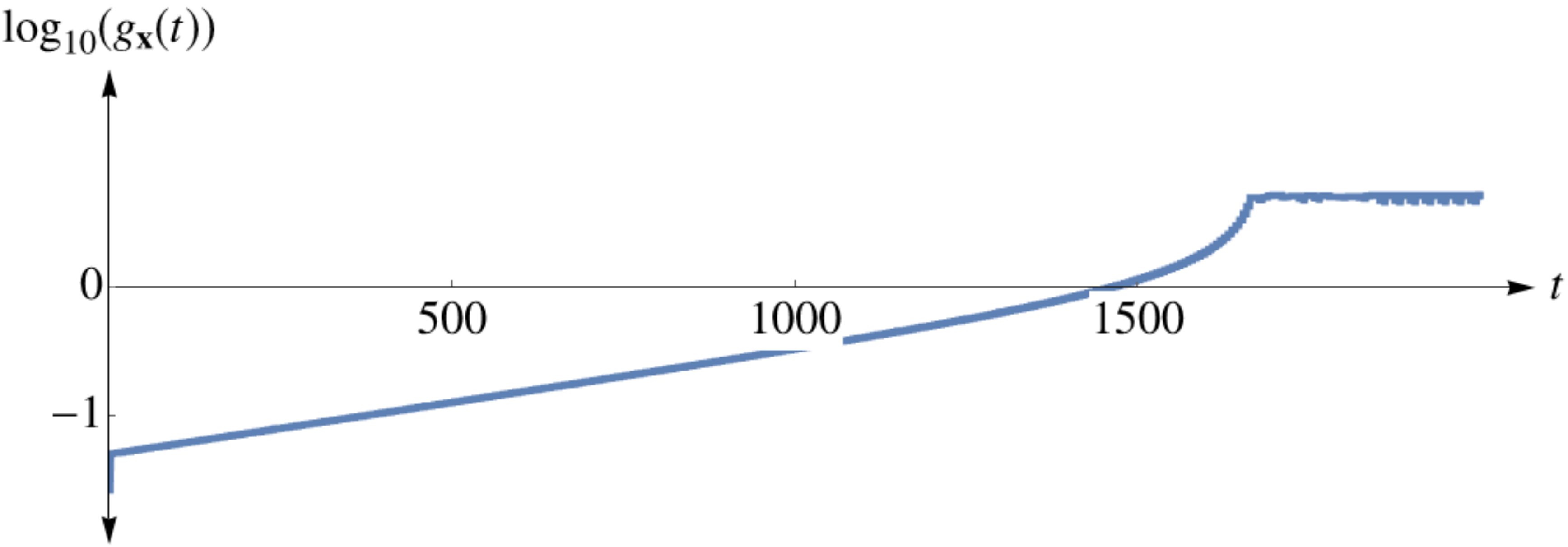}
		\caption{$\ell=6$, $\beta=7.5$.}
	\end{subfigure}
	\caption[]{Plots of $\log(g_{\bf x}(t))$. Here $g_{\bf x}(t)$ is the measure of ``synchrony'' defined in Equation \eqref{eq:gxt} and ${\bf x}$ is the solution of the coupled DDE \eqref{eq:cdde} associated with $(\frac18,\beta,f,R_{3,\kappa_\ell})$ starting at ${\bs\phi}$, for $\ell=1,\dots,6$ and varying $\beta$. Note that the axes of the plots are not identical. As stated in the text, $|\nu(\lambda_\ell)|=0.9$ for $\ell=1,2,3$, and $|\nu(\lambda_\ell)|=1.1$ for $\ell=4,5,6$. Therefore, by Theorem \ref{thm:1}, for $\beta$ sufficiently large, the synchronous SOPS of the coupled DDE \eqref{eq:cdde} associated with $(\frac18,\beta,f,R_{3,\kappa_\ell})$ is asymptotically stable with an exponential phase (resp.\ unstable) for $\ell=1,2,3$ (resp.\ $\ell=4,5,6$). For $\ell=1,2,3$ and $\beta=\beta_\text{Hopf}+0.01$, we see that ${\bf x}$ appears to converge to a synchronous SOPS of the coupled DDE \eqref{eq:cdde}, suggesting that asymptotic stability in fact holds for all $\beta>\beta_\text{Hopf}\approx1.65$. For $\ell=4$ and $\beta=\beta_\text{Hopf}+0.01$, we see that the synchronous SOPS of the coupled DDE \eqref{eq:cdde} appears to be unstable, suggesting that instability holds for all $\beta>\beta_\text{Hopf}$. On the other hand, for $\ell=5,6$, we see that there appears to be a critical $\beta$ where the synchronous SOPS of the coupled DDE \eqref{eq:cdde} transitions from asymptotically stable to unstable.}
	\label{fig:5}
\end{figure}

\begin{figure}
	\begin{subfigure}{.49\textwidth}
		\centering
		\includegraphics[width=\textwidth]{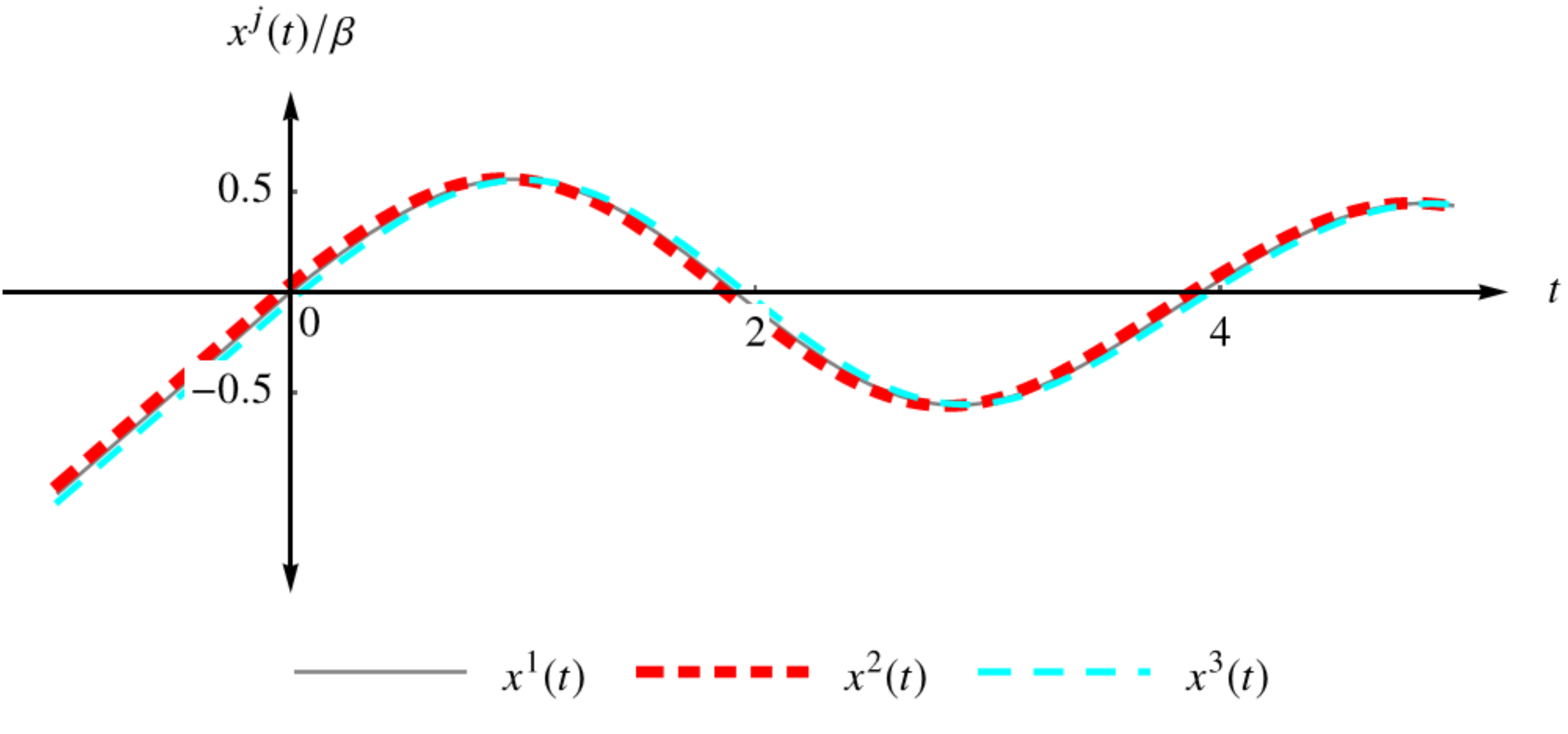}
		\caption{$\ell=1$, $\beta=\beta_\text{Hopf}+0.01$}
	\end{subfigure}
	\begin{subfigure}{.49\textwidth}
		\centering
		\includegraphics[width=\textwidth]{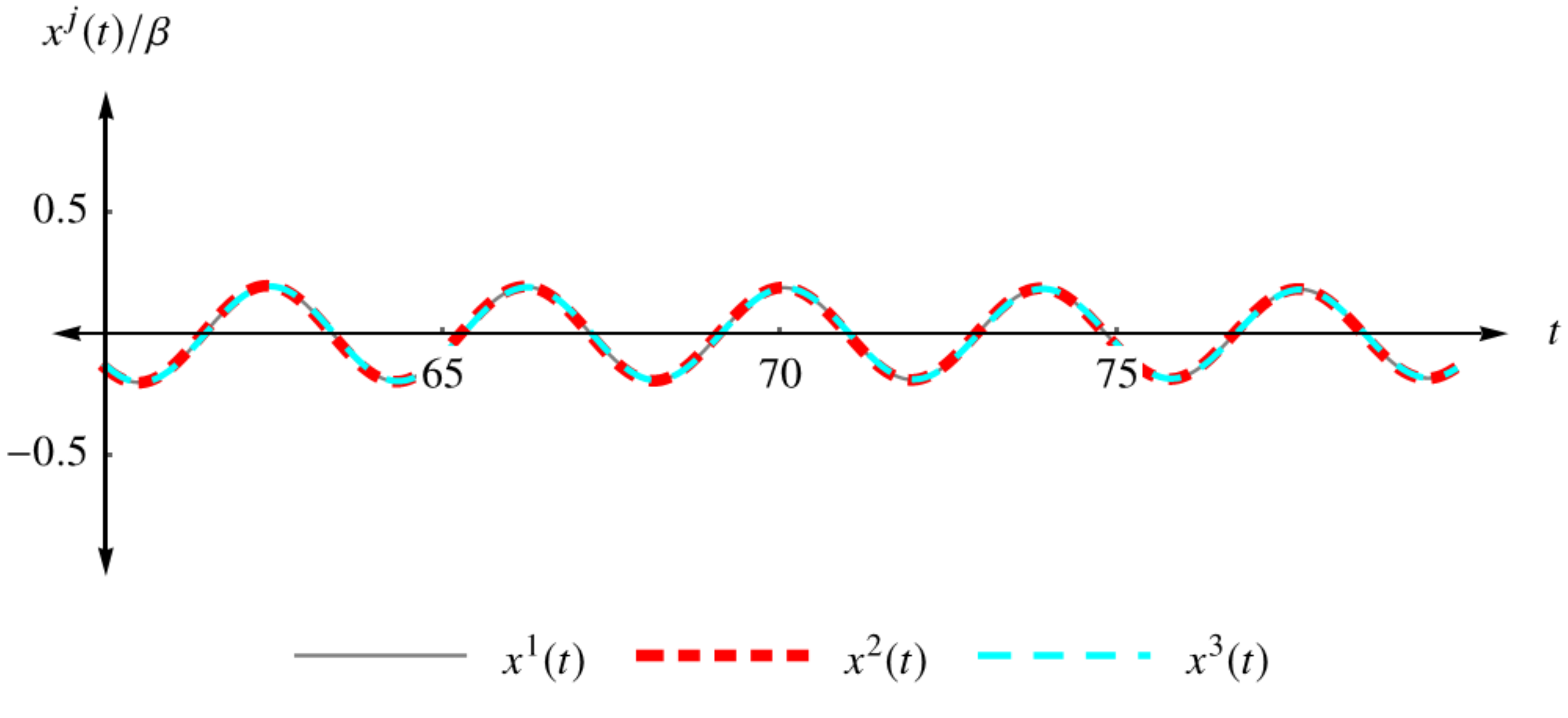}
		\caption{$\ell=1$, $\beta=\beta_\text{Hopf}+0.01$}
	\end{subfigure}\newline\newline\newline
	\begin{subfigure}{.49\textwidth}
		\centering
		\includegraphics[width=\textwidth]{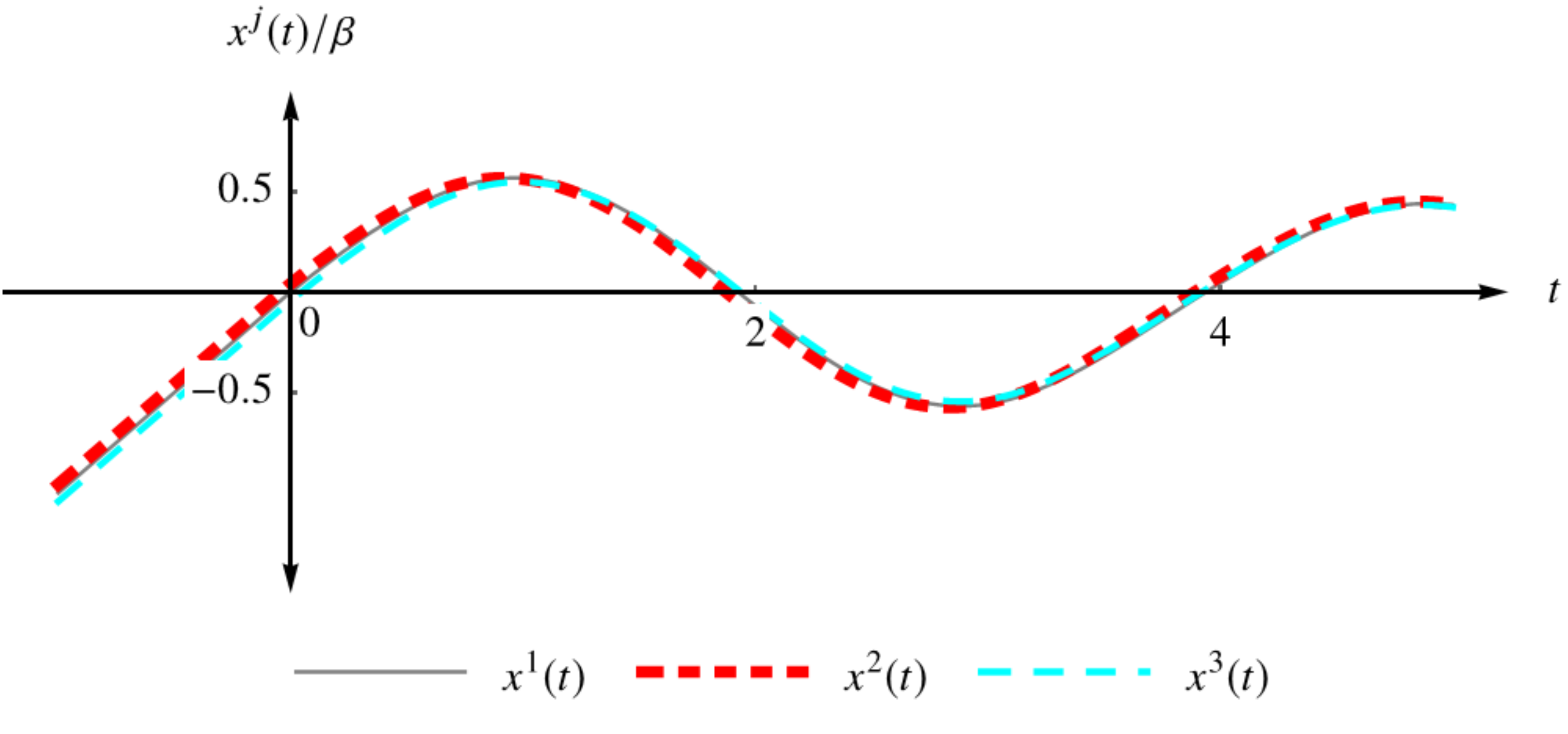}
		\caption{$\ell=2$, $\beta=\beta_\text{Hopf}+0.01$}
	\end{subfigure}
	\begin{subfigure}{.49\textwidth}
		\centering
		\includegraphics[width=\textwidth]{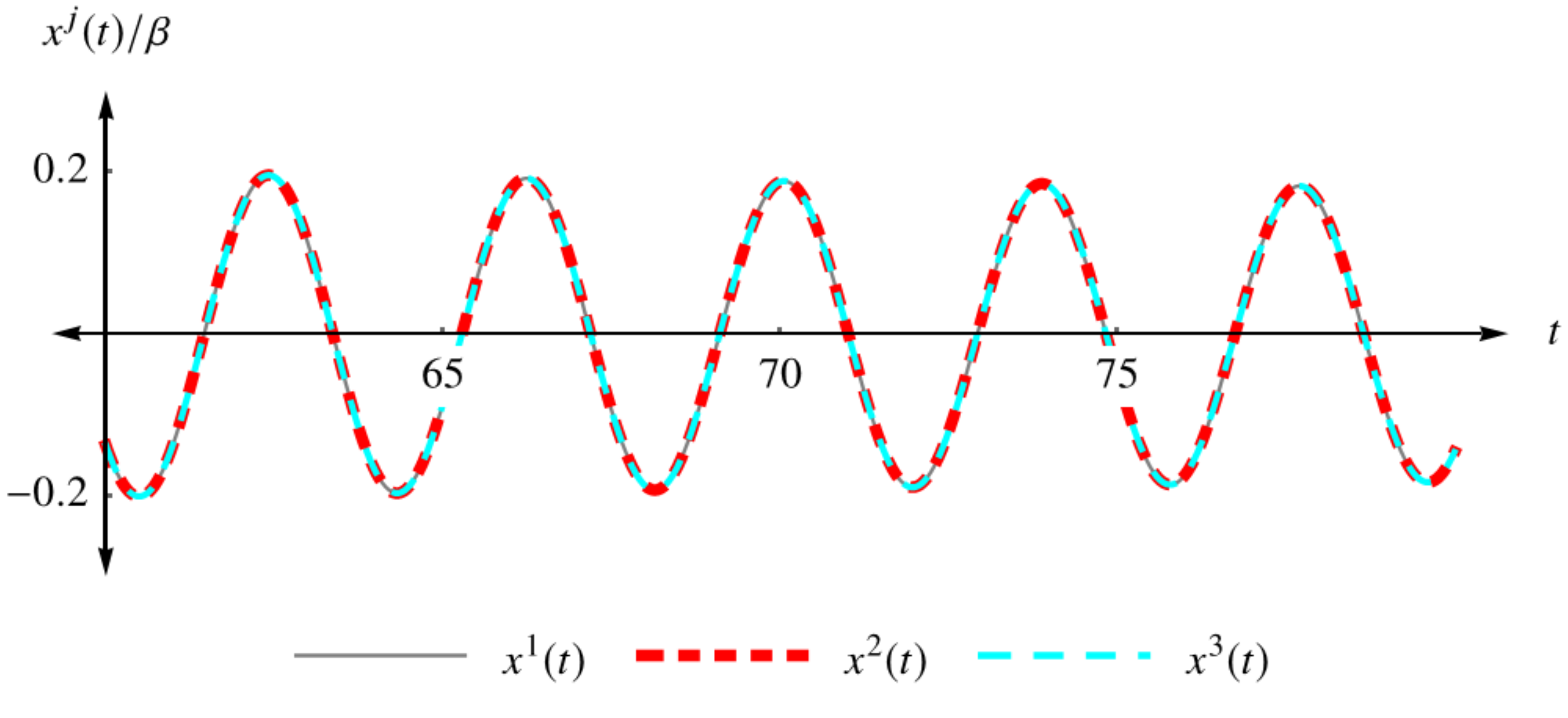}
		\caption{$\ell=2$, $\beta=\beta_\text{Hopf}+0.01$}
	\end{subfigure}\newline\newline\newline
	\begin{subfigure}{.49\textwidth}
		\centering
		\includegraphics[width=\textwidth]{Figure-3D_ring3a.pdf}
		\caption{$\ell=3$, $\beta=\beta_\text{Hopf}+0.01$}
	\end{subfigure}
	\begin{subfigure}{.49\textwidth}
		\centering
		\includegraphics[width=\textwidth]{Figure-3D_ring3c.pdf}
		\caption{$\ell=3$, $\beta=\beta_\text{Hopf}+0.01$}
	\end{subfigure}
	\caption[]{Plots of the solutions ${\bf x}$ of the coupled DDE \eqref{eq:cdde} associated with $(\frac18,\beta_\text{Hopf}+0.01,f,R_{3,\kappa_\ell})$ starting at ${\bs\phi}$, for $\ell=1,2,3$ over the time intervals $[-1,5]$ and $[60,80]$. In this case, $\sigma_{-1}(R_{3,\kappa_\ell})=\{\lambda_\ell,\bar\lambda_\ell\}$ and $|\nu_\star(\lambda_\ell)|=0.9$. Thus, according to Theorem \ref{thm:1}, for $\beta$ sufficiently large, the synchronous SOPS of the coupled DDE \eqref{eq:cdde} associated with $(\frac18,\beta,f,R_{3,\kappa_\ell})$ is asymptotically stable with an exponential phase. These plots suggest that, in each case, the synchronous SOPS is asymptotically stable for all $\beta>\beta_\text{Hopf}$.}
	\label{fig:3}
\end{figure}

\begin{figure}
	\begin{subfigure}{\textwidth}
		\centering
		\includegraphics[width=.5\textwidth]{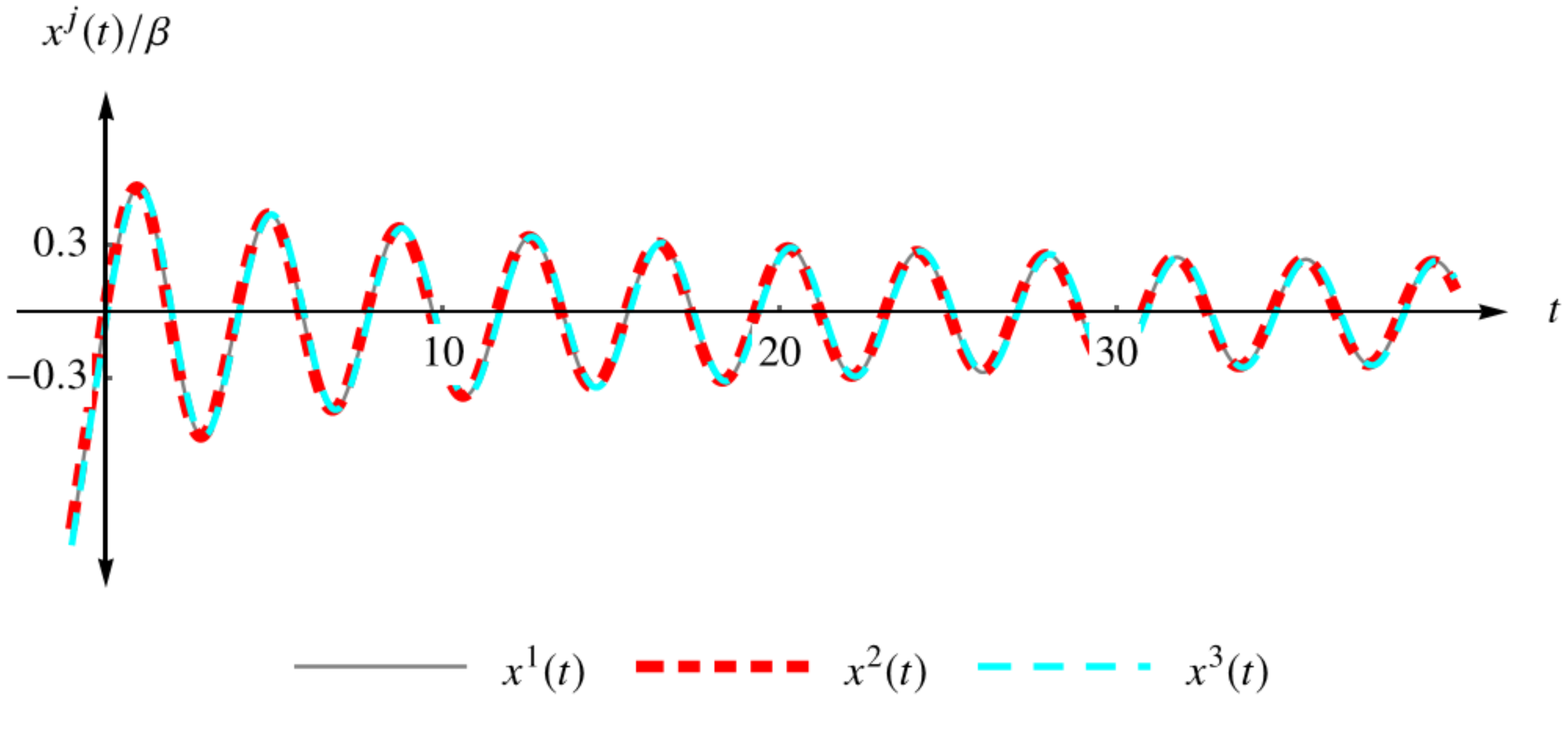}
		\caption{$\ell=4$, $\beta=\beta_\text{Hopf}+0.01$}
	\end{subfigure}\newline\newline\newline
	\begin{subfigure}{\textwidth}
		\centering
		\includegraphics[width=.5\textwidth]{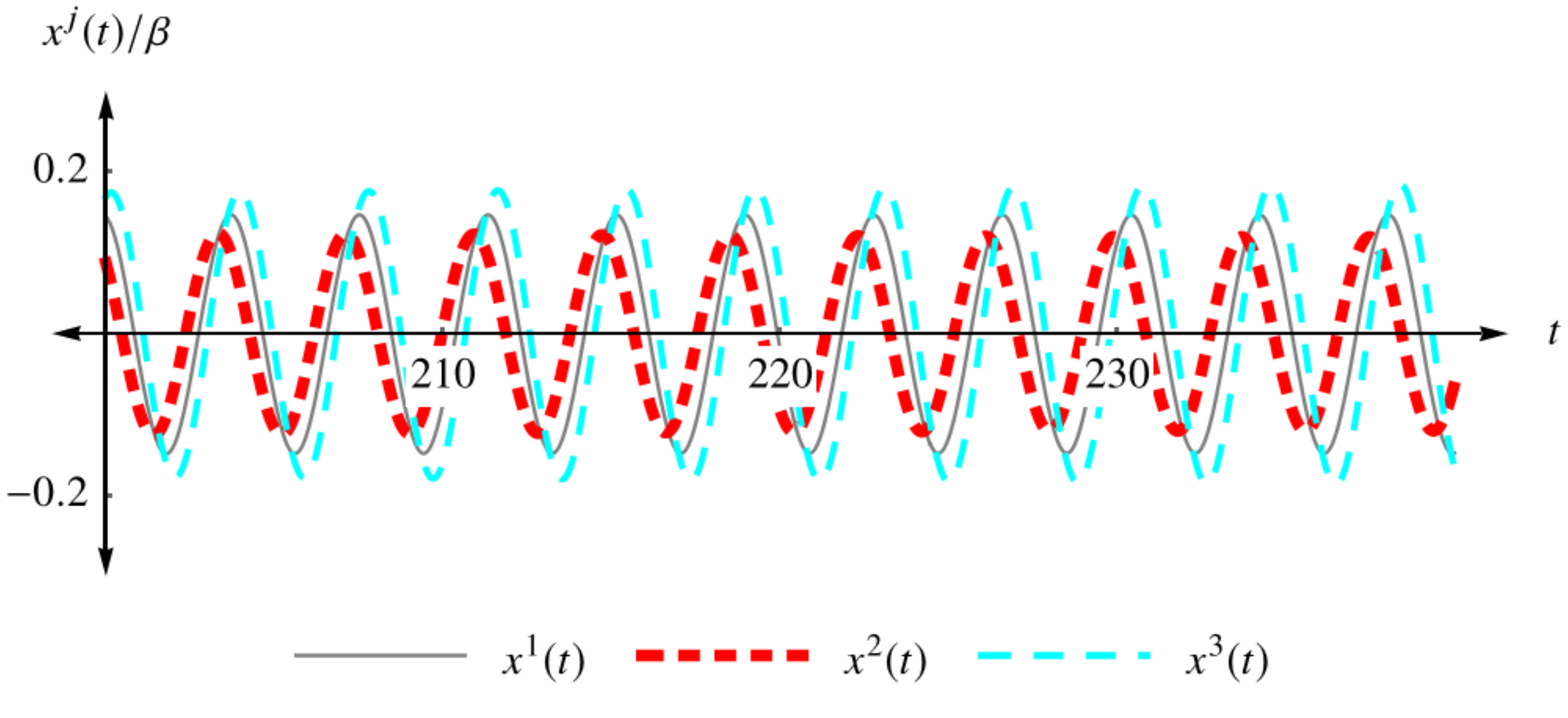}
		\caption{$\ell=4$, $\beta=\beta_\text{Hopf}+0.01$}
	\end{subfigure}\newline\newline\newline
	\begin{subfigure}{\textwidth}
		\centering
		\includegraphics[width=.5\textwidth]{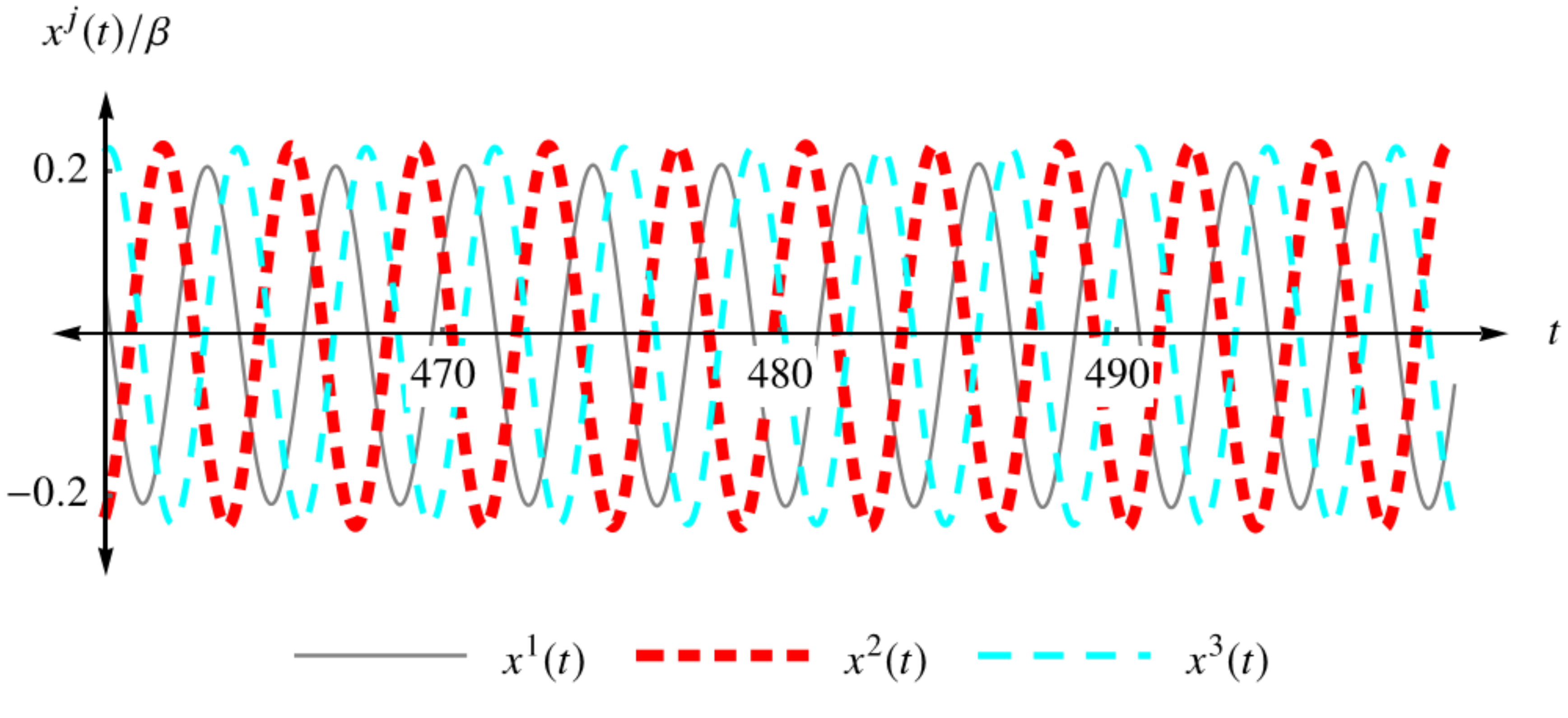}
		\caption{$\ell=4$, $\beta=\beta_\text{Hopf}+0.01$}
	\end{subfigure}%
	\caption[]{Plots of the solution ${\bf x}$ of the coupled DDE \eqref{eq:cdde} associated with $(\frac18,\beta_\text{Hopf}+0.01,f,R_{3,\kappa_4})$ starting at ${\bs\phi}$, over the time intervals $[-1,40]$, $[200,240]$ and $[460,500]$. In this case, $\sigma_{-1}(R_{3,\kappa_4})=\{\lambda_4,\bar\lambda_4\}$ and $\nu_\star(\lambda_4)=1.1$. Thus, according to Theorem \ref{thm:1}, for $\beta$ sufficiently large, the synchronous SOPS of the coupled DDE \eqref{eq:cdde} associated with $(\frac18,\beta,f,R_{3,\kappa_4})$ is unstable. These plots (and additional plots that are not shown) suggest the synchronous SOPS is unstable for all $\beta>\beta_\text{Hopf}$.}
	\label{fig:4}
\end{figure}

\begin{figure}
	\begin{subfigure}{.49\textwidth}
		\centering
		\includegraphics[width=\textwidth]{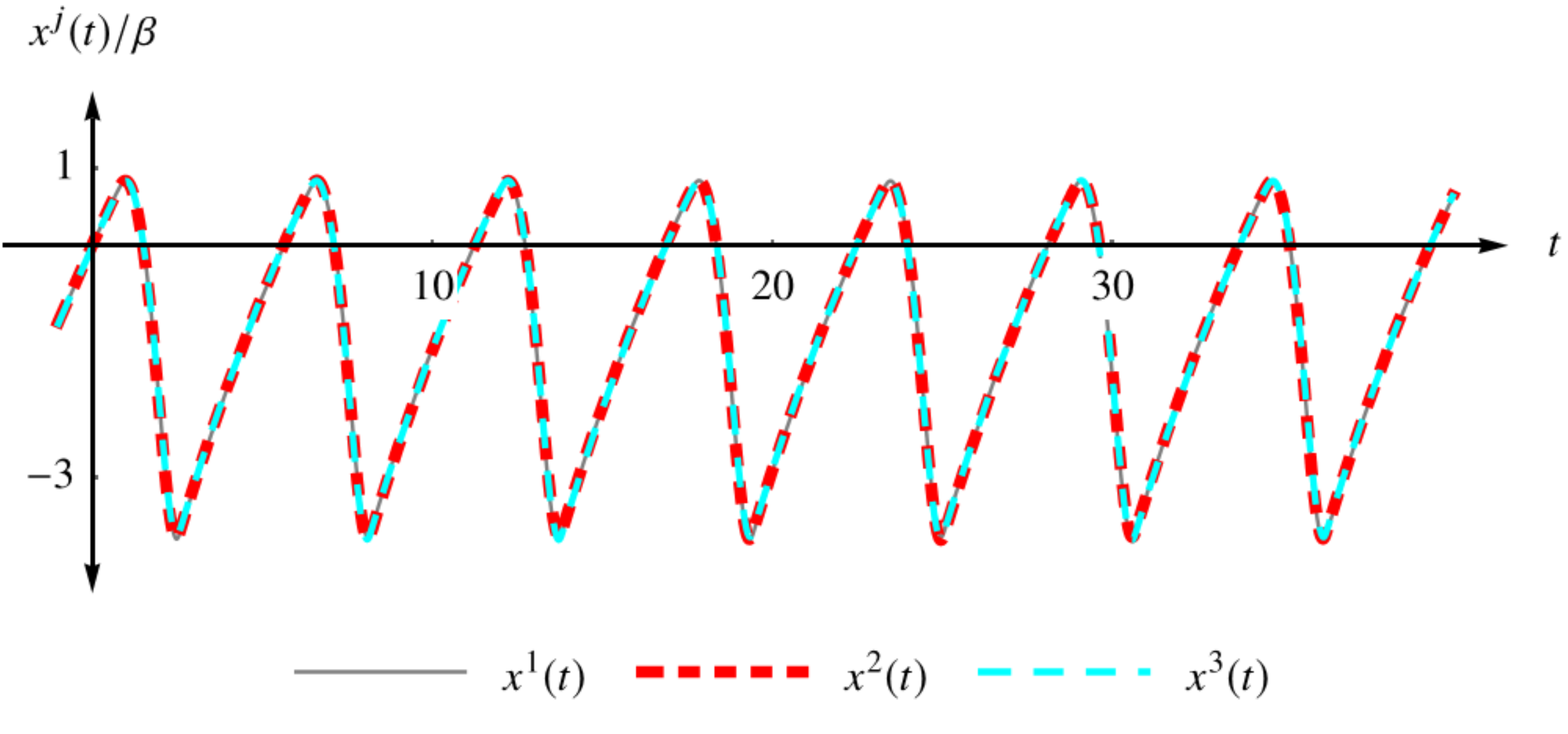}
		\caption{$\ell=5$, $\beta=6.6$.}
	\end{subfigure}
	\begin{subfigure}{.49\textwidth}
		\centering
		\includegraphics[width=\textwidth]{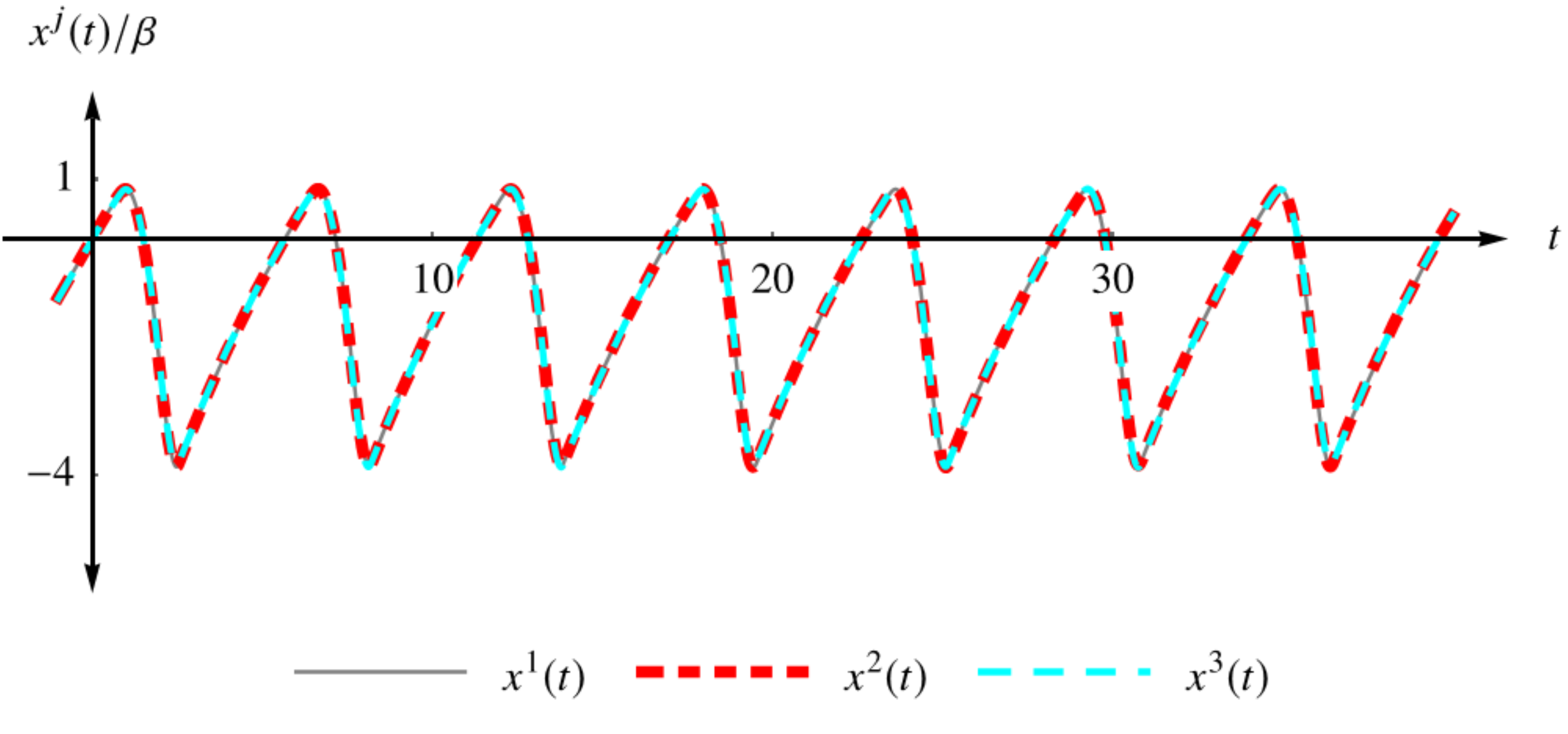}
		\caption{$\ell=5$, $\beta=6.7$}
	\end{subfigure}\newline\newline\newline
    \begin{subfigure}{.49\textwidth}
        \centering
        \includegraphics[width=\textwidth]{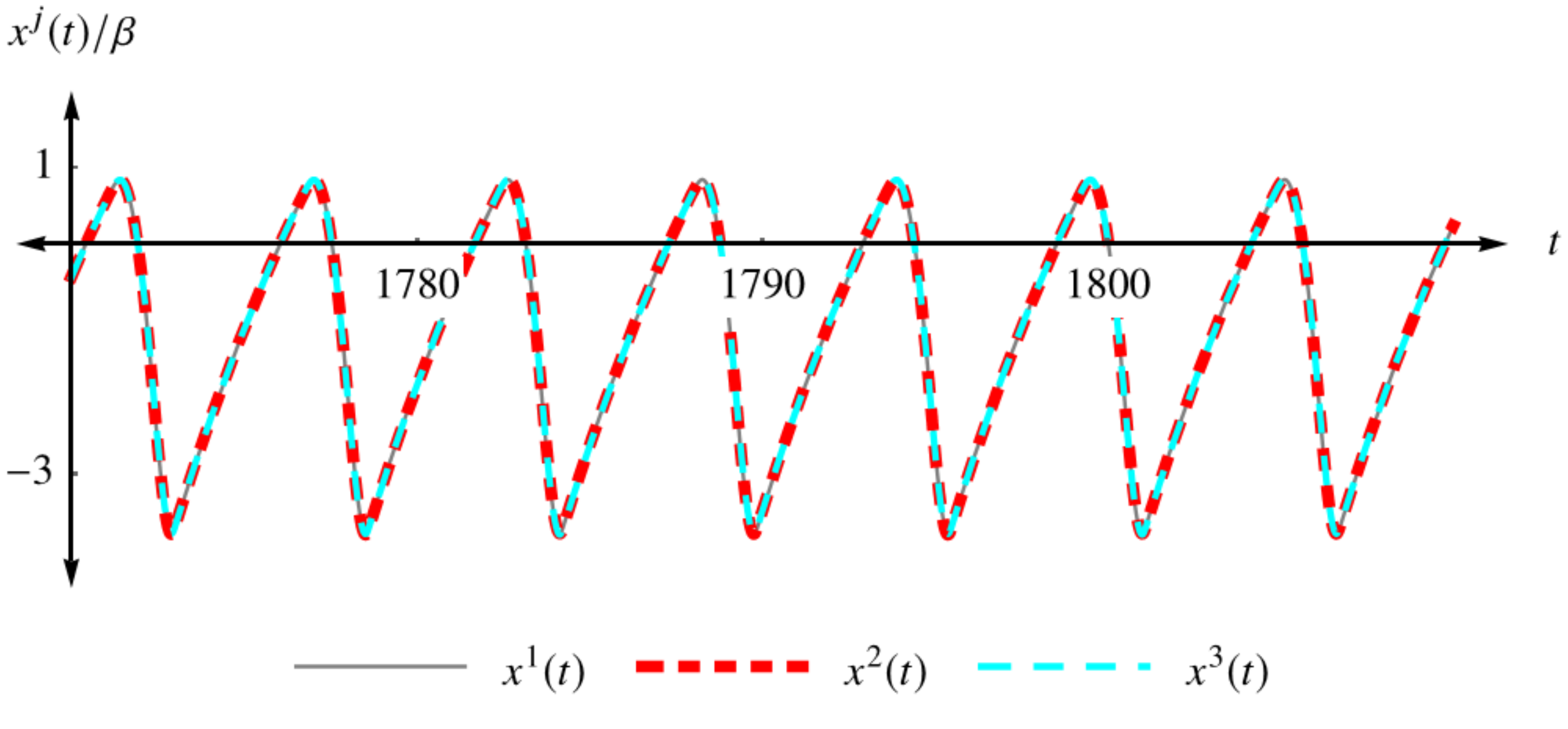}
        \caption{$\ell=5$, $\beta=6.6$}
    \end{subfigure}
	\begin{subfigure}{.49\textwidth}
		\centering
		\includegraphics[width=\textwidth]{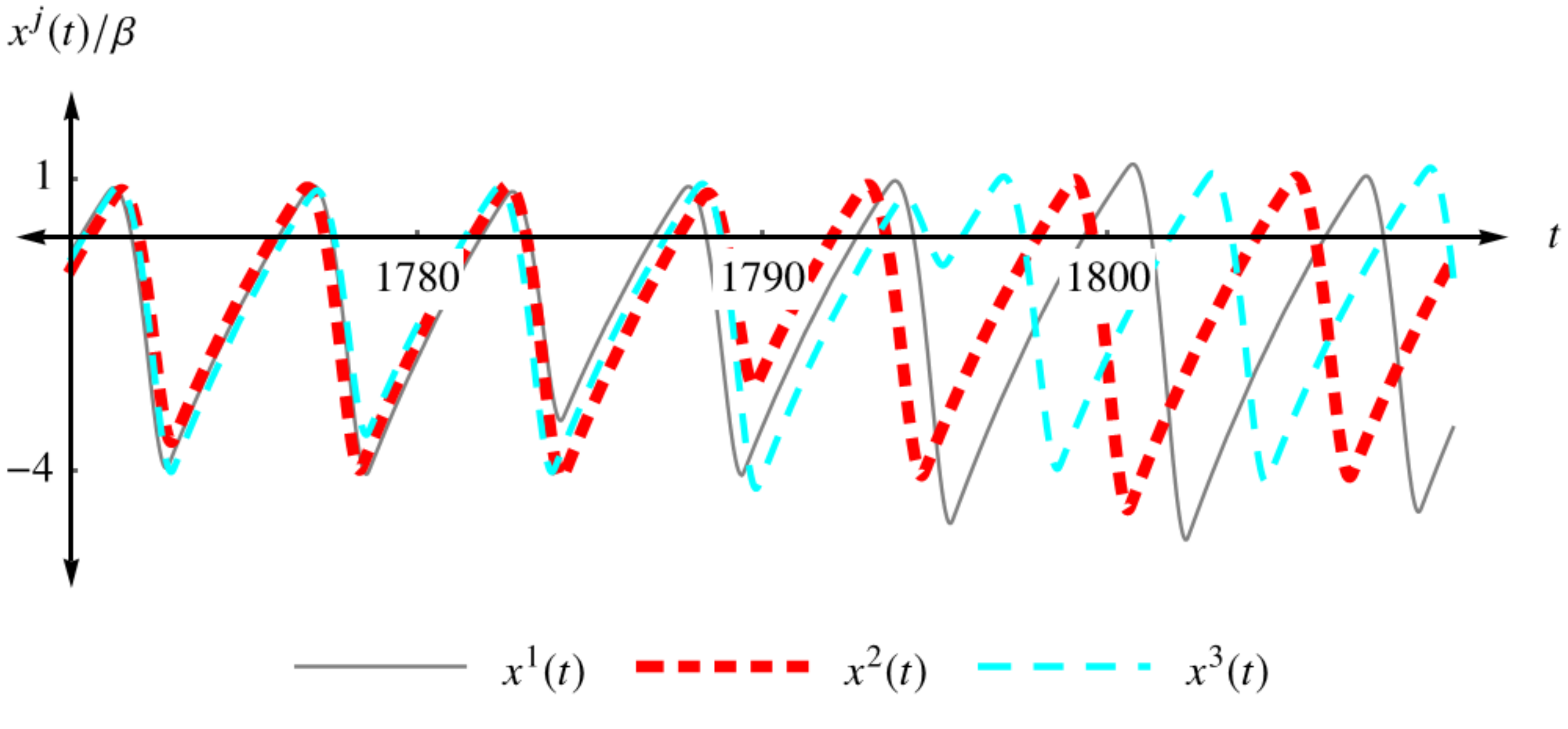}
		\caption{$\ell=5$, $\beta=6.7$}
	\end{subfigure}\newline\newline\newline
    \begin{subfigure}{.49\textwidth}
        \centering
        \includegraphics[width=\textwidth]{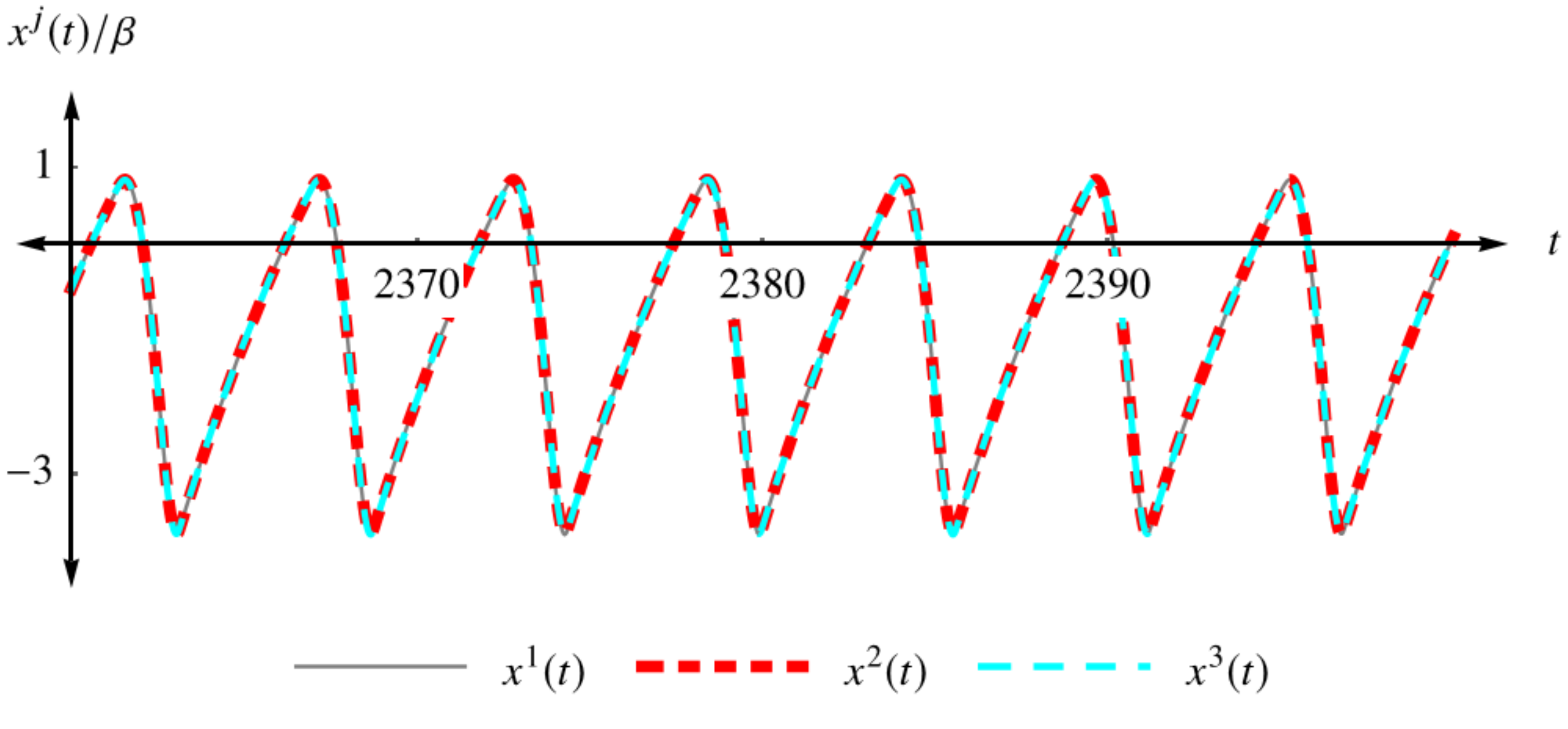}
        \caption{$\ell=5$, $\beta=6.6$}
    \end{subfigure}
	\begin{subfigure}{.49\textwidth}
		\centering
		\includegraphics[width=\textwidth]{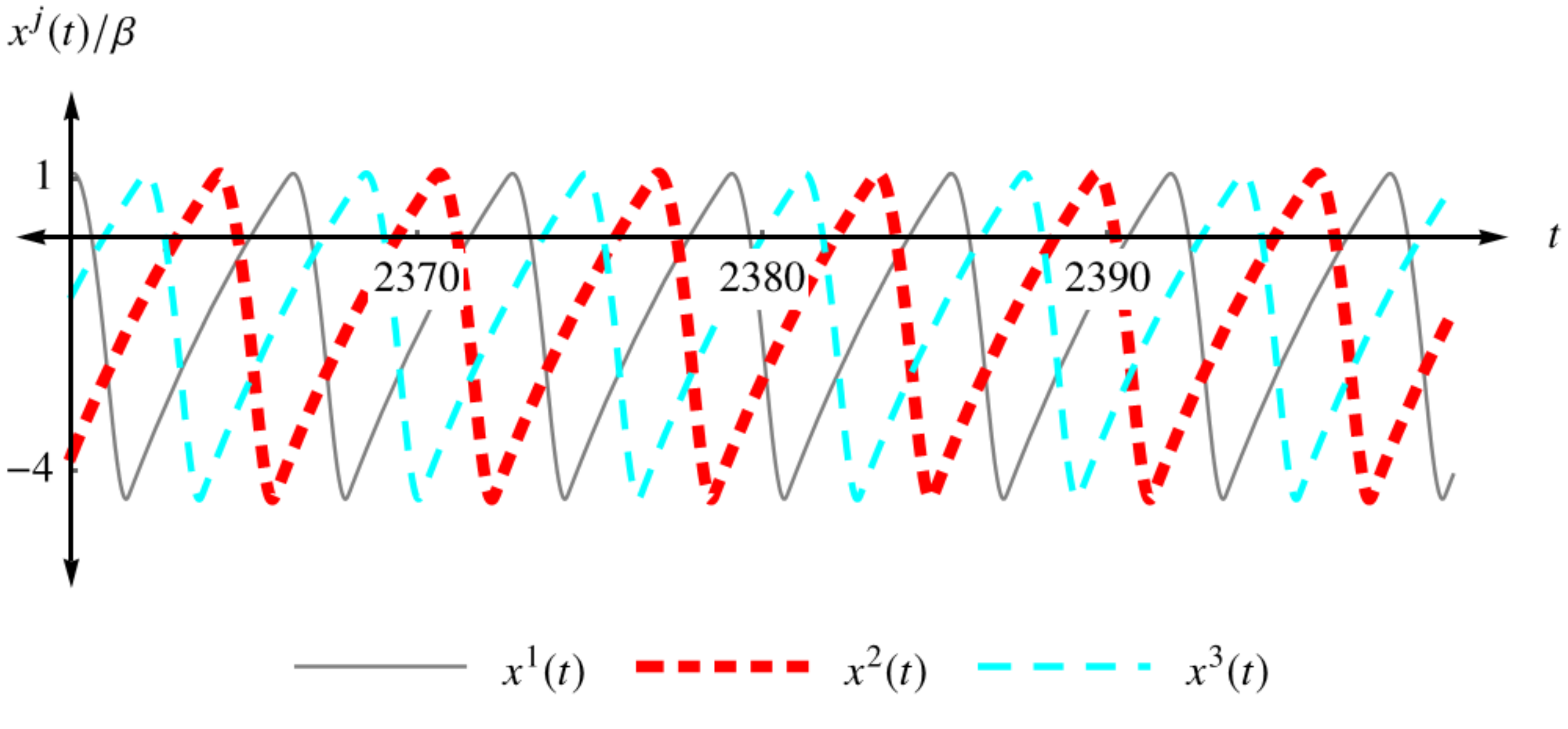}
		\caption{$\ell=5$, $\beta=6.7$}
	\end{subfigure}%
	\caption[]{
        Plots of the solutions ${\bf x}$ of the coupled DDE \eqref{eq:cdde} associated with $(\frac18,\beta,f,R_{3,\kappa_5})$ starting at ${\bs\phi}$, for $\beta=6.6,6.7$, over the time intervals $[-1,40]$, $[1770,1810]$ and $[2360,2400]$. The plots suggest there is a critical value of $\beta\in(6.6,6.7)$ such that the synchronous SOPS of the 3-dimensional coupled DDE \eqref{eq:cdde} associated with $(\frac18,\beta,f,R_{3,\kappa_5})$ is asymptotically stable with an exponential phase for $\beta$ less than the critical value and unstable for $\beta$ greater than the critical value. For comparison, $\beta_\text{Hopf}\approx1.65$.}
	\label{fig:2}
\end{figure}

\begin{figure}
	\begin{subfigure}{.49\textwidth}
		\centering
		\includegraphics[width=\textwidth]{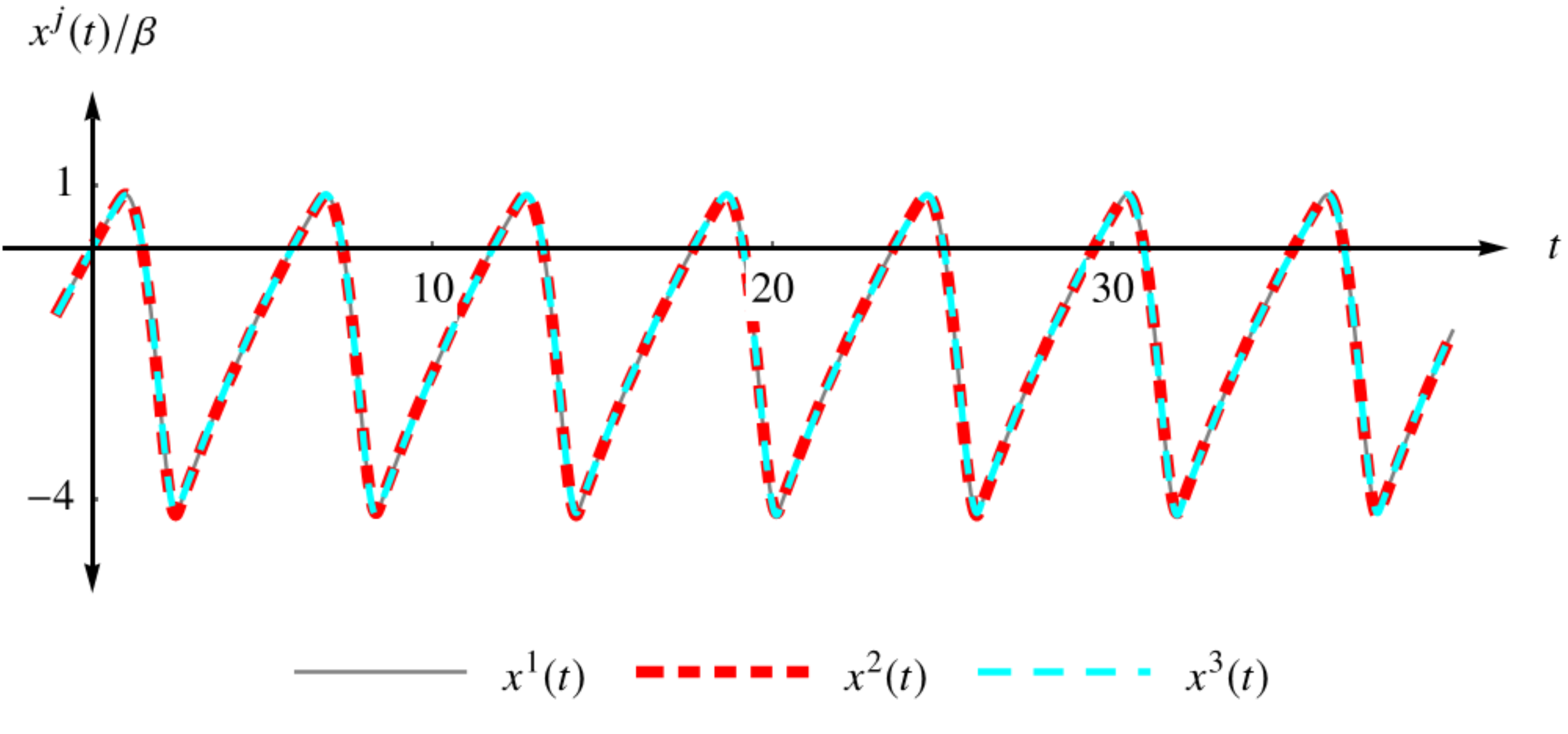}
		\caption{$\ell=6$, $\beta=7.35$.}
	\end{subfigure}\hfill
	\begin{subfigure}{.49\textwidth}
		\centering
		\includegraphics[width=\textwidth]{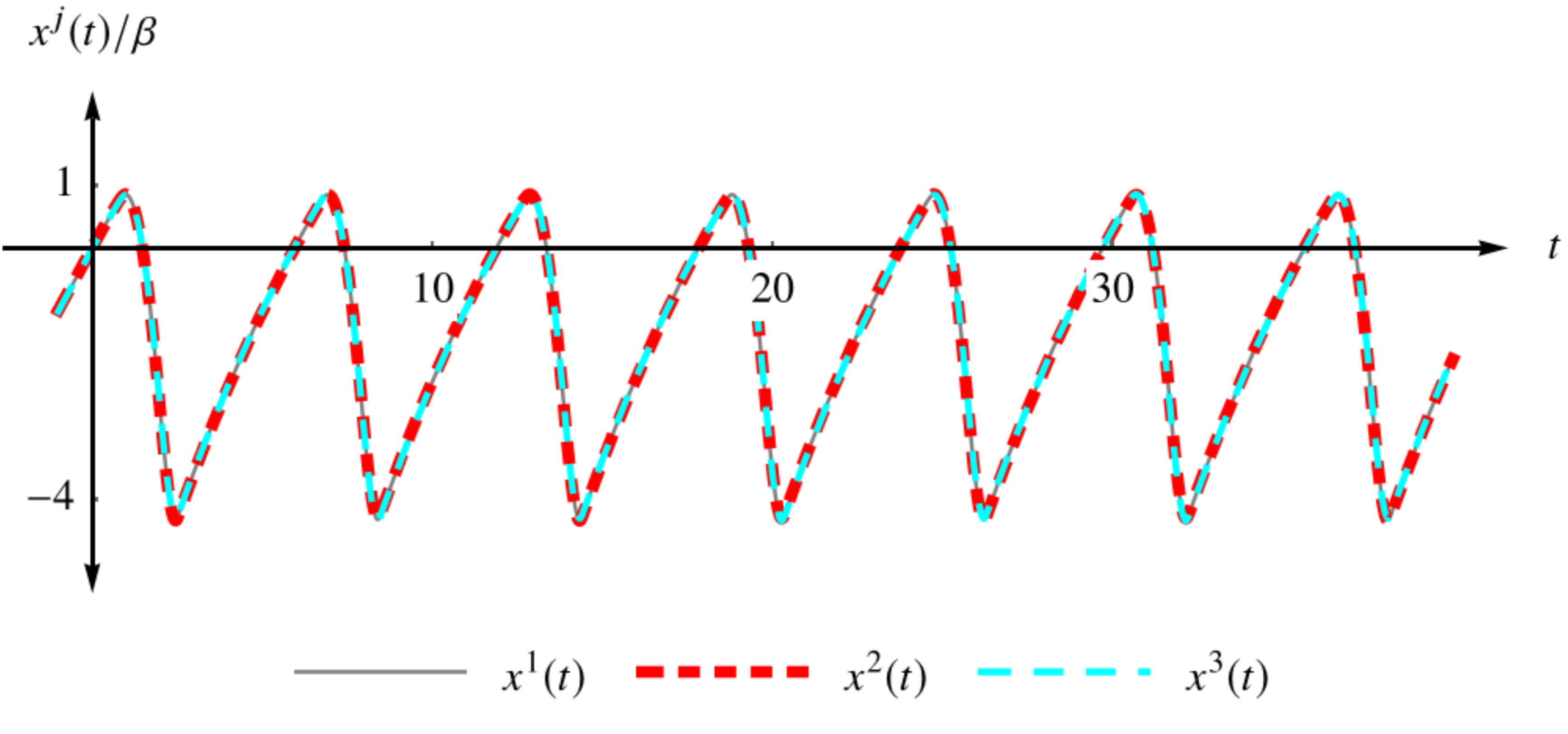}
		\caption{$\ell=6$, $\beta=7.5$.}
	\end{subfigure}\newline\newline\newline
    \begin{subfigure}{.49\textwidth}
    \centering
    \includegraphics[width=\textwidth]{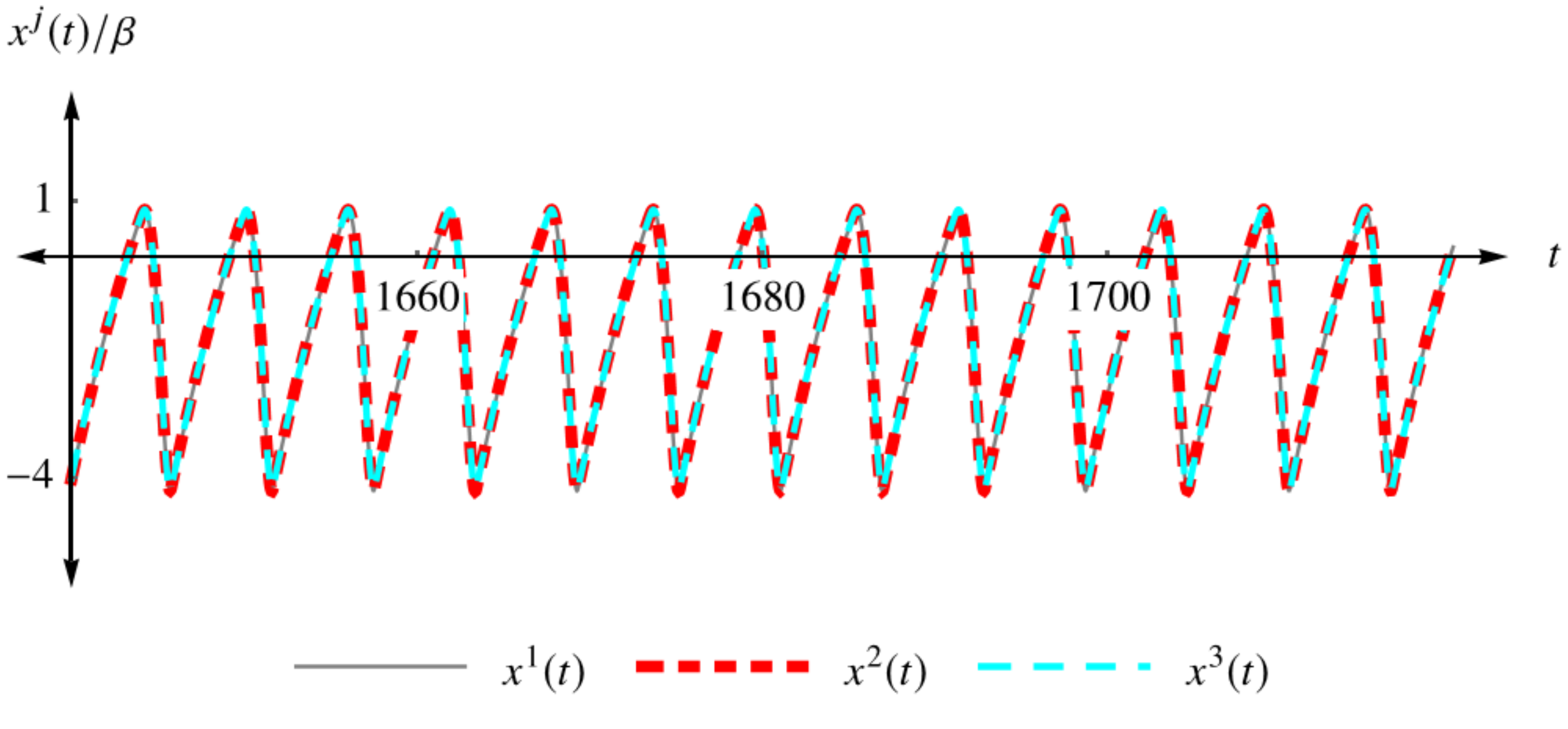}
    \caption{$\ell=6$, $\beta=7.35$.}
    \end{subfigure}
	\begin{subfigure}{.49\textwidth}
		\centering
		\includegraphics[width=\textwidth]{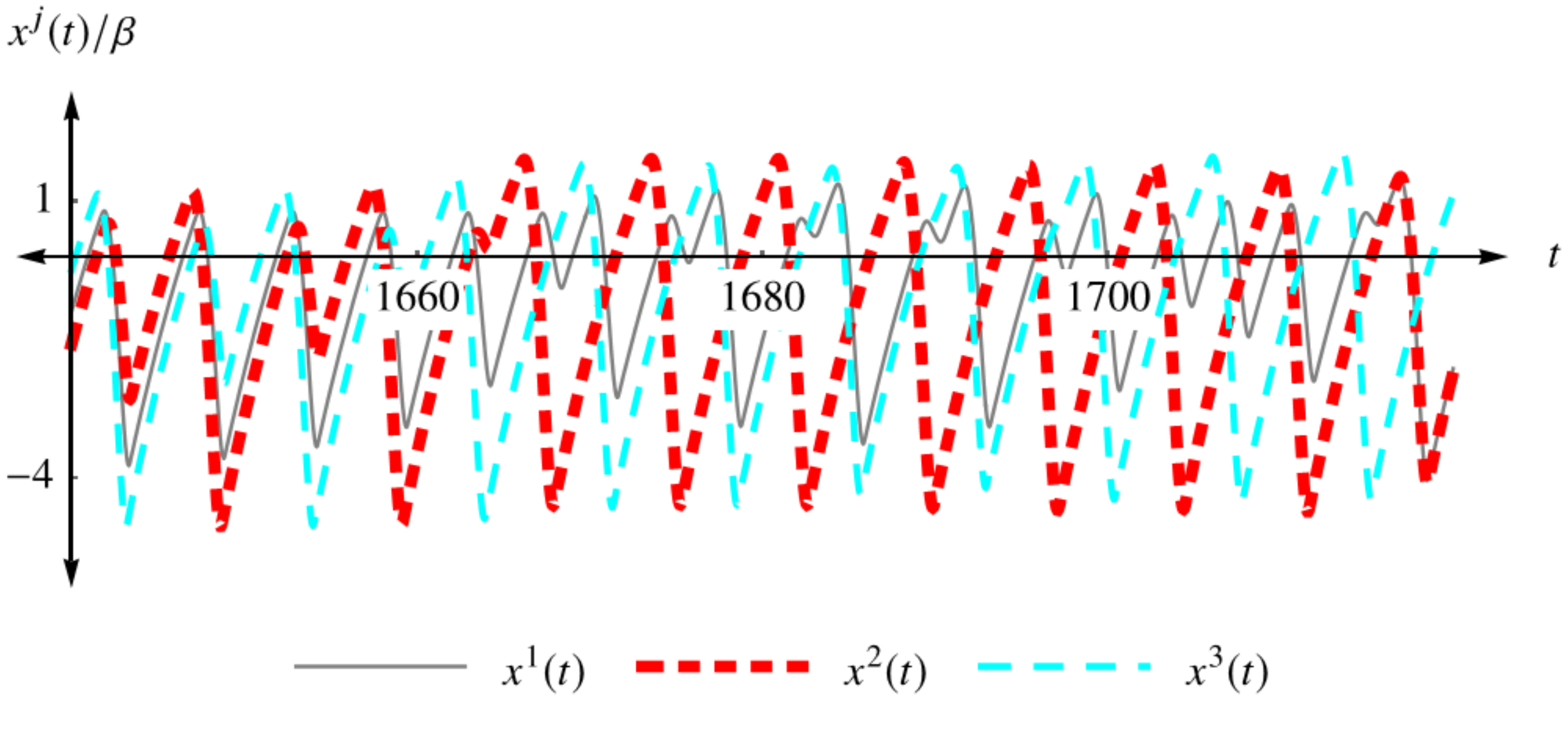}
		\caption{$\ell=6$, $\beta=7.5$.}
	\end{subfigure}\newline\newline\newline
\begin{subfigure}{.49\textwidth}
    \centering
    \includegraphics[width=\textwidth]{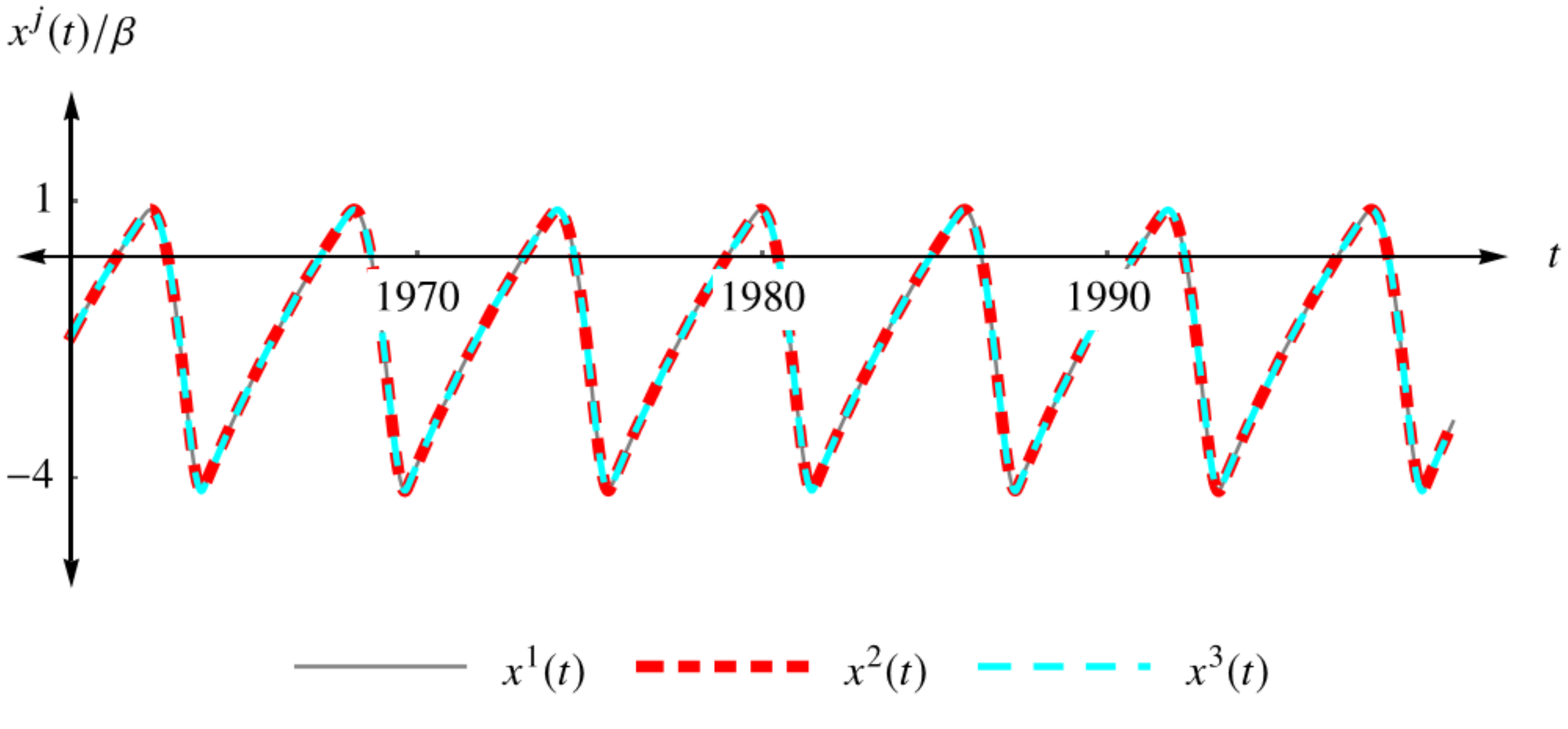}
    \caption{$\ell=6$, $\beta=7.35$.}
\end{subfigure}
	\begin{subfigure}{.49\textwidth}
		\centering
		\includegraphics[width=\textwidth]{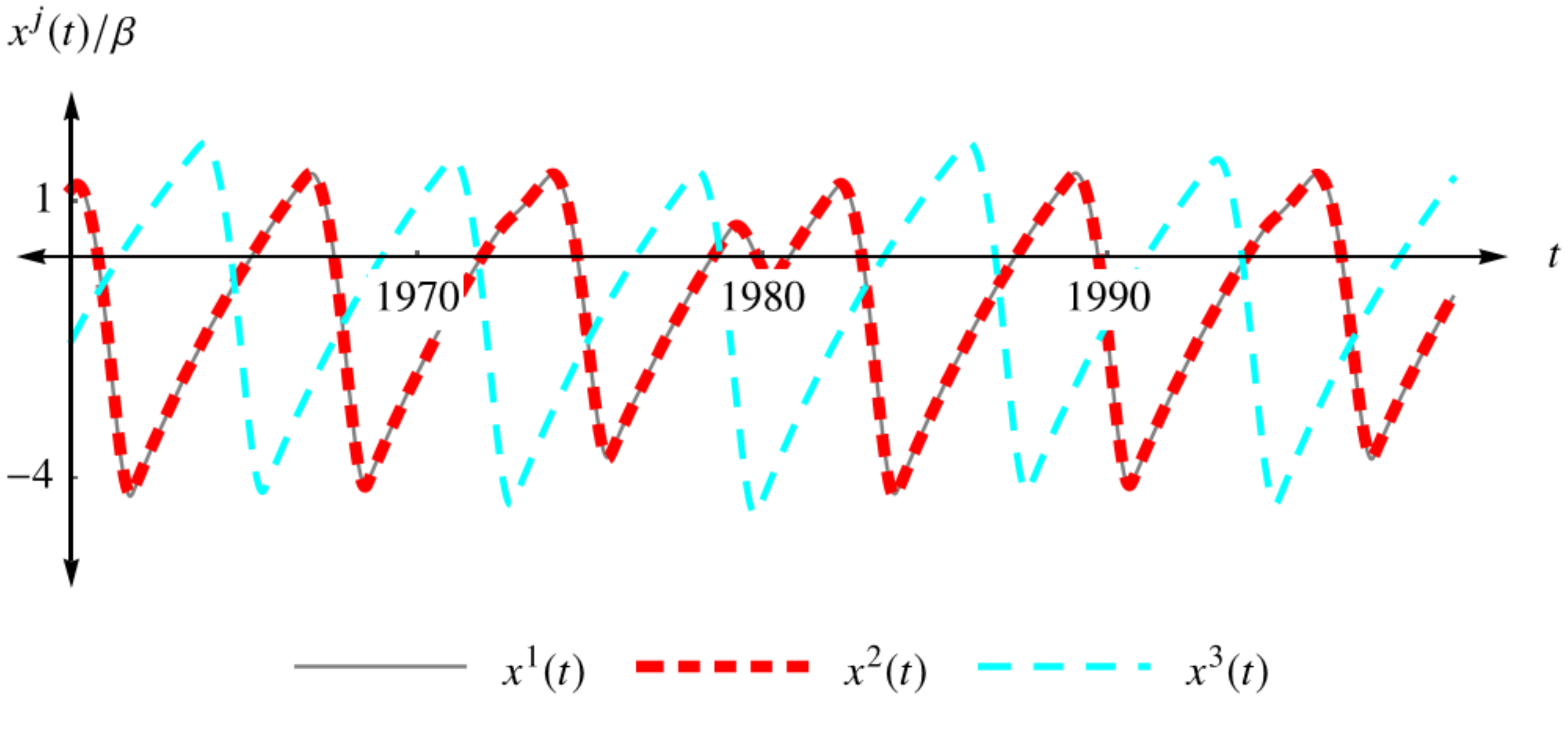}
		\caption{$\ell=6$, $\beta=7.5$.}
	\end{subfigure}
	\caption[]{
        Plots of the solutions ${\bf x}$ of the coupled DDE \eqref{eq:cdde} associated with $(\frac18,\beta,f,R_{3,\kappa_6})$ starting at ${\bs\phi}$, for $\beta=7.35,7.5$, over the time intervals $[-1,40]$, $[1640,1720]$ and $[1960,2000]$. The plots suggest there is a critical value of $\beta\in(7.35,7.5)$ such that the synchronous SOPS of the 3-dimensional coupled DDE \eqref{eq:cdde} associated with $(\frac18,\beta,f,R_{3,\kappa_\ell})$ is asymptotically stable with an exponential phase for $\beta$ less than the critical value and unstable for $\beta$ greater than the critical value. For comparison, $\beta_\text{Hopf}\approx1.65$.}
	\label{fig:8}
\end{figure}

\clearpage

\appendix

\section{Functional analysis results}
\label{apdx:meansfieldlemma}

The proofs of Lemma \ref{lem:DPhi} and Corollary \ref{cor:mu_eigen} rely on the following theorem, which is a slight variant of \cite[Theorem 10]{Xie1992} that immediately follows from the linearity of the operators.
Suppose $\banach$ is a complex Banach space and $V\in B_0(\banach)$.
For $\lambda\in\sigma(V)$ recall that $m_V(\lambda)$ is equal to the dimension of the generalized eigenspace $E_\lambda=\cup_{j=1}^\infty \Null(V-\lambda\Id_\banach)^j$.

\begin{theorem}\label{thm:eigenreduction}
	Let $\banach$ be a complex Banach space and $V\in B_0(\banach)$ satisfy $V\chi_0=\mu\chi_0$ for some $\mu\in\CC\setminus\{0\}$ and $\chi_0\in\banach\setminus\{0\}$.
	If $L\in B(X,\CC)$ is a continuous linear functional such that $L\chi_0=1$ and $W:\banach\to\banach$ is defined by $W\chi=V\chi-L(V\chi)\chi_0$ for $\chi\in X$, then $W\in B_0(\banach)$ and
	\begin{equation*}
    	m_W(\lambda)=\begin{cases}
        	m_V(\lambda) & \text{if  } \lambda\neq \mu,\\
        	m_V(\lambda)-1 & \text{if  } \lambda = \mu.
    	\end{cases}
	\end{equation*}
    In particular, if $\mu=1$ then $\sigminus(V)=\sigma(W)$.
\end{theorem}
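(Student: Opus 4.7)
The plan is to exploit a natural direct sum decomposition of $\banach$. Since $L(\chi_0) = 1$, the map $\Psi\chi := \chi - L(\chi)\chi_0$ is a continuous projection of $\banach$ onto $\Null L$ with kernel $\CC\chi_0$, giving the splitting $\banach = \CC\chi_0 \oplus \Null L$. With respect to this splitting, the hypothesis $V\chi_0 = \mu\chi_0$ forces $V$ into block upper-triangular form
$$V = \begin{pmatrix} \mu & V_{12} \\ 0 & V_{22} \end{pmatrix},$$
while the identity $W = \Psi V$ (verified by direct computation) together with $\Psi\chi_0 = 0$ and $\Psi|_{\Null L} = I$ gives
$$W = \begin{pmatrix} 0 & 0 \\ 0 & V_{22} \end{pmatrix}.$$
Compactness of $W$ is immediate (it follows from compactness of $V_{22}$, or alternatively from the fact that $W - V$ has one-dimensional range).

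First I would handle $\lambda \notin \{0,\mu\}$. Block multiplication gives
$$(V - \lambda\Id_\banach)^j = \begin{pmatrix} (\mu - \lambda)^j & A_j \\ 0 & (V_{22} - \lambda\Id_{\Null L})^j \end{pmatrix}, \qquad (W - \lambda\Id_\banach)^j = \begin{pmatrix} (-\lambda)^j & 0 \\ 0 & (V_{22} - \lambda\Id_{\Null L})^j \end{pmatrix}$$
for some bounded $A_j$ whose exact form is irrelevant. In the kernel of the first, the equation $(\mu - \lambda)^j v_1 + A_j v_2 = 0$ uniquely determines $v_1$ from $v_2$ (since $\mu \ne \lambda$); in the kernel of the second, $v_1 = 0$ (since $\lambda \ne 0$). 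Either way the kernel is in linear bijection with $\Null(V_{22} - \lambda\Id_{\Null L})^j$, so $m_V(\lambda) = m_{V_{22}}(\lambda) = m_W(\lambda)$.

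Next I would handle $\lambda = \mu$. An induction on $j$ yields
$$(V - \mu\Id_\banach)^j = \begin{pmatrix} 0 & V_{12}(V_{22} - \mu\Id_{\Null L})^{j-1} \\ 0 & (V_{22} - \mu\Id_{\Null L})^j \end{pmatrix}.$$
For any $v_2 \in \Null(V_{22} - \mu\Id_{\Null L})^k$, at $j = k+1$ both entries in the right-hand column vanish on $v_2$, so $(v_1,v_2)^T \in \Null(V - \mu\Id_\banach)^{k+1}$ for every $v_1 \in \CC$. Taking unions in $j$ gives $\bigcup_j \Null(V - \mu\Id_\banach)^j = \CC\chi_0 \oplus \bigcup_j \Null(V_{22} - \mu\Id_{\Null L})^j$, so $m_V(\mu) = 1 + m_{V_{22}}(\mu)$. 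The block form of $W$ yields $m_W(\mu) = m_{V_{22}}(\mu)$ by the argument of the previous paragraph (which used only $\lambda \ne 0$, and $\mu \ne 0$ by hypothesis), so $m_W(\mu) = m_V(\mu) - 1$.

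Finally, the ``in particular'' claim follows at once: when $\mu = 1$ is a simple eigenvalue of $V$, the formula yields $m_W(1) = 0$, so $1 \notin \sigma(W)$; for any other nonzero $\lambda$, $\lambda \in \sigma(V)$ iff $m_V(\lambda) > 0$ iff $m_W(\lambda) > 0$ iff $\lambda \in \sigma(W)$; and both $\sigma(V)$ and $\sigma(W)$ contain $0$ since $V$ and $W$ are compact operators on an infinite-dimensional Banach space. Combining gives $\sigma(W) = \sigma(V) \setminus \{1\} = \sigminus(V)$. The main obstacle I anticipate is the careful bookkeeping in the $\lambda = \mu$ case, separating the one extra dimension from the $v_1$-direction (where $V$ acts as multiplication by $\mu$ but $W$ acts as zero) from the intrinsic structure of $V_{22}$; the linearity of $V$ (as opposed to the more general continuous maps treated in Xie 1992) is what makes this bookkeeping elementary, which is presumably what the excerpt means by ``immediately follows from the linearity of the operators.''
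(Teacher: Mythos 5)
Your proof is correct, and it supplies something the paper does not: the paper gives no argument for Theorem \ref{thm:eigenreduction} at all, deferring to \cite{Xie1992} with the remark that the variant ``immediately follows from the linearity of the operators.'' Your route --- block-triangularizing $V$ with respect to the splitting $\banach=\CC\chi_0\oplus\Null L$ induced by the projection $\Psi\chi=\chi-L(\chi)\chi_0$, observing $W=\Psi V$ so that $W$ is block-diagonal with blocks $0$ and $V_{22}$, and then computing $\Null(V-\lambda\Id_\banach)^j$ and $\Null(W-\lambda\Id_\banach)^j$ block by block --- is a complete, self-contained replacement for that citation. The key steps all check out: for $\lambda\notin\{0,\mu\}$ the off-diagonal entry uniquely determines $v_1$ from $v_2$ in one kernel and forces $v_1=0$ in the other, giving $m_V(\lambda)=m_{V_{22}}(\lambda)=m_W(\lambda)$; the induction $(V-\mu\Id_\banach)^j$ has first column zero and right-hand column $\bigl(V_{12}(V_{22}-\mu\Id)^{j-1},\,(V_{22}-\mu\Id)^j\bigr)$, which yields $\bigcup_j\Null(V-\mu\Id_\banach)^j=\CC\chi_0\oplus\bigcup_j\Null(V_{22}-\mu\Id)^j$ and hence $m_V(\mu)=m_{V_{22}}(\mu)+1=m_W(\mu)+1$; and the Riesz theory of compact operators justifies the equivalence ``$\lambda\in\sigma$ iff the generalized eigenspace is nontrivial'' for nonzero $\lambda$.

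One caveat, which reflects on the theorem's statement rather than on your argument: your exclusion of $\lambda=0$ from the first case is necessary, not cosmetic. Since $W\chi_0=\mu\chi_0-L(\mu\chi_0)\chi_0=0$, one has $\Null W^j\supset\CC\chi_0\oplus\Null V_{22}^j$ while $\Null V^j$ is isomorphic to $\Null V_{22}^j$ (the $(1,1)$ entry $\mu^j$ is invertible), so in general $m_W(0)=m_V(0)+1$; taking $V$ injective gives $m_V(0)=0<m_W(0)$. The displayed multiplicity formula should therefore be read for $\lambda\ne0$ only. This does not damage the ``in particular'' conclusion --- the only part the paper uses, in Lemma \ref{lem:DPhi} and Corollary \ref{cor:mu_eigen} --- and you handle it correctly by treating $0$ separately via the fact that $0$ lies in the spectrum of every compact operator on an infinite-dimensional space (an implicit standing hypothesis worth making explicit, since the theorem as stated does not assume $\dim\banach=\infty$).
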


The following proof of Lemma \ref{lem:DPhi} closely parallels the proof of \cite[Theorem 9]{Xie1992}.

\begin{proof}[Proof of Lemma \ref{lem:DPhi}]
	Define $L\in B_0(\C(\R),\R)$ by $L\phi=\phi(-1)$ for all $\phi\in\C(\R)$ so that $T(\beta,\phi,t)=LS(\beta,\phi,t)$ for all $\beta>0$, $\phi\in\C(\R)$ and $t\ge0$.
	Recall that for each $\beta>\buniq$, by the definition of the monodromy operator, $D_\phi S(\beta,\sops_0^\beta,\period^\beta)=\monoU_1^\beta(0)$ (see, e.g., \cite[Chapter 2, Theorem 4.1]{Hale1993}).
	It follows that $D_\phi T(\beta,\phi,\period^{\beta})=LU_1^{\beta}(0)$ for each $\beta>\buniq$.
	Let $\beta>\buniq$.
	By Lemma \ref{lem:qbeta}, $T(\tilde\beta,\phi,q^\beta(\tilde\beta,\phi))=0$ for all $(\tilde\beta,\phi)\in W^\beta$ and so
	\begin{align*}
		0&=D_\phi\lsb T(\tilde\beta,\phi,q^\beta(\tilde\beta,\phi))\rsb\big|_{(\tilde\beta,\phi)=(\beta,\sops_0^\beta)}\\
		&=D_\phi T(\beta,\sops_0^\beta,\period^\beta-1)+D_tT(\beta,\sops_0^\beta,\period^\beta-1)D_\phi q^\beta(\beta,\sops_0^\beta)\\
		&=LU_1^\beta(0)+\dot{\sops}^\beta(-1)D_\phi q^\beta(\beta,\sops_0^\beta).
	\end{align*}
	Hence,
		$$D_\phi q^\beta(\beta,\sops_0^\beta)=-\frac{L\monoU_1^\beta(0)}{\dot{\sops}^\beta(-1)}.$$
	By the definition of $\Phi^\beta$ in \eqref{eq:Phibeta}, the derivative of $\Phi^\beta$ with respect to $\phi$ satisfies
	\begin{align*}
		D_\phi\Phi^\beta(\beta,\sops_0^\beta)&=D_\phi S(\beta,\sops_0^\beta,\period^\beta)+D_tS(\beta,\sops_0^\beta,\period^\beta)D_\phi q^\beta(\beta,\sops_0^\beta)\\
		&=\monoU_1^\beta(0)-\frac{\dot\sops_0^\beta}{\dot{\sops}^\beta(-1)}LU_1^\beta(0).
	\end{align*}
	Since $\monoU_1^\beta(0)\in B_0(\C(\R))$, we see that $D_\phi\Phi^\beta(\beta,\sops_0^\beta)\in B_0(\C(\R))$.
	Now an application of Theorem \ref{thm:eigenreduction} with $X=\C(\R)$, $V=\monoU_1^\beta(0)$, $\chi_0\in\C(\R)$ defined by $\chi_0(\theta)=\frac{\dot\sops_0^\beta(\theta)}{\dot{\sops}^\beta(-1)}$ for $\theta\in[-1,0]$, $\mu=1$, and $W=D_\phi\Phi^\beta(\beta,\sops_0^\beta)$ implies that $\sigma(D_\phi\Phi^\beta(\beta,\sops_0^\beta))=\sigminus(\monoU_1^\beta(0))$.
	By \cite[Theorem 1]{Xie1991}, 1 is a simple eigenvalue of $\monoU_1^\beta(0)$, thus completing the proof of the lemma.
\end{proof}

Given a Banach space $\banach$, $\chi\in\banach$ and $r>0$, recall that $\ball_\banach(\chi,r)$ denotes the ball of radius $r$ centered at $\chi$.

\begin{proof}[Proof of Corollary \ref{cor:mu_eigen}]
	Let $\delta\in(0,1)$, $\mu$ and $\psi$ be as in Theorem \ref{thm:mu_simple}.
    Let $\banachY^\ast=B(\banachY,\CC)$ denote the dual of $\banachY$.
    We claim, and prove below, there is a $\delta_0\in(0,\delta]$ such that for each $\chi\in\ball_\banach(\chi_0,\delta_0)$ there is a linear functional $L(\chi)\in\banachY^\ast$ such that $L(\chi)\psi(\chi)=1$, and $L$ is continuous in $\chi$.
    Assuming the claim holds, define, for each $\chi\in\ball_\banach(\chi_0,\delta_0)$, the operator $W(\chi)\in B_0(\banachY)$ by 
    	$$W(\chi)\xi=V(\chi)\xi-(L(\chi)\xi)\psi(\chi),\qquad\xi\in\banachY.$$ 
    Then $W(\chi)$ is continuous in $\chi$, and by Theorem \ref{thm:eigenreduction} and the fact that $\mu(\chi)$ is a simple eigenvalue of $V(\chi)$ for each $\chi\in \ball_\banach(\chi_0,\delta_0)$, it follows that $W(\chi)\in B_0(\banachY)$ and $\mu(\chi)\not\in\sigma(W(\chi))$ for each $\chi\in \ball_\banach(\chi_0,\delta_0)$.
    Since $W(\chi_0)$ is compact, it has an isolated spectrum, which along with the continuity of the function $\chi\mapsto \sigma(W(\chi))$, implies that by choosing $\delta_0>0$ possibly smaller, we can ensure that for each $\chi\in\ball_\banach(\chi_0,\delta_0)$, the spectrum of $W(\chi)$ does not contain any elements in $\ball(\mu_0,\delta_0)$.

    We are left to prove the claim.
    For each $\chi\in O$ let $V^\ast(\chi)\in B_0(\banachY^\ast)$ denote the adjoint of $V(\chi)$.
    Then $V^\ast:O\to B_0(\banachY^\ast)$ is continuous and for each $\chi\in\ball(\chi_0,\delta)$, $\mu(\chi)$ is the unique simple eigenvalue of $V(\chi)$.
    Moreover, $\psi_0^\ast\in\banachY^\ast$ is a unit eigenfunction associated with $\mu_0$.
	Applying Theorem \ref{thm:mu_simple} again, this time with $\banachY^\ast$, $V^\ast$ and $\psi_0^\ast$ in place of $\banachY$, $V$ and $\psi_0$, respectively, there is a $\delta_0>0$ and a continuous function $\psi^\ast:\ball(\chi_0,\delta_0)\to\banachY^\ast$ such that for each $\chi\in\ball(\chi_0,\delta_0)$, $\psi^\ast(\chi)$ is a unit eigenfunction of $V^\ast(\chi)$ associated with $\mu(\chi)$.
	Since $\psi_0^\ast(\psi_0)\neq0$, by choosing $\delta_0>0$ possibly smaller, we can ensure that $\psi^\ast(\chi)(\psi(\chi))\neq0$ for all $\psi\in\ball_\banach(\chi_0,\delta_0)$.
	For each $\chi\in\ball_\banach(\chi_0,\delta_0)$, define $L(\chi)\in\banachY^\ast$ by 
		$$L(\chi)\xi=\frac{\psi^\ast(\chi)(\xi)}{\psi^\ast(\chi)(\psi(\chi))},\qquad\xi\in\banachY.$$
	Then $L(\chi)\psi(\chi)=1$ for all $\chi\in\ball_\banach(\chi_0,\delta_0)$, and $L(\chi)$ is a continuous function of $\chi$.
	This completes the proof of the claim.          
\end{proof}

\subsection*{Acknowledgements}

We are grateful to the anonymous referee for a careful reading of this paper and for their helpful comments which led to significant improvements in the organization and presentation of our work.
We thank Siavash Golkar for helpful comments on Section \ref{sec:uniform}.

\bibliographystyle{plain}
\bibliography{sync}

\end{document}